\documentclass[11pt,oneside,reqno]{amsart}
\usepackage{amssymb}
\usepackage{empheq}
\usepackage{fullpage}
\usepackage[colorlinks,allcolors=blue]{hyperref}
\usepackage{mathtools}
\usepackage{aligned-overset}
\usepackage{hyperref}
\usepackage{amsmath}
\usepackage{tikz}
\numberwithin{equation}{section}
\newtheorem{theorem}{Theorem}[section]
\newtheorem{corollary}[theorem]{Corollary}
\newtheorem{lemma}[theorem]{Lemma}
\newtheorem{proposition}[theorem]{Proposition}
\theoremstyle{definition}
\newtheorem{definition}[theorem]{Definition}

\theoremstyle{remark}
\newtheorem{remark}[theorem]{Remark}
\newcommand{\nrm}[1]{\left\Vert#1 \right\Vert}
\newcommand{\abs}[1]{\left\vert #1\right\vert}
\newcommand{\brk}[1]{\left\langle#1\right\rangle}
\newcommand{\set}[1]{\left\{#1\right\}}
\newcommand{\tld}[1]{\widetilde{#1}}
\newcommand{\br}[1]{\overline{#1}}

\newcommand{\wht}[1]{\widehat{#1}}	
\newcommand{\dist}{\mathrm{dist}}
\newcommand{\sgn}{{\mathrm{sgn}}}

\newcommand{\supp}{\mathrm{supp}}

\newcommand{\rd}{\partial}

\newcommand{\at}[2][]{#1\big|_{#2}}
\newcommand{\0}{\emptyset}

\newcommand{\bbR}{\mathbb R}
\newcommand{\bbC}{\mathbb C}
\newcommand{\bbT}{\mathbb T}
\newcommand{\bbN}{\mathbb N}

\newcommand{\bbZ}{\mathbb Z}
\newcommand{\calA}{\mathcal A}
\newcommand{\calS}{\mathcal S}
\newcommand{\calF}{\mathcal F}

\newcommand{\dbeta}{\overline{\partial}_\beta \overline{\psi}}
\newcommand{\dvarphi}{\overline{\partial}_\varphi \overline{\psi}}
\newcommand{\dphi}{\overline{\partial}_\phi \overline{\psi}}
\newcommand{\dbetaphi}{\partial_\phi \overline{\partial}_\beta \overline{\psi}}
\newcommand{\dbetavarphi}{(\overline{\partial}_\varphi +1) \overline{\partial}_\beta \overline{\psi}}
\newcommand{\dbbeta}{\overline{\partial}_\beta}
\newcommand{\dbvarphi}{\overline{\partial}_\varphi}
\newcommand{\dbvarphibeta}{(\overline{\partial}_\varphi+1)\overline{\partial}_\beta}
\newcommand{\dbphibeta}{\partial_\phi \overline{\partial}_\beta}
\newcommand{\sigmaz}{\sum_{n \in \mathbb{Z}}}
\newcommand{\n}[1]{#1^{(n)}}
\newcommand{\ui}[1]{#1^{(i)}}
\newcommand{\uj}[1]{#1^{(j)}}
\newcommand{\uij}[1]{#1^{(i+j)}}
\newcommand{\maxi}[1]{\textnormal{max} \left\{ #1 \right\}}
\newcommand{\mini}[1]{\text{min} \left\{ #1 \right\}}
\newcommand{\unibed}{\overset{\sim}{\hookrightarrow}}
\newcommand{\dns}[2]{D^{\left( #1, #2 \right)}}
\newcommand{\dnsi}[2]{\left(D^{\left( #1, #2 \right)}\right)^{-1}}
\newcommand{\Pnm}{P^{(n)}_-}
\newcommand{\Pnp}{P^{(n)}_+}
\newcommand{\Qinv}{\left( Q+1 \right)^{-1}}
\newcommand{\Pnminv}{\left( P^{(n)}_- \right)^{-1}}
\newcommand{\Pnpinv}{\left( P^{(n)}_+ \right)^{-1}}

\newcommand{\Wnm}{W^{(n)}_{-,N}}
\newcommand{\Wnz}{W^{(n)}_{0,N}}
\newcommand{\Wnp}{W^{(n)}_{+,N}}
\newcommand{\Xnz}{X^{(n)}_{0,N}}
\newcommand{\Znz}{Z^{(n)}_{0,N}}
\newcommand{\Wm}{W_{-,N}}
\newcommand{\Wz}{W_{0,N}}
\newcommand{\Wp}{W_{+,N}}
\newcommand{\Xz}{X_{0,N}}
\newcommand{\Zz}{Z_{0,N}}
\newcommand{\Yz}{Y_{0,N}}

\newcommand{\distPnm}{\text{dist}([-\delta, \delta], (2-n)\mu -1)}
\newcommand{\distPnp}{\text{dist}([-\delta, \delta], (2+n)\mu -1)}
\newcommand{\distPnpm}{\text{dist}([-\delta, \delta], (2 \pm n)\mu -1)}
\newcommand{\ncbd}[1]{\left\Vert#1 \right\Vert_{C_b^\delta}}
\newcommand{\ncb}[1]{\left\Vert#1 \right\Vert_{C_b}}
\newcommand{\cbd}{C_b^\delta}

\newcommand{\cbdn}{C_{b, N}^\delta}
\newcommand{\cxizn}{\mathbb{C} \xi_{0, N}}
\newcommand{\cxiin}{\mathbb{C} \xi_{\infty, N}}
\newcommand{\cxiz}{\mathbb{C} \xi_0}
\newcommand{\cxii}{\mathbb{C} \xi_\infty}
\newcommand{\cn}{\mathbb{C}^{(n)}_N}
\newcommand{\nmt}{\frac{1}{(N-2)\mu +\frac{5}{6}}}
\newcommand{\nmtn}{\frac{\left\langle N \right\rangle }{(N-2)\mu +\frac{5}{6}}}
\newcommand{\RT}{\overline{\mathbb{R}}_+ \times \mathbb{T}}
\newcommand{\ball}[3]{B_{#1}\left( #2, #3 \right)}
\newcommand{\trisol}{\overline{\psi}_0}

\newcommand{\res}{\Bigg\vert_{(\beta, \phi)=(0, \theta)}}
\newcommand{\epdot}{\dot{\varepsilon}}
\newcommand{\rmz}{\mathbb{R}^2 \backslash \{ 0 \}}
\newcommand*\circled[1]{\tikz[baseline=(char.base)]{
            \node[shape=circle,draw,inner sep=0.7pt] (char) {#1};}}

\makeatletter
\@namedef{subjclassname@2020}{\textup{2020} Mathematics Subject Classification}
\makeatother

\title{A solution of 2D incompressible Euler equation with algebraic spiral roll-up in the presence of Wiener type perturbation}
\author{Woohyu Jeon}
\address{Department of Mathematical Sciences, Seoul National University, Seoul 08826, Republic of Korea}
\email{woohyu1030@snu.ac.kr}

\subjclass[2020]{35Q31, 35Q35, 76B03}
\keywords{2D incompressible Euler equation; Self-similarity; Algebraic spiral; Implicit function theorem; Wiener type perturbation; $L^p$ perturbation}

\begin{document}
\begin{abstract}
	Extending the results of Elling \cite{Elling-2013, Elling-2016}, we construct a weak solution of 2D incompressible Euler equation with initial vorticity of the form $w_0(x)={\left\vert x \right\vert}^{-1/\mu}g(\theta)$, where $g \in L^p(\mathbb{T})$ satisfies $\sum_{\mathbb{Z}}{\left\vert n \right\vert}^{-0.5} {\left\vert\widehat{g}(n)\right\vert} < \infty$. In particular, the solution is self-similar and shows algebraic spiral roll-up.
\end{abstract}
\maketitle

\tableofcontents

\newpage
\section{Introduction}
\label{sec: Introduction}

\subsection{History}
\label{subsec: History}

One of the most interesting phenomenon which has captured our interest is the flow of non-viscous fluid, e.g., ideal gas. In $\bbR^2$, its dynamics can be modeled through the well-known incompressible Euler equation

\begin{equation} \label{Euler eq} \tag{EE}
	\left\{\begin{aligned}
		& u_t+ \nabla_x \cdot (u \otimes u) + \nabla_x p =0, \\
		& \nabla_x \cdot u =0, 
	\end{aligned}\right.
\end{equation}
or through its vorticity form
\begin{equation} \label{Euler eq-vor} \tag{EE-v}
	\left\{\begin{aligned}
		& w_t+\nabla_x \cdot (wu)=0, \\
		& u=\nabla_x^\perp \psi ,\quad w= \Delta_x \psi.
	\end{aligned}\right.
\end{equation}
Here, $\nabla_x=(\rd_{x_1}, \rd_{x_2}), \nabla_x^\perp=(-\rd_{x_2}, \rd_{x_1})$\footnote{We use subscript below nabla to clarify which coordinate system we are considering.} and $u: \bbR^2 \times [0, \infty) \rightarrow \bbR^2$ denotes a velocity field, $p: \bbR^2 \times [0, \infty) \rightarrow \bbR$ denotes a pressure, $w:\bbR^2 \times [0, \infty) \rightarrow \bbR$ is a scalar vorticity, and $\psi: \bbR^2 \times [0, \infty) \rightarrow \bbR$ is a stream function.


Due to its importance, a various aspect of the incompressible Euler equation has been extensibly dealt with, such as local well-posedness \cite{Lichtenstein, Kato-HsWP, Kato-Commutator}, ill-posedness \cite{Bourgain-CkIP, Bourgain-HsIP}, global well-posedness issue \cite{Wolibner, Yudovich, Yudovich2, Elgindi-C1aIP}, blow-up criteria \cite{BKM, CFM}, regularity issue of vortex patch \cite{Chemin-C1aWP, MB, Kiselev-C2IP}, and bifurcation of rotating vortex patch \cite{Burbea, Hmidi, Hassainia}.

In this paper, we focus on the solution with spiral roll-up structure. The flow with spiral roll-up draw a lot of attention due to its ubiquity in nature; flow near the wings of an airplane (Fig 81, 91 in \cite{Vandyke}), flow after it pass through the solid corner (Fig 80, 95, 98 in \cite{Vandyke}), boundary layer separation (Fig 32 in \cite{Vandyke}), etc. Therefore, there has been a lot of attempts to understand spiral solution. For example, the 2D logarithmic spiral ($r \sim e^{k\theta}$) vortex sheet was introduced by Prandtl \cite{Prandtl} in 1924 and Alexander \cite{Alexander} generalized it. The well-posedness of self-similar logarithmic spiral vortex sheet remains open for a century since Prandtl, but very recently, Cie\'slak et al. \cite{Cieslac} gives reasonable sufficient condition for well-posedness of logarithmic spiral. The case of algebraic spiral ($r \sim \theta^{-\mu}$) has been studied also. Kaden \cite{Kaden} asserted in 1933 that in the initially straight shear layer, a vortex sheet with algebraic spiral ($r \sim \left( \frac{t}{\theta} \right)^{\frac{2}{3}}$) structure emerges. Since then, several works had tried to describe this algebraic spiral more precisely, trying to find higher expansion term near the spiral center \cite{Stern, Mangler}. However, Moore \cite{Moore} asserted that improvement of Kaden spiral could not be done by considering local behavior near the center, but could only be done by analyzing whole vortex sheet with Birkhoff's integro-differential equation \cite{Birkhoff}. Although there has been a lot of effort to prove the existence of algebraic spiral vortex sheet rigorously, it is still an open problem until now.

In the mean time, a pioneering progress was made by a celebrated result in Elling \cite{Elling-2013}. Although the solution he constructed was differentiable solution of 2D Euler equation, not a vortex sheet, he proved rigorously the existence of Euler equation which shows some algebraic spiral structure (More precise definition of algebraic spiral structure is provided later). To state the main theorem of Elling \cite{Elling-2013} precisely, we first clarify some definition and concept.

\subsection{Some basic definition and related works}
\label{subsec: Some basic definition and related works}

We use standard notations; for vector fields
\begin{equation*}
	v= \begin{pmatrix}
		v^1 \\
		v^2\\
	\end{pmatrix}, \qquad w= \begin{pmatrix}
		w^1 \\
		w^2 \\
	\end{pmatrix}
\end{equation*}
and $2 \times 2$ matrices
\begin{equation*}
	A=(a_{ij})_{1 \leq i,j \leq 2}, \qquad B=(b_{ij})_{1 \leq i,j \leq 2},
\end{equation*}
\begin{equation*}
	v \otimes w \coloneqq vw^T= \begin{pmatrix}
		v^1w^1 & v^1w^2 \\
		v^2w^1 & v^2 w^2 \\
	\end{pmatrix}, \qquad \nabla_x v \coloneqq  \begin{pmatrix}
		\rd_{x_1}v^1 & \rd_{x_2}v^1 \\
		\rd_{x_1}v^2 & \rd_{x_2}v^2
	\end{pmatrix}, \qquad A:B \coloneqq \sum_{1 \leq i,j \leq 2} a_{ij}b_{ij}.
\end{equation*}

\begin{definition} \label{weak solution for u}
	We say $u \in L^\infty \left( [0,\infty); L^2_{\textnormal{loc}}(\bbR^2 ; \bbR^2) \right)$ is a weak solution of \eqref{Euler eq} if
	\begin{enumerate}
		\item The velocity field $u$ is weakly divergence free, i.e., for any (time-independent) test function $f \in C^\infty_c(\bbR^2)$ and $t \geq 0$, 
		\begin{equation*}
			\int_{\bbR^2} u(\cdot, t) \cdot \nabla_x f \, dx=0.
		\end{equation*}
		\item For all divergence free test function $h \in C^\infty_c (\bbR^2 \times [0,\infty) : \bbR^2),$ \begin{equation*}
			\int_{\bbR^2} u(\cdot,0) \cdot h(\cdot, 0) \, dx+ \int_0^\infty \int_{\bbR^2} u \cdot g_t + u \otimes u : \nabla_xh \, dxdt =0.
		\end{equation*}
	\end{enumerate}
\end{definition}

\begin{definition} \label{weak solution for w}
	We say $w \in L^\infty \left( [0, \infty); L^2_{\textnormal{loc}}(\bbR^2) \right)$\footnote{We require $w(\cdot, t)$ belongs to $L^2_{\textnormal{loc}}(\bbR^2; \bbR^2)$ instead of $L^1_{\textnormal{loc}}(\bbR^2; \bbR^2)$ to guarantee $wu \in L^\infty([0,\infty);L^1_{\textnormal{loc}}(\bbR^2 ; \bbR^2)$.} is a weak solution of \eqref{Euler eq-vor} if
	\begin{enumerate}
		\item There exist $\psi \in L^\infty([0,\infty); W^{1,1}_{\textnormal{loc}}(\bbR^2))$ and $u \in L^\infty([0,\infty); L^2_{\textnormal{loc}}(\bbR^2; \bbR^2))$ such that $\nabla_x^\perp \psi =u,\, \Delta_x \psi=w$ in distribution sense, i.e., for any test function $f \in C^\infty_c (\bbR^2), g \in C^\infty_c(\bbR^2 ; \bbR^2)$,
		\begin{equation*}
			-\int_{\bbR^2} \psi (\nabla_x^\perp \cdot g) \, dx = \int_{\bbR^2} u \cdot g \, dx, \qquad-\int_{\bbR^2} \nabla_x \psi \cdot \nabla_x f = \int_{\bbR^2} wf.
		\end{equation*}
		\item For any test function $h \in C^\infty_c (\bbR^2 \times [0, \infty))$, \begin{equation*}
			\int_0^\infty w(\cdot, 0) h(\cdot, 0) \, dx+ \int_0^\infty \int_{\bbR^2} wh_t+w(u \cdot \nabla h) \, dxdt =0.
		\end{equation*}
	\end{enumerate}
\end{definition}

\begin{remark}
	We will actually find (classically) differentiable $\psi \in L^\infty([0, \infty); C^1(\bbR))$ such that $\Delta_x \psi=w$ in distribution sense, i.e., for any $f \in C^\infty_c(\bbR^2)$ and $t\geq 0$,
	\begin{equation*}
		-\int_{\bbR^2} \nabla_x \psi (\cdot, t) \cdot \nabla_x f \,dx = \int_{\bbR^2} w(\cdot, t) f \, dx,
	\end{equation*}
	and then define $u$ by $\nabla_x^\perp \psi$. Then, it is trivial that $w, u$ satisfies the first condition in Definition \ref{weak solution for w}.
\end{remark}

Now, we define self-similarity. By scaling property of Euler equation, if $w(x,t), u(x,t), \psi(x,t)$ is a solution for \eqref{Euler eq} or \eqref{Euler eq-vor},
\begin{equation} \label{scaled func}
	w_\lambda(x,t) \coloneqq \frac{1}{\lambda^a}w \left(\frac{x}{\lambda},\frac{t}{\lambda^a}\right), \quad u_\lambda(x,t) \coloneqq \frac{1}{\lambda^{a-1}}u\left(\frac{x}{\lambda},\frac{t}{\lambda^a}\right), \quad \psi_\lambda(x,t) \coloneqq \frac{1}{\lambda^{a-2}}\psi \left(\frac{x}{\lambda},\frac{t}{\lambda^a} \right)
\end{equation}
is also a solution of it for all $\lambda>0$.

\begin{definition} \label{self-similarity}
	A solution $(w, u, \psi)$ of \eqref{Euler eq} or \eqref{Euler eq-vor} is called \textit{self-similar} if
	\begin{equation*}
		w_\lambda(x,t)=w(x,t), \qquad u_\lambda(x,t)=u(x,t), \qquad \psi_\lambda(x,t)=\psi(x,t)
	\end{equation*}
	for all $\lambda>0$.
\end{definition}

Lastly, for $f \in L^1(\bbT)$ where $\bbT \coloneqq \bbR / 2 \pi \bbZ$, we denote $n$-th Fourier coefficient of $f$ by $\wht{f}(n) = \frac{1}{2 \pi} \int_{\bbT} f(x)e^{-inx} \,dx $.

\begin{definition}
	For $s \in \bbR$,
	\begin{equation*}
		\calA^s (\bbT) \coloneqq \set{f \in L^1(\bbT) \, \Bigg\vert \, \nrm{f}_{\calA^s (\bbT)} \coloneqq \sum_{n \in \bbZ} \brk{n}^s \abs{\wht{f}(n)}< \infty},
	\end{equation*}
	where $\brk{n} \coloneqq (1+n^2)^{1/2}$.
\end{definition}

Now, we can state main theorem in Elling \cite{Elling-2013}.

\begin{theorem} [Elling \cite{Elling-2013}] \label{thm elling 2013} 
	Let $\mu \in (\frac{2}{3},\infty)$ be given. Then, there exists sufficiently large $N_{\mu} \in \bbN$ such that if $g \in \calA^0(\bbT)$ satisfies \textbf{`certain conditions'}, a weak solution of \eqref{Euler eq} exists with initial vorticity $w_0(x)= \abs{x}^{-\frac{1}{\mu}}g(\theta)$. In particular, the solution is self-similar and any integral curve of $u-\mu z$ is an algebraic spiral.
	
	The \textbf{certain conditions} are
	\begin{enumerate}
		\item High periodicity: $g(\theta)=g\left( \theta+\frac{2 \pi}{N} \right)$ for $N \geq N_\mu$.
		\item Small $\calA^0(\bbT)$ norm perturbation around constant: There exists a constant $c$ such that
		\begin{equation*}
			\nrm{g(\theta)-c}_{\calA^0 (\bbT)} < \varepsilon_N \abs{c}
		\end{equation*}
		for some $\varepsilon_N$ determined by $N$.
	\end{enumerate}
\end{theorem}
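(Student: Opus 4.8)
The plan is to realize the solution as a self-similar profile obtained by an implicit-function-theorem argument around an explicit base state. Since we want a self-similar solution in the sense of Definition \ref{self-similarity} with the prescribed homogeneous datum $w_0(x)=\abs{x}^{-1/\mu}g(\theta)$, comparing $w_\lambda(\cdot,0)$ with $w_0$ in \eqref{scaled func} pins down the scaling exponent $a=1/\mu$, and one is led to the ansatz $w(x,t)=\abs{x}^{-1/\mu}W$, $\psi(x,t)=\abs{x}^{2-1/\mu}\overline{\psi}$, $u=\nabla_x^\perp\psi$, in which $W$ and $\overline{\psi}$ depend only on two ``similarity variables'' formed from $\theta$, $\log\abs{x}$ and $t$: an angular-type variable $\phi$ adapted to the spiral $r\sim\theta^{-\mu}$ (so that spiral trajectories become the lines $\phi=\mathrm{const}$) and a ``pseudo-time'' $\beta\in\overline{\bbR}_+$ measuring proximity to the spiral center, with $\beta=0$ corresponding to $t=0$ and hence to the prescribed datum. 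Substituting into \eqref{Euler eq-vor} and eliminating the explicit $t$-dependence turns the problem into a stationary first-order transport system for $\overline{\psi}$ on $\overline{\bbR}_+\times\bbT$ coupled to a twisted Biot--Savart relation (a twisted elliptic inversion in $(\phi,\beta)$), with $g$ imposed on $\{\beta=0\}$ and decay required as $\beta\to\infty$; the assertion that every integral curve of $u-\mu z$ is an algebraic spiral then becomes the statement that the Lagrangian trajectories of this self-similar flow are the curves $\phi=\mathrm{const}$, which one reads off directly from the profile.

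The base state is the case $g\equiv c$: then $w_0=c\abs{x}^{-1/\mu}$ is radial and the steady circular flow $\psi_0(x)=\frac{c}{(2-1/\mu)^2}\abs{x}^{2-1/\mu}$, $u_0=\psi_0'(r)\,e_\theta$ is an exact, time-independent — hence self-similar — solution of \eqref{Euler eq-vor}, corresponding to the constant profile $\overline{\psi}_0=\frac{c}{(2-1/\mu)^2}$; a one-line ODE computation shows its $u_0-\mu z$ trajectories already satisfy $r^{-1/\mu}\sim\theta$. Writing the profile equation as $\mathcal{F}(\overline{\psi};g)=0$ with $\mathcal{F}(\overline{\psi}_0;c)=0$, the goal is to solve for $\overline{\psi}$ near $\overline{\psi}_0$ when $g$ is near $c$ (after rescaling the amplitude one may take $c=1$), working in a function space that builds in the $N$-fold angular periodicity (so only Fourier modes $n\in N\bbZ$ in $\phi$ occur) together with summable Fourier data — an $\calA^s$-type space — and a $\beta$-weight chosen to make the Biot--Savart inversion bounded and to encode the right behavior at the spiral center. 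The range $\mu>2/3$ enters here, in the verification that the resulting $(w,u,\psi)$ has the integrability needed for the weak formulation of Definition \ref{weak solution for w} — in particular $wu\in L^\infty([0,\infty);L^1_{\mathrm{loc}})$ near $r=0$, which is exactly the content of $\mu>2/3$ — and that $\psi\in C^1$ with $\Delta_x\psi=w$ distributionally.

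The crux is the bounded invertibility of the linearization $\mathcal{L}=D_{\overline{\psi}}\mathcal{F}(\overline{\psi}_0;c)$. Linearizing at the radial base state decouples $\mathcal{L}$ over Fourier modes in $\phi$: on the $n$-th mode it acts as an operator $P^{(n)}_{\pm}$ in the single variable $\beta$ of the schematic form $\big((2\pm n)\mu-1\big)+A_n$, where the scalar $(2\pm n)\mu-1$ comes from the homogeneity and the twisted-Laplacian symbol — indeed $(2-1/\mu)^2-n^2=\mu^{-2}\big((2-n)\mu-1\big)\big((2+n)\mu-1\big)$, so the second-order elliptic factor splits into two first-order pieces with these symbols — and $A_n$ is a fixed $\beta$-operator of small norm (bounded by a parameter $\delta$). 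Hence $P^{(n)}_{\pm}$ is boundedly invertible exactly when $(2\pm n)\mu-1$ stays out of the $\delta$-neighborhood of $0$, i.e.\ out of $[-\delta,\delta]$; this fails only for the finitely many $n$ with $\abs{(2\pm n)\mu-1}\leq\delta$ — the resonances — and these are precisely the modes eliminated by requiring $g$ to be $N$-periodic with $N\geq N_\mu$. For $\abs{n}\geq N$ one has $\abs{(2\pm n)\mu-1}\gtrsim N\mu$, so $\nrm{(P^{(n)}_{\pm})^{-1}}\lesssim (N\mu)^{-1}$, which assembles into a bound on $\nrm{\mathcal{L}^{-1}}$ depending only on $N$ (and $\mu$); feeding this into the implicit function theorem (equivalently, a Newton/contraction scheme whose nonlinear estimates close for small data) produces the solution for $\nrm{g-c}$ small, with admissible threshold $\varepsilon_N$ comparable to $\nrm{\mathcal{L}^{-1}}^{-1}$. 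I expect this resolvent analysis — pinning down the model operators $P^{(n)}_{\pm}$, locating the finitely many resonant modes, and bounding the inverses uniformly in the unbounded variable $\beta$ — to be the main obstacle; the self-similar reduction, the nonlinear estimates, and the final weak-solution verification are comparatively routine.
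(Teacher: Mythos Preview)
Your plan is correct and is essentially the approach the paper (following Elling) takes: self-similar reduction, the adapted $(\beta,\phi)$ coordinates that straighten the spiral trajectories, an implicit function theorem around the radial base state in an $\calA^s$-type Wiener space, and a mode-by-mode analysis of the linearization via the operators $\Pnp,\Pnm,(Q+1)$ with symbols $(2\pm n)\mu-1$ and $-1$. One refinement: the reason high periodicity $N\ge N_\mu$ is needed is not primarily to exclude the finitely many ``resonant'' modes with $(2\pm n)\mu-1\in[-\delta,\delta]$ (that already holds for modest $N$ once $\delta$ is fixed as in the paper), but rather because the linearization is $\Pnp\Pnm(Q+1)+(2\mu-1)\,in\beta$, the first summand is an isometry by design of the function spaces, and one needs $N$ large to drive the operator norm of the second summand uniformly below $1$ over $n\in N\bbZ$.
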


Due to the embedding $\calA^0 (\bbT) \hookrightarrow C^0(\bbT)$, Theorem \ref{thm elling 2013} requires that $g(\theta)$ be close to a constant function in $L^\infty$-norm. However, as been mentioned in Section \ref{subsec: History}, natural phenomenon have lead us to construct vortex sheet and this occurs when  $g(\theta)$ is a dirac delta distribution. Unfortunately, the second condition in Theorem \ref{thm elling 2013} makes $g(\theta)$ far away from dirac delta. Meanwhile, in 2016, Elling \cite{Elling-2016} took one step forward and develop his result in \cite{Elling-2013}. To state main result in \cite{Elling-2016}, we first clarify what it means by \textit{spiral roll-up} in this paper.

\begin{definition} \label{spiral roll-up}
	Let $w(x,t)$ be a weak solution of \eqref{Euler eq-vor} with initial vorticity \begin{equation*}
		w(x,0)=\abs{x}^{-\frac{1}{\mu}}g(\theta).
		\end{equation*}
		Define $A \coloneqq \set{\theta \in \bbT \, \vert \, g(\theta) =0}$. Then $w(x,t)$ is said to show \textit{spiral roll-up} if there exists a continuous function $h: (0,\infty) \times \bbT \rightarrow [a,b]$, where $0<a<b<\infty$, such that
		\begin{equation} \label{eq90}
			m\Bigg( \set{x \in \rmz \, \vert \, w(x,t)=0} \Delta \bigcup_{\theta \in A} \bigg\{ h(\beta, \theta) \underbrace{\left( \frac{t}{\beta} \right)^\mu (\cos (\beta + \theta), \sin (\beta+ \theta) )}_{\text{An algebraic spiral for all }\theta \in A,\, t>0 } \, \bigg\vert \beta \in (0, \infty) \bigg\}  \Bigg)=0
		\end{equation}
		for all $t>0$. Here, $m(A \Delta B)=m(A-B) + m(B-A)$ where $m$ denotes Lebesgue measure on $\bbR^2$. If $g\in C^0 (\bbT)$, we require that 
		\begin{equation*}
			\set{x \in \rmz \, \vert \, w(x,t)=0}= \bigcup_{\theta \in A} \bigg\{ h(\beta, \theta) \left( \frac{t}{\beta} \right)^\mu (\cos (\beta + \theta), \sin (\beta+ \theta) ) \, \bigg\vert \beta \in (0, \infty) \bigg\}
		\end{equation*}
		holds set theoretically.
\end{definition}

Then, the result in Elling \cite{Elling-2016} could be stated as follows.

\begin{theorem} [Elling \cite{Elling-2016}] \label{thm Elling 2016}
	Let $\mu \in \left( \frac{2}{3}, \infty \right)$ and $\varepsilon>0$ be given. Then, there exists sufficiently large $N_{\mu, \varepsilon} \in \bbN$ such that if $g \in \calA^0(\bbT) $ satisfies \textbf{`certain conditions'}, a weak solution of \eqref{Euler eq} exists with initial vorticity $w_0(x)=\abs{x}^{-\frac{1}{\mu}} g(\theta)$. In particular, the solution is self-similar and shows spiral roll-up in the sense of Definition \ref{spiral roll-up}.\footnote{Elling \cite{Elling-2016} shows that $w(x,t)$ is a weak solution of \eqref{Euler eq-vor} on $\rmz$, not on whole $\bbR^2$. }
	
	The \textbf{certain conditions} are
	\begin{enumerate}
		\item High periodicity: $g(\theta)=g\left( \theta+\frac{2 \pi}{N} \right)$ for $N \geq N_{\mu,\varepsilon}$.
		\item Small $\calA^0(\bbT)$ semi-norm compared to $\wht{g}(0)$: 
		\begin{equation*}
			\varepsilon \sum_{n \in \bbZ \backslash \set{0}} \abs{\wht{g}(n)} < \abs{\wht{g}(0)} 
		\end{equation*}
	\end{enumerate}
\end{theorem}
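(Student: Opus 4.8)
The plan is to realize $w_0$ as the time-$0$ trace of an explicitly constructed self-similar profile, obtained by perturbing a radially symmetric ``straight'' solution via a quantitative implicit function theorem whose invertibility hypothesis is precisely what forces the high-periodicity assumption. Since $w_0$ is homogeneous of degree $-1/\mu$, the self-similar exponent in \eqref{scaled func} must be $a=1/\mu$; writing $\xi=xt^{-\mu}$ one has $w(x,t)=t^{-1}W(\xi)$, $\psi(x,t)=t^{2\mu-1}\Psi(\xi)$ with $W=\Delta_\xi\Psi$, and \eqref{Euler eq-vor} collapses to a steady equation for the profile in which the transporting vector field is exactly the $u-\mu z$ (with $z=\xi$) of Theorem \ref{thm elling 2013}. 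Because the base rotation accelerates as $\xi\to0$, this equation is singular at the origin, so I would next ``unwind'' the spirals: in polar coordinates $\xi=\rho(\cos\vartheta,\sin\vartheta)$ set $\beta:=\rho^{-1/\mu}$ and $\phi:=\vartheta-\beta$, so that the candidate spirals of \eqref{eq90} become exactly the rays $\{\phi=\mathrm{const}\}$, the boundary $\{\beta=0\}$ corresponds to the initial time (so that $\overline{w}|_{\beta=0}=g(\phi)$, where $\overline{w}$ is the rescaled vorticity and $w(x,t)=|x|^{-1/\mu}\,\overline{w}(t|x|^{-1/\mu},\ \theta-t|x|^{-1/\mu})$), and $\{\beta=\infty\}$ is the spiral center. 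In the coordinates $(\beta,\phi)\in\RT$ the profile equation for $\overline{\psi}$ becomes a genuine transport equation in the ``time'' $\beta$, driven by a Biot--Savart velocity built from $\overline{\psi}$ with the appropriate coordinate weights and with a single, genuinely quadratic nonlinearity. For $g\equiv c$ the data are radially symmetric, the solution is steady, and one obtains the explicit $\phi$-independent base profile $\trisol$, with power-type asymptotics $\sim\beta^{1-2\mu}$ as $\beta\to0$.

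\textbf{Linearization and its inversion.} Fix $N$ and restrict once and for all to functions whose angular Fourier series is supported on $N\bbZ$. Writing $\overline{\psi}=\trisol+v$ turns the problem into $\mathcal{L}v=\mathcal{B}(v,v)$ with boundary value $v|_{\beta=0}=g-\widehat g(0)$, where $\mathcal{L}=D\mathcal{N}(\trisol)$ and $\mathcal{B}$ is the fixed bilinear quadratic part. Since $\trisol$ is $\phi$-independent, $\mathcal{L}$ is diagonal in the mode $n\in N\bbZ$, where it is assembled from the $\beta$-ODE operators $\Pnp,\Pnm$ corresponding to the two characteristic families, whose indicial data involve the resonance parameters $(2\pm n)\mu-1$. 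The crucial point is that for $|n|\geq N$ with $N$ large these parameters stay a definite distance from the bad set, so $\distPnpm$ is bounded below, and hence $\Pnpinv,\Pnminv$ exist on the weighted $\cbd$/$\cbdn$ spaces over $\bbRb$ with operator norm $\lesssim\big((|n|-2)\mu\big)^{-1}$. Summing over $n\in N\bbZ$ — and using that $\calA^0(\bbT)$ is the Wiener algebra, so that $\mathcal{B}$ is bounded on the associated space-time spaces — yields that $\mathcal{L}$ is an isomorphism between the relevant spaces with $\|\mathcal{L}^{-1}\|\lesssim(N\mu)^{-1}$.

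\textbf{Nonlinear solution and verification.} One may normalize $\widehat g(0)=1$, the amplitude rescaling $w\mapsto\sigma w(\cdot,\sigma\,\cdot)$ being a symmetry of \eqref{Euler eq-vor}; extending the boundary datum $g-1$ into the interior and absorbing it reduces the problem to a fixed point $\tilde v=\mathcal{L}^{-1}\big(\mathcal{B}'(\tilde v,\tilde v)+f\big)$ with homogeneous boundary data and $\|f\|\lesssim\|g-1\|_{\calA^0}=\sum_{n\neq0}|\widehat g(n)|$. Since $\|\mathcal{L}^{-1}\|\lesssim(N\mu)^{-1}$ while $\|\mathcal{B}'\|=O(1)$, this map is a contraction on a ball of radius $\sim N\mu$ as soon as $\sum_{n\neq0}|\widehat g(n)|\lesssim(N\mu)^2$ — that is, as soon as $\varepsilon\sum_{n\neq0}|\widehat g(n)|<|\widehat g(0)|$ and $N\geq N_{\mu,\varepsilon}$, where $N_{\mu,\varepsilon}$ is chosen large in terms of $\mu$ (for the previous step) and so that $(N_{\mu,\varepsilon}\mu)^2\gtrsim1/\varepsilon$. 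The resulting $\overline{\psi}$ is smooth for $\beta>0$, so after unwinding and rescaling $\psi\in C^1$, $u=\nabla_x^\perp\psi$, $w=\Delta_x\psi$ are smooth on $\rmz\times(0,\infty)$; one then checks the weak formulations of Definitions \ref{weak solution for u}--\ref{weak solution for w} on $\rmz$ (integration by parts is legitimate there and the contributions at the origin vanish since $w\sim|x|^{-1/\mu}$ pairs integrably with test functions — which is exactly why the conclusion is stated on $\rmz$) and that $w(\cdot,t)\to w_0$ as $t\to0^+$. Finally, spiral roll-up follows from the transport structure: $\overline{w}$ is constant along the characteristics of the profile equation, which emanate from $\{\beta=0\}$, are globally defined for $\beta\in(0,\infty)$, and are $O((N\mu)^{-1})$-perturbations of the rays $\{\phi=\mathrm{const}\}$; hence $\{w(\cdot,t)=0\}$ is exactly the union over $\theta\in A$ of the images of the characteristics through $(0,\theta)$, and reparametrizing each such curve by its polar angle (which is $\beta+\theta$ up to an $O((N\mu)^{-1})$ correction absorbed into $h$) writes it as $h(\beta,\theta)(t/\beta)^\mu(\cos(\beta+\theta),\sin(\beta+\theta))$ with $h$ pinched between two positive constants, i.e.\ an algebraic spiral — which is exactly \eqref{eq90}.

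\textbf{Main obstacle.} The technical heart, and the step I expect to be hardest, is the linear analysis of $\mathcal{L}$: identifying the operators $\Pnp,\Pnm$ from the characteristic structure of the profile equation; designing weighted $\cbd$-type spaces on $\bbRb$ that simultaneously accommodate the $\beta\to0$ blow-up of $\trisol$, the behavior at the spiral center $\beta\to\infty$, and the mapping properties needed to close the iteration; and, above all, extracting the sharp lower bound for $\distPnpm$ valid for all $|n|\geq N$, which is simultaneously what forces the high-periodicity hypothesis and what produces the decisive gain $\|\mathcal{L}^{-1}\|\lesssim(N\mu)^{-1}$ allowing the non-perturbatively-large data of Theorem \ref{thm Elling 2016}.
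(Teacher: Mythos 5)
Your proposal correctly identifies the overall skeleton of Elling's proof (self-similar reduction with exponent $a=1/\mu$, unwinding the spirals by a polar-type change of variables, Fourier decomposition in the angular variable, mode-by-mode ODE operators $\Pnp,\Pnm$ whose indicial parameters $(2\pm n)\mu-1$ stay away from the bad set when $|n|\geq N$, and a quantitative implicit-function/fixed-point argument), and it gets the roll-up conclusion essentially right. However, there are two substantive gaps between what you describe and what Elling actually does.

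\emph{The coordinate change is adapted, not fixed.} You set $\beta=\rho^{-1/\mu}$, $\phi=\vartheta-\beta$, a coordinate change determined a priori. Elling's crucial device is the \emph{adapted} coordinate $a=\tfrac12\log(-\psi_\beta/\mu)$, $\theta=\beta+\phi$, which depends on the very stream function $\psi$ being sought. This is what makes the $\phi$-component of the pulled-back vector field $\nabla_z^\perp\tilde\psi-\mu z$ vanish \emph{identically} (Proposition~\ref{new coordinate prop}), so the transport equation collapses to the one-dimensional ODE $\big((\psi_\varphi)^{1/(2\mu)}w\big)_\beta=0$ with the explicit general solution $w=(\psi_\varphi)^{-1/(2\mu)}\Omega(\phi)$. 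Having eliminated $w$ this way, the only remaining equation is the scalar relation $\bar L(\bar\psi,\Omega)=0$, an ODE in $\beta$ for each Fourier mode, to which the implicit function theorem is applied with $\Omega$ as the free parameter. In your fixed coordinate change the transport operator retains a nontrivial $\phi$-component for any non-radial $\psi$, so you would be left with a genuinely coupled two-dimensional transport plus Biot--Savart system, the zero set of $w$ would only be \emph{approximately} the rays $\{\phi=\mathrm{const}\}$ (you would then need an additional reparametrization argument to get exact algebraic spirals for the conclusion of Definition~\ref{spiral roll-up}), and the boundary datum at $\beta=0$ is not a Dirichlet condition $v|_{\beta=0}=g-\widehat g(0)$ on the stream function: the initial angular density depends jointly on $\bar\psi(0,\cdot)$ and $\Omega$, and the relation to $g$ is inverted through the separate local-diffeomorphism statement of Proposition~\ref{local surjectivity}, not built into the IFT for $\bar L$.

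\emph{The high-periodicity gain is not $\|\mathcal L^{-1}\|\lesssim(N\mu)^{-1}$.} In the spaces actually used, $\Xnz=(Q+1)^{-1}\Pnminv\Wnz$ and $\Znz=\Pnp\Wnz$ are \emph{defined} so that $\Pnp\Pnm(Q+1):\Xnz\to\Znz$ is an isometry of norm exactly $1$; the $\langle n\rangle^{-1}$ gains from $\Pnpinv,\Pnminv$ on $\cbd$ are absorbed into the norms, not into a small operator norm of the inverse. Indeed, if one insisted on keeping $\|\mathcal L^{-1}\|\sim(N\mu)^{-1}$ on an unweighted $\cbd$-type scale, the $\partial_\phi$ factors ($=in$) and products in the quadratic part of $\bar L$ would cost back factors of $n$, and the heuristic would not close. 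The actual mechanism by which high periodicity permits non-small $\calA^0$-perturbations (i.e.\ $\varepsilon$ arbitrarily small with $N\geq N_{\mu,\varepsilon}$) is the use of the $\calA^{-0.5}$ norm for $\Omega$: for $N$-periodic data, $\sum_{n\neq0}\langle n\rangle^{-0.5}|\widehat\Omega(n)|\leq\langle N\rangle^{-0.5}\sum_{n\neq0}|\widehat\Omega(n)|$, so a ball of fixed $\calA^{-0.5}$-radius in the IFT contains a ball of $\calA^0$-radius $\sim\langle N\rangle^{0.5}$, and it is this $\langle N\rangle^{0.5}$, not a decaying $\|\mathcal L^{-1}\|$, that produces the arbitrary constant $\varepsilon$ in Theorem~\ref{thm Elling 2016}.
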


Compared to \cite{Elling-2013}, \cite{Elling-2016} embraces a larger class of initial data due to freedom in $\varepsilon$. Especially, as emphasized in the title, \cite{Elling-2016} admits initial data with mixed sign, e.g., $g(\theta)=1+2 \sin \left( \frac{2 \pi}{N}\right)$ for some large $N$. However, due to the embedding $\calA^0(\bbT) \hookrightarrow C^0(\bbT)$, $g(\theta)$ should be a continuous function even though assumption of $L^\infty$-small perturbation from constant in \cite{Elling-2013} is eliminated.

Very recently, Shao et al. \cite{Shao} made an remarkable progress after \cite{Elling-2016}. The main result of \cite{Shao} is:

\begin{theorem} [Shao et al. \cite{Shao}] \label{thm Shao}
	Let $\mu \in \left( \frac{1}{2}, \infty \right), N \in \bbN \backslash \set{1}$ is given. Then, there exists $\varepsilon>0$, which is independent of $N$, such that if $g \in L^1(\bbT)$ satisfies \textbf{`certain conditions'}, a weak solution of \eqref{Euler eq} exists with initial vorticity $w_0(x) = \abs{x}^{-\frac{1}{\mu}} g(\theta)$. In particular, the solution is self-similar.
	
	The \textbf{certain conditions} are
	\begin{enumerate}
		\item Periodicity: $g(\theta) = g \left( \theta + \frac{2 \pi}{N}\right)$
		\item Small $L^1$-seminorm compared to $\wht{g}(0)$:
		\begin{equation*}
			\nrm{g(\theta) - \wht{g}(0)}_{L^1(\bbT)} < \varepsilon \sqrt{N}\abs{\wht{g}(0)}.
		\end{equation*}
	\end{enumerate}
\end{theorem}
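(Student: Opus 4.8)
The plan is to follow the self-similar reduction of Elling \cite{Elling-2013, Elling-2016}, but to carry out the perturbative step in function spaces tailored to the mere $L^1(\bbT)$ hypothesis on $g$ and to keep explicit track of the dependence on $N$, so that the smallness parameter $\varepsilon$ may be chosen independently of $N$. The first step is to reduce \eqref{Euler eq-vor} to a steady problem. Since $w_0$ is homogeneous of degree $-1/\mu$, Definition~\ref{self-similarity} forces the scaling exponent $a=1/\mu$, so a self-similar solution must take the form $w(x,t)=t^{-1}W(xt^{-\mu})$, $\psi(x,t)=t^{2\mu-1}\Psi(xt^{-\mu})$, $u(x,t)=t^{\mu-1}U(xt^{-\mu})$; substituting into \eqref{Euler eq-vor} gives, in the profile variable $y=xt^{-\mu}$, the time-independent system $(U-\mu y)\cdot\nabla W=W$ together with $\Delta\Psi=W$ and $U=\nabla^\perp\Psi$.

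Next I would pass to spiral-adapted coordinates $(\beta,\phi)\in\RT$, where $\phi$ is the angular variable and $\beta$ is a (two-point compactified) radial variable, the endpoints $\beta=0$ and $\beta=\infty$ corresponding to spatial infinity and the spiral centre, chosen so that at a fixed time an algebraic spiral $r\aeq(t/\beta)^\mu$ becomes a coordinate curve. In these variables the profile system collapses to a single quasilinear elliptic--transport equation for a profile stream function $\br{\psi}(\beta,\phi)$ on the half-cylinder $\RT$, $N$-periodic in $\phi$. Normalizing $\wht g(0)=1$ by the scaling symmetry, the constant datum $g\equiv1$ produces the explicit base solution $\trisol$; writing $\br{\psi}=\trisol+\epdot$ and $\tld g:=g-1$ — so $\wht{\tld g}(0)=0$ and the hypothesis reads $\nrm{\tld g}_{L^1(\bbT)}<\varepsilon\sqrt N$ — the equation takes the schematic form $\mathcal L\epdot=\mathcal Q(\epdot)+\mathcal S[\tld g]$, where $\mathcal L$ is the linearization of the profile operator at $\trisol$, $\mathcal Q$ collects the quadratic terms, and $\mathcal S$ is linear in $\tld g$ and smoothing.

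The heart of the matter is inverting $\mathcal L$ with $N$-quantified bounds. Decomposing in Fourier modes in $\phi$, the $n$-th component $\mathcal L_n$ is an ODE operator on $\bbRb$ assembled from the blocks $\Pnp$, $\Pnm$ and $Q$, whose inverses $\Pnpinv$, $\Pnminv$ and $\Qinv$ are bounded provided the non-resonance quantities $\distPnpm>0$ and $\distQ>0$; since $g$ — hence the source, hence $\epdot$ — carries only modes $n\in N\bbZ$, we have $\abs n\ge N$, these distances are bounded below (of size $\aeq N\mu$), and $\nrm{\mathcal L_n^{-1}}\aleq\bigl((\abs n-2)\mu+\tfrac56\bigr)^{-1}$. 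Summing this decay over $n\in N\bbZ\setminus\set 0$ and using Cauchy--Schwarz (or Hausdorff--Young) to pass from the $L^1$-control of $\tld g$, which only bounds $\sup_n\abs{\wht{\tld g}(n)}$, to the norm of the bounded Hölder space $\cbdn$ of $N$-periodic profiles in which the fixed point is sought, produces a decisive negative power of $N$ in $\nrm{\mathcal L^{-1}\mathcal S[\tld g]}_{\cbdn}$. Combined with a bilinear estimate for $\mathcal Q$ that is uniform in (indeed improving with) $N$, the map $\epdot\mapsto\mathcal L^{-1}\bigl(\mathcal Q(\epdot)+\mathcal S[\tld g]\bigr)$ becomes a contraction on a small ball of $\cbdn$ as soon as $\nrm{\tld g}_{L^1(\bbT)}<\varepsilon\sqrt N$ with $\varepsilon$ small; crucially $\varepsilon$ is $N$-independent precisely because the $N$-gain of $\mathcal L^{-1}$ absorbs the $\sqrt N$ in the hypothesis. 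Finally I would unwind the coordinate changes to recover $(w,u,\psi)$ (the analysis produces them on $\rmz$, and the isolated singularity at the origin is negligible once local integrability is in hand), verify that $\psi\in L^\infty_t C^1$ as in the Remark after Definition~\ref{weak solution for w} and that $u=\nabla_x^\perp\psi\in L^\infty_t L^2_{\mathrm{loc},x}$ — here $\mu>\tfrac12$ enters exactly as the condition making $u$ square-integrable near the origin, the minimal regularity demanded by Definition~\ref{weak solution for u} — and check the weak formulation and the attainment of the initial datum $w_0=\abs x^{-1/\mu}g(\theta)$; self-similarity is built into the ansatz.

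The main obstacle is the quantitative bookkeeping of the middle step: obtaining a sharp-enough negative power of $N$ in $\nrm{\mathcal L^{-1}}$ between the relevant spaces while the data is only in $L^1(\bbT)$, so that the Wiener-algebra manipulations available to Elling cannot be used directly, and simultaneously keeping $\mathcal Q$ under control uniformly in $N$, so that the contraction radius — and with it $\varepsilon$ — does not degenerate as $N\to\infty$. A secondary point requiring care is that lowering Elling's threshold from $\mu>\tfrac23$ to $\mu>\tfrac12$ forces one to re-examine the non-resonance conditions $\distPnpm>0$, $\distQ>0$ and the boundary behaviour of $\mathcal L_n$ at the endpoints $\beta=0,\infty$ for the borderline modes $n=\pm N$, where the margins are smallest.
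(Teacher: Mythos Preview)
This theorem is not proved in the paper; it is a cited result from \cite{Shao}, stated in the ``related works'' section as context for the paper's own main theorem (Theorem~\ref{main theorem}). There is therefore no proof in the paper to compare your proposal against.

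Your sketch is a plausible outline of how one might adapt the Elling framework to $L^1$ data and extract the $\sqrt{N}$ gain, but it is necessarily speculative relative to this paper. In particular, your claim that Cauchy--Schwarz or Hausdorff--Young converts $L^1$ control of $\tld g$ into the needed profile-space norm is the crux and is left vague: $L^1(\bbT)$ only gives $\sup_n|\wht{\tld g}(n)|\le\nrm{\tld g}_{L^1}$, and summing $\sum_{n\in N\bbZ\setminus\{0\}}$ against an $O(|n|^{-1})$ decay from $\mathcal L_n^{-1}$ does not by itself produce a factor of $N^{-1/2}$ --- that step requires a more careful accounting of the actual norms and how the source term $\mathcal S[\tld g]$ is measured. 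If you want to see how \cite{Shao} actually handles this and the lowering of the threshold to $\mu>\tfrac12$, you would need to consult that paper directly.
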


Theorem \ref{thm Shao} is awesome, because they break an assumption that $g(\theta)$ is continuous. Also, they eliminated the assumption of high periodicity and relieve the condition on $\mu$ from $\left( \frac{2}{3}, \infty \right)$ to $\left( \frac{1}{2}, \infty \right)$, making our distance closer to the goal of constructing spiral vortex sheet.

Now, we state our main theorem, which is located in the middle of Elling \cite{Elling-2016} and Shao et al. \cite{Shao}.

\begin{theorem} \label{main theorem}
	Let $\mu > \frac{2}{3}, \,1\leq p < 2\mu$ be given.\footnote{As we will see later, $\mu>\frac{2}{3}$ condition is required to show $u$ is a weak solution of \eqref{Euler eq}. Also, $1 \leq p < 2\mu$ condition is need to show $w(\cdot,t) \in L^p_{\textnormal{loc}}(\bbR^2)$.  } For all
	\begin{equation} \label{ran of N}
		N > (2\mu -1)\left(397+\frac{1090}{\mu} + \frac{1264}{\mu^2}+\frac{999}{\mu^3}+\frac{42}{\mu^4} \right),
	\end{equation}	
	 there exists a constant $C=C(N)$ such that for all $g \in \calA^{-0.5}(\bbT) \cap L^p (\bbT) $ satisfying
	\begin{enumerate}
		\item Periodicity: $g(\theta)=g\left(\theta+ \frac{2 \pi}{N} \right)$,
		\item Small $\calA^{-0.5}$-seminorm relative to $\wht{g}(0)$: $\sum_{n \in \bbZ \backslash \set{0}} \brk{n}^{-0.5} \abs{\wht{g}(n)} < C_N \abs{\wht{g}(0)}$,
	\end{enumerate}
	
	there exist
	\begin{equation} \label{eq103}
		w \in L^\infty([0,\infty); L^p_{\textnormal{loc}}(\bbR^2)), \quad u \in \begin{cases}
			L^\infty([0,\infty); L^{\frac{2-\mu}{1-\mu}}_{\textnormal{loc}}(\bbR^2; \bbR^2)) & \left( \frac{2}{3} < \mu <1 \right)\\
			L^\infty([0,\infty); L^\infty_{\textnormal{loc}}(\bbR^2; \bbR^2)) & (\mu \geq 1)\\
		\end{cases},\quad  \psi \in L^\infty([0,\infty); C^1(\bbR^2 \backslash \set{0}))
	\end{equation}
	which satisfy
	\begin{enumerate}
		\item [\circled{1}] $u$ is a weak solution of \eqref{Euler eq} with with initial velocity given by \eqref{initial data for velocity prop}.
		\item [\circled{2}] If $\mu>1$, $w$ is a weak solution of \eqref{Euler eq-vor} with initial vorticity $w_0(x)=\abs{x}^{-\frac{1}{\mu}}g(\theta)$.
		\item [\circled{3}] This solution is self-similar and shows spiral roll-up in the sense of \eqref{spiral roll-up}.
	\end{enumerate}
\end{theorem}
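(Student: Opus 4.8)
The plan is to follow the self-similar reduction of Elling \cite{Elling-2013,Elling-2016}, pushing every estimate through in the weaker space $\calA^{-0.5}(\bbT)$ (carrying the $L^p$ norm alongside for the local integrability of the vorticity) so that $g$ is allowed to be discontinuous. Since $w_0(x)=\abs{x}^{-1/\mu}g(\theta)$ is homogeneous of degree $-1/\mu$, the scaling exponent in \eqref{scaled func} is forced to be $a=1/\mu$, and any self-similar solution must have the form
\begin{equation*}
  w(x,t)=t^{-1}W(xt^{-\mu}),\qquad u(x,t)=t^{\mu-1}U(xt^{-\mu}),\qquad \psi(x,t)=t^{2\mu-1}\Psi(xt^{-\mu}),
\end{equation*}
so it suffices to construct the time-$1$ profile. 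Substituting into \eqref{Euler eq-vor} turns the evolution equation into the stationary equation $(U-\mu z)\cdot\nabla W=W$ with $W=\Delta\Psi$ and $U=\nabla^\perp\Psi$: the profile vorticity is carried along the self-similar velocity $U-\mu z$, whose integral curves are exactly the algebraic spirals of \eqref{eq90}. I would then straighten those curves — in polar variables $(r,\theta)$ introduce coordinates $(\beta,\phi)$ with $\beta\sim r^{-1/\mu}$ and $\phi=\theta-\beta\pmod{2\pi}$ — so that $\rmz$ becomes the half-cylinder $\RT$ and the profile equation becomes, in a rescaled stream function $\br\psi(\beta,\phi)$, an equation of the schematic form $\mathcal L\,\br\psi=\mathcal N(\br\psi)$, where $\mathcal L$ is the linearization at the radial background solution $\trisol$ (the one with $g\equiv\wht g(0)$) and $\mathcal N$ gathers the terms at least quadratic in $\br\psi-\trisol$, all restricted to $N$-fold symmetric functions.

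\textbf{Inverting the linearization.} The heart of the matter is solving $\mathcal L\,v=f$ with quantitative bounds. Expanding in Fourier modes $e^{in\phi}$ — only $n\in N\bbZ$ survive by periodicity — and resolving the $\beta$-dependence, $\mathcal L$ splits into a family of one-dimensional operators on $\bbRb$ whose invertibility on the relevant weighted spaces is governed by the distances $\distPnpm$ and $\distQ$ of the admissible decay window $[-\delta,\delta]$ from the indicial exponents $(2\pm n)\mu-1$ and $-1$. I would check that, for $\delta$ small and $N$ larger than the explicit polynomial in $\mu$ in \eqref{ran of N}, every such denominator is uniformly bounded away from $0$ and — crucially — that the operator norm of $\mathcal L^{-1}$ on the $\calA^{-0.5}$-weighted spaces $\cbdn$ decays in $N$ fast enough to beat the $N$-growth of the Lipschitz constant of $\mathcal N$ on a fixed small ball. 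It is precisely here that the weight $\brk n^{-1/2}$, in place of Elling's $\brk n^{0}$, forces a re-derivation of all the constants and pins down the particular threshold \eqref{ran of N}.

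\textbf{Fixed point and verification.} With $\mathcal L^{-1}$ bounded, a Banach fixed-point argument (equivalently, the implicit function theorem) in a small ball of $\cbdn$ around $\trisol$ yields a unique $\br\psi$ whenever $\sum_{n\in\bbZ\setminus\{0\}}\brk n^{-0.5}\abs{\wht g(n)}<C_N\abs{\wht g(0)}$, which is hypothesis (2). Transporting $\br\psi$ back through the coordinate change gives $\psi\in L^\infty([0,\infty);C^1(\rmz))$, hence $u=\nabla_x^\perp\psi$ and $w=\Delta_x\psi$; the homogeneity built into the ansatz makes $w(x,t)$ equal to $\abs{x}^{-1/\mu}$ times a bounded angular profile that degenerates to $g(\theta)$ as $t\to0$, and a direct integrability check near $0$ and $\infty$ — where $\mu>\frac23$ and $1\le p<2\mu$ enter — places $w,u$ in the Lebesgue spaces of \eqref{eq103}. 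For \circled{1} one verifies the two clauses of Definition \ref{weak solution for u} by testing the profile equation and undoing the self-similar scaling; for \circled{2}, valid in the range $\mu>1$ where $wu$ and every product in Definition \ref{weak solution for w} is locally integrable against the claimed $C^1$ regularity of $\psi$, the analogous computation applies. Self-similarity is immediate from the ansatz. Finally, the zero set $\{w(x,t)=0\}$ pulls back under the spiral coordinates to $\{(\beta,\phi):\Omega(\phi)=0\}$, where the angular profile $\Omega$ agrees with $g$ up to a nowhere-vanishing factor plus an $L^\infty$-small continuous correction; this set differs from $\bigcup_{\theta\in A}\{\text{spiral}_\theta\}$ by a Lebesgue-null set, which is \eqref{eq90}, and when $g$ is continuous the correction argument upgrades it to the set-theoretic identity in Definition \ref{spiral roll-up}.

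\textbf{Main obstacle.} I expect the decisive difficulty to be the second step: showing that the threshold \eqref{ran of N} simultaneously keeps all indicial denominators $\distPnpm$, $\distQ$ away from $0$ and produces an operator norm for $\mathcal L^{-1}$ on the $\calA^{-0.5}$-weighted spaces that dominates the nonlinearity — tracking the constants through the $\beta$-integrals with the borderline weight $\brk n^{-1/2}$ (and simultaneously through the $L^p$ norms) is what makes the estimate quantitatively tight. A secondary difficulty is the low regularity of $g$, which forces the weak-formulation identity in \circled{2} and the spiral-roll-up conclusion to be argued measure-theoretically rather than pointwise.
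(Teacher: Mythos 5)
Your proposal follows the same broad strategy as the paper: self-similar reduction with scaling exponent $a=1/\mu$, Elling's adapted $(\beta,\phi)$-coordinates that send algebraic spirals to lines, Fourier decomposition in $\phi$, and a perturbative inversion of the linearization at the radial background via an implicit/inverse function theorem. However, there are three points where your description diverges from (or is weaker than) the paper's actual argument, and they are worth flagging.

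First, the paper does not directly estimate $\mathcal L^{-1}$ on $\cbdn$-type spaces. The central technical device is to \emph{build} the domain and codomain so that the leading part of the linearization, namely $\n{P_+}\n{P_-}(Q+1)$, is an isometric isomorphism by fiat: one sets $\Xnz:=(Q+1)^{-1}\Pnminv\Wnz$ and $\Znz:=\Pnp\Wnz$ (Definition \ref{X, Z space}). This sidesteps the inversion problem entirely, at the cost of having to prove that these induced spaces are genuinely Banach and $\brk{n}^q$-uniformly embedded in something tame (Propositions \ref{A^s is Banach}, the $\brk{n}^1$-embedding of $\Znz$), and that all the remaining differential operators appearing in $\br L$ map boundedly between them. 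Your framing — ``solving $\mathcal L v=f$ with quantitative bounds'' — would require direct resolvent estimates that the paper never performs.

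Second, your attribution of the threshold \eqref{ran of N} is off. You say the operator norm of $\mathcal L^{-1}$ must ``decay in $N$ fast enough to beat the $N$-growth of the Lipschitz constant of $\mathcal N$.'' In the paper the nonlinearity has no $N$-dependent Lipschitz constant; the implicit function theorem needs only that $\frac{\rd\br L}{\rd\br\psi}(\br\psi_0,\Omega_0)$ be an isomorphism. Since $\n{P_+}\n{P_-}(Q+1)$ is isometric by construction, the whole issue is the lower-order term $(2\mu-1)(Q-\n P)=(2\mu-1)in\beta$, and the condition \eqref{ran of N} is precisely what forces $\nrm{(2\mu-1)in\beta}_{[\Xnz,\Znz]}<1$ uniformly in $n\in N\bbZ\setminus\{0\}$ so that a Neumann series applies. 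Tracking the explicit constants in Propositions \ref{continuity of Pn in cbd}--\ref{continuity of Pn} with the concrete cutoff $\xi_\infty$ is what yields the polynomial in \eqref{ran of N}; it has nothing to do with $\mathcal N$.

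Third, two non-trivial passages are compressed or omitted. (i) You describe the matching of initial data by saying $\Omega$ ``agrees with $g$ up to a nowhere-vanishing factor plus an $L^\infty$-small correction,'' but because that factor involves $\br\psi^{(\Omega)}=G(\Omega)$, which depends implicitly on $\Omega$, the paper needs a \emph{second} application of the inverse function theorem (Proposition \ref{local surjectivity}) to show the map $\Omega\mapsto h^{(\Omega)}$ is a local diffeomorphism on $\Yz$ and therefore hits the prescribed $g$. (ii) The passage from the profile equation to a weak solution on all of $\bbR^2\times[0,\infty)$ requires a one-directional mollification $w^{\dot\varepsilon}$ of $\Omega$ to justify the chain rule for merely $L^p$ vorticity, plus a careful cut-off argument $\zeta^\delta$ near the origin and near $t=0$ (Propositions \ref{weak solution of EE in velocity form upgraded}, \ref{weak solution of EE in vorticity form upgraded}), exploiting exactly the exponents $\mu>\frac{2}{3}$ (and $\mu>1$ for $\circled{2}$) to get the H\"older duals to converge. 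Your proposal acknowledges these exponents enter but does not identify the cut-off/mollification mechanism that they feed into.
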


We briefly explain some remarks on Theorem \ref{main theorem} compared to previous results by Elling \cite{Elling-2016} and Shao \cite{Shao}.

\begin{enumerate}
	\item As $g \in \calA^{-0.5}(\bbT) \cap L^p(\bbT)$, $g(\theta)$ needs not be continuous. Also, as $1<2 \times \frac{2}{3}$, the most general $p=1$ case  is included for all $\mu>\frac{2}{3}$.
	\bigskip
	\item In the Theorem \ref{thm Elling 2016} in \cite{Elling-2016}, although it is computable, how large $N$ should be was not clarified. This is because the author used $\lesssim$\footnote{$A \lesssim B$ means that there exists some universal constant $c>0$ such that $A \leq cB$.} instead of $\leq$ when he estimated operator norm. Unlike in \cite{Elling-2016}, we will bound operator norm via $\leq$ rather than $\lesssim$ to estimate exact lower bound of $N$ which needs to guarantee the existence of weak solution, hoping that $N$ could be chosen reasonably small. Unfortunately, lower bound of $N$ we get is larger than expected, with the order of $10^3$.
	\bigskip
	\item Our result implies Elling \cite{Elling-2016} since we use $\calA^{-0.5}$-seminorm. As we have \begin{equation*}
		\sum_{n \in N\bbZ \backslash \set{0}} \brk{n}^{-0.5}\abs{\wht{g}(n)} \leq \brk{N}^{-0.5} \sum_{n \in N \bbZ \backslash \set{0}} \abs{\wht{g}(n)},
	\end{equation*}
	taking $N$ large enough substitute the role of $\varepsilon$.
	\bigskip
	\item Shao et al. \cite{Shao} can cover larger class of data than ours. In our result, $g(\theta) \in \calA^{-0.5}(\bbT) \cap L^1(\bbT)$, while in \cite{Shao}, $g(\theta)$ just needs to be in $L^1(\bbT)$. Also, \cite{Shao} hardly require that $g(\theta)$ be periodic (just $N \neq 1$ is enough) and allow $\mu$ to be $\frac{1}{2}< \mu \leq \frac{2}{3}$ also. However, a big difference between \cite{Shao} and ours is in how to measure the norm of $g(\theta)-\wht{g}(0)$. If $g(\theta)$ is given as a trigonometric polynomial, it is more convenient to estimate $\calA^{-0.5}$-seminorm than $L^1$-norm. For example, consider $g(\theta)$ of the following form \begin{equation} \label{eq99}
		g(\theta) = \sum_{n \in (kN)\bbZ \, \cap \, [-M,M]} c_n e^{in \theta},
		\end{equation}
		where $N$ is chosen so that \eqref{ran of N} is satisfied. Since \eqref{eq99} is a finite sum, $g \in L^1(\bbT)$ condition is always satisfied regardless of $M$. To verify whether we can construct Euler solution with initial vorticity $w_0(x)=\abs{x}^{-\frac{1}{\mu}}g(\theta)$, we have to check
		\begin{equation} \label{eq100}
			\sum_{n \in ((kN)\bbZ \backslash \set{0}) \, \cap \, [-M,M]} \brk{n}^{-0.5} \abs{c_n} \leq C_N
		\end{equation}
		to apply Theorem \ref{main theorem}. Meanwhile, when we try to apply Theorem \ref{thm Shao}, we have to check
		\begin{equation} \label{eq101}
			\bigg\Vert\sum_{n \in ((kN)\bbZ \backslash \set{0}) \, \cap \, [-M,M]} c_n e^{in\theta}\bigg\Vert_{L^1(\bbT)} \leq \sqrt{kN} \varepsilon.
		\end{equation}
		Since
		\begin{equation*}
			\sum_{n \in ((kN)\bbZ \backslash \set{0}) \, \cap \, [-M,M]} \brk{n}^{-0.5} \abs{c_n} \quad \leq \quad  \brk{kN}^{-\varepsilon} \sum_{n \in ((kN)\bbZ \backslash \set{0}) \, \cap \, [-M,M]} \brk{n}^{-0.5+\varepsilon} \abs{c_n}
		\end{equation*}
		for arbitrary fixed $0<\varepsilon<0.5$, to verify \eqref{eq100}, it suffices to check
		\begin{equation} \label{eq102}
			\sum_{n \in ((kN)\bbZ \backslash \set{0}) \, \cap \, [-M,M]} \brk{n}^{-0.5+\varepsilon} \abs{c_n} \leq \brk{kN}^\varepsilon C_N.
		\end{equation}
		Therefore, no mater how small implicit $C_N$ and $\varepsilon$ be, we can make right hand side of \eqref{eq101} and \eqref{eq102} bigger than (say) 1 by taking $k$ sufficiently large. For such $k$, to meet \eqref{eq102}, we can choose any sequence $\set{c_n}_{n \in \bbZ}$ such that $\sum_{n \in ((kN)\bbZ \backslash \set{0}) \, \cap \, [-M,M]}  \brk{n}^{-0.5+\varepsilon} \abs{c_n}$ is convergence whose limit is less than 1 (For example, just $\abs{c_n}<\frac{1}{n}$ is a sufficient condition for $\varepsilon=0.1$). Then, even if we make $M$ arbitrarily large, we can guarantee \eqref{eq102} holds.
		
		However, for general $c_n$, there is no clever way to control
		\begin{equation*}
			\bigg\Vert\sum_{n \in ((kN)\bbZ \backslash \set{0}) \, \cap \, [-M,M]} c_n e^{in\theta}\bigg\Vert_{L^1(\bbT)}
		\end{equation*}
		except a coarse triangle inequality
		\begin{equation*}
			\bigg\Vert\sum_{n \in ((kN)\bbZ \backslash \set{0}) \, \cap \, [-M,M]} c_n e^{in\theta}\bigg\Vert_{L^1(\bbT)} \leq 2 \pi \sum_{n \in ((kN)\bbZ \backslash \set{0}) \, \cap \, [-M,M]} \abs{c_n},
		\end{equation*}
		which leaves the possibility of divergence as $M \rightarrow \infty$ even for $\abs{c_n} < \frac{1}{n}$.
		\bigskip
		\item We show that constructed solution \eqref{eq103} is a weak solution for \eqref{Euler eq-vor} with an additional assumption $\mu>1$, while \cite{Elling-2016} and \cite{Shao} only showed $u$ is a weak solution of \eqref{Euler eq}.
	\end{enumerate}

\subsection{Sketch of the proof and organization of the paper}
\label{subsec: Sketch of the proof and organization of the paper}

In Section \ref{sec: Self-similarity and coordinate change}, with a self-similarity assumption, we rewrite \eqref{Euler eq-vor} in special coordinate, which is called adapted coordinate by Elling \cite{Elling-2013, Elling-2016}, as below: 

\begin{align*}
	&\qquad \,\, \underbrace{\begin{cases}
		w_t+ \nabla_x^\perp \psi \cdot \nabla_x w=0, \\
		w=\Delta_x \psi.
	\end{cases}}_{\text{ In }(x,t) - \text{coordinate}} \\
	\underset{\text{Sec } \ref{subsec: Euler equation under self-similarity}}&{\Longrightarrow} \underbrace{\begin{cases}
		-\tld{w}+(\nabla_z^\perp \tld{\psi}-\mu z) \cdot \nabla_z \tld{w}=0, \\
		w=\Delta_z \psi.
	\end{cases}}_{\substack{\text{In } (z_1, z_2)-\text{coordinate, where }z=xt^{-\mu} \\ \text{(self-similarity assumption)}}} \\
	\underset{\text{Sec } \ref{subsec: Equations in new coordinate}}&{\Longrightarrow} \underbrace{\begin{cases}
		w(\beta, \phi)=(\psi_\phi(\beta, \phi)-\psi_\beta(\beta, \phi))^{-\frac{1}{2\mu}} \Omega(\phi) \quad \text{for some }\Omega(\phi)  ,\\
		L(\psi(\beta, \phi), \Omega(\phi))=0, \quad \text{where $L$ is defiend as in } \eqref{def of L}.
	\end{cases}}_{\text{In }(\beta, \phi)-\text{coordinate defined by }\eqref{new coordinate} \text{ defined on Sec } \ref{subsec: Special change of coordinate: From spiral to line}}  \\
	\underset{\text{Sec } \ref{subsec: Rescaling to eliminate decay}}&{\Longrightarrow} \begin{cases}
		w(\beta, \phi)=(\psi_\phi(\beta, \phi)-\psi_\beta(\beta, \phi))^{-\frac{1}{2\mu}} \Omega(\phi) \quad \text{for some }\Omega(\phi),  \\
		\br{L}(\br{\psi}(\beta, \phi), \Omega(\phi))=0, \quad \text{where }	\br{L}(\br{\psi}, \Omega) = \beta^{2\mu} L(\psi, \Omega) \text{ with } \br{\psi}(\beta, \phi)=\frac{1}{\beta^{1-2\mu}}\psi(\beta, \phi). \end{cases}   \\
\end{align*}
	
	In Section \ref{sec: Main theorem in new coordinate}, We solve $\br{L}(\br{\psi}(\beta, \phi), \Omega(\phi))=0$. More precisely, we show that for given $\Omega(\phi)$, there exists $\br{\psi}=\br{\psi}^{(\Omega)}$ such that $\br{L}(\br{\psi}(\beta, \phi), \Omega(\phi))=0$. As you already have noticed, this is a tempting situation where we can use implicit function theorem. To apply it, we have to answer for these three questions:

\bigskip
\textbf{$\bullet$ Question 1:} \label{Q1} What is the Banach space $X, Y$ and  $Z$ that will be a domain and codomain for $\br{L}: X \times Y \rightarrow Z?$ In other words, where does $\br{\psi}, \Omega, \br{L}(\br{\psi},\Omega)$ lives?

\textbf{$\bullet$ Question 2:} \label{Q2} (When we denote a trivial solution of $\br{L}$ by $(\br{\psi}_0,\Omega_0)$) Is the map $\bar{L}$ $C^1$ in some neighborhood of $(\br{\psi}_0, \Omega_0)$?

\textbf{$\bullet$ Question 3:} \label{Q3} Is the partial Fr\'{e}chet derivative $\frac{\rd \br{L}}{\rd \br{\psi}}(\br{\psi}_0, \Omega_0)$ isomorphism?
\bigskip

After some review of basic notion in functional analysis in Section \ref{subsec: Wiener type function space on RT and its continuity results}, we give the answer to \textbf{Question 1} in Section \ref{subsec: Suited function space}. Using some properties of differential operator established in Section \ref{subsec: Properties of operator dns}, we give the positive answer to \textbf{Question 2} in Section \ref{subsec: Continuity results for differential operator in function space}. We settle \textbf{Question 3} in Section \ref{subsec: Isomorphism in high periodicity and implicit function theorem}. Then, applying implicit function theorem, we can find $\br{\psi}=\br{\psi}(\Omega)$ so that $\br{L}(\br{\psi}, \Omega)=0$ for suitable $\Omega$.

Section \ref{sec: Self-similarity and coordinate change} and \ref{sec: Main theorem in new coordinate} are almost same as that of Elling \cite{Elling-2016} except that we described in more detail for careful readers. Therefore, if you already understood Elling \cite{Elling-2016}, you can omit Section \ref{sec: Self-similarity and coordinate change} and \ref{sec: Main theorem in new coordinate}.

In Section \ref{sec: Main theorem in original coordinate}, we first reverse the coordinate change we did in Section \ref{sec: Self-similarity and coordinate change} and construct physical solution $(w(x,t), u(x,t), \psi(x,t))$ via $\br{\psi}(\beta, \phi), \Omega(\phi)$ in Section \ref{subsec: Regularity of stream function and change of coordinate}. As we will guarantee suitable regularity for $\br{\psi}$ via constructing space $X$ in \textbf{Question 1} carefully, change of coordinate we've done can be justified (i.e., it is $C^1$-diffeomorphism). A big difference between Elling \cite{Elling-2016} and ours emerges at this timing. In \cite{Elling-2016}, even though $\Omega$ originally lives in $Y=\calA^{-0.5}(\bbT)$, he restrict the range of $\Omega$ to $\calA^{-0.5}(\bbT) \cap \calA^{0}(\bbT)$. He would do so, because $\calA^0(\bbT)$ consists of continuous function and he use this continuity crucially in his proof. However, we restrict the range of $\Omega$ from $\calA^{-0.5}(\bbT)$ to $\calA^{-0.5}(\bbT) \cap L^p(\bbT)$ for $p \in [1,2\mu)$, from which we can include a larger class of initial data. In Section \ref{subsec: Weak solution; weak version}, we show the constructed $(w(x,t), u(x,t), \psi(x,t))$ is a weak solution of \eqref{Euler eq} and \eqref{Euler eq-vor} on $\rmz \times [a,\infty)$ for $a>0$. Through the investigation of convergence of $(w(x,t), u(x,t), \psi (x,t))$ as $t \rightarrow 0$ and some uniform-in-time boundedness in Section \ref{subsec: Convergence to initial data and time independent boundedness}, we show that they are in fact weak solution on whole domain $\bbR^2 \times [0,\infty)$ on Section \ref{subsec: Weak solution; strong version }. After we establish the relation between  $g(\theta)$ in Theorem \ref{main theorem} and $\Omega(\phi)$, we prove our main theorem in Section \ref{subsec: Initial data in original coordinate and main theorem}.
	\vspace{3mm}
	
	\textbf{Acknowledgements.} The author thanks In-Jee Jeong for stimulating discussions and mentioning \cite{Elling-2013} to us. The author also acknowledges partial support by Samsung Science
and Technology Foundation (SSTF-BA2002-04). 
	\bigskip

\section{Self-similarity and coordinate change}
\label{sec: Self-similarity and coordinate change}

\subsection{Euler equation under self-similarity}
\label{subsec: Euler equation under self-similarity}

In this section, we assume all functions are smooth, so there are no obstacles in manipulating equations. We will see in Section \ref{subsec: Regularity of stream function and change of coordinate} that this assumption is valid at least in distribution sense as they have some reasonable regularity. Consider a 2D Euler equation in the vorticity form, which is derived from \eqref{Euler eq-vor} with smoothness assumption.

\begin{equation} \label{EE}
	\left\{\begin{aligned}
		& w_t+u \cdot \nabla_x w =0, \\
		& u=\nabla_x^\perp \psi ,\quad w= \Delta_x \psi. 
	\end{aligned}\right.
\end{equation}
Scaling property \eqref{scaled func} of Euler equation says that if $(w, u, \psi)$ satisfy \eqref{EE}, then
\begin{equation} 
	w_\lambda(x,t) \coloneqq \frac{1}{\lambda^a}w \left(\frac{x}{\lambda},\frac{t}{\lambda^a}\right), \quad u_\lambda(x,t) \coloneqq \frac{1}{\lambda^{a-1}}u\left(\frac{x}{\lambda},\frac{t}{\lambda^a}\right), \quad \psi_\lambda(x,t) \coloneqq \frac{1}{\lambda^{a-2}}\psi \left(\frac{x}{\lambda},\frac{t}{\lambda^a} \right)
\end{equation}
are also solution for \eqref{EE} for all $\lambda>0, a \in \mathbb{R}$. Under the self-similarity assumption \eqref{self-similarity}, take $\lambda=t^\mu$ and $a=\frac{1}{\mu}$, then we have
\begin{equation} \label{tilde func}
	\begin{aligned}
	w(x,t)&=t^{-1}w(\overbrace{xt^{-\mu}}^{\eqqcolon ~z},1) \eqqcolon t^{-1} \tilde{w}(z),\\
	u(x,t)&=t^{\mu-1}u(xt^{-\mu},1) \eqqcolon t^{\mu-1} \tilde{u}(z), \\
	\psi(x,t)&=t^{2\mu-1}\psi(xt^{-\mu},1) \eqqcolon t^{2\mu-1} \tilde{\psi}(z).
	\end{aligned}
\end{equation}
Substitute \eqref{tilde func} to \eqref{EE} and applying chain rule yields
\begin{equation} \label{EEz}
	\left\{\begin{aligned}
		& -\widetilde{w}+(\nabla^\perp_z \widetilde{\psi}-\mu z) \cdot \nabla_z \widetilde{w}=0, \\
		& \Delta_z \widetilde{\psi}=\widetilde{w}.
	\end{aligned} \right.
\end{equation}

\bigskip

\subsection{Special change of coordinate: From spiral to line}
\label{subsec: Special change of coordinate: From spiral to line}

As we are modeling spiral structure, it is natural to try to solve \eqref{EEz} in polar coordinate. Instead of using standard polar coordinate $(z_1,z_2)=(r \cos \theta, r \sin \theta)$, following Elling \cite{Elling-2016}, we use \textit{log-polar} coordinate.
\begin{equation} \label{log-polar coordinate}
	(a,\theta) \mapsto (z_1, z_2)=(e^a \cos \theta, e^a \sin \theta)
\end{equation}
for $(a, \theta) \in \bbR \times \bbT$. Note that in log-polar coordinate, algebraic spiral $\lvert z \rvert = \theta^{-\mu}$ corresponds to the graph of exponential function $\theta=e^{-\frac{a}{\mu}}$.

\begin{figure}[!h]
	\includegraphics[trim = 31mm 211mm 16mm 33mm, clip, width=\textwidth]{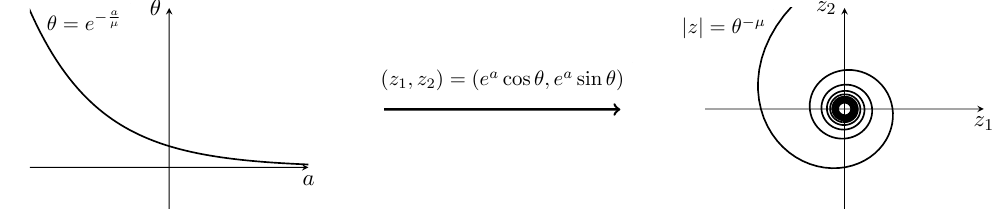}
	\caption{Exponential function in log-polar coordinate corresponds to algebraic spiral in Cartesian coordinate.} \label{Fig 1}
\end{figure}

However, we want more than that: We want to find another coordinate change $(\beta, \phi) \mapsto (a,\theta)=(a(\beta,\phi), \theta(a,\phi))$ such that
\begin{equation} \label{phil}
	\begin{aligned}
		&\textit{line parallel to $\beta$ - axis in $(\beta,\phi)$ - coordinate corresponds to} \\
		&~~~~ \quad \quad \textit{algebraic spiral in cartesian $(z_1, z_2)$ - coordinate.}
	\end{aligned}
\end{equation}
The following observation gives us a clue as to how to achieve this goal.

Consider stationary, radial, self-similar, trivial solution of \eqref{EE}
\begin{equation} \label{trivial sol}
	w_0(x,t)=C \lvert x \rvert^{-\frac{1}{\mu}}, \quad \psi_0(x,t)=C \left( 2-\frac{1}{\mu} \right)^{-2} \lvert x \rvert^{2-\frac{1}{\mu}}
\end{equation}
for $\mu \in \left( \frac{2}{3}, \infty \right)$, where corresponding solution in $(z_1, z_2)$ - coordinate is
\begin{equation} \label{trivial solz}
  \widetilde{w}_0(z)=C \lvert z \rvert^{-\frac{1}{\mu}}, \quad \widetilde{\psi}_0(z)=C \left( 2-\frac{1}{\mu} \right)^{-2} \lvert z \rvert^{2-\frac{1}{\mu}}.
\end{equation}
 Computing an integral curve of $\nabla^\perp_z \widetilde{\psi}-\mu z$ in \eqref{EEz} for explicit function \eqref{trivial solz} yields the following interesting result.
 
 \begin{proposition} \label{Int curve is alg spiral}
	For $\widetilde{\psi}_0(z)=C \left( 2-\frac{1}{\mu} \right)^{-2} \lvert z \rvert^{2-\frac{1}{\mu}}$, integral curve of $\nabla_z^\perp \widetilde{\psi}_0-\mu z$ is an algebraic spiral.
 \end{proposition}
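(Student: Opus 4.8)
The plan is to write down the ODE satisfied by an integral curve of the vector field $\nabla_z^\perp \widetilde\psi_0 - \mu z$ appearing in \eqref{EEz}, to pass to polar coordinates on $z$, to observe that the radial equation decouples and integrates explicitly, and finally to eliminate the curve parameter so as to recognize the relation ``radius $\sim$ (angle)$^{-\mu}$'', which is the defining feature of an algebraic spiral (cf.\ the curves appearing in Definition \ref{spiral roll-up}).

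Concretely, since $\widetilde\psi_0$ in \eqref{trivial solz} is radial, writing $r=|z|$ one computes $\widetilde\psi_0'(r) = C(2-\tfrac1\mu)^{-1} r^{1-\frac1\mu}$ and hence $\nabla_z \widetilde\psi_0 = C(2-\tfrac1\mu)^{-1} r^{-\frac1\mu}\,z$, so that
\begin{equation*}
	\nabla_z^\perp \widetilde\psi_0(z) - \mu z = C\Big(2-\tfrac1\mu\Big)^{-1} r^{-\frac1\mu}\, z^\perp - \mu z, \qquad z^\perp \coloneqq (-z_2, z_1).
\end{equation*}
Parametrizing an integral curve as $z(s) = r(s)\big(\cos\theta(s),\sin\theta(s)\big)$ and using $z^\perp = r\,(-\sin\theta,\cos\theta)$, the system $\dot z = \nabla_z^\perp\widetilde\psi_0 - \mu z$ separates into
\begin{equation*}
	\dot r = -\mu r, \qquad \dot\theta = C\Big(2-\tfrac1\mu\Big)^{-1} r^{-\frac1\mu}.
\end{equation*}
The radial equation gives $r(s) = r_0 e^{-\mu s}$, and then either integrating the angular equation in $s$, or equivalently solving the separable equation $\tfrac{dr}{d\theta} = \dot r/\dot\theta = -\tfrac{\mu(2-1/\mu)}{C}\, r^{1+\frac1\mu}$, yields after eliminating the parameter
\begin{equation*}
	r^{-\frac1\mu} = \Big(2-\tfrac1\mu\Big) C^{-1}\,(\theta - \theta_\infty)
\end{equation*}
for a constant $\theta_\infty$ determined by the initial point. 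Solving for $r$ gives $r = \big|C(2-\tfrac1\mu)^{-1}\big|^{\mu}\,|\theta-\theta_\infty|^{-\mu}$; that is, the curve is, up to a positive multiplicative constant and an angular shift, of the form radius $=(\text{angle})^{-\mu}$, winding infinitely around the origin as $r\to 0$ and opening out toward the ray $\theta=\theta_\infty$ as $r\to\infty$. This is precisely an algebraic spiral.

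There is no genuine difficulty in this argument; the only points requiring care are that $\widetilde\psi_0$ is radial (which is exactly what makes the $r$-equation decouple from the $\theta$-equation), that $2-\tfrac1\mu>0$ since $\mu>\tfrac23$ (so that the vector field is finite on $\rmz$ and the integral curves, having $r(s)=r_0e^{-\mu s}$, never reach the origin), and the bookkeeping of the multiplicative constants and the sign of $C$ when eliminating $s$.
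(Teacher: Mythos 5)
Your proof is correct and follows essentially the same strategy as the paper: pass to polar coordinates on $z$, observe that the flow of $\nabla_z^\perp\widetilde\psi_0-\mu z$ separates into a radial and an angular equation (which the paper phrases via the decomposition into $v_{z,\mathrm{rad}}$ and $v_{z,\mathrm{ang}}$), and then solve the resulting separable ODE $\frac{d r}{d\theta}=-\frac{\mu}{C}(2-\tfrac1\mu)\,r^{1+1/\mu}$ to obtain $r\sim(\theta-\theta_\infty)^{-\mu}$, i.e.\ \eqref{spiral ode}. The only differences are presentational (you parametrize by arclength $s$ and compute $\dot r,\dot\theta$ directly rather than projecting $(z_1',z_2')$ onto $\hat z$ and $\hat z^\perp$), and your version is slightly more careful about the constants and signs than the display preceding \eqref{spiral ode}, which drops a factor of $|z|$ in the intermediate step although the final expression is correct.
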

 \begin{proof}
 	Via direct calculation,
 	\begin{equation*}
 		\nabla_z^\perp \widetilde{\psi}_0 - \mu z=C \left( 2-\frac{1}{\mu} \right)^{-1} \lvert z \rvert^{-\frac{1}{\mu}} (-z_2,z_1)-\mu (z_1, z_2).
 	\end{equation*}
 	Hence, an integral curve $(z_1(t), z_2(t))$ satisfies the following ODEs
\begin{equation*}
	\left\{\begin{aligned}
		z_1(t)'&=-C \left( 2 - \frac{1}{\mu} \right)^{-1} \lvert z(t) \rvert^{-\frac{1}{\mu}}z_2(t)-\mu z_1(t), \\
		z_2(t)' &= C \left( 2 - \frac{1}{\mu} \right)^{-1} \lvert z(t) \rvert^{-\frac{1}{\mu}}z_1(t)-\mu z_2(t).
	\end{aligned} \right.
\end{equation*}
Using this equation, we have
\begin{equation*}
	\left\{\begin{aligned}
		v_{z,rad}(t)&=\frac{z(t)}{\lvert z(t) \rvert} \cdot \left(z_1(t)', z_2(t)' \right)=-\mu\lvert z(t) \rvert, \\
		v_{z, ang}(t) &= \frac{z(t)^\perp}{\lvert z(t) \rvert} \cdot \left(z_1(t)', z_2(t)' \right)= C \left(2-\frac{1}{\mu} \right)^{-1} \lvert z(t) \rvert^{1-\frac{1}{\mu}}.
	\end{aligned} \right.
\end{equation*}
Therefore,
\begin{equation} \label{spiral ode}
	\frac{d\lvert z \rvert}{d \theta} = \frac{d \lvert z \rvert / dt}{d \theta/dt}=\frac{v_{z, rad}}{v_{z,ang}/\lvert z \rvert}=\frac{\mu\lvert z \rvert}{C \left(2-\frac{1}{\mu} \right)^{-1} \lvert z \rvert^{1-\frac{1}{\mu}}} = -\frac{\mu}{C} \left( 2-\frac{1}{\mu} \right) \lvert z \rvert^{1+\frac{1}{\mu}}.
\end{equation}
Solving ODE \eqref{spiral ode} gives
\begin{equation*}
	\lvert z(t) \rvert = \left( a \theta(t) +b \right)^{-\mu}
\end{equation*}
for some $a>0, b \in \mathbb{R}$, which is an algebraic spiral.
 \end{proof}

\begin{figure}[!h]	
	\includegraphics[trim = 31mm 165mm 15mm 33mm, clip, width=\textwidth]{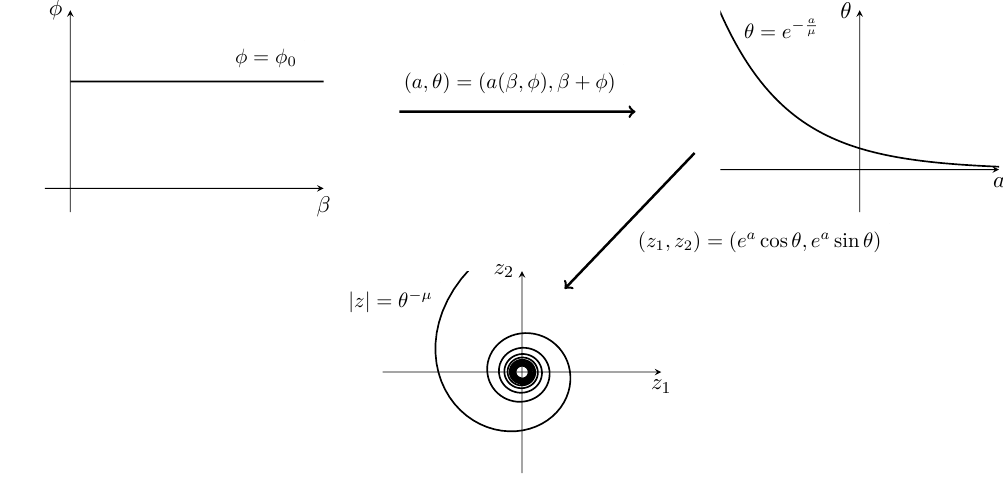}
	\caption{Strategy for setting new coordinate: Make algebraic spiral in Cartesian coordinate correspond to a line in new coordinate.} \label{Fig 2}
\end{figure}

Proposition \ref{Int curve is alg spiral} provides a strategy for choosing a new coordinate.
Suppose we find a self-similar profile $\widetilde{\psi}(z)$ which satisfies \eqref{EEz}. Since we are about to apply implicit function theorem to find a solution, $\widetilde{\psi}(z)$ will be close to $\widetilde{\psi}_0(z)$ in some sense from which we can expect that integral curve of $\nabla_z^\perp\widetilde{\psi}-\mu z$ has spiral-like structure by the virtue of Proposition \ref{Int curve is alg spiral}. Therefore, given $\widetilde{\psi}(z)$, we choose $(a,\theta)=(a(\beta,\phi), \theta(\beta, \phi))$ so that when we pull back integral curve of $\nabla_z^\perp\widetilde{\psi}-\mu z$, it becomes a line parallel to $\beta$ - axis (Figure \ref{Fig 2}). Therefore, for fixed $\phi_0$, we hope that $\theta(\beta,\phi_0)$ is monotonically increasing as $\beta$ increases to make spiral winding. For this aim, it is natural to set $\theta(\beta,\phi)$ as
\begin{equation*}
	\theta(\beta,\phi)=\beta+\phi
\end{equation*}
for $(\beta,\phi) \in (\bbR_+\times \bbT)$. Before stating one of the key proposition, we introduce some notation. Let $T$ be a coordinate transformation map from $(\beta, \phi)$ to $(z_1, z_2)$ coordinate, which is the composition of two maps in Figure \ref{Fig 2}. Then, we denote pull back of $\widetilde{\psi}(z_1,z_2)$ to $(\beta,\phi)$ - coordinate simply by $\psi(\beta, \phi)$\footnote{Although we use same function symbol as in physical coordinate $(x,t)$, it may not cause confusion since the number of variable is different. Physical coordinate contains time variable, while $(z_1, z_2)$ coordinate does not. We will not use space-time coordinate until Section \ref{sec: Main theorem in original coordinate}.}, i.e.,
\begin{equation} \label{pull back notation}
	\widetilde{\psi}(T(\beta,\phi)) \eqqcolon \psi(\beta,\phi), \quad \widetilde{w}(T(\beta, \phi)) \eqqcolon w(\beta, \phi).
\end{equation}
Also, we introduce new differential operator as it appears so many time
\begin{equation} \label{diff phi}
	\rd_\varphi \coloneqq \rd_\phi -\rd_\beta.
\end{equation}
Then, we can achieve \eqref{phil} through the following proposition.

\begin{proposition} \label{new coordinate prop}
	Assume $\psi \in C^1\left(\bbR_+ \times \bbT \right)$ and $a_\varphi$ never vanish. If we choose
	\begin{equation} \label{new coordinate}
		a(\beta, \phi) = \frac{1}{2}\log \left( -\frac{\psi_\beta}{\mu} \right), \quad \theta(\beta, \phi)=\beta + \phi,
	\end{equation}
	the the pull back of integral curve of $\nabla_z^\perp \widetilde{\psi}- \mu z$ to $(\beta, \phi)$ - coordinate is parallel to $\beta - axis$\footnote{For $a(\beta, \phi)$ to be well-defined as in \eqref{new coordinate}, $\psi_\beta$ should be strictly negative for all $(\beta, \phi) \in \bbR_+ \times \bbT$. This will be verified in Section \ref{subsec: Continuity results for differential operator in function space}.}.
\end{proposition}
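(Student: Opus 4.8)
The plan is a direct computation: write the vector field $\nb_z^\perp\widetilde{\psi}-\mu z$ from \eqref{EEz} in the $(\beta,\phi)$ coordinates — passing through the log-polar coordinates $(a,\theta)$ of \eqref{log-polar coordinate} as an intermediate step — and check that the choice \eqref{new coordinate} is exactly the one annihilating its $\phi$-component. First I would rewrite the field in log-polar coordinates. Writing $\widehat{\psi}(a,\theta)\coloneqq\widetilde{\psi}(e^a\cos\theta,e^a\sin\theta)$ for the representation of $\widetilde{\psi}$ there, so that $\psi(\beta,\phi)=\widehat{\psi}(a(\beta,\phi),\beta+\phi)$, the standard stream-function identity in polar coordinates rephrased via $r=e^a$ (hence $\rd_r=e^{-a}\rd_a$) reads $\nb_z^\perp\widetilde{\psi}=-e^{-a}\widehat{\psi}_\theta\,\hat{r}+e^{-a}\widehat{\psi}_a\,\hat\theta$. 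Combining this with $-\mu z=-\mu e^a\hat{r}$ and the basis relations $\hat{r}=e^{-a}\rd_a$, $\hat\theta=e^{-a}\rd_\theta$ (which follow from $\rd_a z=z=e^a\hat{r}$ and $\rd_\theta z=e^a\hat\theta$), one obtains, in the coordinate basis $(\rd_a,\rd_\theta)$,
\begin{equation*}
	\nb_z^\perp\widetilde{\psi}-\mu z=\bigl(-e^{-2a}\widehat{\psi}_\theta-\mu\bigr)\,\rd_a+e^{-2a}\widehat{\psi}_a\,\rd_\theta .
\end{equation*}

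Next I would re-express this field in the $(\beta,\phi)$ coordinates via $(\beta,\phi)\mapsto(a,\theta)=(a(\beta,\phi),\beta+\phi)$. The Jacobian of this map has determinant $a_\beta-a_\phi=-a_\varphi$, which is nonzero by hypothesis, so the map is a local $C^1$-diffeomorphism and the change of coordinates for the vector field is legitimate; inverting the Jacobian and using the chain rule $\psi_\beta=\widehat{\psi}_a\,a_\beta+\widehat{\psi}_\theta$, a short computation gives that the $\rd_\phi$-component of the resulting field equals $\bigl(-e^{-2a}\psi_\beta-\mu\bigr)/a_\varphi$. The choice $a=\tfrac{1}{2}\log(-\psi_\beta/\mu)$ of \eqref{new coordinate} is precisely the one for which $e^{-2a}\psi_\beta=-\mu$, so this $\rd_\phi$-component vanishes identically. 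Hence the field is everywhere proportional to $\rd_\beta$, so $\phi$ is constant along each of its integral curves; that is, those curves are lines parallel to the $\beta$-axis, as claimed.

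I do not expect a genuine obstacle — the argument is a computation — but two points call for care: keeping the signs straight through the two successive coordinate changes, and noticing that the formula for $a$ tacitly presupposes $\psi_\beta<0$ on all of $\bbR_+\times\bbT$, so that the logarithm is defined. This positivity, like the non-vanishing of $a_\varphi$, is not part of the present statement; it is verified later, in Section \ref{subsec: Continuity results for differential operator in function space}.
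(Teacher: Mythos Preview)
Your proof is correct and follows essentially the same strategy as the paper: compute the $\phi$-component of the pulled-back vector field and observe that the choice \eqref{new coordinate} makes it vanish. The only organizational difference is that the paper carries out the computation in a single step, writing the composite Jacobian $J_T$ of $(\beta,\phi)\mapsto(z_1,z_2)$ explicitly and extracting the $\phi$-component via one matrix product, whereas you factor through the log-polar coordinates $(a,\theta)$ first and then pass to $(\beta,\phi)$; both routes arrive at the same expression $(-e^{-2a}\psi_\beta-\mu)/a_\varphi$ for that component.
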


\begin{proof}
	Note that coordinate change map $T$, which is a composition of two coordinate change map; from $(a, \theta)$ to $(z_1,z_2)$ and from $(\beta, \phi)$ to $(a,\theta)$, can be written explicitly
	\begin{equation} \label{map T}
		T(\beta, \phi)= \left( e^{a(\beta, \phi)} \cos (\beta+\phi), e^{a(\beta,\phi)} \sin (\beta+\phi) \right) = \left( e^a \cos \theta, e^a \sin \theta \right).
	\end{equation}
	We can also explicitly calculate
	\begin{equation} \label{jacobian}
		J_T=
		\begin{pmatrix}
			a_\beta e^a \cos \theta-e^a \sin \theta & a_\phi e^a \cos \theta - e^a \sin \theta \\
			a_\beta e^a \sin \theta+e^a \cos \theta & a_\phi e^a \sin \theta + e^a \cos \theta
		\end{pmatrix},
	\end{equation}
	
	\begin{equation} \label{determinant}
		\abs{J_T}=-e^{2a}a_\varphi,
	\end{equation}
	
	\begin{equation} \label{diff change}
		\begin{pmatrix}
			\rd_{z_1} \\
			\rd_{z_2} \\
		\end{pmatrix}
		= \left( J_T^{-1} \right)^t
		\begin{pmatrix}
			\rd_\beta \\
			\rd_\phi \\
		\end{pmatrix}
		=-\frac{1}{e^a a_\varphi}
		\begin{pmatrix}
			a_\phi \sin \theta + \cos \theta & -a_\beta \sin \theta - \cos \theta \\
			-a_\phi \cos \theta + \sin \theta & a_\beta \cos \theta - \sin \theta \\
		\end{pmatrix}
		\begin{pmatrix}
			\rd_\beta \\
			\rd_\phi \\
		\end{pmatrix},
	\end{equation}
where $J_T$ is the Jacobian matrix of $T$ and $\abs{J_T}$ denotes its determinant. Then, the $\phi$ - component of pull back of $\nabla_z^\perp \widetilde{\psi}-\mu z$ to $(\beta, \theta)$ - coordinate is
\begin{align*}
	& \underbrace{(0,1)}_{\substack{\text{extract} \\ \phi - \text{component}}} \cdot \underbrace{ \left( J_T \right)^{-1}}_{\text{pull back}} \cdot \underbrace{ \Bigg( \underbrace{
	\begin{pmatrix}
		0 & -1 \\
		1 & 0 \\
	\end{pmatrix}}_{\substack{\text{gradient} \\ \text{perp}}} \underbrace{\left( J_T^{-1} \right)^t \begin{pmatrix}
		\rd_\beta \\
		\rd_\phi \\
	\end{pmatrix}}_{\nabla_{z}} \psi - \mu \underbrace{\begin{pmatrix}
		e^a \cos \theta \\
		e^a \sin \theta \\
	\end{pmatrix}}_{z} \Bigg) }_{\nabla^\perp_z \widetilde{\psi}-\mu z} \\&\underset{\eqref{jacobian}}{=} - \frac{\psi_\beta}{e^{2a}a_\varphi}-\frac{\mu}{a_\varphi}.
\end{align*}
Observe that the last term is zero if and only if
\begin{equation*}
	a(\beta, \phi)=\frac{1}{2} \log \left( -\frac{\psi_\beta}{\mu}  \right).
\end{equation*}
\end{proof}

\bigskip

\subsection{Equations in new coordinate}
\label{subsec: Equations in new coordinate}

In this subsection, we rewrite equation \eqref{EEz} in $(\beta, \phi)$ - coordinate where the relationship between each coordinate is given by \eqref{map T}.

We first deal with the first equation in \eqref{EEz} under the assumption that $a_\varphi$ and $\psi_\varphi$ never vanish, which will be justified later.
\begin{equation} \label{EE to ODE}
	\begin{aligned}
	0 & \underset{\eqref{EEz}}{=} -\widetilde{w}(z)+(\nabla^\perp_z \widetilde{\psi}(z)-\mu z) \cdot \nabla_z \widetilde{w}(z) \\
	& =-\widetilde{w}(z)+ \left( \begin{pmatrix}
		0 & -1 \\
		1 & 0 \\
	\end{pmatrix} \begin{pmatrix}
		\rd_{z_1} \\
		\rd_{z_2} \\
	\end{pmatrix} \widetilde{\psi}(z)-\mu
	\begin{pmatrix}
		z_1 \\
		z_2 \\
	\end{pmatrix} \right) \cdot \begin{pmatrix}
		\rd_{z_1} \\
		\rd_{z_2} \\
	\end{pmatrix} \widetilde{w}(z) \\
	& \text{(after pull back)} \\
		\underset{\eqref{diff change}}&{=} -w(\beta,\phi)+ \left( \begin{pmatrix}
		0 & -1 \\
		1 & 0 \\
	\end{pmatrix} \left( J_T^{-1} \right)^t \begin{pmatrix}
		\rd_\beta \\
		\rd_\phi \\
	\end{pmatrix} \psi(\beta, \phi) -\mu \begin{pmatrix}
		e^{a(\beta, \phi)} \cos (\beta+\phi) \\
		e^{a(\beta, \phi)} \sin (\beta+\phi) \\
	\end{pmatrix} \right) \cdot \left( \left( J_T^{-1} \right)^t \begin{pmatrix}
		\rd_\beta \\
		\rd_\phi \\
	\end{pmatrix} w(\beta, \phi) \right)\\
	\underset{\eqref{jacobian}}&{=}-w-\frac{1}{e^{2a}a_\varphi} \left( \psi_\beta w_\phi - \psi_\phi w_\beta \right) - \frac{\mu}{a_\varphi}w_\varphi \\
	&=\frac{1}{e^{2a}a_\varphi} \Bigg( \underbrace{e^{2a}a_\varphi}_{\substack{\text{By } \eqref{new coordinate} \\ e^{2a}=-\frac{\psi_\beta}{\mu}, ~ a_\varphi = \frac{\psi_{\beta \varphi}}{2\psi_\beta}}} w - \psi_\beta w_\phi +\psi_\phi w_\beta - \underbrace{\mu e^{2a}}_{-\psi_\beta}w_\varphi \Bigg) \\
	&=\frac{1}{e^{2a}a_\varphi} \left( \frac{\psi_{\beta \varphi}w}{2 \mu} - \psi_\beta w_\phi + \psi_\phi w_\beta +\psi_\beta w_\varphi \right) \\
	&=\frac{1}{e^{2a}a_\varphi} \Bigg( \frac{\psi_{\beta \varphi}w}{2 \mu} \underbrace{- \psi_\beta w_\phi + \psi_\beta w_\beta}_{-\psi_\beta w_\varphi} \underbrace{- \psi_\beta w_\beta+ \psi_\phi w_\beta}_{\psi_\varphi w_\beta} +\psi_\beta w_\varphi \Bigg) \\
	&=\frac{1}{e^{2a} a_\varphi} \left( \frac{\psi_{\beta \varphi}w}{2 \mu}+\psi_\varphi w_\beta \right) \\
	&= \frac{(\psi_\varphi)^{1-\frac{1}{2 \mu}}}{e^{2a} a_\varphi} \left( \frac{1}{2 \mu}(\psi_\varphi)^{\frac{1}{2 \mu}-1} \psi_{\beta \varphi}w+(\psi_\varphi)^{\frac{1}{2 \mu}} w_\beta \right) \\
	&= \frac{(\psi_\varphi)^{1-\frac{1}{2 \mu}}}{e^{2a} a_\varphi} \left( (\psi_\varphi)^{\frac{1}{2 \mu}}w \right)_\beta. \\ &&
\end{aligned}
\end{equation}

As we assume
\begin{equation*}
	\psi_\varphi \neq 0, \quad a_\varphi \neq 0,
\end{equation*}
we derive from above equalities that there exists $\Omega(\phi)$, depending only on $\phi$, such that
\begin{equation} \label{w formula}
	w(\beta, \phi)=\left( \psi_\varphi \right)^{-\frac{1}{2\mu}} \Omega(\phi).
\end{equation}
\bigskip

Next, we deal with the second equation in \eqref{EEz}; the vorticity-stream relation $\widetilde{w}=\Delta\widetilde{\psi}$. We utilize adjoint formula to facilitate the calculation. The adjoint formula says that for $C^1$ - coordinate change map $T: (\beta, \phi) \mapsto (z_1, z_2)$ and $\tld{v}(z) \in C^0 (\bbR^2; \bbR^2), \tld{f}(z) \in C^0(\bbR^2)$,
\begin{equation} \label{adjoint formula}	
	\nabla_z \cdot \widetilde{v} = \widetilde{f} \iff \nabla_{(\beta,\phi)} \cdot \left( \abs{J_T} J_T^{-1} \widetilde{v}(T(\beta,\phi)) \right)=\abs{J_T} \widetilde{f}(T(\beta, \phi)),
\end{equation}
where the divergence would be understood as distribution sense.

By adjoint formula, $\widetilde{w}=\Delta_z \widetilde{\psi}=\nabla_z \cdot \left( \nabla_z \widetilde{\psi} \right)$ in $(z_1, z_2)$ - coordinate holds if and only if
\begin{equation} \label{applying adj formula}
	\begin{pmatrix}
		\rd_\beta \\
		\rd_\phi \\
	\end{pmatrix} \cdot \left( \abs{J_T} J_T^{-1} \left(J_T^{-1} \right)^t \begin{pmatrix}
		\rd_\beta \\
		\rd_\phi \\
	\end{pmatrix} \psi \right) = \abs{J_T}w.
\end{equation}
The right-hand side of $\eqref{applying adj formula}$ is simply
\begin{equation*}
	\abs{J_T} w \underset{\eqref{determinant} }{=} -e^{2a} a_\varphi w \underset{\eqref{new coordinate}}{=}\frac{\psi_{\beta \varphi}}{2 \mu}w.
\end{equation*}
The left-hand side of \eqref{applying adj formula} can also be calculated by \eqref{jacobian} and \eqref{determinant};
\begin{align*}
	&\begin{pmatrix}
		\rd_\beta \\
		\rd_\phi \\
	\end{pmatrix} \cdot \left( \abs{J_T} J_T^{-1} \left(J_T^{-1} \right)^t \begin{pmatrix}
		\rd_\beta \\
		\rd_\phi \\
	\end{pmatrix} \psi \right) \\
	&=\begin{pmatrix}
		\rd_\beta \\
		\rd_\phi \\
	\end{pmatrix} \cdot \Bigg( \underbrace{e^a \begin{pmatrix}
		a_\phi \sin \theta+\cos \theta & -a_\phi \cos \theta+ \sin \theta \\
		-a_\beta \sin \theta-\cos \theta & a_\beta \cos \theta-\sin \theta \\
	\end{pmatrix}}_{\abs{J_T}J_T^{-1}} \underbrace{\frac{1}{e^a a_\varphi} \begin{pmatrix}
		(-a_\phi \sin \theta - \cos \theta) \psi_\beta+(a_\beta \sin \theta + \cos \theta)\psi_\phi \\
		(a_\phi \cos \theta - \sin \theta) \psi_\beta + (-a_\beta \cos \theta + \sin \theta) \psi_\phi \\
	\end{pmatrix}}_{\left( J_T^{-1} \right)^t \begin{pmatrix}
		\rd_\beta \\
		\rd_\phi \\
	\end{pmatrix}\psi } \Bigg) \\
	&=\begin{pmatrix}
		\rd_\beta \\
		\rd_\phi \\
	\end{pmatrix} \cdot \left( -\frac{1}{a_\varphi} \begin{pmatrix}
		\left( (a_\phi)^2+1 \right) \psi_\beta + \left( -a_\beta a_\phi -1 \right) \psi_\phi \\
		(-a_\beta a_\phi -1) \psi_\beta+ \left( (a_\beta)^2+1 \right) \psi_\phi
	\end{pmatrix} \right)\\
	&= \left(  \frac{\left( (a_\phi)^2+1 \right) \psi_\beta + \left( -a_\beta a_\phi -1 \right) \psi_\phi}{-a_\varphi}  \right)_\beta+ \left( \frac{(-a_\beta a_\phi -1) \psi_\beta+ \left( (a_\beta)^2+1 \right) \psi_\phi}{-a_\varphi}  \right)_\phi \\
	\underset{\rd_\beta=\rd_\phi-\rd_\varphi}&{=} \left( \frac{\left( (a_\phi)^2+1 \right) \psi_\beta + \left( -a_\beta a_\phi -1 \right) \psi_\phi+(-a_\beta a_\phi -1) \psi_\beta+ \left( (a_\beta)^2+1 \right) \psi_\phi}{-a_\varphi} \right)_\phi \\
	& \quad \qquad - \left( \frac{\left( (a_\phi)^2+1 \right) \psi_\beta + \left( -a_\beta a_\phi -1 \right) \psi_\phi}{-a_\varphi} \right)_\varphi \\
	&= \left( \frac{\left( a_\phi^2+1 \right)(\psi_\varphi - \psi_\phi)+(a_\beta a_\phi +1)\psi_\phi}{-a_\varphi} \right)_\varphi + \left( \frac{a_\phi a_\varphi \psi_\beta - a_\beta a_\varphi \psi_\phi}{-a_\varphi} \right)_\phi \\
	&= -\left( \frac{\left( a_\phi^2+1 \right) \psi_\varphi}{a_\varphi}-a_\phi \psi_\phi \right)_\varphi - \left(  a_\phi \psi_\beta-a_\beta\psi_\phi \right)_\phi \\
	\underset{\eqref{new coordinate}}&{=} - \left( \frac{2 \psi_\beta \psi_\varphi}{\psi_{\beta \varphi}} \left(1+\left(\frac{\psi_{\beta \phi}}{2 \psi_\beta} \right)^2 \right)-\frac{\psi_{\beta \phi} \psi_\phi}{2 \psi_\beta}\right)_\varphi - \left( a_\phi \psi_\phi- a_\phi \psi_\varphi -a_\beta \psi_\phi \right)_\phi \\
	&= - \left( \frac{2 \psi_\beta \psi_\varphi}{\psi_{\beta \varphi}} \left(1+\left(\frac{\psi_{\beta \phi}}{2 \psi_\beta} \right)^2 \right)-\frac{\psi_{\beta \phi} \psi_\phi}{2 \psi_\beta} \right)_\varphi - \left(  a_\varphi \psi_\phi -a_\phi \psi_\varphi \right)_\phi \\
	\underset{\eqref{new coordinate}}&{=} - \left( \frac{2 \psi_\beta \psi_\varphi}{\psi_{\beta \varphi}} \left(1+\left(\frac{\psi_{\beta \phi}}{2 \psi_\beta} \right)^2 \right)-\frac{\psi_{\beta \phi} \psi_\phi}{2 \psi_\beta} \right)_\varphi - \left(  \frac{\psi_{\beta \varphi}\psi_\phi - \psi_{\beta \phi} \psi_\varphi}{2\psi_\beta} \right)_\phi. \\ 
	\end{align*}
Therefore, equation \eqref{applying adj formula} is transformed to
\begin{equation} \label{L before}
	\left( \frac{2 \psi_\beta \psi_\varphi}{\psi_{\beta \varphi}} \left(1+\left(\frac{\psi_{\beta \phi}}{2 \psi_\beta} \right)^2 \right)-\frac{\psi_{\beta \phi} \psi_\phi}{2 \psi_\beta} \right)_\varphi + \left(  \frac{\psi_{\beta \varphi}\psi_\phi - \psi_{\beta \phi} \psi_\varphi}{2\psi_\beta} \right)_\phi+\frac{\psi_{\beta \varphi}}{2 \mu}w=0.
\end{equation}
Using \eqref{w formula}, \eqref{L before} becomes
\begin{equation} \label{def of L}
	\begin{aligned}
	& \left( \frac{2 \psi_\beta \psi_\varphi}{\psi_{\beta \varphi}} \left(1+\left(\frac{\psi_{\beta \phi}}{2 \psi_\beta} \right)^2 \right)-\frac{\psi_{\beta \phi} \psi_\phi}{2 \psi_\beta} \right)_\varphi + \left(  \frac{\psi_{\beta \varphi}\psi_\phi - \psi_{\beta \phi} \psi_\varphi}{2\psi_\beta} \right)_\phi+\frac{\psi_{\beta \varphi}}{2 \mu} \left(\psi_\varphi \right)^{-\frac{1}{2 \mu}}\Omega \\
	& \eqqcolon L(\psi, \Omega)=0.
\end{aligned}
\end{equation}

As we will see in Section \ref{sec: Main theorem in original coordinate}, $\Omega(\phi)$ is closely related to $g(\theta)$ in 
(main theorem). Since our goal is to find a solution $w(x,t)$ for given $g(\theta)$ in Theorem \ref{main theorem}, our primary goal is to find $\psi = \psi(\Omega)$ for given suitable $\Omega$. After finding $\psi$ associated with $\Omega$, defining $w$ as in \eqref{w formula} make \eqref{EEz} be satisfied, and then we can find self-similar solution of \eqref{EE} through \eqref{tilde func}.

\bigskip

\subsection{Rescaling to eliminate decay}
\label{subsec: Rescaling to eliminate decay}

As we implied, we achieve above primary goal through implicit function theorem. That means we should find at least one solution for the map $L$. Obviously, we expect that pull back of \eqref{trivial solz} to $(\beta, \phi)$ - coordinate should be a trivial solution for map $L$. In this subsection, we explicitly calculate pull back of $\widetilde{\psi}_0$ and related $\Omega_0$ to find a trivial solution of $L$. After that, we normalize stream function $\psi$ to make trivial solution a constant function.

Remember that a trivial solution in $(z_1, z_2)$ - coordinate is given by
\begin{equation*}
	\widetilde{w}_0(z) = C \abs{z}^{-\frac{1}{\mu}}, \qquad \widetilde{\psi}_0(z)=C \left( 2-\frac{1}{\mu} \right)^{-2} \abs{z}^{2-\frac{1}{\mu}}
\end{equation*}
for some constant $C$. Also, we adopted coordinate change map $T$ given by \eqref{map T} and $a(\beta, \phi), \theta(\beta, \phi)$ is given so that \eqref{new coordinate} holds. Then,
\begin{equation} \label{eq1}
	\psi_0(\beta, \phi)=\widetilde{\psi}_0(T(\beta, \phi))=C \left(2-\frac{1}{\mu} \right)^{-2} e^{\left(2-\frac{1}{\mu}\right)a}.
\end{equation}
Differentiating \eqref{eq1} in $\beta$ - coordinate and use \eqref{new coordinate}, we have
\begin{equation*}
	C \left(2-\frac{1}{\mu} \right)^{-1} a_\beta \,e^{\left(2-\frac{1}{\mu}\right)a} \underset{\eqref{eq1}}{=} \psi_\beta \underset{\eqref{new coordinate}}{=} -\mu e^{2a}
\end{equation*}
and this leads to 
\begin{equation*}
	-\frac{a_\beta}{\mu}e^{-\frac{a}{\mu}}=\frac{1}{C} \left( 2-\frac{1}{\mu} \right).
\end{equation*}
Integrating with respect to $\beta$ coordinate, we have
\begin{equation} \label{eq2}
	e^{-\frac{a}{\mu}} = \frac{1}{C} \left( 2-\frac{1}{\mu} \right) \beta.
\end{equation}
Note that integration constant should be chosen zero for $T$ to be bijective. Take the power of $1-2\mu$ both side in \eqref{eq2} and substitute the value of $e^{\left( 2-\frac{1}{\mu}\right)a}$ to \eqref{eq1} gives
\begin{equation} \label{eq3}
	\psi_0(\beta, \phi)= \left( \frac{C \mu^{\frac{1}{2\mu}}}{2-\frac{1}{\mu}} \right)^{2\mu} \frac{1}{2\mu-1} \beta^{1-2 \mu}.
\end{equation}
Also, from the relation
\begin{align*}
	&\left( 2 - \frac{1}{\mu} \right) \beta \underset{\eqref{eq2}}{=}Ce^{-\frac{a}{\mu}} \underset{\eqref{trivial solz}}{=} w(\beta, \phi) \underset{\eqref{w formula}}{=} \left( (\psi_0)_\varphi \right)^{-\frac{1}{2\mu}} \Omega_0(\phi) \\
	& \underset{\eqref{eq3}}{=}\left( -C^{2\mu} \left( 2-\frac{1}{2\mu} \right)^{-1-2\mu} (1-2\mu) \beta^{-2\mu}  \right)^{-\frac{1}{2 \mu}} \Omega_0(\phi) = \frac{1}{C} \left( 2-\frac{1}{\mu} \right)^{1+\frac{1}{2\mu}} (2\mu-1)^{-\frac{1}{2\mu}} \beta \Omega_0(\phi)
\end{align*}
we have
\begin{equation} \label{eq4}
	\Omega_0(\phi)=C \mu^{\frac{1}{2 \mu}}.
\end{equation}

Since $C$ was arbitrary, it is reasonable to choose $C$ so that coefficient of \eqref{eq3} and \eqref{eq4} be as simple as possible. Therefore, we choose
\begin{equation} \label{choice of C}
	C=\mu^{-\frac{1}{2\mu}} \left( 2-\frac{1}{\mu} \right).
\end{equation}
With this choice of $C$, we fix trivial solution of map $L$ from now on:
\begin{equation} \label{trivial sol new}
	\psi_0(\beta, \phi)=\frac{1}{2\mu -1} \beta^{1-2\mu}, \qquad \Omega_0(\phi)=2-\frac{1}{\mu}.
\end{equation}
Note that $\psi_0(\beta, \psi)$ is a positive polynomial function with
\begin{equation*}
	\lim_{\beta \rightarrow 0} \psi_0 (\beta, \phi) = +\infty, \qquad \lim_{\beta \rightarrow \infty} \psi_0(\beta, \phi)=0.
\end{equation*}
In most of the situation, trivial solution is desired to be simple as possible. Since $\psi_0(\beta, \phi)$ is constant times $\beta^{1-2 \mu}$ and other non-trivial solution $\psi(\beta, \phi)$ will be close in some sense to $\psi_0(\beta, \phi)$ (remember, we find non-trivial $\psi(\beta, \phi)$ via implicit function theorem), we deal with function after dividing it with $\beta^{1-2\mu}$, i.e., we deal with $\overline{\psi}(\beta, \phi)$ where
\begin{equation} \label{psi and psi bar}
	\psi(\beta, \phi)=\beta^{1-2\mu} \br{\psi}(\beta, \phi).
\end{equation}
Investigating the map $L$ shows that we need to calculate
\begin{align}
	\psi_\beta \underset{\eqref{psi and psi bar} }&{=} (1-2\mu)\beta^{-2\mu} \br{\psi} + \beta^{1-2\mu} \br{\psi}_\beta = \beta^{-2\mu} \left(\beta \br{\psi}_\beta +(1-2\mu) \bar{\psi} \right) \label{eq5},\\
	\psi_\varphi \underset{\eqref{psi and psi bar} }&{=} \beta^{1-2\mu} \br{\psi}_\phi - (1-2\mu) \beta^{-2\mu} \br{\psi} - \beta^{1-2\mu} \bar{\psi}_\beta=\beta^{-2 \mu} \left( \beta \br{\psi}_\varphi + (2\mu-1) \br{\psi} \right) \label{eq6}, \\
	\psi_\phi \underset{\eqref{psi and psi bar} }&{=} \beta^{1-2\mu} \br{\psi}_\phi \label{eq7}.
\end{align}
For simplicity and convenience, we define \textit{differential bar operator} $\br{\rd}_\beta, \br{\rd}_\varphi$ by
\begin{equation} \label{diff beta bar}
	\br{\rd}_\beta \coloneqq \beta \rd_\beta + (1-2\mu)id,
\end{equation}
\begin{equation} \label{diff varphi bar}
	\br{\rd}_\varphi \coloneqq \beta \rd_\varphi + (2\mu-1)id.
\end{equation}
Then, we can represent partial derivatives of $\psi$ as partial bar derivative of $\bar{\psi}$ as followings:
\begin{align}
	\psi_\beta &= \beta^{-2\mu} \br{\rd}_\beta \br{\psi} \label{beta and bar beta}, \\
	\psi_\varphi &= \beta^{-2\mu} \br{\rd}_\varphi \br{\psi} \label{varphi and bar var phi}, \\
	\psi_\phi &= \beta^{1-2\mu} \rd_\phi \br{\psi} \label{phi and bar phi}, \\
	\psi_{\beta \phi} &= \beta^{-2\mu} \rd_\phi \br{\rd}_\beta \br{\psi} \label {betaphi and bar betaphi}, \\
	\psi_{\beta \varphi} &= \beta^{-2\mu -1} \left( \bar{\rd}_\varphi +1 \right) \br{\rd}_\beta \br{\psi} \label{betavarphi and bar betavarphi}.
\end{align}
Substitute \eqref{beta and bar beta} $\sim$ \eqref{betavarphi and bar betavarphi} to each term in $L$ gives
\begin{equation} \label{eq8}
	\begin{aligned}
		\frac{\psi_{\beta \varphi}}{2 \mu} \left(\psi_\varphi \right)^{-\frac{1}{2 \mu}}\Omega &= \frac{\beta^{-2\mu -1} \left( \bar{\rd}_\varphi +1 \right) \br{\rd}_\beta \br{\psi}}{2\mu} \cdot \beta \left( \br{\rd}_\varphi \br{\psi} \right) ^{-\frac{1}{2\mu}} \Omega\\
		 &= \beta^{-2\mu} \cdot \frac{\left( \bar{\rd}_\varphi +1 \right) \br{\rd}_\beta \br{\psi} \cdot \left( \br{\rd}_\varphi \br{\psi} \right) ^{-\frac{1}{2\mu}} }{2 \mu} \Omega,
	\end{aligned}
\end{equation}

\begin{equation} \label{eq9}
	\begin{aligned}
		\left(  \frac{\psi_{\beta \varphi}\psi_\phi - \psi_{\beta \phi} \psi_\varphi}{2\psi_\beta} \right)_\phi &= \left(  \frac{\beta^{-2\mu -1} \left( \bar{\rd}_\varphi +1 \right) \br{\rd}_\beta \br{\psi} \cdot \beta^{1-2\mu} \rd_\phi \br{\psi} - \beta^{-2\mu} \rd_\phi \br{\rd}_\beta \br{\psi} \cdot \beta^{-2\mu} \br{\rd}_\varphi \br{\psi} }{2 \beta^{-2\mu} \br{\rd}_\beta \br{\psi}} \right)_\phi \\
		&= \beta^{-2\mu} \left(  \frac{ \left( \bar{\rd}_\varphi +1 \right) \br{\rd}_\beta \br{\psi} \cdot  \rd_\phi \br{\psi} - \rd_\phi \br{\rd}_\beta \br{\psi} \cdot \br{\rd}_\varphi \br{\psi}}{2\br{\rd}_\beta \br{\psi}  }\right)_\phi,
	\end{aligned}	
\end{equation}

\begin{equation} \label{eq10}
 \begin{aligned}
 	& \left( \frac{2 \psi_\beta \psi_\varphi}{\psi_{\beta \varphi}} \left(1+\left(\frac{\psi_{\beta \phi}}{2 \psi_\beta} \right)^2 \right)-\frac{\psi_{\beta \phi} \psi_\phi}{2 \psi_\beta} \right)_\varphi \\
 	&= \left( \frac{2\beta^{-2\mu} \br{\rd}_\beta \br{\psi} \cdot \beta^{-2\mu} \br{\rd}_\varphi \br{\psi}}{\beta^{-2\mu -1} \left( \bar{\rd}_\varphi +1 \right) \br{\rd}_\beta \br{\psi}} \left( 1+ \left( \frac{\rd_\phi \br{\rd}_\beta \br{\psi}}{2\br{\rd}_\beta} \right)^2  \right) - \frac{\beta^{-2\mu} \rd_\phi \br{\rd}_\beta \br{\psi} \cdot \beta^{1-2\mu} \rd_\phi \br{\psi}   }{2 \beta^{-2\mu} \br{\rd}_\beta \br{\psi}}  \right)_\varphi \\
 	&= \left( \beta^{1-2\mu} \cdot \left( \frac{2\br{\rd}_\beta \br{\psi} \cdot  \br{\rd}_\varphi \br{\psi}}{\left( \bar{\rd}_\varphi +1 \right) \br{\rd}_\beta \br{\psi}} \left( 1+ \left( \frac{\rd_\phi \br{\rd}_\beta \br{\psi}}{2\br{\rd}_\beta} \right)^2  \right) - \frac{\rd_\phi \br{\rd}_\beta \br{\psi} \cdot  \rd_\phi \br{\psi}   }{2  \br{\rd}_\beta \br{\psi}}  \right) \right)_\varphi \\
 	\underset{\eqref{eq6}}&{=} \beta^{-2\mu} \cdot \br{\rd}_\varphi \left( \frac{2\br{\rd}_\beta \br{\psi} \cdot  \br{\rd}_\varphi \br{\psi}}{\left( \bar{\rd}_\varphi +1 \right) \br{\rd}_\beta \br{\psi}} \left( 1+ \left( \frac{\rd_\phi \br{\rd}_\beta \br{\psi}}{2\br{\rd}_\beta} \right)^2  \right) - \frac{\rd_\phi \br{\rd}_\beta \br{\psi} \cdot  \rd_\phi \br{\psi}   }{2  \br{\rd}_\beta \br{\psi}}  \right).   
 \end{aligned}	
\end{equation}
Hence, $L(\psi, \Omega)$ can be represented as
\begin{align*}
	L(\psi, \Omega) &= \left( \frac{2 \psi_\beta \psi_\varphi}{\psi_{\beta \varphi}} \left(1+\left(\frac{\psi_{\beta \phi}}{2 \psi_\beta} \right)^2 \right)-\frac{\psi_{\beta \phi} \psi_\phi}{2 \psi_\beta} \right)_\varphi
	+\left(  \frac{\psi_{\beta \varphi}\psi_\phi - \psi_{\beta \phi} \psi_\varphi}{2\psi_\beta} \right)_\phi +\frac{\psi_{\beta \varphi}}{2 \mu} \left(\psi_\varphi \right)^{-\frac{1}{2 \mu}}\Omega \\
	\underset{\eqref{eq8},\eqref{eq9}, \eqref{eq10}}&{=} \beta^{-2\mu} \cdot \Bigg(\br{\rd}_\varphi \left( \frac{2\br{\rd}_\beta \br{\psi} \cdot  \br{\rd}_\varphi \br{\psi}}{\left( \bar{\rd}_\varphi +1 \right) \br{\rd}_\beta \br{\psi}} \left( 1+ \left( \frac{\rd_\phi \br{\rd}_\beta \br{\psi}}{2\br{\rd}_\beta} \right)^2  \right) - \frac{\rd_\phi \br{\rd}_\beta \br{\psi} \cdot  \rd_\phi \br{\psi}   }{2  \br{\rd}_\beta \br{\psi}}  \right)  \\
	&\qquad \qquad \qquad \quad+ \rd_\phi \left(  \frac{ \left( \bar{\rd}_\varphi +1 \right) \br{\rd}_\beta \br{\psi} \cdot  \rd_\phi \br{\psi} - \rd_\phi \br{\rd}_\beta \br{\psi} \cdot \br{\rd}_\varphi \br{\psi}}{2\br{\rd}_\beta \br{\psi}  }\right)
	+ \frac{\left( \bar{\rd}_\varphi +1 \right) \br{\rd}_\beta \br{\psi} \cdot \left( \br{\rd}_\varphi \br{\psi} \right) ^{-\frac{1}{2\mu}} }{2 \mu} \Omega \Bigg) \\
	&\eqqcolon \beta^{-2 \mu} \br{L}(\br{\psi}, \Omega).
\end{align*}

\section{Main theorem in new coordinate}
\label{sec: Main theorem in new coordinate}

Through suitable scaling, we've reduced the problem of finding solution $\psi=\psi(\Omega)$ of $L(\psi, \Omega)$ to find $\br{\psi} = \br{\psi}(\Omega)$ of $\br{L}(\br{\psi},\Omega)$ where
\begin{equation} \label{L bar}
	\begin{aligned}
		 	&\br{L}(\br{\psi},\Omega)=\br{\rd}_\varphi \left( \frac{2\br{\rd}_\beta \br{\psi} \cdot  \br{\rd}_\varphi \br{\psi}}{\left( \bar{\rd}_\varphi +1 \right) \br{\rd}_\beta \br{\psi}} \left( 1+ \left( \frac{\rd_\phi \br{\rd}_\beta \br{\psi}}{2\br{\rd}_\beta} \right)^2  \right) - \frac{\rd_\phi \br{\rd}_\beta \br{\psi} \cdot  \rd_\phi \br{\psi}   }{2  \br{\rd}_\beta \br{\psi}}  \right) \\
		 	& \qquad \qquad + \rd_\phi \left(  \frac{ \left( \bar{\rd}_\varphi +1 \right) \br{\rd}_\beta \br{\psi} \cdot  \rd_\phi \br{\psi} - \rd_\phi \br{\rd}_\beta \br{\psi} \cdot \br{\rd}_\varphi \br{\psi}}{2\br{\rd}_\beta \br{\psi}  }\right)
	+ \frac{\left( \bar{\rd}_\varphi +1 \right) \br{\rd}_\beta \br{\psi} \cdot \left( \br{\rd}_\varphi \br{\psi} \right) ^{-\frac{1}{2\mu}} }{2 \mu} \Omega.
	\end{aligned}
\end{equation}
Once $\br{\psi}$ is found, the process of recovering $\psi(\beta, \phi)$ and $w(\beta, \phi)$ is straight-forward through the relation \eqref{psi and psi bar} and \eqref{w formula}. As we want to solve functional equation
\begin{equation*}
	\br{L}(\br{\psi}, \Omega)=0
\end{equation*}
by using implicit function theorem around trivial solution
\begin{equation} \label{trivial solz bar}
	\br{\psi}_0 \underset{\eqref{psi and psi bar}}{=} \frac{\psi_0}{\beta^{1-2\mu}}  \underset{\eqref{trivial sol new}}{=}\frac{1}{2\mu-1}, \qquad \Omega_0 \underset{\eqref{trivial sol new}}{=} 2-\frac{1}{\mu},
\end{equation}
we have to answer following three questions:

\bigskip
\textbf{$\bullet$ Question 1:} \label{Q1} What is the Banach space $X, Y$ and $Z$ that will be a domain and codomain for $\br{L}: X \times Y \rightarrow Z?$ In other words, where does $\br{\psi}, \Omega, \br{L}(\br{\psi},\Omega)$ lives?

\bigskip

\textbf{$\bullet$ Question 2:} \label{Q2} Is the map $\bar{L}$ $C^1$ in some neighborhood of $(\br{\psi}_0, \Omega_0)$?

\bigskip

\textbf{$\bullet$ Question 3:} \label{Q3} Is the partial Fr\'{e}chet derivative $\frac{\rd \br{L}}{\rd \br{\psi}}(\br{\psi}_0, \Omega_0) \in L(X, Z)$\footnote{$L(X,Z)$ denotes continuous map from $X$ to $Z$.} isomorphism?

\bigskip
First of all, we compute $\frac{\rd \br{L}}{\rd \br{\psi}}(\br{\psi}_0, \Omega_0)$.
\begin{proposition}
	The Fr\'{e}chet derivative $\frac{\rd \br{L}}{\rd \br{\psi}}(\br{\psi}_0, \Omega_0)$ is given by
	\begin{equation} \label{diff L bar}
		\frac{\rd \br{L}}{\rd \br{\psi}}(\br{\psi}_0, \Omega_0)=\frac{1}{2\mu^2} \Big( \left( \br{\rd}_\varphi \br{\rd}_\varphi+\mu^2 \rd_\phi \rd_\phi \right) \left( \br{\rd}_\beta+2\mu \right)+\left( 2\mu -1 \right) \left( \br{\rd}_\beta + \br{\rd}_\varphi \right) \Big).
	\end{equation}
\end{proposition}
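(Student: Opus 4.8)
The plan is to compute the derivative directly, writing $\br\psi=\br\psi_0+\eta$ and extracting from $\br L(\br\psi_0+\eta,\Omega_0)$ the part linear in $\eta$. Since $\br\psi_0=\frac{1}{2\mu-1}$ is constant, the affine operators \eqref{diff beta bar}, \eqref{diff varphi bar} act on it only through their zeroth-order terms, so at the base point
\begin{equation*}
	\dbbeta\br\psi_0=-1,\qquad \dbvarphi\br\psi_0=1,\qquad \rd_\phi\br\psi_0=0,\qquad \rd_\phi\dbbeta\br\psi_0=0,\qquad \dbvarphibeta\br\psi_0=(\dbvarphi+1)(-1)=-2\mu.
\end{equation*}
All of these are constants, and the ones occurring in denominators of \eqref{L bar} ($\dbbeta\br\psi\mapsto-1$ and $\dbvarphibeta\br\psi\mapsto-2\mu$) or under the power $(\,\cdot\,)^{-1/2\mu}$ ($\dbvarphi\br\psi\mapsto1$) are nonzero. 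Hence near $\br\psi_0$ the three summands of $\br L$ in \eqref{L bar} are smooth functions of $\eta$ and its derivatives, and the linearization is a purely mechanical task.

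First I would discard the terms that vanish to first order. In the first bracket of \eqref{L bar}, the factor $1+\bigl(\frac{\rd_\phi\dbbeta\br\psi}{2\dbbeta\br\psi}\bigr)^2$ linearizes to $1$ and the subtracted term $\frac{\rd_\phi\dbbeta\br\psi\cdot\rd_\phi\br\psi}{2\dbbeta\br\psi}$ linearizes to $0$, because both $\rd_\phi\dbbeta\br\psi$ and $\rd_\phi\br\psi$ vanish at $\br\psi_0$; for the same reason the summand $\rd_\phi\dbbeta\br\psi\cdot\dbvarphi\br\psi$ in the numerator of the second bracket contributes nothing. The surviving rational expressions are handled by logarithmic differentiation: writing $\delta(\,\cdot\,)$ for the differential at $\br\psi_0$ in direction $\eta$,
\begin{align*}
	\delta\!\left(\frac{2\,\dbbeta\br\psi\cdot\dbvarphi\br\psi}{\dbvarphibeta\br\psi}\right)&=\frac1\mu\Bigl(-\dbbeta\eta+\dbvarphi\eta+\tfrac{1}{2\mu}\,\dbvarphibeta\eta\Bigr), \\
	\delta\!\left(\frac{\dbvarphibeta\br\psi\cdot\rd_\phi\br\psi-\rd_\phi\dbbeta\br\psi\cdot\dbvarphi\br\psi}{2\,\dbbeta\br\psi}\right)&=\mu\,\rd_\phi\eta+\tfrac12\,\rd_\phi\dbbeta\eta,
\end{align*}
and, since $\Omega_0=2-\frac1\mu=\frac{2\mu-1}{\mu}$ by \eqref{trivial sol new},
\begin{equation*}
	\delta\!\left(\frac{\dbvarphibeta\br\psi\cdot(\dbvarphi\br\psi)^{-1/2\mu}}{2\mu}\,\Omega_0\right)=\frac{2\mu-1}{2\mu^2}\bigl(\dbvarphibeta\eta+\dbvarphi\eta\bigr).
\end{equation*}

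Then I would assemble: applying $\dbvarphi$ to the first linearized bracket, $\rd_\phi$ to the second, adding the $\Omega_0$-term, multiplying by $2\mu^2$, and expanding $\dbvarphibeta=\dbvarphi\dbbeta+\dbbeta$. The terms carrying $\dbvarphi\dbbeta\eta$ come with coefficient $1-2\mu$ from the first bracket and $2\mu-1$ from the $\Omega_0$-term, so they cancel, and one is left with $\dbvarphi\dbvarphi(\dbbeta+2\mu)\eta+\mu^2\rd_\phi\rd_\phi(\dbbeta+2\mu)\eta+(2\mu-1)(\dbbeta+\dbvarphi)\eta$, i.e.\ $2\mu^2$ times \eqref{diff L bar}. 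I expect the chief nuisance to be bookkeeping rather than anything conceptual: the operators $\dbbeta,\dbvarphi,\rd_\phi$ do not all commute — indeed $[\dbvarphi,\dbbeta]=-\beta\rd_\phi$, while $\rd_\phi$ commutes with both — so the order of composition has to be tracked carefully, and the tidy factored form of \eqref{diff L bar} only emerges after the cancellation above, which is precisely where the specific value of $\Omega_0$ fixed in \eqref{trivial sol new} gets used. (That this Gateaux derivative is actually the Fréchet derivative between the Banach spaces of Question 1 is part of the positive answer to Question 2, carried out later.)
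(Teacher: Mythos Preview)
Your approach is the same as the paper's: evaluate the five building blocks at $\br\psi_0$, linearize each of the three summands of $\br L$ separately, and combine. Your displayed formulas for the three differentials are correct and the assembly works exactly as you describe, with the $\dbvarphi\dbbeta$ cross-terms cancelling between the first bracket and the $\Omega_0$-term.

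There is one slip in your prose, however: you write that the summand $\rd_\phi\dbbeta\br\psi\cdot\dbvarphi\br\psi$ in the numerator of the second bracket ``contributes nothing'' for the same reason as the product $\rd_\phi\dbbeta\br\psi\cdot\rd_\phi\br\psi$. That reasoning does not apply here, since $\dbvarphi\br\psi_0=1\neq0$; only one factor vanishes, so this summand \emph{does} contribute $-\rd_\phi\dbbeta\eta$ to the linearized numerator and hence $+\tfrac12\rd_\phi\dbbeta\eta$ to $\delta(\text{second bracket})$. Fortunately your displayed formula $\mu\,\rd_\phi\eta+\tfrac12\,\rd_\phi\dbbeta\eta$ already includes this term, so the contradiction is only between your words and your (correct) equation, not in the mathematics itself. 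Just delete or fix that sentence.
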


\begin{proof}
	With explicit expression \eqref{trivial solz bar} and the definition of bar derivative \eqref{diff beta bar}, \eqref{diff varphi bar}, we can easily calculate 
	\begin{equation} \label{diff of trivial sol}
		\dbeta_0=-1, \quad \dvarphi_0=1, \quad \dphi_0=0, \quad \dbetaphi_0=0, \quad \dbetavarphi_0=-2\mu
	\end{equation}
	Then, combining the following elementary calculation
	\begin{align*}
		&\frac{\rd}{\rd \br{\psi}} \left( \frac{\dbetavarphi \cdot \left( \dvarphi \right)^{-\frac{1}{2\mu}}}{2\mu} \Omega_0 \right)(\br{\psi}_0, \Omega_0) \\
		&= \frac{\Omega_0}{2\mu} \left( \left( \dvarphi_0 \right)^{-\frac{1}{2\mu}} \left(\br{\rd}_\varphi +1 \right) \br{\rd}_\beta  \, + \dbetavarphi_0 \cdot \left(-\frac{1}{2\mu} \right) \left( \dvarphi_0 \right)^{-\frac{1}{2\mu}-1} \cdot \br{\rd}_\varphi  \right) \\
		\underset{\eqref{diff of trivial sol}}&{=} \frac{2 \mu -1}{2\mu^2} \left( \left(\br{\rd}_\varphi +1 \right) \br{\rd}_\beta+\br{\rd}_\phi \right),
	\end{align*}
	
	\bigskip
	
	\begin{align*}
		& \frac{\rd}{\rd \br{\psi}}\left( \rd_\phi \left(  \frac{ \left( \bar{\rd}_\varphi +1 \right) \br{\rd}_\beta \br{\psi} \cdot  \rd_\phi \br{\psi} - \rd_\phi \br{\rd}_\beta \br{\psi} \cdot \br{\rd}_\varphi \br{\psi}}{2\br{\rd}_\beta \br{\psi}  }\right) \right)(\br{\psi}_0, \Omega_0) \\
		& = \rd_\phi \Bigg( \dbetavarphi_0 \cdot \dphi_0 - \dbetaphi_0 \cdot \dvarphi_0 \Big) \cdot \left( -\frac{\br{\rd}_\beta}{2(\dbeta_0)^2} \right) \\
		& \quad + \frac{1}{2\dbeta_0} \left(  \dbetavarphi_0 \cdot \rd_\phi +  \dphi_0 \cdot (\br{\rd}_\varphi+1)\br{\rd}_\beta -\dbetaphi_0 \cdot \br{\rd}_\varphi -  \dvarphi_0 \cdot \rd_\phi \br{\rd}_\beta \right) \Bigg) \\
		\underset{\eqref{diff of trivial sol}}&{=} \mu \rd_\phi \rd_\phi + \frac{1}{2} \rd_\phi \rd_\phi \br{\rd}_\beta,
	\end{align*}
	
	\bigskip
	
	\begin{align*}
		&\frac{\rd}{\rd \br{\psi}} \left( \br{\rd}_\varphi \left( \frac{2\br{\rd}_\beta \br{\psi} \cdot  \br{\rd}_\varphi \br{\psi}}{\left( \bar{\rd}_\varphi +1 \right) \br{\rd}_\beta \br{\psi}} \left( 1+ \left( \frac{\rd_\phi \br{\rd}_\beta \br{\psi}}{2\br{\rd}_\beta} \right)^2  \right) - \frac{\rd_\phi \br{\rd}_\beta \br{\psi} \cdot  \rd_\phi \br{\psi}   }{2  \br{\rd}_\beta \br{\psi}}  \right) \right)(\br{\psi}_0, \Omega) \\
		&= \br{\rd}_\varphi \Bigg( 2 \dbeta_0 \cdot \dvarphi_0 \left( 1+ \left( \frac{\dbetaphi_0}{2\dbeta_0} \right)^2 \right) \cdot  \frac{-(\br{\rd}_\varphi+1)\br{\rd}_\beta}{\left( \dbetavarphi_0 \right)^2}  \\
		& \qquad \qquad + \frac{2 \dbeta_0 \cdot \dvarphi_0}{\dbetavarphi_0} \cdot \frac{\dbetaphi_0}{\dbeta_0} \cdot \left( \frac{2 \dbeta_0 \cdot \rd_\phi \br{\rd}_\beta - 2\dbetaphi_0 \cdot \br{\rd}_\beta}{(2 \dbeta_0)^2} \right) \\
		& \qquad \qquad + \frac{2 \dvarphi_0}{\dbetavarphi_0} \left( 1+ \left( \frac{\dbetaphi_0}{2 \dbeta_0} \right)^2 \right) \br{\rd}_\beta \\
		& \qquad \qquad + \frac{2 \dvarphi_0}{\dbetavarphi_0} \left( 1+ \left( \frac{\dbetaphi_0}{2 \dbeta_0} \right)^2 \right) \br{\rd}_\varphi \\
		& \qquad \qquad - \frac{1}{2} \dbetaphi_0 \cdot \dphi_0 \left( \frac{-\br{\rd}_\beta}{(\dbeta_0)^2} \right) \\
		& \qquad \qquad -\frac{1}{2} \frac{\dphi_0}{\dbeta_0} \rd_\phi \br{\rd}_\beta \\
		& \qquad \qquad -\frac{1}{2} \frac{\dbetaphi_0}{\dbeta_0} \rd_\phi \\
		\underset{\eqref{diff of trivial sol}}&{=} \frac{1}{2\mu^2} \Big( \br{\rd}_\varphi(\br{\rd}_\varphi+1)\br{\rd}_\beta+2\mu \br{\rd}_\varphi \br{\rd}_\varphi -2\mu \br{\rd}_\varphi  \br{\rd}_\beta \Big),
	\end{align*}
	we have
	\begin{align*}
		\frac{\rd \br{L}}{\rd \br{\psi}}(\br{\psi}_0, \Omega)&=\frac{1}{2\mu^2} \bigg( \br{\rd}_\varphi(\br{\rd}_\varphi+1)\br{\rd}_\beta+2\mu \br{\rd}_\varphi \br{\rd}_\varphi -2\mu \br{\rd}_\varphi  \br{\rd}_\beta \bigg) \\
		& \qquad +\mu \rd_\phi \rd_\phi + \frac{1}{2} \rd_\phi \rd_\phi \br{\rd}_\beta \\
		& \qquad + \frac{2 \mu -1}{2\mu^2} \left( \left(\br{\rd}_\varphi +1 \right) \br{\rd}_\beta+\br{\rd}_\phi \right) \\
		&= \frac{1}{2\mu^2} \Big( \br{\rd}_\varphi \br{\rd}_\varphi \br{\rd}_\beta+2\mu \br{\rd}_\varphi \br{\rd}_\varphi + 2\mu^3 \rd_\phi \rd_\phi +\mu^2 \rd_\phi \rd_\phi \br{\rd}_\beta +(2\mu-1) \br{\rd}_\beta + (2\mu-1)\br{\rd}_\varphi \Big) \\
		&= \frac{1}{2\mu^2} \Big( (\br{\rd}_\varphi \br{\rd}_\varphi+\mu^2 \rd_\phi \rd_\phi)(\br{\rd}_\beta+2\mu)+(2\mu-1)(\br{\rd}_\beta+\br{\rd}_\varphi)  \Big).
	\end{align*}
\end{proof}

\bigskip

\subsection{Wiener type function space on $\br{\bbR}_+ \times \bbT$ and its continuity results}
\label{subsec: Wiener type function space on RT and its continuity results}

The map $\br{L}$ and the Fr\'{e}chet partial derivative $\frac{\rd \br{L}}{\rd \br{\psi}}$ looks horrible and it seems hard to answer \hyperref[Q2]{Question 2} and \hyperref[Q3]{Question 3}. We overcome this difficulty by decomposing Banach space $X, Y, Z$ and various differential operator into each frequency. For example, suppose $M$ and $N$ are some Banach space with $f(\beta,\phi) \in M, \rd_\phi f(\beta,\phi) \in N$. Instead of constructing $M, N$ and showing the continuity of $\rd_\phi: M \rightarrow N$ directly, we do Fourier transform $f(\beta, \phi)$ with respect to $\phi$ variable, i.e., 
\begin{equation*}
	f(\beta, \phi)=\sum_{n \in \bbZ}f^{(n)}(\beta)e^{in \phi},
\end{equation*}
Note that $n$-th Fourier coefficient $f^{(n)}$ is not a constant but a function of $\beta$. Then, at least formally, we can represent $\rd_\phi f$ as
\begin{equation*}
	\rd_\phi f(\beta, \phi)=\sum in \n{f}(\beta)e^{in\phi},
\end{equation*}
so operator $\rd_\phi$ corresponds to multiplication operator $in$ in each Fourier mode. Here, we represent a key strategy for solving the problem; we first construct $\n{M}, \n{N}$ for each $n \in \bbZ$ where we expect $n$-th Fourier coefficient $\n{f}(\beta)$ lives, and establish the (uniform) continuity of multiplication operator $in: \n{f}(\beta) \mapsto in\n{f}(\beta)$. We then construct Banach space $M, N$ via $\n{M}, \n{N}$ and establish continuity of $\rd_\phi$ by combining the continuity of $in: \n{f}(\beta) \mapsto in\n{f}(\beta)$. In this subsection, we gives some definition and properties of some Banach space to make above argument more precise.

Let $\brk{\beta} \coloneqq (1+\beta^2)^\frac{1}{2}$. For $f \in C^\infty (\br{\bbR}_+)$, define $k$-th Schwarz seminorm $\nrm{\cdot}_{\mathcal{S},k}$ for $k \in \bbN$ by
\begin{equation*}
	\nrm{\cdot}_{\calS,k} \coloneqq \sup_{\substack{\beta \in \bbR_+ \\0\leq m \leq k}} \left( \frac{\brk{\beta}^2}{\beta}\right)^m \abs{\rd^{(m)}f(\beta)}.
\end{equation*}
Then, the Schwarz class on $\br{\bbR}_+$ is defined by
\begin{equation*}
	\calS(\br{\bbR}_+) \eqqcolon \set{f \in C^\infty (\br{\bbR}_+) \, \vert \, \nrm{f}_{\calS, k} < \infty \text{ for all } k \in \bbN}.
\end{equation*}
Also, we define $\calS'(\br{\bbR}_+)$ by the dual of $\calS(\br{\bbR}_+)$. Now, we define Schwarz class and tempered distribution on $\br{\bbR}_+ \times \bbT$.

\begin{definition} \label{Schwarz class}
	The \textit{Schwarz class} on $\br{\bbR}_+ \times \bbT$ is defined by
	\begin{equation*}
		\calS(\br{\bbR}_+ \times \bbT) \coloneqq \left\{\sigmaz \n{f}(\beta)e^{in\phi} \, \Bigg\vert \, \nrm{\set{\n{f}}_{n \in \bbZ}}_{q,k} \coloneqq \sigmaz \brk{n}^q \nrm{\n{f}}_{\calS,k} < \infty \text{ for all } q \in \bbR, k \in \bbN\right\}.
	\end{equation*}
\end{definition}
Next, we define dual space above Schwarz class. We additionally impose a condition rather than just defining as a dual of Schwarz class.

\begin{definition} \label{tempered distribution}
	A continuous linear functional $T$ on $\calS(\br{\bbR}_+ \times \bbT)$ is called \textit{tempered distribution} if there exists a sequence of tempered distribution $\set{\n{T}}_{n \in \bbZ} \in \calS'(\br{\bbR}_+)$ such that
	\begin{equation} \label{eq11}
		T \left( \sigmaz \n{g} e^{in\phi} \right) = \sigmaz \n{T} \left(\n{g} \right)
	\end{equation}
	holds for all $g(\beta, \phi)=\sigmaz \n{g}(\beta) e^{in\phi} \in \calS(\br{\bbR}_+ \times \bbT)$ with $\set{\n{T}}_{n \in \bbZ}$ being \textit{polynomially uniformly bounded}, i.e.,  there exists $q \in \bbR, k \in \bbN$ and a constant $C>0$ such that
	\begin{equation} \label{polynomially uniformly bounded}
		\abs{\n{T} \left( \n{g} \right)} \leq C \brk{n}^q \nrm{\n{g}}_{\calS, k} \text{ for all } n \in \bbZ.
	\end{equation}
	
\end{definition}

\begin{remark}
	Summation in \eqref{eq11} is well-defined due to polynomially uniformly bounded condition \eqref{polynomially uniformly bounded} and definition \ref{Schwarz class}.
\end{remark}

\begin{definition}
	Let $\set{\n{X}}_{n \in \bbZ}$ be a sequence of Banach space that is embedded in Banach space $X \in \calS'(\br{\bbR}_+)$. We say $\n{X}$\footnote{We abbreviate $\set{\n{X}}_{n \in \bbZ}$ to $\n{X}$ when it is clear that it denotes a sequence rather than $n$-th component of sequence.} is $\brk{n}^q$ -\textit{uniformly embedded} in $X$ if there exists $M>0$ such that $\nrm{id}_{[\n{X},X]}<M\brk{n}^q$\footnote{$\nrm{T}_{[A,B]}$ represents the operator norm of $T: A \rightarrow B$.} for all $n \in \bbZ$. In particular if $\n{X}$ is $\brk{n}^0$- uniformly embedded in $X$, we simply say $\n{X}$ is \textit{uniformly embedded} in $X$ and denote it by
	\begin{equation*}
		\n{X} \overset{\sim}{\hookrightarrow}X.
	\end{equation*}
\end{definition}

\begin{remark} \label{tempered distribution from a function}
	A natural and concrete way to construct a tempered distribution is to take a function $\n{f}$ on each Banach space $\n{X}$ which is $\brk{n}^s$-uniformly embedded in $X \in \calS'(\br{\bbR}_+)$ for some $s \in \bbR$\footnote{By regarding a function $f \in X$ as tempered distribution for some Banach space $X$ which consists of function defined on $\br{\bbR}_+$ (e.g., $L^p(\br{\bbR}_+)$), it means there exists $C>0, k \in \bbN$ so that for all $g \in \calS(\br{\bbR}_+)$, $\brk{f,g}=\int_{\br{\bbR}_+} \br{f}g$ with $\abs{\brk{f,g}} \leq C \nrm{f}_X \nrm{g}_{\calS, k}$.}, with \textit{polynomial norm decreasing condition}:
	\begin{equation} \label{polynomial norm decreasing condition}
		\sup_{n \in \bbZ} \,\brk{n}^q \nrm{\n{f}}_{\n{X}} < \infty \text{ for some } q \in \bbR.
	\end{equation}
	Then, for $g(\beta, \phi)=\sigmaz \n{g}(\beta)e^{in\phi} \in \calS(\br{\bbR}_+ \times \bbT)$,
	\begin{equation*}
		\brk{\set{\n{f}}_{n \in \bbZ},g} \coloneqq \sigmaz \brk{\n{f}, \n{g}}.
	\end{equation*}
	Note that $\set{\n{f}}_{n \in \bbZ}$ satisfies polynomially uniformly bounded condition since for all $n \in \bbZ$,
	\begin{align*}
		&\abs{\brk{\n{f}, \n{g}}} \leq C\nrm{\n{f}}_X \nrm{\n{g}}_{\calS,k} \underset{\substack{\brk{n}^s-\text{uniform} \\ \text{embeding}}}{\leq} CM\brk{n}^s \nrm{\n{f}}_{\n{X}} \nrm{\n{g}}_{\calS, k} \\
		&= CM \brk{n}^{s-q} \brk{n}^q \nrm{\n{f}}_{\n{X}} \nrm{\n{g}}_{\calS,k} \leq \underbrace{CM \left( \sup_{n \in \bbZ} \brk{n}^q  \nrm{\n{f}}_{\n{X}} \right)}_{\text{A constant}} \brk{n}^{s-q} \nrm{\n{g}}_{\calS, k}.
	\end{align*}
\end{remark}

Let $\calS(\br{\bbR}_+)^\bbZ, \calS'(\br{\bbR}_+)^\bbZ$ be the set of all sequence whose element is in $\calS(\br{\bbR}_+), \calS'(\br{\bbR}_+)$ respectively. Then, how to define Fourier transform on $\calS(\br{\bbR}_+ \times \bbT)$ and $\calS'(\br{\bbR}_+ \times \bbT)$ is obvious from their definition.

\begin{definition} \label{FT}
	Let $f(\beta, \phi)=\sigmaz \n{f}(\beta) e^{in \phi} \in \calS(\br{\bbR}_+ \times \bbT)$ and $T=\set{\n{T}}_{n \in \bbZ} \in \calS'(\br{\bbR}_+ \times \bbT)$. Then, Fourier transform $\calF: \calS(\br{\bbR}_+ \times \bbT) \rightarrow \calS(\br{\bbR}_+)^\bbZ$ and $\calF: \calS'(\br{\bbR}_+ \times \bbT) \rightarrow \calS'(\br{\bbR}_+)^\bbZ$ is defined by
	\begin{equation*}
		\calF(f)(n)=\n{f}(\beta), \qquad \calF(T)(n)=\n{T}.
	\end{equation*}
\end{definition}

For convenience, when we write $\n{f}(\beta)$ for some function $f(\beta, \phi)$, it always means $n$-th Fourier coefficient of $f(\beta, \phi)$.

It is trivial from Definition \ref{FT} that $\calF$ is injective. When we try to define inverse Fourier transform, some care is needed due to the norm conditions in Definition \ref{Schwarz class} and \ref{tempered distribution}. Therefore, one can only define inverse transform $\calF^{-1}$ not on whole $\calS(\br{\bbR}_+)^\bbZ$ or $\calS'(\br{\bbR}_+)^\bbZ$, but on $\calF \left( \calS \left( \br{\bbR}_+ \right) \right), \calF \left( \calS' \left( \br{\bbR}_+ \right) \right)$ respectively.

Now, we are ready to define an important space which is main space of this paper.

\begin{definition} [$\calA^s$-space] \label{A^s}
	Let $\set{\n{X}}_{n \in \bbZ}$ be a sequence of Banach space that is $\brk{n}^q$-uniformly embedded in $X \in \calS'(\br{\bbR}_+)$ for some $q \in \bbR$. Then, the space $\calA^s \left( \set{\n{X}}_{n \in \bbZ} \right)$ is defined by
	\begin{align*}
		\calA^s \left( \set{\n{X}}_{n \in \bbZ} \right) \coloneqq \Bigg\{T=\set{\n{T}}_{n \in \bbZ} \in \calS'(\br{\bbR}_+ \times \bbT) \, \Bigg\vert \, \n{T} \in \n{X} \text{ for all }n \in \bbZ \text{ with } \\
		\nrm{T}_{\calA^s \left( \set{\n{X}}_{n \in \bbZ} \right)} \coloneqq \sigmaz \brk{n}^s \nrm{\n{T}}_{\n{X}} < \infty \Bigg\}.
	\end{align*}
\end{definition}
Since $\calA^s \left( \set{\n{X}}_{n \in \bbZ} \right)$ is always associated with a sequence of Banach space not with single Banach space, we simply write $\calA^s \left( \n{X} \right)$ instead of $\calA^s \left( \set{\n{X}}_{n \in \bbZ} \right)$ without confusion.

\begin{remark}
	Polynomially uniformly bounded condition \eqref{polynomially uniformly bounded} for tempered distribution is automatically satisfied since
	\begin{equation*}
		\brk{\n{T}, \n{g}} \leq \nrm{\n{T}}_{X} \nrm{\n{g}}_{\calS, k} \leq M \brk{n}^q \nrm{\n{T}}_{\n{x}} \nrm{\n{g}}_{\calS, k} = \underbrace{M \brk{n}^s \nrm{\n{T}}_{\n{X}}}_{\text{uniformly bounded w.r.t }n} \cdot \brk{n}^{q-s}\nrm{\n{g}}_{\calS, k}
	\end{equation*}
\end{remark}

The first thing we should do is to show $\calA^s \left( \n{X} \right)$ is really a Banach space. We need the following lemma.

\begin{lemma} \label{induced Banach space}
	Let $G: X \rightarrow Y$ be an injective linear map from a Banach space $X$ to a vector space $Y$. Then, GX, the image of X by G, is a Banach space with induced norm
	\begin{equation*}
		\nrm{Gx}_{GX} \coloneqq \nrm{x}_{X}.
	\end{equation*}
	With this induced norm, $G: X \rightarrow GX$ becomes isometric isomorphism.
\end{lemma}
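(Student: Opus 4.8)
The plan is the standard ``transport of structure'' argument: since $G$ is injective it is a linear bijection onto its image $GX$, so we simply carry the Banach space structure of $X$ across this bijection. First I would check that $\nrm{\cdot}_{GX}$ is well defined; this is precisely where injectivity of $G$ is used. Each $y \in GX$ has a \emph{unique} preimage under $G$, so declaring $\nrm{y}_{GX} \coloneqq \nrm{x}_X$ for the $x$ with $Gx = y$ — equivalently $\nrm{y}_{GX} = \nrm{G^{-1}y}_X$, where $G^{-1}\colon GX \to X$ denotes the (linear) set-theoretic inverse, which exists exactly because $G$ is injective — is unambiguous.

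Next I would verify the norm axioms on $GX$. Homogeneity and the triangle inequality are inherited from $\nrm{\cdot}_X$ through linearity of $G^{-1}$: for $y,y' \in GX$ and a scalar $\lambda$, $\nrm{\lambda y}_{GX} = \nrm{G^{-1}(\lambda y)}_X = \abs{\lambda}\,\nrm{y}_{GX}$ and $\nrm{y+y'}_{GX} = \nrm{G^{-1}y + G^{-1}y'}_X \le \nrm{y}_{GX} + \nrm{y'}_{GX}$; positive definiteness uses injectivity once more, since $\nrm{y}_{GX}=0$ forces $G^{-1}y = 0$ and hence $y = 0$. By construction, $G\colon X \to GX$ is then a linear bijection satisfying $\nrm{Gx}_{GX} = \nrm{x}_X$ for all $x$, i.e. an isometric isomorphism of normed spaces (and so is $G^{-1}$). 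The remaining point is completeness of $GX$: given a Cauchy sequence $(y_k)$ in $GX$, the sequence $x_k \coloneqq G^{-1}y_k$ is Cauchy in $X$ because $G^{-1}$ is isometric, hence converges to some $x \in X$ by completeness of $X$, and then $\nrm{y_k - Gx}_{GX} = \nrm{x_k - x}_X \to 0$, so $y_k \to Gx \in GX$.

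There is no genuine obstacle here — the whole statement is an exercise in transporting structure along a bijection. The only thing that deserves a moment's attention is recognizing at the outset that injectivity of $G$ is exactly the hypothesis which makes the candidate norm well defined (and which simultaneously supplies positive definiteness); once that is in place, homogeneity and subadditivity are immediate from linearity, and completeness is automatic because an isometry preserves both the Cauchy property and limits.
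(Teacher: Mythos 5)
Your proof is correct and complete; the paper states Lemma \ref{induced Banach space} without proof as a standard fact, and your transport-of-structure argument (well-definedness via injectivity, norm axioms via linearity of $G^{-1}$, completeness via the isometry carrying Cauchy sequences back to $X$) is precisely the standard justification one would supply.
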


\begin{proposition} \label{A^s is Banach}
	Let $\set{\n{X}}_{n \in \bbZ}$ be a sequence Banach space which is $\brk{n}^q$-uniformly imbedded in $X \in \calS'\left( \br{\bbR}_+ \right)$ for some $q \in \bbR$. Then, $\calA^s \left( \n{X} \right) \in \calS' \left( \br{\bbR}_+ \times \bbT \right)$ is a Banach space for all $s \in \bbR$ equipped with $\calA^s \left( \n{X} \right)$-norm given in Definition \ref{A^s}.
\end{proposition}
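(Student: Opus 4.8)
The plan is to exhibit $\calA^s(\n{X})$ as the image of a bona fide Banach space under an injective linear map, and then quote Lemma \ref{induced Banach space}. The model space is the weighted $\ell^1$-direct sum $\ell^1_s := \bigl\{(x_n)_{n\in\bbZ} : x_n \in \n{X},\ \sum_{n}\brk{n}^s\nrm{x_n}_{\n{X}} < \infty\bigr\}$ with norm $\nrm{(x_n)}_{\ell^1_s} := \sum_n \brk{n}^s\nrm{x_n}_{\n{X}}$. That this is complete is the usual diagonal argument, which I would state as a one-line lemma or inline: a Cauchy sequence is, coordinatewise, Cauchy in each $\n{X}$ (because $\brk{n}^s\nrm{x^{(j)}_n - x^{(k)}_n}_{\n{X}} \le \nrm{x^{(j)} - x^{(k)}}_{\ell^1_s}$), hence converges coordinatewise since each $\n{X}$ is Banach, and one checks by truncation to finite index sets that the coordinatewise limit lies in $\ell^1_s$ and is the $\ell^1_s$-limit.

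Next, define $G : \ell^1_s \to \calS'(\br{\bbR}_+\times\bbT)$ by letting $G\bigl((x_n)_n\bigr)$ be the functional $g = \sum_n \n{g}\,e^{in\phi} \mapsto \sum_n x_n(\n{g})$, where each $x_n$ is viewed inside $\n{X} \hookrightarrow X \subset \calS'(\br{\bbR}_+)$. The only genuinely substantive step is to verify that $G\bigl((x_n)_n\bigr)$ really is a tempered distribution in the sense of Definition \ref{tempered distribution}, i.e.\ that $\{x_n\}_n$ is polynomially uniformly bounded and the defining series converges; this is exactly where the $\brk{n}^q$-uniform embedding $\n{X} \hookrightarrow X$ enters. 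Writing $\abs{\brk{f,h}} \le C\nrm{f}_X\nrm{h}_{\calS,k}$ for the embedding $X \subset \calS'(\br{\bbR}_+)$, one has for each $n$
\[
	\abs{x_n(\n{g})} \le C\nrm{x_n}_X\nrm{\n{g}}_{\calS,k} \le CM\brk{n}^q\nrm{x_n}_{\n{X}}\nrm{\n{g}}_{\calS,k} \le CM\Bigl(\sup_m \brk{m}^s\nrm{x_m}_{\n{X}}\Bigr)\brk{n}^{q-s}\nrm{\n{g}}_{\calS,k},
\]
which is \eqref{polynomially uniformly bounded} with exponent $q-s$; convergence of $\sum_n x_n(\n{g})$ then follows from the rapid decay of $\nrm{\n{g}}_{\calS,k}$ in $n$ built into Definition \ref{Schwarz class}. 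This is essentially the computation already performed in Remark \ref{tempered distribution from a function}, so it may be cited almost verbatim.

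Finally, $G$ is obviously linear, and it is injective because the Fourier transform $\calF$ on $\calS'(\br{\bbR}_+\times\bbT)$ is injective (Definition \ref{FT}): if $G\bigl((x_n)_n\bigr) = G\bigl((y_n)_n\bigr)$ then the two distributions have identical Fourier coefficients, so $x_n = y_n$ in $\n{X}$ for every $n$. By construction the image $G(\ell^1_s)$ is precisely $\calA^s(\n{X})$, and the norm it inherits from $\ell^1_s$ through Lemma \ref{induced Banach space} is $\nrm{(x_n)}_{\ell^1_s} = \sum_n \brk{n}^s\nrm{x_n}_{\n{X}}$, i.e.\ exactly the $\calA^s(\n{X})$-norm of Definition \ref{A^s}. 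Lemma \ref{induced Banach space} then yields that $\calA^s(\n{X})$ is a Banach space with this norm. I do not anticipate a real obstacle; the only delicate point is the one flagged above — ensuring that the candidate elements (hence, once the argument is unwound, the completion limit) lie in $\calS'(\br{\bbR}_+\times\bbT)$ and not merely in $\calS'(\br{\bbR}_+)^{\bbZ}$ — and the $\brk{n}^q$-uniform embedding hypothesis is precisely what makes this automatic.
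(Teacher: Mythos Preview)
Your proposal is correct and follows essentially the same approach as the paper: both identify $\calA^s(\n{X})$ as the image of an $\ell^1$-type direct sum of the $\n{X}$ under an injective map into $\calS'(\br{\bbR}_+\times\bbT)$, verify the tempered-distribution condition via the $\brk{n}^q$-uniform embedding, and invoke Lemma~\ref{induced Banach space}. The only cosmetic difference is that the paper works with the unweighted space $\ell^1(\{\n{X}\})$ and factors through the multiplication operator $M_{\brk{n}^{-s}}$ before applying $\calF^{-1}$, whereas you absorb the weight $\brk{n}^s$ directly into the norm of your model space $\ell^1_s$; these are equivalent packagings of the same argument.
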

\begin{proof}
	Note that $\set{\n{X}}_{n \in \bbZ}$ is a vector space with component-wise addition and scalar multiplication. Then,
	\begin{equation*}
		\ell^1 \left( \set{\n{X}}_{n \in \bbZ} \right) \coloneqq \set{\set{\n{f}}_{n \in \bbZ} \in \set{\n{X}}_{n \in \bbZ} \, \Bigg\vert \, \nrm{\set{\n{f}}_{n \in \bbZ}}_{\ell^1 \left( \set{\n{X}}_{n \in \bbZ} \right)} \coloneqq \sigmaz \nrm{\n{f}}_{\n{X}} < \infty}
	\end{equation*}
	is a Banach space with the same proof as Riesz-Fischer theorem.
	
	Consider a component-wise multiplication operator $M_{\brk{n}^{-s}} \rightarrow \calF \left( \calS' \left( \br{\bbR}_+ \times \bbT \right) \right)$ defined by
	\begin{equation*}
		M_{\brk{n}^{-s}} \left( \set{\n{f}}_{n \in \bbZ} \right) \coloneqq \set{\brk{n}^{-s} \n{f}}_{n \in \bbZ}.
	\end{equation*}
	It is trivial that $M_{\brk{n}^{-s}}$ is injective. To show range of $M_{\brk{n}^{-s}}$ is contained in $\calF \left( \calS' \left( \br{\bbR}_+ \times \bbT \right) \right)$, it suffices to check polynomial norm decreasing condition \eqref{polynomial norm decreasing condition} for all $\set{\n{f}}_{n \in \bbZ} \in \ell^1 \left( \set{\n{X}}_{n \in \bbZ} \right)$. Since $\set{\n{f}}_{n \in \bbZ} \in \ell^1 \left( \set{\n{X}}_{n \in \bbZ} \right)$ we have $\sup_{n \in Z}\nrm{\n{f}}_{\n{X}} < \infty$. Hence
	\begin{equation*}
		\sup_{n \in \bbZ} \brk{n}^s \nrm{\brk{n}^{-s}\n{f}}_{\n{X}} = \sup_{n \in \bbZ} \nrm{\n{f}}_{\n{X}} < \infty,
	\end{equation*}
	so $\set{\brk{n}^{-s} \n{f}}_{n \in \bbZ}$ satisfies norm-decreasing condition \eqref{polynomial norm decreasing condition}.
	
	Since
	\begin{equation*}
		\calA^s \left( \n{X}  \right) = \left( \calF^{-1} \, \circ \, M_{\brk{n}^{-s}} \right) \left( \ell^1 \left( \set{\n{X}}_{n \in \bbZ} \right) \right)
	\end{equation*}
	and $M_{\brk{n}^{-s}}: \ell^1 \left( \set{\n{X}}_{n \in \bbZ} \right) \rightarrow \calF \left( \calS' \left( \br{\bbR}_+ \times \bbT \right) \right), \calF^{-1}: \calF \left( \calS' \left( \br{\bbR}_+ \times \bbT \right) \right) \rightarrow  \calS' \left( \br{\bbR}_+ \times \bbT \right)$ are injective, the result follows by Lemma \ref{induced Banach space}.
\end{proof}

Next, we investigate some continuity results regarding on $\calA^s$-space.

\begin{definition} \label{uniformly bounded linear map}
	Let $\n{G}: \n{X} \rightarrow \n{Y}$ be a sequence of linear map where $\set{\n{X}}_{n \in \bbZ}, \set{\n{Y}}_{n \in \bbZ}$ are sequences of normed vector spaces. We say $\set{\n{G}}_{n \in \bbZ}$ is \textit{uniformly bounded} if there exists $M>0$ such that
	\begin{equation*}
		\nrm{\n{G}}_{[\n{X}, \n{Y}]} \leq M
	\end{equation*}
	for all $n \in \bbZ$.
\end{definition}

\begin{proposition} \label{continuity between A^s}
	Let $\n{G}: \n{X} \rightarrow \n{Y}$ be a sequence of uniformly bounded linear map where $\set{\n{X}}_{n \in \bbZ}, \set{\n{Y}}_{n \in \bbZ}$ are sequences of Banach space which satisfies the assumption in \eqref{A^s}. Then, $\set{\n{G}}_{n \in \bbZ}$ induces bounded linear map
	\begin{equation*}
		G: \calA^s \left( \n{X} \right) \rightarrow \calA^s \left( \n{Y} \right)
	\end{equation*}
	defined by $G= \calF^{-1} \, \circ \, \n{G} \, \circ \, \calF$ for all $s \in \bbR$.
\end{proposition}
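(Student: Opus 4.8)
The plan is to reduce the statement to componentwise estimates through the Fourier transform $\calF$ of Definition \ref{FT}. Since $G = \calF^{-1} \circ \n{G} \circ \calF$ is by construction diagonal in the $\phi$-frequency variable, its boundedness on $\calA^s$ should be nothing more than the weighted $\ell^1$-sum of the pointwise-in-$n$ operator bounds on $\n{G}$, mirroring the structure of the proof of Proposition \ref{A^s is Banach}. Concretely, I would fix $T = \set{\n{T}}_{n \in \bbZ} \in \calA^s(\n{X})$ --- so $\n{T} \in \n{X}$ for every $n$ and $\sigmaz \brk{n}^s \nrm{\n{T}}_{\n{X}} < \infty$, hence in particular $\sup_n \brk{n}^s \nrm{\n{T}}_{\n{X}} < \infty$ --- and set $\n{S} \coloneqq \n{G} \, \n{T} \in \n{Y}$. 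Uniform boundedness of $\set{\n{G}}_{n \in \bbZ}$ then supplies the single elementary inequality on which the whole argument rests:
\begin{equation} \label{PPeq1}
	\nrm{\n{S}}_{\n{Y}} = \nrm{\n{G} \, \n{T}}_{\n{Y}} \leq \nrm{\n{G}}_{[\n{X}, \n{Y}]} \, \nrm{\n{T}}_{\n{X}} \leq M \nrm{\n{T}}_{\n{X}} \qquad (n \in \bbZ).
\end{equation}

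Before setting $GT \coloneqq \calF^{-1}\big(\set{\n{S}}_{n \in \bbZ}\big)$, I would verify that $\set{\n{S}}_{n \in \bbZ}$ is genuinely the Fourier transform of a tempered distribution on $\bbRb \times \bbT$, so that $\calF^{-1}$ is applicable. This is the one step that uses the standing hypothesis of Definition \ref{A^s}, namely that $\set{\n{Y}}_{n \in \bbZ}$ is $\brk{n}^{q}$-uniformly embedded in some $Y \in \calS'(\bbRb)$: then each $\n{S}$ lies in $Y \subset \calS'(\bbRb)$, and the computation recorded in the Remark following Definition \ref{A^s}, now fed with \eqref{PPeq1} and the boundedness of $\brk{n}^s \nrm{\n{T}}_{\n{X}}$, yields the polynomially uniformly bounded condition \eqref{polynomially uniformly bounded} for $\set{\n{S}}_{n \in \bbZ}$. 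Hence $\set{\n{S}}_{n \in \bbZ}$ defines a tempered distribution in the sense of Definition \ref{tempered distribution}, $\calF^{-1}$ applies, and $GT \in \calS'(\bbRb \times \bbT)$ is well-defined with $n$-th Fourier coefficient $\n{S}$. Linearity of $G$ is immediate from linearity of each $\n{G}$ together with that of $\calF$ and $\calF^{-1}$.

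For boundedness I would then simply multiply \eqref{PPeq1} by $\brk{n}^s$ and sum over $n$:
\begin{equation*}
	\nrm{GT}_{\calA^s(\n{Y})} = \sigmaz \brk{n}^s \nrm{\n{S}}_{\n{Y}} \leq M \sigmaz \brk{n}^s \nrm{\n{T}}_{\n{X}} = M \nrm{T}_{\calA^s(\n{X})} < \infty,
\end{equation*}
which at once shows $GT \in \calA^s(\n{Y})$ and $\nrm{G}_{[\calA^s(\n{X}), \calA^s(\n{Y})]} \leq M$.

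I do not anticipate a genuine obstacle: the proposition is essentially a bookkeeping statement that repackages \eqref{PPeq1}, and no new analytic input is required. The only point demanding a little care is the well-definedness check --- confirming that componentwise application of $\set{\n{G}}_{n \in \bbZ}$ keeps us inside $\calF\big(\calS'(\bbRb \times \bbT)\big)$ --- and this is dispatched exactly as in Proposition \ref{A^s is Banach}, using the uniform embedding hypothesis and the Remark after Definition \ref{A^s}.
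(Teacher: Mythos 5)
Your proof is correct and matches the paper's argument: both reduce to the single componentwise estimate $\nrm{\n{G}\n{T}}_{\n{Y}} \leq M\nrm{\n{T}}_{\n{X}}$, multiply by $\brk{n}^s$, and sum. The paper's proof is shorter (``Almost trivial''), omitting the well-definedness check you spell out; your extra verification that $\set{\n{G}\n{T}}_{n\in\bbZ}$ satisfies the polynomially uniformly bounded condition is a sound detail the paper implicitly takes for granted, not a different route.
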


\begin{proof}
	Almost trivial. For $T=\set{\n{T}}_{n \in \bbZ} \in \calA^s \left( \n{X} \right)$, the map $G$ can be written explicitly by $G\left( \set{\n{T}}_{n \in \bbZ} \right) = \set{\n{G} \left( \n{T} \right)}_{n \in \bbZ}$, and through uniformly bounded condition on $\n{G}$,
	\begin{equation*}
		\nrm{G(T)}_{\calA^s \left( \n{Y} \right)} = \sigmaz \brk{n}^s \nrm{\n{G} \left( \n{T} \right)}_{\n{Y}} \leq \sigmaz M \brk{n}^s \nrm{\n{T}}_{\n{X}}=M \nrm{T}_{\calA^s \left( \n{X} \right)}.
	\end{equation*}
\end{proof}

\begin{remark} \label{concrete element}
	Although we've defined $\calS \left( \br{\bbR}_+ \times \bbT \right), \calS' \left( \br{\bbR}_+ \times \bbT \right), \calA^s \left( \n{X} \right)$ in rather abstract and formal way, what we have in mind is very concrete as in the introduction paragraph of Section \ref{subsec: Wiener type function space on RT and its continuity results}. The specific element we intend to be in $\calA^s \left( \n{X} \right)$ is the function of the form
	\begin{equation*}
		f(\beta, \phi)= \sigmaz \n{f}(\beta)e^{in \phi}
	\end{equation*}
	which satisfies $\sigmaz \brk{n}^s \nrm{\n{f}(\beta)}_{\n{X}} < \infty$, where $\n{X}$ are some (intricate) Banach space which is embedded in a set of continuous bounded function on $[0, \infty)$. A function $f(\beta, \phi)$ can be regarded as tempered distribution by defining
	\begin{equation*}
		\brk{f, \sigmaz \n{g} (\beta) e^{in \phi}}=\brk{\sigmaz \n{f} (\beta) e^{in \phi},\sigmaz \n{g} (\beta) e^{in \phi}}=\sigmaz \int_0^\infty \br{\n{f}(\beta)} \n{g}(\beta)\, d\beta.
	\end{equation*}
	Note that $f$ satisfies polynomially uniformly bounded condition \eqref{polynomially uniformly bounded} as
	\begin{align*}
		\abs{\int_0^\infty \n{f}(\beta) \n{g}(\beta)} & \leq \int_0^\infty \brk{n}^s \abs{\n{f}(\beta)} \brk{n}^{-s} \abs{\n{g}(\beta)} \\
		& \leq \underbrace{\left( \sigmaz \brk{n}^s \nrm{\n{f}(\beta)}_{\n{X}} \right)}_{\text{independent of } n} \brk{n}^{-s} \int_0^\infty \left( \frac{\brk{\beta}^2}{\beta} \right)^{-2} \underbrace{\left( \frac{\brk{\beta}^2}{\beta} \right)^{2} \abs{\n{g}(\beta)}}_{\leq \nrm{\n{g}}_{\calS, 2}} \, d\beta \\
		& \leq \underbrace{\Bigg( \left( \sigmaz \brk{n}^s \nrm{\n{f}(\beta)}_{\n{X}} \right) \underbrace{\int_0^\infty \left( \frac{\brk{\beta}^2}{\beta} \right)^{-2}\, d\beta}_{< \infty} \Bigg)}_{\text{constant independent of }n} \brk{n}^{-s} \nrm{\n{g}}_{\calS, 2}.
	\end{align*}
	Also, considering the definition of derivatives on tempered distribution, we have
	\begin{equation*}
		\rd_\beta \left( \sigmaz \n{f} (\beta) e^{in \phi} \right)=\sigmaz \n{f}_\beta (\beta) e^{in \phi}, \qquad \rd_\phi \left( \sigmaz \n{f} (\beta) e^{in \phi} \right)=\sigmaz in \n{f} (\beta) e^{in \phi}
	\end{equation*}
	in the sense of tempered distribution, and if regularity of $\set{\n{f}(\beta)}_{n \in \bbZ}$ is good enough, this will be true as a function. In the view of proposition \ref{continuity between A^s}, $\rd_\beta$ is induced by $\set{\rd_\beta}_{n \in \bbZ}$ and $\rd_\phi$ is induced by multiplication operator $\set{M_{in}}_{n \in \bbZ}$.
\end{remark}

From now on, we omit `$M$' in multiplication operator for simplicity, so we use $\set{in}_{n \in \bbZ}$ instead of $\set{M_{in}}_{n \in \bbZ}$. Next, we investigate multiplication property of $\calA^s$-space.

\begin{proposition} \label{A^s algebra}
	Let $\set{\n{X}}, \set{\n{Y}}, \set{\n{Z}}$ be sequences of Banach space which is $\brk{n}^q$-uniformly embedded in $X, Y, Z \in \calS'(\br{\bbR}_+)$ respectively for some $q \in \bbR$. Also, suppose $X, Y, Z$ consist of functions on $\br{\bbR}_+$ (cf. Remark \ref{tempered distribution from a function}) and
	\begin{equation*}
		\ui{X} \cdot \uj{Y} \unibed \uij{Z},
	\end{equation*}
	i.e., there exists $M>0$ such that for all $\ui{f} \in \ui{X}$ and $\uj{g} \in \uj{Y}$, we have $\ui{f} \uj{g} \in \uij{Z}$ with $\nrm{\ui{f} \uj{g}}_{\uij{Z}} \leq M \nrm{\ui{f}}_{\ui{X}} \nrm{\uj{g}}_{\uj{Y}}$ for all $i,j \in \bbZ$. Then, for all $s \geq 0$,
	\begin{enumerate}
		\item For $f \in \calA^s \left( \n{X} \right), g \in \calA^s \left( \n{Y} \right)$, $fg \in \calS' \left( \br{\bbR}_+ \times \bbT \right)$ defined by \begin{equation} \label{eq12}
			\left( fg \right)^{(n)} \coloneqq \sum_{k \in \bbZ} f^{(n-k)}g^{(k)}
		\end{equation} belongs to $\calA^s \left( \n{Z} \right)$ with norm bound \begin{equation*}
			\nrm{fg}_{\calA^s \left( \n{Z} \right)} \leq \sqrt{2}M \nrm{f}_{\calA^s \left( \n{X} \right)} \nrm{g}_{\calA^s \left( \n{Y} \right)}.
		\end{equation*}\\
		\item For $f \in \calA^s \left( \n{X} \right), g \in \calA^{-s} \left( \n{Y} \right)$, $fg \in \calS' \left( \br{\bbR}_+ \times \bbT \right)$ defined by \begin{equation} \label{eq13}
			\left( fg \right)^{(n)} \coloneqq \sum_{k \in \bbZ} f^{(n-k)}g^{(k)}
		\end{equation} belongs to $\calA^{-s} \left( \n{Z} \right)$ with norm bound \begin{equation*}
			\nrm{fg}_{\calA^{-s} \left( \n{Z} \right)} \leq \sqrt{2}M \nrm{f}_{\calA^s \left( \n{X} \right)} \nrm{g}_{\calA^{-s} \left( \n{Y} \right)}. 
		\end{equation*}
	\end{enumerate}
\end{proposition}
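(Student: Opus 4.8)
This is the bilinear ``algebra'' estimate for the $\calA^{s}$-scale, and the plan is to strip off the pointwise (in $\beta$) bilinear bound supplied by the hypothesis $\ui{X}\cdot\uj{Y}\unibed\uij{Z}$, reducing both assertions to a single weighted $\ell^{1}$-convolution inequality on the Fourier side; that inequality in turn rests on one elementary estimate for the weight $\brk{\cdot}$.

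First I would make sense of the coefficient $(fg)^{(n)}$. Fix $n\in\bbZ$. For each $k$, since $(n-k)+k=n$, the hypothesis gives $f^{(n-k)}g^{(k)}\in\n{Z}$ together with $\nrm{f^{(n-k)}g^{(k)}}_{\n{Z}}\le M\,\nrm{f^{(n-k)}}_{X^{(n-k)}}\nrm{g^{(k)}}_{Y^{(k)}}$. In case~(1) the sequences $\bigl(\nrm{f^{(m)}}_{X^{(m)}}\bigr)_{m}$ and $\bigl(\nrm{g^{(k)}}_{Y^{(k)}}\bigr)_{k}$ lie in $\ell^{1}(\bbZ)$ (they are dominated by the corresponding $\calA^{s}$-norms since $\brk{\cdot}^{s}\ge1$), so $\sum_{k}\nrm{f^{(n-k)}}_{X^{(n-k)}}\nrm{g^{(k)}}_{Y^{(k)}}<\infty$ by Young's inequality for $\ell^{1}\ast\ell^{1}$; in case~(2) the same finiteness follows once the weight inequality below is available. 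Since each $\n{Z}$ is complete, the series $\sum_{k}f^{(n-k)}g^{(k)}$ converges absolutely in $\n{Z}$, and I define $(fg)^{(n)}$ to be its sum, so that \eqref{eq12} and \eqref{eq13} are meaningful.

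The only computational ingredient is the elementary weight inequality
\begin{equation*}
	\brk{n}^{2}\le 2\,\brk{n-k}^{2}\brk{k}^{2}\qquad(n,k\in\bbZ),
\end{equation*}
equivalently $\brk{a+b}\le\sqrt{2}\,\brk{a}\brk{b}$, which is immediate from $2\brk{a}^{2}\brk{b}^{2}-\brk{a+b}^{2}=1+(a-b)^{2}+2a^{2}b^{2}\ge0$. Permuting the two summands gives the dual form $\brk{k}\le\sqrt{2}\,\brk{n}\brk{n-k}$, hence $\brk{n}^{-s}\le\sqrt{2}\,\brk{n-k}^{s}\brk{k}^{-s}$ and $\brk{n}^{s}\le\sqrt{2}\,\brk{n-k}^{s}\brk{k}^{s}$ in the relevant range of $s$. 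With this, for~(1) I would estimate, using Tonelli to rearrange the nonnegative double sum,
\begin{align*}
	\nrm{fg}_{\calA^{s}(\n{Z})}
	&=\sigmaz\brk{n}^{s}\nrm{(fg)^{(n)}}_{\n{Z}}
	\le M\sigmaz\sum_{k\in\bbZ}\brk{n}^{s}\nrm{f^{(n-k)}}_{X^{(n-k)}}\nrm{g^{(k)}}_{Y^{(k)}}\\
	&\le\sqrt{2}\,M\Bigl(\sum_{m\in\bbZ}\brk{m}^{s}\nrm{f^{(m)}}_{X^{(m)}}\Bigr)\Bigl(\sum_{k\in\bbZ}\brk{k}^{s}\nrm{g^{(k)}}_{Y^{(k)}}\Bigr)
	=\sqrt{2}\,M\,\nrm{f}_{\calA^{s}(\n{X})}\,\nrm{g}_{\calA^{s}(\n{Y})},
\end{align*}
and for~(2) I would argue identically, inserting $\brk{n}^{-s}\le\sqrt{2}\,\brk{n-k}^{s}\brk{k}^{-s}$ instead, which reorganizes the double sum into $\nrm{f}_{\calA^{s}(\n{X})}\,\nrm{g}_{\calA^{-s}(\n{Y})}$. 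In particular the relevant $\calA^{\pm s}$-norm of $fg$ is finite; by Proposition~\ref{A^s is Banach}, $\calA^{\pm s}(\n{Z})\subset\calS'(\br{\bbR}_+ \times \bbT)$, so $fg$ is a tempered distribution lying in the asserted space with the asserted bound.

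The argument is essentially routine; the one spot that needs a little care is the well-definedness step, namely verifying that the coefficient-wise convolution converges in the Banach space $\n{Z}$ \emph{before} its norm is estimated, so that $(fg)^{(n)}$ — and hence $fg$ itself — is genuinely defined as a tempered distribution rather than only a formal object. If anything should be singled out as the crux, it is the choice of the sharp form $\brk{a+b}\le\sqrt{2}\,\brk{a}\brk{b}$ of the weight inequality, which is exactly what delivers the clean constant $\sqrt{2}M$ rather than a larger power of $2$.
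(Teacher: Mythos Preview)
Your proposal is correct and follows essentially the same route as the paper: both reduce to the weighted $\ell^1$-convolution estimate via the elementary inequality $\brk{n}\le\sqrt{2}\,\brk{n-k}\brk{k}$ (respectively $\brk{n}^{-1}\le\sqrt{2}\,\brk{n-k}\brk{k}^{-1}$) and then sum. The only cosmetic difference is that you verify absolute convergence of $(fg)^{(n)}$ in $\n{Z}$ before estimating, whereas the paper runs the norm estimate first and infers well-definedness from its finiteness afterward.
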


\begin{proof}
	We first prove (1). For $f \in \calA^s \left( \n{X} \right)$ and $g \in \calA^s \left( \n{Y} \right)$,
	\begin{align*}
		\nrm{fg}_{\calA^s(\n{Z})} &= \sigmaz \brk{n}^s \nrm{\n{(fg)}}_{\n{Z}} \\
		\underset{\eqref{eq12}}&{\leq} \sigmaz \brk{n}^s \sum_{k \in \bbZ} \nrm{f^{(n-k)}g^{(k)}}_{\n{Z}} \\
		\underset{\ui{X} \cdot \uj{Y} \unibed \uij{Z}}&{\leq} M \sigmaz \sum_{k \in \bbZ} \brk{n}^s \nrm{f^{(n-k)}}_{X^{(n-k)}} \nrm{g^{(k)}}_{Y^{(k)}} \\
		\underset{\brk{n} \leq \sqrt{2}\brk{n-k}\brk{k}}&{\leq} \sqrt{2}M \sigmaz \sum_{k \in \bbZ} \brk{n-k}^s \nrm{f^{(n-k)}}_{X^{(n-k)}} \brk{k}^s \nrm{g^{(k)}}_{Y^{(k)}} \\
		&\leq \sqrt{2}M \nrm{f}_{\calA^s \left( \n{X} \right)} \nrm{g}_{\calA^s \left( \n{Y} \right)}.
	\end{align*}
	Since $\nrm{f}_{\calA^s \left( \n{X} \right)} < \infty$ and $\nrm{g}_{\calA^s \left( \n{Y} \right)}<\infty$, we derive $fg$ defined by \eqref{eq12} is well-defined tempered distribution as infinite sum converges absolutely to an element of $\n{Z}$.
	
	Proof of (2) is similar; just use $\brk{n}^{-1} \leq \sqrt{2} \brk{n-k} \brk{k}^{-1}$ instead of  $\brk{n} \leq \sqrt{2}\brk{n-k}\brk{k}$.
\end{proof}

\bigskip

\subsection{Suited function space}
\label{subsec: Suited function space}

In this subsection, we introduce various Banach spaces which will provide the answer for \hyperref[Q1]{Question 1}. At the first glance, these spaces might look artificial, as we will not use familiar Banach spaces such as Lebesgue space or Sobolev space. However, the reasons why we have to choose these unintuitive space become clear.

Suppose $X$ and $Y$ are Banach spaces that is included in some vector space $V$. Then, $X \cap Y$ is also a Banach space with its norm
\begin{equation*}
	\nrm{a}_{X \cap Y} \coloneqq \maxi{\nrm{a}_X, \nrm{a}_Y}.
\end{equation*}  
Also, if $X \cap Y = \0$, $X \oplus Y$ is a Banach space with its norm
\begin{equation*}
	\nrm{x+y}_{X \oplus Y} \coloneqq \nrm{x}_{X}+\nrm{y}_{Y}.
\end{equation*}
Even though $X$ and $Y$ are not disjoint, $X+Y$ is still a Banach space with its norm
\begin{equation*}
	\nrm{a}_{X+Y}=\inf \left\{ \nrm{x}_{X} + \nrm{y}_{Y}\, \vert \, a=x+y, x \in X, y \in Y \right\}.
\end{equation*}
The function $\xi_\infty:[0,\infty) \rightarrow [0,1]$ is a smooth function which is 0 near the origin and 1 near the infinity. More specific conditions imposed on $\xi_\infty$ will be clarified later. Also, we define $\xi_0$ to be $1-\xi_\infty$. Now we are ready to introduce important Banach spaces which will be main characters of this paper;

\begin{align}
	C_b & \coloneqq \set{f: [0, \infty) \rightarrow \bbC \, \Big\vert \, f \text{ is continuous with } \nrm{f}_{C_b} \coloneqq \sup_{\beta \in [0, \infty)} \abs{f(\beta)} < \infty} \label{Cb}, \\
	C_b^\delta &\coloneqq \beta^\delta C_b \cap \beta^{-\delta}C_b = \set{f \in C_b \, \Big\vert \, \nrm{f}_{C_b^\delta} \coloneqq \maxi{\nrm{\beta^\delta f}_{C_b}, \nrm{\beta^{-\delta} f}_{C_b}} < \infty} \label{Cbd}, \\
	\n{W_-} & \coloneqq C_b^\delta \oplus \delta_{0n} \bbC \xi_0 \coloneqq \begin{cases}
		C_b^\delta \oplus \bbC \xi_0 & (n=0) \\
		C_b^\delta & (n \neq 0) 
	\end{cases}, \label{Wnm} \\
	\n{W_0} & \coloneqq C_b^\delta \oplus \bbC \xi_0 \oplus \delta_{0n} \bbC \label{Wnz}, \\
	\n{W_+} & \coloneqq C_b^\delta \oplus \bbC \xi_0 \oplus \bbC \xi_\infty, \label{Wnp} \\
	\Box_{,N} & \coloneqq \begin{cases}
		\Box & (n=N\bbZ) \\
		0 & (n \neq N\bbZ)
	\end{cases}, \qquad \text{where } \Box=C_b, C_b^\delta, \n{W}_-, \n{W}_0, \n{W}_+. \label{N periodic}
\end{align}
Note that
\begin{equation} \label{unibed chain}
	C_b^\delta \unibed \n{W}_- \unibed \n{W}_0 \unibed \n{W}_+ \unibed C_b \in \calS'(\bbR_+),
\end{equation}
where $\n{X} \unibed \n{Y}$ means $\n{X} \subset \n{Y}$ and $id_{[\n{x}, \n{Y}]}$ is uniformly bounded for all $n \in \bbZ$. Above spaces have multiplication property also.

\begin{proposition} \label{W^n algebra}
	For all $N \geq 0$,
	\begin{align}
		\ui{W}_{-,N} \cdot \uj{W}_{+,N} & \unibed \uij{W}_{-,N}, \label{eq14} \\
		\ui{W}_{0,N} \cdot \uj{W}_{0,N} & \unibed \uij{W}_{0,N},\label{eq15} \\
		\ui{W}_{+,N} \cdot \uj{W}_{+,N} & \unibed \uij{W}_{+,N}. \label{eq16} 
	\end{align}
\end{proposition}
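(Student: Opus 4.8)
The plan is to reduce all three estimates to a short finite list of multiplication inclusions among the elementary pieces $C_b^\delta$, $\bbC\xi_0$, $\bbC\xi_\infty$ and $\bbC\cdot 1$ out of which \eqref{Wnm}--\eqref{Wnp} are assembled, and then expand a general product bilinearly over those pieces. The $N$-periodicity is essentially free: by \eqref{N periodic}, $\ui{W}_{\star,N}=\{0\}$ whenever $i\notin N\bbZ$, so if $i\notin N\bbZ$ or $j\notin N\bbZ$ the left-hand side is $\{0\}$ and nothing is to prove; and when $i,j\in N\bbZ$ one has $i+j\in N\bbZ$, hence $\uij{W}_{\star,N}=\uij{W}_\star$. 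Since $\ui{W}_\star$ depends on $i$ only through whether $i=0$, it then suffices to verify finitely many unrestricted inclusions $\ui{W}_\star\cdot\uj{W}_\star\unibed\uij{W}_\star$ with constants independent of $i,j$.

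The technical core is a sublemma gathering elementary facts about the building blocks. First, $C_b^\delta$ is a Banach algebra with $\nrm{fg}_{C_b^\delta}\le\nrm{f}_{C_b^\delta}\nrm{g}_{C_b^\delta}$: this follows from $C_b^\delta\hookrightarrow C_b$ with constant $1$ (as $|f|\le\nrm{f}_{C_b^\delta}\min(\beta^\delta,\beta^{-\delta})\le\nrm{f}_{C_b^\delta}$) together with $\nrm{\beta^{\pm\delta}fg}_{C_b}\le\nrm{\beta^{\pm\delta}f}_{C_b}\nrm{g}_{C_b}$. Second, $\xi_0,\xi_\infty,1$ all lie in $C_b$ with sup-norm $\le 1$, so multiplication by any of them maps $C_b^\delta$ boundedly into itself. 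Third, the ``cross'' products are absorbed into $C_b^\delta$: since $\xi_\infty$ vanishes near $0$ and $\xi_0$ near $\infty$, the function $\xi_0\xi_\infty$ is continuous and supported in a compact subset of $(0,\infty)$, hence lies in $C_b^\delta$; consequently $\xi_0^2-\xi_0=-\xi_0\xi_\infty\in C_b^\delta$, $\xi_\infty^2-\xi_\infty=-\xi_0\xi_\infty\in C_b^\delta$, while $\xi_0\cdot 1=\xi_0$, $\xi_\infty\cdot 1=\xi_\infty$, $1\cdot 1=1$. In particular $\xi_0^2\in\bbC\xi_0\oplus C_b^\delta$ and $\xi_\infty^2\in\bbC\xi_\infty\oplus C_b^\delta$, the direct sums being genuine since none of $\xi_0,\xi_\infty,1$ lies in $C_b^\delta$, so the projections onto the one-dimensional slots are bounded.

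With the sublemma in hand, each of \eqref{eq14}--\eqref{eq16} follows by the same routine. Write $f$ (resp.\ $g$) as its $C_b^\delta$-part plus its special-direction parts --- using the $\bbC\xi_0$, $\bbC\cdot 1$ representation for $W_0$, so no $\xi_\infty$ is ever introduced there --- expand $fg$ into at most $3\times 3$ products, classify each product by the sublemma, and collect. Every product falls into $C_b^\delta$ except those of two special directions: $\xi_0\cdot\xi_0$ feeds the $\bbC\xi_0$ slot, $\xi_\infty\cdot\xi_\infty$ feeds $\bbC\xi_\infty$ (for $W_+$), and $1\cdot 1$ feeds the constant slot (for $W_0$). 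Hence $fg$ lands in the claimed space, and its norm is dominated by a finite sum of terms of the form (sublemma constant)$\times$(norm of an $f$-component)$\times$(norm of a $g$-component)$\le C\nrm{f}_{\ui{W}_\star}\nrm{g}_{\uj{W}_\star}$ with $C$ independent of $i,j$, which is the uniform embedding.

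The one non-mechanical step --- and the main obstacle --- is the bookkeeping of which Fourier mode may carry which one-dimensional summand, since by \eqref{Wnm} the $\bbC\xi_0$ summand sits only at mode $0$ in $W_-$, and by \eqref{Wnz} the constant only at mode $0$ in $W_0$. One must check that for \eqref{eq15} the constant coming from $1\cdot 1$ is produced only when $i=j=0$, so it lands at the mode that admits it (it also helps that $\n{W}_0$ and $\n{W}_+$ coincide, with equivalent norms, at mode $0$ because $1=\xi_0+\xi_\infty$), and for \eqref{eq14} that the $W_+$ factor feeds no $\xi_\infty$-direction into the product (the terms $a\xi_\infty$ and $\xi_0\xi_\infty$ lying in $C_b^\delta$) while the only mode into which a $\xi_0$-direction can be fed --- produced through $\xi_0^2=\xi_0+(\xi_0^2-\xi_0)$ --- is one in which $W_-$ contains $\bbC\xi_0$. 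Organizing this accounting, which is driven entirely by $N\bbZ$ being closed under addition together with the exact placement of the $\delta_{0n}$ summands in \eqref{Wnm} and \eqref{Wnz}, is all that remains; uniformity in $n$ is automatic, since the sublemma constants and the number of terms do not depend on $n$.
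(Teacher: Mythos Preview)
Your approach---reduce to the atomic pieces $C_b^\delta$, $\bbC\xi_0$, $\bbC\xi_\infty$, $\bbC$, record the handful of pairwise products, then expand bilinearly---is precisely what the paper's two-line proof instructs, and your sublemma is correct. The verifications of \eqref{eq15} and \eqref{eq16} go through as you describe.

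The bookkeeping for \eqref{eq14}, however, breaks down. You assert that ``the only mode into which a $\xi_0$-direction can be fed \ldots\ is one in which $W_-$ contains $\bbC\xi_0$,'' but this is false. Take $i=0$ and any $j\in N\bbZ\setminus\{0\}$: then $\xi_0\in W_{-,N}^{(0)}=C_b^\delta\oplus\bbC\xi_0$, and since $W_{+,N}^{(j)}=C_b^\delta\oplus\bbC\xi_0\oplus\bbC\xi_\infty$ carries $\bbC\xi_0$ for \emph{every} $j\in N\bbZ$, also $\xi_0\in W_{+,N}^{(j)}$. Their product $\xi_0^2$ would then have to lie in $W_{-,N}^{(i+j)}=W_{-,N}^{(j)}=C_b^\delta$; but $\xi_0\equiv 1$ near $\beta=0$, so $\beta^{-\delta}\xi_0^2$ is unbounded there and $\xi_0^2\notin C_b^\delta$. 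Thus \eqref{eq14} is false as written, and no rearrangement of the case analysis can repair it; the paper's terse proof simply does not confront this case. What \emph{is} true---and what your argument does establish---is the weaker inclusion $\ui{W}_{-,N}\cdot\uj{W}_{+,N}\unibed\uij{W}_{0,N}$, since every $W_{0,N}^{(n)}$ with $n\in N\bbZ$ carries the $\bbC\xi_0$ summand. This suffices for the sole downstream application in Proposition~\ref{continuity of diff crucial}, where the image is immediately embedded $W_{-,N}\hookrightarrow W_{0,N}$ anyway.
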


\begin{proof}
	Write an element of $W^{(i)}_{\triangle, N}$, where $\triangle$ could be either $-, 0, +$, explicitly and use elementary inequalities
	\begin{equation*}
		\nrm{fg}_{C_b} \leq \nrm{f}_{C_b} \nrm{g}_{C_b},
	\end{equation*}
	\begin{align*}
		\ncbd{f} &= \maxi{\nrm{\beta^\delta f}_{C_b}, \nrm{\beta^{-\delta}f}_{C_b}} \\
		& = \sup_{\beta \in [0, \infty)} \underbrace{\maxi{\beta^\delta, \beta^{-\delta}}}_{\geq 1} \abs{f(\beta)} \\
		& \geq \nrm{f}_{C_b}.
	\end{align*}
\end{proof}

Since $\n{W_{\triangle, N}}$ is uniformly embedded in $C_b$, we can define the following space.

\begin{definition}
	\begin{align*}
		W_{-, N} &\coloneqq  \calA^{0.5} \left( \Wnm \right), \\
		W_{0, N} &\coloneqq  \calA^{0.5} \left( \Wnz \right), \\
		W_{+, N} &\coloneqq  \calA^{0.5} \left( \Wnp \right). 
	\end{align*}
\end{definition}

For three Banach space $X, Y$ and $Z$, we write $X \cdot Y \hookrightarrow Z$ if there exists $M>0$ such that for all $f \in X, g \in Y$, we have $fg \in Z$ with $\nrm{fg}_{Z} \leq M \nrm{f}_{X} \nrm{g}_{Y}$ holds. Using this notation, we can prove the following \textit{algebra} property of $W_{\triangle, N}$.

\begin{proposition} \label{W algebra}
	For all $N \geq 0$,
	\begin{align}
		W_{-,N} \cdot W_{+,N} \hookrightarrow W_{-,N}, \label{pm embedding} \\
		W_{0,N} \cdot W_{0,N} \hookrightarrow W_{0,N},  \\
		W_{+,N} \cdot W_{+,N} \hookrightarrow W_{+,N}. 
	\end{align}
\end{proposition}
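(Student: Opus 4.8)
The plan is to obtain Proposition \ref{W algebra} as an immediate consequence of the single-mode algebra in Proposition \ref{W^n algebra} combined with the general multiplication result Proposition \ref{A^s algebra}, invoked with exponent $s=0.5$. Before applying Proposition \ref{A^s algebra} one must check its hypotheses, namely that the three sequences $\set{\Wnm}_{n\in\bbZ}$, $\set{\Wnz}_{n\in\bbZ}$, $\set{\Wnp}_{n\in\bbZ}$ are $\brk{n}^q$-uniformly embedded in a common Banach space of functions on $\bbRb$. This is exactly what the chain \eqref{unibed chain} provides: every one of these sequences is uniformly (that is, $\brk{n}^0$-uniformly) embedded in $C_b\in\calS'(\bbR_+)$, and $C_b$ consists of functions on $\bbRb$ in the sense of Remark \ref{tempered distribution from a function}. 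For the modes $n\notin N\bbZ$ the relevant space is $\set{0}$, which is trivially uniformly embedded in $C_b$ and satisfies every algebra inclusion vacuously, so these modes cause no trouble. Since $0.5\geq 0$, we will be using part (1) of Proposition \ref{A^s algebra}.

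With the hypotheses verified, each of the three inclusions drops out directly. For \eqref{pm embedding}, the single-mode statement \eqref{eq14} gives $\ui{W}_{-,N}\cdot\uj{W}_{+,N}\unibed\uij{W}_{-,N}$ with some constant $M$ (independent of $N$); applying Proposition \ref{A^s algebra}(1) with $s=0.5$ to the sequences $\set{\Wnm}$, $\set{\Wnp}$, $\set{\Wnm}$, we get that for $f\in\Wm=\calA^{0.5}(\Wnm)$ and $g\in\Wp=\calA^{0.5}(\Wnp)$ the product $fg$ — defined by the Fourier-coefficient convolution \eqref{eq12} — lies in $\calA^{0.5}(\Wnm)=\Wm$ with $\nrm{fg}_{\Wm}\leq\sqrt{2}\,M\,\nrm{f}_{\Wm}\nrm{g}_{\Wp}$. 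This is precisely $\Wm\cdot\Wp\hookrightarrow\Wm$. The other two inclusions are proved verbatim in the same way, using \eqref{eq15} with $\Wz=\calA^{0.5}(\Wnz)$ and \eqref{eq16} with $\Wp=\calA^{0.5}(\Wnp)$, respectively; in both cases the resulting embedding constant is $\sqrt{2}$ times the single-mode constant and hence is independent of $N$, as required by the statement ``for all $N\geq 0$''.

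There is essentially no obstacle here: the real work is already contained in Proposition \ref{W^n algebra} (proved by the elementary $C_b$- and $\cbd$-norm inequalities) and in Proposition \ref{A^s algebra} (the summation over Fourier modes against the Wiener weight $\brk{n}^{0.5}$, powered by $\brk{n}\leq\sqrt{2}\brk{n-k}\brk{k}$). The one point worth spelling out is that the product $fg$ in the statement of Proposition \ref{W algebra} means exactly the object from Proposition \ref{A^s algebra}, i.e.\ the tempered distribution whose $n$-th Fourier coefficient is $\sum_{k\in\bbZ}f^{(n-k)}g^{(k)}$; when $f$ and $g$ are genuine bounded functions on $\bbRb\times\bbT$ this coincides with their pointwise product, so no ambiguity arises in later sections where these algebra properties are used.
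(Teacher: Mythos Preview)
Your proposal is correct and takes essentially the same approach as the paper: the paper's proof is the single line ``Proposition \ref{A^s algebra} and Proposition \ref{W^n algebra},'' and you have simply unpacked that citation, verifying the uniform-embedding hypothesis via \eqref{unibed chain} and applying part (1) of Proposition \ref{A^s algebra} at $s=0.5$.
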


\begin{proof}
	Proposition \ref{A^s algebra} and Proposition \ref{W^n algebra}.
\end{proof}

Now, we are ready to construct Banach space to where we apply implicit function theorem to $\overline{L}: X \times Y \rightarrow Z$ defined as in \eqref{L bar}. We have to choose $X, Y, Z$ so that $\overline{L}$ is $C^1$ and $\frac{\rd \overline{L}}{\rd \overline{\psi}} \left( \br{\psi}_0, \Omega_0 \right)$ is an isomorphism. As isomorphism condition is more strict than to just require its Fr\'echet derivative to be bounded, we construct $X, Y, Z$ so that we easily guarantee $\frac{\rd \br{L}}{\rd \psi}(\br{\psi}_0, \Omega_0)$ is an isomorphism.
		To utilize Proposition \ref{continuity between A^s}, we first decompose partial Fr\'echet derivative \eqref{diff L bar} into each Fourier mode. From now on, if a map $T: \calA^s \left( \n{X} \right) \rightarrow \calA^s \left( \n{Y} \right)$ is induced by sequence of uniformly bounded map $\n{G}$ as in proposition \ref{continuity between A^s}, we use notation $\n{T}$ to represent $\n{G}$. Then, differential operators on $\calA^s \left( \n{X} \right)$ were decomposed into
\begin{align}
	\n{\rd_\beta} &= \rd_\beta \label{diff beta n}, \\
	\n{\rd_\phi} &= in \label{diff phi n}, \\
	\n{\rd_\varphi} &= in-\rd_\beta \label{diff varphi n}, \\
	\n{\br{\rd}_\beta} \underset{\eqref{diff beta bar}}&{=} \n{\left( \beta \rd_\beta+(1-2\mu)\right)} \underset{\eqref{diff beta n}}{=} \beta \rd_\beta+(1-2\mu) \label{diff beta bar n}, \\
	\n{\br{\rd}_\varphi} \underset{\eqref{diff varphi bar}}&{=} \n{\left(\beta \rd_\varphi+(2\mu-1)\right)} \underset{\eqref{diff varphi n}}{=} \beta in -\beta \rd_\beta + (2\mu -1) \label{diff varphi bar n},\\
	\n{\left( \left( \br{\rd}_\varphi+1 \right)\br{\rd}_\beta \right)} 
		\underset{\eqref{diff beta bar}, \eqref{diff varphi bar}}&{=} \n{\left( (\beta \rd_\varphi +2\mu)(\beta \rd_\beta +(1-2\mu) \right)} \nonumber \\
		&= \n{\left( \beta \rd_\varphi \beta \rd_\beta + (1-2\mu) \beta \rd_\varphi + 2\mu \beta \rd_\beta +2\mu (1-2\mu) \right)} \nonumber \\
		&= (\beta in-\beta \rd_\beta)\beta \rd_\beta +(1-2\mu)(\beta in-\beta \rd_\beta)+2\mu \beta \rd_\beta +2\mu(1-2\mu) \label{diff long bar n}, \\
		\n{\left( \rd_\phi \br{\rd}_\beta \right)} \underset{\eqref{diff beta bar}}&{=} \n{\left( \rd_\phi (\beta \rd_\beta +1-2\mu) \right)} \underset{\eqref{diff phi n}}{=} in \beta \rd_\beta + in(1-2\mu). \label{diff short bar n}
\end{align}

Then, we can decompose \eqref{diff L bar} to each Fourier mode using \eqref{diff beta bar n} and \eqref{diff varphi bar n};
\begin{align*}
	\n{\left( \frac{\rd \br{L}}{\rd \br{\psi}}\left( \br{\psi}_0, \Omega_0 \right) \right)} \underset{\eqref{diff L bar}}&{=} \frac{1}{2\mu^2} \n{\left( (\br{\rd}_\varphi \br{\rd}_\varphi +\mu^2 \rd_\phi \rd_\phi)(\br{\rd}_\beta +2\mu)+(2\mu-1)(\br{\rd}_\beta +\br{\rd}_\varphi )  \right)} \\
	\underset{\eqref{diff beta bar n}, \eqref{diff varphi bar n}}&{=} \frac{1}{2\mu^2} \left( \left( \left( \beta \left( in-\rd_\beta \right)+ \left( 2\mu -1 \right) \right)^2 -\mu^2 n^2 \right) \left( \beta \rd_\beta +1 \right) + \left( 2\mu-1\right) in \beta \right) \\
	&= \frac{1}{2\mu^2} ( \left( \beta \left( \rd_\beta-in \right)- \left( \left( 2+n \right) \mu -1 \right) \right) \left( \beta \left( \rd_\beta -in \right) - \left( \left( 2-n \right) \mu-1 \right) \right) \left( \beta \rd_\beta +1 \right) \\
	&+\left( 2\mu-1 \right) in \beta )
\end{align*}

Observation of \eqref{diff beta bar n}, \eqref{diff varphi bar n}, \eqref{diff long bar n}, \eqref{diff short bar n} and $\n{\left( \frac{\rd \br{L}}{\rd \br{\psi}}\left( \br{\psi}_0, \Omega_0 \right) \right)}$ reveals similar expression of the following form.

\begin{definition} \label{operator dns}
	For $n \in \bbZ, s \in \bbR$, the operator $\dns{n}{s}$ is defined by
	\begin{equation} \label{dns}
		\dns{n}{s} \coloneqq \beta \left( \rd_\beta - in \right) -s.
	\end{equation}
\end{definition}

To simplify expression more, we use the following notation.
\begin{align}
	\n{P} & \coloneqq \dns{n}{0} = \beta \left( \rd_\beta -in \right) \label{Pn}, \\
	\n{P_+} & \coloneqq \dns{n}{(2+n)\mu -1} = \beta \left( \rd_\beta - in \right) - \left( \left( 2+n \right) \mu -1\right) \label{Pnp}, \\
	\n{P_-} & \coloneqq \dns{n}{(2-n)\mu -1} = \beta \left( \rd_\beta - in \right) - \left( \left( 2-n \right) \mu -1\right) \label{Pnm}, \\
	Q & \coloneqq \dns{0}{0}= \beta \rd_\beta \label{Q}.
\end{align}

Then, above operators are represented simply by
\begin{align}
	\n{\left( \beta \rd_\beta \right)} &= Q \label{eq17}, \\
	\n{\left( \beta \rd_\varphi \right)} &= -\n{P} \label{eq18}, \\
	\n{\left( \br{\rd}_\beta \right)} &= Q+ (1-2\mu)id \label{eq19},\\
	\n{\left( \br{\rd}_\varphi \right)} &= -\n{P}+(2\mu-1)id \label{eq20},\\
	\n{\left( \rd_\phi \right)} &= in  \cdot id,\label{eq21} \\
	\n{\left( \left( \br{\rd}_\varphi +1 \right) \br{\rd}_\beta \right)} &= -\n{P}Q+(2\mu-1) \n{P} +2\mu Q + 2\mu (1-2\mu) id,\label{eq22} \\
	\n{\left( \rd_\phi \br{\rd}_\beta \right)} &= inQ+in(1-2\mu)id, \label{eq23} \\
	\n{\left( \frac{\rd \br{L}}{\rd \br{\psi}}\left( \br{\psi}_0, \Omega_0 \right) \right)} &= \frac{1}{2\mu^2} \left( \n{P_+} \n{P_-} \left( Q+1 \right) + \left(2\mu -1 \right) \left( Q-\n{P} \right) \right). \label{diff L bar n}
\end{align}

To apply implicit function theorem, we have to show that \eqref{diff L bar n} is an isomorphism between some Banach space, but observe that \eqref{diff L bar n} consists of sum of two linear map. To prove sum of linear map is isomorphism, we will show that $\n{P_+} \n{P_-} (Q+1)$ is isomorphism and operator norm of $(2\mu -1) \left( Q-\n{P} \right)$ could be small if we choose $N$ large in \eqref{N periodic}. However, how can we prove $\n{P_+} \n{P_-} (Q+1)$ is an isomorphism? Actually, it is more precise to say that we do not prove it, but construct space so that $\n{P_+} \n{P_-} (Q+1)$ have no choice but to be an isometric isomorphism. The key is Lemma \ref{induced Banach space}, but to apply it, we have to verify that the operator is injective. Therefore, we need to investigate the property of operator $\dns{n}{s}$.

\bigskip

\subsection{Properties of operator $D^{(n, s)}$}
\label{subsec: Properties of operator dns}

In this subsection, we investigate the properties of operator
\begin{equation*}
	\dns{n}{s}=\beta \left( \rd_\beta \ - in \right) -s.
\end{equation*}
First, to utilize Lemma \ref{induced Banach space}, we check the injectivity of $\dns{n}{s}$.

\begin{proposition} \label{injective on C_b}
	For $s \neq 0$, $\dns{n}{s}$ is injective on $C_b$.\footnote{Note that derivative is defined in distribution sense.}
\end{proposition}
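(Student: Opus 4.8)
The plan is to solve the homogeneous ODE $\dns{n}{s}f = 0$ in the distributional sense and show the only solution in $C_b$ is $f \equiv 0$ when $s \neq 0$. Writing out the operator, $\dns{n}{s}f = 0$ means $\beta f'(\beta) - in\beta f(\beta) - s f(\beta) = 0$, i.e. $\beta f' = (in\beta + s)f$. On the open half-line $(0,\infty)$ the coefficient $\frac{in\beta + s}{\beta} = in + \frac{s}{\beta}$ is smooth, so elliptic (here just ODE) regularity promotes any distributional solution to a classical $C^\infty$ solution on $(0,\infty)$; then one integrates explicitly to get $f(\beta) = c\, \beta^{s} e^{in\beta}$ for some constant $c$, on each of $(0,\infty)$ (the domain is connected, so a single constant).

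Next I would rule out $c \neq 0$ using the $C_b$ constraint together with the behavior at the endpoints. If $\Re s < 0$, then $|\beta^s e^{in\beta}| = \beta^{\Re s} \to \infty$ as $\beta \to 0^+$, contradicting boundedness; if $\Re s > 0$, then $\beta^{\Re s} \to \infty$ as $\beta \to \infty$, again contradicting $f \in C_b$; and if $\Re s = 0$ but $s \neq 0$, then $\beta^{s} = e^{(\Im s) i \log \beta}$ (times possibly a real part which is now $1$) oscillates and has no limit as $\beta \to 0^+$, so $f$ cannot extend continuously to $\beta = 0$, contradicting $f \in C_b = C([0,\infty);\bbC)$ with the sup norm (elements of $C_b$ are continuous on the closed half-line $[0,\infty)$, in particular at $0$). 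Hence $c = 0$ and $f \equiv 0$, so $\dns{n}{s}$ is injective on $C_b$.

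The one point requiring a little care — and the step I'd expect to be the main (mild) obstacle — is justifying that a distributional solution of $\beta f' = (in\beta+s)f$ on $(0,\infty)$ is actually the classical solution, i.e. that there are no extra distributional solutions supported at interior points or arising from the degeneracy of the coefficient $\beta$ at the origin. Since we only test against $\calS(\bbRb)$ and restrict attention to $(0,\infty)$ where $\beta$ is bounded away from $0$, the equation $f' = (in + s/\beta)f$ has smooth coefficients there, and the standard fact that distributional solutions of a first-order linear ODE with smooth coefficients on an interval are classical applies; the space of such solutions is one-dimensional, spanned by $\beta^s e^{in\beta}$. I would state this as a short lemma or simply invoke it. The endpoint analysis at $\beta = 0$ and $\beta = \infty$ is then the elementary case-check above, using only that $C_b$ consists of bounded continuous functions on the \emph{closed} half-line.
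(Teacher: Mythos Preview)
Your proposal is correct and follows essentially the same approach as the paper: solve the homogeneous ODE to get the general solution $c\,\beta^s e^{in\beta}$, then observe that for $s\neq 0$ this cannot lie in $C_b$ unless $c=0$. The paper's proof is terser and omits the discussion of distributional versus classical solutions (which your footnote-aware treatment handles carefully); note also that in this paper $s\in\bbR$ (see Definition~\ref{operator dns}), so your case $\Re s=0$, $s\neq 0$ is vacuous here, though not incorrect.
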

\begin{proof}
	An ODE
	\begin{equation*}
		\dns{n}{s}f=\beta\rd_\beta f-\beta inf-sf=0
	\end{equation*}
	has a general solution
	\begin{equation*}
		f(\beta)=C \beta^s e^{in\beta}
	\end{equation*}
	for $C \in \bbR$. Due to $\beta^s$ -term for $s \neq 0$, $f$ belongs to $C_b$ if and only if $c=0$. 
\end{proof}
Since $\dns{n}{s}$ is injective whenever $s \neq 0$ in $C_b$ with $\n{W_\triangle} \unibed C_b$ for $\triangle = +, 0, -$, one can think of the following trivial strategy to make $\n{P_+} \n{P_-} (Q+1)$ isometric isomorphism; Using Lemma \ref{induced Banach space}, set the domain of $\n{P_+} \n{P_-} (Q+1)$ to one of $\n{W}_\triangle$, say $\n{W}_0$, and set the codomain of $\n{P_+} \n{P_-} (Q+1)$ to $\n{P_+} \n{P_-} (Q+1)\n{W}_0$. However, this attempt might fail, because we do not have any guarantee that $(Q+1)\n{W}_0$ belongs to $C_b$. If we take differential operator on $C_b$, regularity of function is getting worse. Therefore, we have to set the domain of $\n{P_+} \n{P_-} (Q+1)$ to sufficiently regular space so that a function can maintain its boundedness after passing through differential operator. To achieve this, we set the domain to inverse of differential operator so that differential operator could be cancelled. With this motivation in mind, we investigate the properties of $\dnsi{n}{s}$.

\begin{proposition} \label{D^{-1} in C_b}
	For $s \neq 0$, the range of $\dns{n}{s}: C_b \rightarrow \dns{n}{s}C_b$ contains $C_b$ so that $\dnsi{n}{s}: C_b \rightarrow C_b$ is well-defined with
	\begin{equation} \label{D^{-1} C_b}
		\nrm{\dnsi{n}{s}}_{C_b} \leq \frac{1}{\abs{s}}.
	\end{equation}
\end{proposition}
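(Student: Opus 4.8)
The plan is to solve the ODE $\dns{n}{s}f = \beta(\rd_\beta f - inf) - sf = g$ explicitly for given $g \in C_b$ and then estimate the $C_b$-norm of the solution. Since $\dns{n}{s}$ is already known to be injective on $C_b$ for $s \neq 0$ (Proposition \ref{injective on C_b}), it suffices to exhibit \emph{one} solution $f \in C_b$ and verify the stated bound; injectivity then guarantees that $\dnsi{n}{s}$ is well-defined on $C_b$. First I would rewrite the equation in the form $\rd_\beta f - inf - \frac{s}{\beta}f = \frac{g}{\beta}$ on $(0,\infty)$. The integrating factor for the homogeneous part $\beta\rd_\beta - \beta in - s$ is $\beta^{-s}e^{-in\beta}$ (consistent with the homogeneous solution $\beta^s e^{in\beta}$ found in Proposition \ref{injective on C_b}), so that
\begin{equation*}
	\rd_\beta\!\left( \beta^{-s} e^{-in\beta} f(\beta) \right) = \beta^{-s-1} e^{-in\beta} g(\beta).
\end{equation*}
The key decision is the choice of base point for integration, and this is dictated by the sign of $s$: if $s > 0$ one integrates from $0$, writing $f(\beta) = \beta^s e^{in\beta} \int_0^\beta \tau^{-s-1} e^{-in\tau} g(\tau)\, d\tau$, while if $s < 0$ one integrates from $\infty$, writing $f(\beta) = -\beta^s e^{in\beta} \int_\beta^\infty \tau^{-s-1} e^{-in\tau} g(\tau)\, d\tau$. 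In both cases the improper integral converges absolutely because $\abs{g} \le \nrm{g}_{C_b}$ and the power $\tau^{-s-1}$ (resp. $\tau^{-s-1}$) is integrable near the relevant endpoint precisely when $s \neq 0$ has the chosen sign.

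Next I would carry out the norm estimate. Taking $s > 0$ as the representative case,
\begin{equation*}
	\abs{f(\beta)} \le \beta^s \int_0^\beta \tau^{-s-1} \abs{g(\tau)}\, d\tau \le \nrm{g}_{C_b}\, \beta^s \int_0^\beta \tau^{-s-1}\, d\tau = \nrm{g}_{C_b}\, \beta^s \cdot \frac{\beta^{-s}}{s} = \frac{1}{s}\nrm{g}_{C_b},
\end{equation*}
and the case $s < 0$ is identical after replacing $\int_0^\beta$ by $\int_\beta^\infty$ and $\tfrac{1}{s}$ by $\tfrac{1}{\abs{s}}$, yielding $\abs{f(\beta)} \le \frac{1}{\abs{s}}\nrm{g}_{C_b}$ uniformly in $\beta$. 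It remains to check that the constructed $f$ is genuinely in $C_b$: boundedness is the estimate just obtained; continuity on $(0,\infty)$ follows from dominated convergence applied to the integral representation; and continuity (indeed the correct limiting value) at $\beta = 0$ follows because $\beta^s \to 0$ as $\beta \to 0^+$ when $s>0$ (for $s<0$, the factor $\beta^s$ blows up but the integral $\int_\beta^\infty \tau^{-s-1}e^{-in\tau}g(\tau)\,d\tau$ vanishes like $\beta^s$'s reciprocal — more carefully, one shows $\beta^{-s}\int_\beta^\infty \tau^{-s-1}\abs{g}\,d\tau \le \nrm{g}_{C_b}/\abs{s}$ stays bounded, and a short argument gives the limit). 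One should also verify that $\dns{n}{s}f = g$ holds in the distributional sense, which is immediate from the fundamental theorem of calculus applied to the integrating-factor identity.

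The main obstacle — though a mild one — is the endpoint behaviour at $\beta = 0$, where one must be careful that the explicit solution extends continuously: for $s < 0$ the prefactor $\beta^s$ is unbounded, so the decay of the tail integral must be shown to exactly compensate, and for $s > 0$ one must confirm the integral does not diverge faster than $\beta^{-s}$ near $0$ (it does not, by the crude bound above). Everything else is a routine integrating-factor computation, and the constant $\frac{1}{\abs{s}}$ falls out of $\int \tau^{-s-1}\,d\tau = \mp\frac{1}{s}\tau^{-s}$ with no slack, giving the sharp bound \eqref{D^{-1} C_b}.
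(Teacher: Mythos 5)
Your overall strategy---solve the ODE by the integrating factor $\beta^{-s}e^{-in\beta}$, choose the base point of integration according to the sign of $s$, and estimate the resulting integral---is exactly the paper's approach (the paper additionally rescales by $x=\beta y$ to pull the $\beta$-dependence out, but that is cosmetic). However, you have swapped the two cases. For $s>0$ the integrand $\tau^{-s-1}$ has a non-integrable singularity at $\tau=0$ (exponent $-s-1<-1$), so $\int_0^\beta \tau^{-s-1}\,d\tau$ diverges; the convergent choice is to integrate from $\infty$, i.e.\ $f(\beta)=\beta^s e^{in\beta}\int_\infty^\beta \tau^{-s-1}e^{-in\tau}g(\tau)\,d\tau$. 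Conversely, for $s<0$ the integral from $\infty$ diverges at the upper limit and one must integrate from $0$. The paper states this correctly: $C=\infty$ for $s>0$ and $C=0$ for $s<0$.

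Notice that your final arithmetic betrays the confusion: you write $\int_0^\beta \tau^{-s-1}\,d\tau = \beta^{-s}/s$, but the antiderivative of $\tau^{-s-1}$ is $\tau^{-s}/(-s)$, so $\bigl[\tau^{-s}/(-s)\bigr]_0^\beta$ is $-\infty$ for $s>0$; the value $\beta^{-s}/s$ is what you get from $\bigl[\tau^{-s}/(-s)\bigr]_\beta^\infty$, i.e.\ from the correct base point $\infty$. Your final bound $1/\abs{s}$ is therefore right, and the rest of the argument (continuity via dominated convergence, behavior at $\beta=0$, distributional check) is sound, but the stated choice of integration endpoint must be reversed for the proof to be literally correct, and the corresponding discussion of which prefactor blows up at $\beta=0$ should be adjusted accordingly.
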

\begin{proof}
	Solving the ODE
	\begin{equation*}
		\dns{n}{s}u=\beta \rd_\beta u -\beta i n u-su=f
	\end{equation*}
	for $f \in C_b$ gives
	\begin{align*}
		\dnsi{n}{s}f &= \beta^se^{in\beta}\int_C^\beta = \frac{f(x)e^{-inx}}{x^{s+1}} \, dx \\
		\underset{x=\beta y}&{=}\beta^se^{in\beta}\int_{C\beta}^1 = \frac{f(\beta y)e^{-in\beta y}}{\beta^{s+1}y^{s+1}}\beta \, dy \\
		&= e^{in\beta} \int_{\frac{C}{\beta}}^1 \frac{f(\beta y) e^{-in\beta y}}{y^{s+1}} \, dy,
	\end{align*}
	where $C \geq 0$ is an integral constant. Since the domain of $\dns{n}{s}$ is $C_b$, we are forced to choose
	\begin{equation*}
		C = \begin{cases}
			\infty & (s>0) \\
			0 & (s<0)
		\end{cases}
	\end{equation*}
	to make integral converge. Then, we have
	\begin{equation} \label{eq24}
		\dnsi{n}{s}f= \begin{cases}
			e^{in\beta}\int_\infty^1 \frac{f(\beta y)e^{-in \beta y}}{y^{s+1}} \, dy & (s>0) \\
			e^{in\beta}\int_0^1 \frac{f(\beta y)e^{-in \beta y}}{y^{s+1}} \, dy & (s>0)
		\end{cases}
	\end{equation}
	with
	\begin{align*}
		\nrm{\dnsi{n}{s}f}_{C_b} \leq \nrm{f}_{C_b} \int_1^\infty \frac{1}{y^{s+1}} \, dy= \frac{1}{s} \nrm{f}_{C_b} < \infty \qquad &(s>0), \\
		\nrm{\dnsi{n}{s}f}_{C_b} \leq \nrm{f}_{C_b} \int_0^\infty \frac{1}{y^{s+1}} \, dy= -\frac{1}{s} \nrm{f}_{C_b} < \infty \qquad &(s>0).
	\end{align*}
	Hence, $\dnsi{n}{s}f \in C_b$ for all $f \in C_b$, which implies $C_b \subset \dns{n}{s}C_b$ for $s \neq 0$.
\end{proof}

Using Proposition \ref{D^{-1} in C_b}, we define a following space which will be the space where $\br{\psi}$ and the codomain of $\br{L}$ lives.

\begin{definition} \label{X, Z space}
	\begin{align}
		\n{X_0} &\coloneqq \left(Q+1 \right)^{-1} \left( \n{P_-} \right)^{-1} \n{W_0} \label{Xnz}, \\
		\n{Z_0} &\coloneqq \n{P_+} \n{W}_0 \label{Znz}, \\
		\Box_{,N} &\coloneqq \begin{cases}
			\Box & n=N\bbZ  \\
			0 & n \neq N\bbZ
		\end{cases}\qquad \text{where} \qquad \Box= \n{X_0}, \n{Z_0} \nonumber
	\end{align}
\end{definition}

With this definition, the fact that $\n{P}_-(Q+1): \Xnz \rightarrow \Wnz$ is isometric isomorphism is a direct corollary of Lemma \ref{induced Banach space}. Before we check the continuity of the other differential operators using Proposition \ref{continuity between A^s}, we need to check
\begin{equation} \label{eq25}
	(Q+1)^{-1} \left( \n{P} \right)^{-1} \n{W_0} \unibed K \in \calS'(\br{\bbR}_+)
\end{equation}
for some Banach space $K$. Since
\begin{equation*}
	\Wnz = \cbdn \oplus \cxizn \oplus \delta_{0n} \bbC \hookrightarrow C_b,
\end{equation*}
it suffices to check
\begin{equation*}
	\nrm{\Qinv \Pnminv}_{[C_b, C_b]} \underset{\eqref{Q}, \eqref{Pnm}}{=} \nrm{\dnsi{0}{-1} \dnsi{n}{(2-n)\mu -1}}_{[C_b, C_b]} \underset{\eqref{D^{-1} C_b}}{=} \frac{1}{\abs{(2-n)\mu -1}}
\end{equation*}
is uniformly bounded for all $n \in N\bbZ$. It seems trivial, but what if $(2-n)\mu -1$ is zero for some $n \in N \bbZ$? To prevent this, we impose some condition on $\mu, N$; from now on, we assume
\begin{equation} \label{condition on N, mu}
	N \geq 2, \qquad \mu > \frac{2}{3}.
\end{equation}
With this assumption, $\abs{(2-n) \mu -1}$ never touches zero and $\frac{1}{\abs{(2-n) \mu -1}}$ is uniformly bounded for $n \in N \bbZ$. This shows \eqref{eq25} for $K = C_b$.

Now we treat boundedness property of operator $\dnsi{n}{s}$ on several spaces we have defined.

\begin{proposition} \label{D^{-1} in C_b^delta}	
	For $s \notin [-\delta, \delta]$, the range of $\dns{n}{s}: C_b^\delta \rightarrow \dns{n}{s}C_b^\delta$ contains $C_b^\delta$ so that $\dnsi{n}{s}: C_b^\delta \rightarrow C_b^\delta$ is well-defined with
	\begin{equation} \label{D^{-1} C_b^delta}
		\nrm{\dnsi{n}{s}}_{C_b^\delta} \leq \frac{1}{\dist \left([-\delta, \delta], s\right)}.
	\end{equation}
\end{proposition}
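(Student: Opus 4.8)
The plan is to mirror the proof of Proposition \ref{D^{-1} in C_b} almost verbatim, using the explicit integral representation \eqref{eq24} for $\dnsi{n}{s}$ but now estimating in the $C_b^\delta$-norm instead of the $C_b$-norm. First I would recall from the proof of Proposition \ref{D^{-1} in C_b} that, for $s \neq 0$, solving $\dns{n}{s}u = f$ forces
\begin{equation*}
	\dnsi{n}{s}f(\beta) = e^{in\beta} \int_{C}^{1} \frac{f(\beta y) e^{-in\beta y}}{y^{s+1}} \, dy,
\end{equation*}
with the integration constant $C = \infty$ when $s>0$ and $C = 0$ when $s<0$, these being the only choices that make the integral converge for bounded $f$. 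Since $C_b^\delta \hookrightarrow C_b$ uniformly, the same representation is valid on $C_b^\delta$; the content of the proposition is the norm bound.

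Next I would estimate $\nrm{\beta^{\pm\delta} \dnsi{n}{s}f}_{C_b}$ directly. Writing $g(\beta) \coloneqq \dnsi{n}{s}f(\beta)$, we have $\abs{\beta^{\delta} g(\beta)} \leq \int_{C}^{1} \beta^{\delta}\abs{f(\beta y)} y^{-s-1} \, dy = \int_C^1 y^{-\delta} \cdot \abs{(\beta y)^{\delta} f(\beta y)} \cdot y^{-s-1} \, dy \leq \nrm{\beta^\delta f}_{C_b} \int_C^1 y^{-s-1-\delta}\, dy$, and similarly $\abs{\beta^{-\delta} g(\beta)} \leq \nrm{\beta^{-\delta} f}_{C_b} \int_C^1 y^{-s-1+\delta} \, dy$. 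For $s > \delta$ (so $C = \infty$) the first integral is $\int_1^\infty y^{-s-1+\delta}\,dy = \tfrac{1}{s-\delta}$ after reversing orientation, wait — one must track the sign of the exponent carefully: the two integrands have exponents $-s-1-\delta$ and $-s-1+\delta$, and for $s>\delta$ both exponents are $<-1$, so $\int_1^\infty y^{-s-1\pm\delta}\,dy = \tfrac{1}{s\mp\delta}$, both at most $\tfrac{1}{s-\delta} = \tfrac{1}{\dist([-\delta,\delta],s)}$. For $s < -\delta$ (so $C = 0$) both exponents are $>-1$, so $\int_0^1 y^{-s-1\pm\delta}\,dy = \tfrac{1}{-s\pm\delta}$, both at most $\tfrac{1}{-s-\delta} = \tfrac{1}{\dist([-\delta,\delta],s)}$. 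Taking the maximum over the two weights $\beta^{\pm\delta}$ gives $\nrm{g}_{C_b^\delta} \leq \tfrac{1}{\dist([-\delta,\delta],s)}\nrm{f}_{C_b^\delta}$, which in particular shows $\dnsi{n}{s}f \in C_b^\delta$, hence $C_b^\delta \subset \dns{n}{s}C_b^\delta$.

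The only genuinely delicate point — and the one I would state explicitly — is the bookkeeping of which exponent controls which integral and verifying that in both regimes $s > \delta$ and $s < -\delta$ the worse of the two bounds equals exactly $\bigl(\dist([-\delta,\delta],s)\bigr)^{-1}$; everything else is a routine repetition of Proposition \ref{D^{-1} in C_b}. One should also note that the hypothesis $s \notin [-\delta,\delta]$ is precisely what keeps $\dist([-\delta,\delta],s) > 0$ and both integrals finite, and that injectivity of $\dns{n}{s}$ on $C_b^\delta$ follows a fortiori from Proposition \ref{injective on C_b} since the kernel solution $C\beta^s e^{in\beta}$ lies in $C_b^\delta$ only when $C = 0$ (as $\beta^s \notin C_b^\delta$ for $s \neq 0$, $\abs{s} > \delta$). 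So the well-definedness of $\dnsi{n}{s} : C_b^\delta \to C_b^\delta$ is unambiguous.
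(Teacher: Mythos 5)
Your argument is correct and follows exactly the paper's route: pull the $\beta^{\pm\delta}$ weights into the integral representation \eqref{eq24}, compute the resulting elementary integrals, and observe that the worse of the two bounds is $1/(|s|-\delta)=1/\dist([-\delta,\delta],s)$, with injectivity inherited from Proposition \ref{injective on C_b}. If anything you are slightly more careful than the paper: for $s<-\delta$ the controlling quantity is $\tfrac{1}{-s-\delta}$ as you write, whereas the paper's case split records $\tfrac{1}{s+\delta}$ (a sign slip, harmless since the final $\dist$ formulation is what gets used).
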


\begin{proof}
	This is also elementary like Proposition \ref{D^{-1} in C_b}. Since $C_b^\delta \hookrightarrow C_b$, $\dns{n}{s} \at{C_b^\delta}{}$ is injective and its inverse is well-defined by Proposition \ref{injective on C_b}. For $s>0$, 
	\begin{align*}
		\nrm{\dnsi{n}{s}f}_{C_b^\delta} \underset{\eqref{eq24}}&{=} \underset{+, -}{\max} \left\{\nrm{\beta^{\pm s} \int_\infty^1 \frac{f(\beta t) e^{-in \beta t}}{t^{s+1}} \, dt}_{C_b} \right\} \\
		&= \underset{+, -}{\max} \left\{ \nrm{\int_\infty^1 \frac{\left( \beta t \right)^{\pm \delta}f(\beta t) e^{in \beta t}}{t^{s+1 \pm \delta}}\, dt}_{C_b}  \right\} \\
		& \leq \nrm{f}_{C_b} \underset{+, -}{\max} \int_1^\infty \frac{1}{t^{s+1 \pm \delta}} \, dt \\
		&\leq \nrm{f}_{C_b^\delta} \int_1^\infty \frac{1}{t^{s+1-\delta}} \, dt \\
		\underset{\text{(converges only when }s>\delta\text{)}}&{\leq} \frac{1}{s-\delta} \nrm{f}_{C_b^\delta}.
	\end{align*}
	Similarly for $s<0$,
	\begin{align*}
		\nrm{\dnsi{n}{s}f}_{C_b^\delta} \underset{\eqref{eq24}}&{=} \underset{+, -}{\max} \left\{\nrm{\beta^{\pm s} \int_0^1 \frac{f(\beta t) e^{-in \beta t}}{t^{s+1}} \, dt}_{C_b} \right\} \\
		\underset{\text{(converges only when }s<-\delta \text{)}}&{\leq} \nrm{f}_{C_b^\delta} \frac{1}{s+\delta}.
	\end{align*}
	Combining these results, we have
	\begin{equation*}
		\nrm{\dnsi{n}{s}f}_{C_b^\delta} \leq \begin{cases}
			\frac{1}{s-\delta} \nrm{f}_{C_b^\delta} & (s>\delta) \\
			\frac{1}{s+\delta} \nrm{f}_{C_b^\delta} & (s<-\delta)
		\end{cases} = \frac{1}{\dist \left( [-\delta, \delta], s\right)} \nrm{f}_{C_b^\delta}
	\end{equation*}
\end{proof}

In this paper, \eqref{D^{-1} C_b^delta} will be the most frequently used inequality since $\n{P_+}, \n{P_-}, Q$ are the special case of $\dns{n}{s}$, where $s=(2+n)\mu -1, (2-n)\mu -1, -1$ respectively. Therefore, we need to calculate $\distPnpm$ and set $\delta$ more concretely.

\begin{proposition} \label{dist bound prop}
	Suppose $\mu > \frac{2}{3}$ and set
	\begin{equation} \label{def of delta}
		\delta=\frac{1}{2} \min \{ 2\mu -1, 1 \}.
	\end{equation} 
	
	Then, for $\abs{n} \geq N \geq 4$,
	\begin{equation} \label{dist bound}
		\frac{(N-2)\mu + \frac{5}{6}}{\brk{N}} \brk{n} \leq \dist \left( [-\delta, \delta], (2 \pm n)\mu -1 \right) \leq \frac{(N+2) \mu -\frac{3}{2}}{\brk{N}}\brk{n}.
	\end{equation}
\end{proposition}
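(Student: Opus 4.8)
The plan is to reduce the two-sided estimate \eqref{dist bound} to an elementary computation of $\dist([-\delta,\delta], s)$ for $s = (2\pm n)\mu - 1$, using the explicit formula
\begin{equation*}
	\dist([-\delta,\delta], s) = \begin{cases}
		s - \delta & (s > \delta) \\
		-s - \delta & (s < -\delta)
	\end{cases} = \abs{s} - \delta \quad \text{whenever } \abs{s} > \delta,
\end{equation*}
which holds since $[-\delta,\delta]$ is a symmetric interval. So the first step is to check that for $\abs{n} \geq N \geq 4$ and $\mu > \frac{2}{3}$ we indeed have $\abs{(2\pm n)\mu - 1} > \delta$, i.e.\ that $s$ stays outside $[-\delta,\delta]$; this is where the hypothesis $N \geq 4$ (rather than just $N \geq 2$) is used, and it follows because $\abs{(2\pm n)\mu - 1} \geq (|n|-2)\mu - 1 \geq (N-2)\mu - 1 \geq 2\mu - 1 \geq 2\delta$. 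Hence $\dist([-\delta,\delta], (2\pm n)\mu - 1) = \abs{(2\pm n)\mu - 1} - \delta$, and the problem becomes: bound $\abs{(2\pm n)\mu - 1} - \delta$ above and below by constant multiples of $\brk{n}$.

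Next I would get rid of the $\pm$ and the $\mu$-shift. For $\abs{n} \geq N \geq 4$ one has $\abs{(2\pm n)\mu - 1} = (\abs{n} \mp' 2)\mu - 1$ for the appropriate sign, so in all cases
\begin{equation*}
	(\abs{n} - 2)\mu - 1 \leq \abs{(2\pm n)\mu - 1} \leq (\abs{n} + 2)\mu - 1.
\end{equation*}
Subtracting $\delta$ and recalling $\delta = \frac{1}{2}\min\{2\mu-1, 1\} \leq \frac{1}{2}$ as well as $\delta \leq \mu - \frac12$ (from $\delta \le \frac12(2\mu-1)$), one checks the cleaner bounds
\begin{equation*}
	(\abs{n}-2)\mu - 1 - \delta \geq (\abs{n}-2)\mu - 1 - \tfrac{1}{2} = (\abs{n}-2)\mu - \tfrac{3}{2} \geq (N-2)\mu + \tfrac{5}{6}
\end{equation*}
is \emph{not} quite right as stated — so the real content is a careful bookkeeping of the constant $\frac{5}{6}$ versus $-\frac{3}{2}$. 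The correct route is: use $\delta \le \mu - \frac12$ to write $(\abs{n}-2)\mu - 1 - \delta \ge (\abs{n}-2)\mu - 1 - (\mu - \frac12) = (\abs{n}-3)\mu - \frac12$, then compare with $\frac{(N-2)\mu + 5/6}{\brk N}\brk n$ using $\brk{n}/\brk{N} \ge |n|/\langle N\rangle$-type estimates and $|n| \ge N$; the constants $\frac{5}{6}$ and $\frac{3}{2}$ are precisely what make both inequalities in \eqref{dist bound} tight when $\abs{n} = N$. For the upper bound, symmetrically, $(\abs{n}+2)\mu - 1 - \delta \le (\abs{n}+2)\mu - 1 = (\abs{n}+2)\mu - 1$, and one compares with $\frac{(N+2)\mu - 3/2}{\brk N}\brk n$.

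The key technical inequality underlying both directions is the comparison between $\abs{n}$ (or affine functions of it) and $\brk{n} = (1+n^2)^{1/2}$ after normalizing by $\brk{N}$: one needs that for $\abs{n} \geq N \geq 4$,
\begin{equation*}
	\frac{a\abs{n} + b}{\brk{n}} \quad \text{is monotone in the appropriate direction, with extremum at } \abs{n} = N,
\end{equation*}
so that the worst case $\abs{n} = N$ forces the constants $(N\pm 2)\mu - 1 \mp (\text{correction})$ to appear. Concretely, $\frac{a\abs n + b}{\brk n}$ as a function of $t = \abs n \ge N$ has derivative with sign $(b - a t)/\brk t^{3}$-ish, hence is eventually decreasing; checking $N \ge 4$ suffices to place us in the monotone regime. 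I expect this monotonicity-plus-endpoint bookkeeping to be the main obstacle — not deep, but the place where the exact numerical constants $\frac56$, $\frac32$, and the threshold $N \ge 4$ all have to be reconciled simultaneously, and where an off-by-a-little error in handling $\delta$ would break the tightness. Once the endpoint analysis at $\abs n = N$ is done and the monotonicity in $\abs n$ is established, \eqref{dist bound} follows by evaluating at the endpoint and using $\abs n \ge N$.
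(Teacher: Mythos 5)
Your overall strategy is the same as the paper's: for $\abs{n} \ge N$ the point $s = (2\pm n)\mu - 1$ lies outside $[-\delta,\delta]$, so $\dist([-\delta,\delta],s) = \abs{s} - \delta$, and one studies the ratio $(\abs{s}-\delta)/\brk{n}$ as a function of $\abs{n}$. The paper implements this by showing $f(x) = \frac{(2+x)\mu-1-\delta}{\sqrt{x^2+1}}$ is decreasing on $[4,\infty)$ and $g(x)=\frac{(x-2)\mu+1-\delta}{\sqrt{x^2+1}}$ is increasing there, both tending to $\mu$, and then reading the two bounds off the endpoint $x=N$ and the limit. So your ``monotonicity plus endpoint'' plan is indeed the right one.

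The gap is that you stop precisely where the work is: you flag the endpoint bookkeeping as ``the main obstacle'' and never carry it out, and if you do carry it out you will find the stated lower bound does not hold. At $\abs{n}=N$ with the minus sign,
\begin{equation*}
  \dist\left([-\delta,\delta],\ (2-N)\mu - 1\right) = (N-2)\mu + 1 - \delta,
\end{equation*}
and since $\delta = \tfrac12\min\{2\mu-1,1\} > \tfrac16$ for every $\mu > \tfrac23$, this is \emph{strictly less} than $(N-2)\mu + \tfrac56$; concretely, $\mu=1$, $n=N=4$ gives a distance of $\tfrac52$ against a claimed lower bound of $\tfrac{17}{6}$. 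You are not alone in this: the paper's own proof derives $g(N) \le \frac{(N-2)\mu+5/6}{\brk{N}}$ from $\delta > \tfrac16$ and then asserts $\frac{(N-2)\mu+5/6}{\brk{N}} \le g(n)$ for $n \ge N$, which would require the reverse inequality at $n=N$. The bound this argument actually delivers is $\frac{(N-2)\mu + 1 - \delta}{\brk{N}} \ge \frac{(N-2)\mu + 1/2}{\brk{N}}$ (using $\delta \le \tfrac12$), so the constant $\tfrac56$ in \eqref{dist bound} should be replaced by $1-\delta$ (or $\tfrac12$), with corresponding adjustments in the later numerical norm estimates that quote this proposition.
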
 
\begin{proof}
	By symmetry, we assume $n$ is positive. First, we estimate
	\begin{equation*}
		\frac{\dist \left( [-\delta, \delta], (2 + n)\mu -1 \right)}{\brk{n}} = \frac{(2+n) \mu -1-\delta}{\brk{n}}.
	\end{equation*}
	Consider $f: [4, \infty) \rightarrow \bbR$ defined by
	\begin{equation*}
		f(x)=\frac{(2+x)\mu -1-\delta}{\sqrt{x^2+1}}.
	\end{equation*}
	As
	\begin{equation*}
		f'(x)=\frac{\mu \sqrt{x^2+1}-\frac{\left( (2+x) \mu -1 -\delta \right)x}{\sqrt{x^2+1}}}{x^2+1} = \frac{\mu-(2\mu -1 -\delta)x}{\left( x^2+1 \right)^{\frac{3}{2}}},
	\end{equation*}
	we have
	\begin{equation*}
		f'(4)=\frac{4}{17\sqrt{17}}\left( \delta-\left( \frac{7}{4} \mu -1 \right) \right).
	\end{equation*}
	Using elementary inequality
	\begin{equation*}
		\delta=\frac{1}{2} \min \{ 2\mu -1, 1 \} \leq \frac{7}{4}\mu -1
	\end{equation*}
	on $\mu > \frac{2}{3}$, we have $f'(x) \leq 0$ for $x \geq 4$. Since
	\begin{equation*}
		f(N)=\frac{(2+N) \mu -1-\delta}{\brk{N}}\underset{\text{(}\delta \leq  \frac{1}{2} \text{)}}{\geq} \frac{(2+N)\mu -\frac{3}{2}}{\brk{N}} \qquad \text{and} \qquad \lim_{x \rightarrow \infty}f(x)=\mu,
	\end{equation*}
	we have
	\begin{equation*}
		\mu \leq \frac{\dist \left( [-\delta, \delta], (2 + n)\mu -1 \right)}{\brk{n}} \leq \frac{(2+N) \mu -\frac{3}{2}}{\brk{N}}
	\end{equation*}
	for $n \geq N \geq 4$.
	\bigskip

	Now, we estimate
	\begin{equation*}
		\frac{\dist \left( [-\delta, \delta], (2 - n)\mu -1 \right)}{\brk{n}} = \frac{(n-2)\mu +1-\delta}{\brk{n}}.
	\end{equation*}
	Consider $g:[4, \infty) \rightarrow \bbR$ defined by
	\begin{equation*}
		g(x)=\frac{(x-2)\mu +1-\delta}{\sqrt{x^2+1}}.
	\end{equation*}
	As
	\begin{equation*}
		g'(x)=\frac{\mu \sqrt{x^2+1}-\left( (x-2)\mu +1-\delta \right)\frac{x}{\sqrt{x^2+1}}}{x^2+1} = \frac{\mu+(2\mu -1 + \delta)x}{\left( x^2+1 \right) ^{\frac{3}{2}}},
	\end{equation*}
	we know $g'(x) \geq 0$ for $x \geq 4$. Since
	\begin{equation*}
		g(N)=\frac{(N-2)\mu +1 -\delta}{\brk{N}} \underset{\text{(} \delta>\frac{1}{6} \text{)}}{\leq} \frac{(N-2) \mu +\frac{5}{6}}{\brk{N}}\qquad \text{and} \qquad \lim_{x \rightarrow \infty} g(x) = \mu,
	\end{equation*}
	we have
	\begin{equation*}
		\frac{(N-2)\mu +\frac{5}{6}}{\brk{N}} \leq \frac{\dist \left( [-\delta, \delta], (2 - n)\mu -1 \right)}{\brk{n}} \leq \mu.
	\end{equation*}
	Combining estimates for $f$ and $g$, we get the result.
\end{proof}

From now on, we always assume $\mu, N, \delta$ satisfy assumption in Proposition \ref{dist bound prop}.

\begin{corollary} \label{Pninv in cbd}
	\begin{align}
		\nrm{\n{P_\pm}}_{[\cbdn, \cbdn]} & \leq \frac{\brk{N}}{(N-2)\mu +\frac{5}{6}} \cdot \frac{1}{\brk{n}}. \label{Pninv cbd} \\
		\nrm{(Q+1)}_{[\cbdn, \cbdn]} &\leq 2. \label{Qinv cbd}	
	\end{align}
\end{corollary}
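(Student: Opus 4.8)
The plan is to derive both bounds directly from Proposition~\ref{D^{-1} in C_b^delta}, since $\n{P_\pm}$ and $Q+1$ are, by \eqref{Pnp}, \eqref{Pnm}, \eqref{Q}, instances of $\dns{n}{s}$ for the shift values $s = (2\pm n)\mu - 1$ and $s = -1$ respectively. So the whole statement is really about packaging those earlier estimates together with the distance bounds from Proposition~\ref{dist bound prop}, and the only real work is checking that we are in the admissible range of the shift parameters. Let me sketch it.

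First, for \eqref{Pninv cbd}: by definition $\Pnpinv = \dnsi{n}{(2+n)\mu-1}$ and $\Pnminv = \dnsi{n}{(2-n)\mu-1}$, so Proposition~\ref{D^{-1} in C_b^delta} gives
\begin{equation*}
	\nrm{\n{P_\pm}^{-1}}_{[\cbd,\cbd]} \leq \frac{1}{\dist([-\delta,\delta],\, (2\pm n)\mu - 1)},
\end{equation*}
provided $(2\pm n)\mu - 1 \notin [-\delta,\delta]$. For $n \in N\bbZ$ with $|n| \geq N \geq 4$, Proposition~\ref{dist bound prop} applies and yields
\begin{equation*}
	\dist([-\delta,\delta],\, (2\pm n)\mu - 1) \geq \frac{(N-2)\mu + \frac{5}{6}}{\brk{N}}\brk{n},
\end{equation*}
which already places the shift outside $[-\delta,\delta]$ (the right side is positive under our standing assumptions $\mu > \frac{2}{3}$, $N \geq 4$) and, after inverting, gives exactly \eqref{Pninv cbd}. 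I would also note that on the left-hand side of the corollary we write $\n{P_\pm}$ but mean its inverse --- I would phrase this carefully or simply restate the bound for $\nrm{\n{P_\pm}^{-1}}$, matching the notation $\nrm{\dnsi{n}{s}}_{C_b^\delta}$ in Proposition~\ref{D^{-1} in C_b^delta}. The cases $n = 0$ or $|n| < N$ within $N\bbZ$ do not arise because $\Box_{,N}$ forces $n \in N\bbZ$, and for $n=0$ the operator acts on $\cbdn = 0$ trivially (or, if one prefers, $\dns{0}{2\mu-1}$ and $\dns{0}{-(2\mu-1)}$ have shifts $\pm(2\mu-1) \notin [-\delta,\delta]$ by the choice $\delta = \frac12\min\{2\mu-1,1\}$, so the estimate still holds).

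Second, for \eqref{Qinv cbd}: here $Q+1 = \dns{0}{-1}$ wait --- more precisely, by \eqref{Q} and \eqref{dns}, $Q = \dns{0}{0}$ so $Q+1$ is not literally $\dns{n}{s}$; rather $(Q+1)^{-1} = \dnsi{0}{-1}$ since $\dns{0}{-1} = \beta\rd_\beta + 1 = Q+1$. Applying Proposition~\ref{D^{-1} in C_b^delta} with $n=0$, $s=-1$ requires $-1 \notin [-\delta,\delta]$, which holds because $\delta = \frac12\min\{2\mu-1,1\} \leq \frac12 < 1$. Then
\begin{equation*}
	\nrm{(Q+1)^{-1}}_{[\cbd,\cbd]} \leq \frac{1}{\dist([-\delta,\delta],\,-1)} = \frac{1}{1-\delta} \leq \frac{1}{1-\frac12} = 2,
\end{equation*}
giving \eqref{Qinv cbd} (again with the same caveat that the left side should read $(Q+1)^{-1}$). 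Since $(Q+1)$ only shows up in the $N\bbZ$-component and the bound is $n$-independent, it holds uniformly, as needed for later applications of Proposition~\ref{continuity between A^s}.

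The main obstacle, such as it is, is bookkeeping rather than mathematics: one must be careful that (i) the corollary statement is really about the \emph{inverse} operators despite writing $\n{P_\pm}$ and $(Q+1)$, (ii) the admissibility condition $s \notin [-\delta,\delta]$ of Proposition~\ref{D^{-1} in C_b^delta} is verified in every case (this is where $\delta < \frac12$ and the $|n| \geq N \geq 4$ restriction from $\Box_{,N}$ get used), and (iii) the distance estimate \eqref{dist bound} is invoked with the correct orientation so that inverting preserves the inequality direction. None of these steps is deep, but skipping any of them would leave a genuine gap, so I would spell out each verification explicitly.
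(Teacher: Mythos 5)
Your route coincides with the paper's one-line proof: identify $\n{P_\pm}$ and $Q+1$ as instances of $\dns{n}{s}$ (with $s = (2\pm n)\mu - 1$ and $s=-1$), apply Proposition~\ref{D^{-1} in C_b^delta} to get the $1/\dist$ bound, and then feed in the lower bound on the distance from Proposition~\ref{dist bound prop}. Your observation that the corollary's left-hand sides should read $\nrm{\Pnpinv}$, $\nrm{\Pnminv}$, $\nrm{\Qinv}$ (i.e.\ the inverse operators) is correct and matches how \eqref{Pninv cbd}, \eqref{Qinv cbd} are cited downstream. The computation $\frac{1}{1-\delta}\leq 2$ from $\delta \leq \tfrac12$ is also right.

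However, your parenthetical treatment of $n=0$ contains two errors. First, $\cbdn$ is \emph{not} the zero space at $n=0$: by the convention \eqref{N periodic}, $\Box_{,N}$ equals $\Box$ precisely when $n \in N\bbZ$, and $0 \in N\bbZ$, so $C_{b,N}^{\delta,(0)} = C_b^\delta$. Second, the fallback claim that ``the estimate still holds'' at $n=0$ is false: for $n=0$ one has $\Pnminv = \Pnpinv = \dnsi{0}{2\mu-1}$, and Proposition~\ref{D^{-1} in C_b^delta} gives only $\frac{1}{\dist([-\delta,\delta],2\mu-1)} = \frac{1}{2\mu-1-\delta}$, which is an $O(1)$ quantity (e.g.\ $\frac{2}{2\mu-1}$ when $\mu\leq 1$), while the right-hand side $\frac{\brk{N}}{(N-2)\mu + 5/6}$ is $O(1/N)$. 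So \eqref{Pninv cbd} simply does not hold at $n=0$. The correct reading is that the corollary inherits the scope $\abs{n}\geq N\geq 4$ from Proposition~\ref{dist bound prop} that it cites, i.e.\ it is a statement for $n\in N\bbZ\setminus\{0\}$; this is consistent with every later invocation of \eqref{Pninv cbd} in the paper, all of which occur under an ``if $n\neq 0$'' hypothesis. Estimate \eqref{Qinv cbd}, by contrast, is $n$-independent and does hold for all $n$.
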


\begin{proof}
	Proposition \ref{D^{-1} in C_b^delta} and \ref{dist bound prop}.
\end{proof}

Now, we calculate $\dnsi{n}{s}f$ when $f$ belongs to $\bbC$, a space of the constant function defined on $[0, \infty)$. Observe in \eqref{Wnz} that $\n{W_0}$ contains $\bbC$ only when $n=0$. The reason we included $\bbC$ on zero frequency is that we should include trivial solution \eqref{trivial solz bar} to the domain of $\br{L}$. Therefore, we show boundedness of $\dnsi{n}{s}$ on $\bbC$ only for $n=0$.

\begin{proposition} \label{D^{-1} in C}
	For $s \neq 0$, the range of $\dns{0}{s}: \bbC \rightarrow \dns{0}{s} \bbC$ contains $\bbC$ so that $\dnsi{0}{s}: \bbC \rightarrow \bbC$ is well-defined with
	\begin{equation} \label{D^{-1} C}
		\nrm{\dnsi{0}{s}}_{\bbC} \leq \frac{1}{\abs{s}}.
	\end{equation}
\end{proposition}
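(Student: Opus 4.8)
The plan is to observe that on the space $\bbC$ of constant functions the operator $\dns{0}{s}$ degenerates to a scalar. First I would recall from Definition \ref{operator dns} that $\dns{0}{s}=\beta(\rd_\beta - i\cdot 0)-s = \beta\rd_\beta - s = Q - s\,\mathrm{id}$, where $Q=\beta\rd_\beta$ as in \eqref{Q}. Since $\beta\rd_\beta$ annihilates every constant function, for $c\in\bbC$ we have $\dns{0}{s}c = -sc$, so $\dns{0}{s}$ acts on $\bbC$ as multiplication by $-s$.

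Given $s\neq 0$, multiplication by $-s$ is a bijection of $\bbC$ onto itself, hence $\bbC \subseteq \dns{0}{s}\bbC$ and the inverse $\dnsi{0}{s}\colon \bbC\to\bbC$ is well-defined and given by $\dnsi{0}{s}c = -c/s$. This immediately yields $\nrm{\dnsi{0}{s}f}_{\bbC} = \frac{1}{\abs{s}}\nrm{f}_{\bbC}$ for every $f\in\bbC$, so in particular $\nrm{\dnsi{0}{s}}_{\bbC}\leq \frac{1}{\abs{s}}$ (in fact with equality), which is \eqref{D^{-1} C}.

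There is no genuine obstacle here; the only point worth flagging is that the formula \eqref{eq24} from the proof of Proposition \ref{D^{-1} in C_b}, applied with $n=0$ to a constant $f\equiv c$, gives $e^{i0\beta}\int_{C}^{1} c\,t^{-s-1}\,dt$, and a direct evaluation of this integral recovers exactly the value $-c/s$ for the admissible choice of $C$ (namely $C=\infty$ if $s>0$ and $C=0$ if $s<0$); this is a consistency check rather than a separate argument, so I would simply present the one-line scalar computation above.
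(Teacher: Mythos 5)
Your proof is correct and takes essentially the same route as the paper: both compute that $\dns{0}{s}$ acts on a constant $c$ as multiplication by $-s$ (since $\beta\rd_\beta$ kills constants), so the inverse is multiplication by $-1/s$, giving the norm bound $1/\abs{s}$. The consistency check against \eqref{eq24} is a nice extra but not needed.
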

\begin{proof}
	Note that $\bbC \hookrightarrow C_b$ so that $\dnsi{0}{s}$ is well-defined by Proposition \ref{injective on C_b}. As
	\begin{equation*}
		\dns{0}{s}c=\left( \beta \rd_\beta -s \right)c=-sc
	\end{equation*}
	for $c \in \bbC$, we have
	\begin{equation*}
		\dnsi{0}{s}c=-\frac{1}{s}c.
	\end{equation*}
\end{proof}

Next, we calculate $\dnsi{n}{s}f$ when $f$ belongs to $\bbC \xi_0$.

\begin{proposition} \label{D^{-1} in xiz}
	For $s \notin [-\delta, \delta]$, $\dnsi{n}{s}: \bbC \xi_0 \rightarrow \cbd \oplus \cxiz$ is continuous with exact norm bound
	\begin{equation}\label{D^{-1} xiz}
		\nrm{\dnsi{n}{s}}_{[\cxiz, \cbd \oplus \cxiz]} \leq \frac{1}{\abs{s}} \left( \frac{\nrm{\beta \rd_\beta \xi_0}_{\cbd}+\abs{n} \ncbd{\beta \xi_0}}{\dist \left( [-\delta, \delta], s \right)} +1 \right).
	\end{equation}
\end{proposition}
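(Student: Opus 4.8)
The plan is to use the explicit action of $\dns{n}{s}$ on $\xi_0$ in order to split off, by hand, the part of $\dnsi{n}{s}(c\xi_0)$ that does not already lie in $\cbd$. First I would record the identity
\begin{equation*}
	\dns{n}{s}\xi_0 = \beta\rd_\beta\xi_0 - in\beta\xi_0 - s\xi_0,
\end{equation*}
which, since $s\neq 0$, rearranges for any $c\in\bbC$ to
\begin{equation*}
	c\,\xi_0 = \dns{n}{s}\!\left(-\tfrac{c}{s}\xi_0\right) + \tfrac{c}{s}\left(\beta\rd_\beta\xi_0 - in\beta\xi_0\right).
\end{equation*}
Because $\xi_0\in C_b$, Proposition~\ref{D^{-1} in C_b} makes $\dnsi{n}{s}$ a well-defined linear injective operator on $C_b\supset\cxiz$, so applying $\dnsi{n}{s}$ to the displayed identity gives
\begin{equation*}
	\dnsi{n}{s}(c\,\xi_0) = -\tfrac{c}{s}\xi_0 + \tfrac{c}{s}\,\dnsi{n}{s}\!\left(\beta\rd_\beta\xi_0 - in\beta\xi_0\right).
\end{equation*}

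Next I would verify that $\beta\rd_\beta\xi_0$ and $\beta\xi_0$ both belong to $\cbd$. Since $\xi_0$ is locally constant near $0$ and near $\infty$, the function $\beta\rd_\beta\xi_0$ is continuous and compactly supported inside $(0,\infty)$, hence trivially in $\cbd$; and $\beta^{\pm\delta}(\beta\xi_0)=\beta^{1\pm\delta}\xi_0$ are both bounded on $[0,\infty)$ because $1-\delta>0$ (as $\delta\leq\tfrac{1}{2}$ by \eqref{def of delta}) and $\xi_0$ vanishes near $\infty$, so $\beta\xi_0\in\cbd$. Hence $\beta\rd_\beta\xi_0-in\beta\xi_0\in\cbd$, and since $s\notin[-\delta,\delta]$, Proposition~\ref{D^{-1} in C_b^delta} gives $\dnsi{n}{s}(\beta\rd_\beta\xi_0-in\beta\xi_0)\in\cbd$ with $\nrm{\dnsi{n}{s}}_{[\cbd,\cbd]}\leq\dist([-\delta,\delta],s)^{-1}$. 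The last display therefore exhibits $\dnsi{n}{s}(c\xi_0)$ as an element of $\cbd\oplus\cxiz$, the sum being direct because $\xi_0\notin\cbd$, which settles the mapping statement.

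For the norm bound I would just read norms off that decomposition. Using $\nrm{c\xi_0}_{\cxiz}=\abs{c}$, the definition of the direct-sum norm, the triangle inequality $\nrm{\beta\rd_\beta\xi_0-in\beta\xi_0}_{\cbd}\leq\nrm{\beta\rd_\beta\xi_0}_{\cbd}+\abs{n}\nrm{\beta\xi_0}_{\cbd}$, and the operator bound from Proposition~\ref{D^{-1} in C_b^delta}, one obtains
\begin{equation*}
	\nrm{\dnsi{n}{s}(c\,\xi_0)}_{\cbd\oplus\cxiz} \leq \frac{\abs{c}}{\abs{s}}\left(1 + \frac{\nrm{\beta\rd_\beta\xi_0}_{\cbd}+\abs{n}\nrm{\beta\xi_0}_{\cbd}}{\dist([-\delta,\delta],s)}\right),
\end{equation*}
and dividing by $\abs{c}$ yields exactly the asserted estimate.

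I do not expect a genuine obstacle here: the argument is short once one spots the correct splitting. The two points that need a little care are (i) that applying $\dnsi{n}{s}$ to the rearranged identity produces the \emph{same} function as $\dnsi{n}{s}(c\xi_0)$ defined via Proposition~\ref{D^{-1} in C_b} — which is immediate from injectivity of $\dns{n}{s}$ on $C_b$ together with $\cbd\oplus\cxiz\subset C_b$ — and (ii) the inclusion $\beta\xi_0\in\cbd$, which uses $\delta<1$, guaranteed by the standing choice $\delta=\tfrac{1}{2}\min\{2\mu-1,1\}$ in \eqref{def of delta}.
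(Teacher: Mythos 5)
Your proof is correct and follows essentially the same route as the paper: both start from the identity $\dns{n}{s}\xi_0=\beta\rd_\beta\xi_0-in\beta\xi_0-s\xi_0$, apply $\dnsi{n}{s}$ (justified by injectivity on $C_b$) to obtain the explicit decomposition $\dnsi{n}{s}\xi_0=\tfrac{1}{s}\bigl(\dnsi{n}{s}(\beta\rd_\beta\xi_0)-in\,\dnsi{n}{s}(\beta\xi_0)-\xi_0\bigr)$, and then invoke Proposition~\ref{D^{-1} in C_b^delta} to read off the norm bound. Your added verification that $\beta\rd_\beta\xi_0,\beta\xi_0\in\cbd$ (using $\delta<1$) is a detail the paper leaves implicit, but the argument is the same.
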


\begin{proof}
	As
	\begin{equation*}
		\dns{n}{s}\xi_0 = \underbrace{\beta\rd_\beta \xi_0}_{\cbd} - \underbrace{in\beta \xi_0}_{\cbd} - \underbrace{s \xi_0}_{\cxiz}
	\end{equation*}
	and $\dns{n}{s}$ is injective on $C_b$, taking $\dnsi{n}{s}$ both side gives
	
	\begin{equation} \label{D^{-1} xiz formula}
		\dnsi{n}{s} \xi_0 = \frac{1}{s} \left( \dnsi{n}{s} (\beta \rd_\beta \xi_0) - in \dnsi{n}{s} (\beta \xi_0) - \xi_0  \right).
	\end{equation}
	From this inequality, we have \eqref{D^{-1} xiz}.
\end{proof}

Proposition \ref{D^{-1} in xiz} with Proposition \ref{dist bound prop} gives the following corollary.

\begin{corollary} \label{Pninv in xiz}
	\begin{align}
		\nrm{\left( \n{P_\pm} \right)^{-1}}_{[\cxizn, \cbdn \oplus \cxizn]} &\leq \frac{\brk{N}}{(N-2)\mu + \frac{5}{6}} \left( \frac{\nrm{\beta \rd_\beta \xi_0}_{\cbd}+\brk{N} \ncbd{\beta \xi_0}}{(N-2)\mu +\frac{5}{6}} +1 \right) \cdot \frac{1}{\brk{n}}. \label{Pninv xiz} \\
		\nrm{\left( Q+1 \right)^{-1}}_{[\cxizn, \cbdn \oplus \cxizn]} &\leq 2 \ncbd{\beta \rd_\beta \xi_0} +1. \label{Qinv xiz}
	\end{align}
\end{corollary}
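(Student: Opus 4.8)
The plan is to read off both inequalities by feeding the distance bounds of Proposition~\ref{dist bound prop} into the operator-norm estimate \eqref{D^{-1} xiz} of Proposition~\ref{D^{-1} in xiz}; the rest is elementary manipulation of $\abs{n}$, $\brk{n}$ and $\brk{N}$, in the same spirit as Corollary~\ref{Pninv in cbd}.

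For \eqref{Pninv xiz}, recall $\n{P_\pm}=\dns{n}{s}$ with $s=(2\pm n)\mu-1$. For $\abs{n}\geq N\geq 4$, Proposition~\ref{dist bound prop} gives
\[
	\dist\bigl([-\delta,\delta],(2\pm n)\mu-1\bigr) \geq \frac{(N-2)\mu+\frac{5}{6}}{\brk{N}}\,\brk{n} > 0,
\]
so $s\notin[-\delta,\delta]$ and Proposition~\ref{D^{-1} in xiz} applies. Into \eqref{D^{-1} xiz} I would then substitute $\frac{1}{\abs{s}}\leq\frac{1}{\dist([-\delta,\delta],s)}$ (valid since $0\in[-\delta,\delta]$), the displayed lower bound on the distance, $\abs{n}\leq\brk{n}$, and $\brk{N}\leq\brk{n}$; handling the parenthetical factor of \eqref{D^{-1} xiz} by first replacing $\abs{n}$ with $\brk{n}$, then dividing by the distance lower bound, and finally using $\brk{N}/\brk{n}\leq 1$ on the surviving $\nrm{\beta\rd_\beta\xi_0}_{\cbd}$-term, one arrives at
\[
	\nrm{\left(\n{P_\pm}\right)^{-1}}_{[\cxizn,\cbdn\oplus\cxizn]} \leq \frac{\brk{N}}{(N-2)\mu+\frac{5}{6}}\left(\frac{\nrm{\beta\rd_\beta\xi_0}_{\cbd}+\brk{N}\ncbd{\beta\xi_0}}{(N-2)\mu+\frac{5}{6}}+1\right)\frac{1}{\brk{n}},
\]
which is \eqref{Pninv xiz}. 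The cases $n\notin N\bbZ$ are vacuous since then $\cxizn=0$ by \eqref{N periodic}, and the remaining index $n=0$, where $\n{P_\pm}$ specializes to $\dns{0}{2\mu-1}$, follows directly from Proposition~\ref{D^{-1} in xiz} because $s=2\mu-1>\delta$.

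For \eqref{Qinv xiz} I would observe that $Q+1=\beta\rd_\beta+1=\dns{0}{-1}$, so Proposition~\ref{D^{-1} in xiz} is applied with parameter ``$n$'' equal to $0$ and $s=-1$, whence the $\abs{n}\,\ncbd{\beta\xi_0}$ contribution drops out. Since $\delta\leq\frac{1}{2}$ by \eqref{def of delta}, $\dist([-\delta,\delta],-1)=1-\delta\geq\frac{1}{2}$, so \eqref{D^{-1} xiz} gives
\[
	\nrm{(Q+1)^{-1}}_{[\cxizn,\cbdn\oplus\cxizn]} \leq \frac{\nrm{\beta\rd_\beta\xi_0}_{\cbd}}{1-\delta}+1 \leq 2\ncbd{\beta\rd_\beta\xi_0}+1,
\]
which is \eqref{Qinv xiz}. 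I do not anticipate a genuine obstacle: this is a direct corollary, and the only points requiring care are picking the correct value of the parameter ``$n$'' in $\dns{n}{s}$ for each of $\n{P_\pm}$ and $Q+1$, carrying out the string of inequalities inside the parenthesis of \eqref{D^{-1} xiz} in the order above so that the $\brk{N}$ in the numerator is produced correctly, and checking that $\delta=\frac{1}{2}\min\{2\mu-1,1\}$ indeed keeps both $s=-1$ and $s=2\mu-1$ out of $[-\delta,\delta]$.
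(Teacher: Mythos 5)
Your derivation of \eqref{Pninv xiz} for $\abs{n}\geq N$ and of \eqref{Qinv xiz} is correct and is precisely the paper's intended argument: plug the distance lower bound of Proposition~\ref{dist bound prop} into the operator-norm estimate \eqref{D^{-1} xiz}, use $\abs{n}\leq\brk{n}$ and $\brk{N}\leq\brk{n}$ to extract the displayed bound, and for $(Q+1)^{-1}=\dnsi{0}{-1}$ use $\dist([-\delta,\delta],-1)=1-\delta\geq\frac12$. So the core of the proof is fine and matches the paper.

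The one place you overclaim is the final sentence about $n=0$. Writing $\n{P_\pm}\big|_{n=0}=\dns{0}{2\mu-1}$ and noting $s=2\mu-1>\delta$ does make Proposition~\ref{D^{-1} in xiz} applicable, but it does \emph{not} yield the bound displayed in \eqref{Pninv xiz}: Proposition~\ref{dist bound prop} requires $\abs{n}\geq N\geq 4$ and is the only step that produces the factor $\frac{\brk{N}}{(N-2)\mu+\frac{5}{6}}\frac{1}{\brk{n}}$. At $n=0$ what \eqref{D^{-1} xiz} gives is $\frac{1}{2\mu-1}\Bigl(\frac{\ncbd{\beta\rd_\beta\xi_0}}{(2\mu-1)-\delta}+1\Bigr)$, and for $\mu$ close to $\frac{2}{3}$ (where $2\mu-1$ and $(2\mu-1)-\delta=\frac{2\mu-1}{2}$ are small) this bound blows up, while the right-hand side of \eqref{Pninv xiz} at $n=0$ stays of order $\frac{1}{\mu}$ for large $N$. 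So "follows directly" is not justified. In fact \eqref{Pninv xiz} (like Corollary~\ref{Pninv in cbd}) is intended to hold only for $n\in N\bbZ\setminus\{0\}$; all the places the paper invokes it (Propositions~\ref{continuity of id}, \ref{continuity of Q}, \ref{continuity of Pn(Q+1)}, etc.) do so under an explicit ``for $n\neq 0$'' caveat, and the $n=0$ frequency is handled by a separate argument. You should simply drop the $n=0$ claim and state the corollary's first estimate as an inequality valid for $\abs{n}\geq N$.
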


\bigskip

\subsection{Continuity results for differential operator in function space}
\label{subsec: Continuity results for differential operator in function space}

In this subsection, we deal with $C^1$ continuity of $\br{L}: X \times Y \rightarrow Z$, where the spaces $X, Y, Z$ will be clarified. Before we do this, we show the following two lemmas related to commutativity of a differential operator with a multiplication operator. First, we treat the commutativity of $\dns{n}{s}$ and multiplication operator $\beta^\ell$.

\begin{lemma} \label{comm in dns}
	For all $f \in \calS'(\br{\bbR}_+)$ and $n, s, \ell \in \bbR$, 
	\begin{equation*}
		\dns{n}{s} \left( \beta^\ell  f\right) = \beta^\ell \dns{n}{s-\ell}f
	\end{equation*}
\end{lemma}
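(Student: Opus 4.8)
The identity is a one-line computation once the (distributional) Leibniz rule is in hand, so the \emph{plan} is mostly a matter of bookkeeping. The plan is to expand $\dns{n}{s} = \beta(\rd_\beta - in) - s$ and apply the product rule. Concretely, I would write
$$\dns{n}{s}(\beta^\ell f) = \beta\,\rd_\beta(\beta^\ell f) - in\,\beta^{\ell+1}f - s\,\beta^\ell f,$$
and then use that $\beta^\ell$ is smooth on the open half-line $(0,\infty)$, so multiplication by $\beta^\ell$ is a legitimate operation on distributions and the Leibniz rule $\rd_\beta(\beta^\ell f) = \ell\,\beta^{\ell-1}f + \beta^\ell\,\rd_\beta f$ holds. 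Multiplying by $\beta$, the key point is that the prefactor $\beta$ in $\beta\rd_\beta$ exactly cancels the negative power produced by differentiating $\beta^\ell$: $\beta\,\rd_\beta(\beta^\ell f) = \ell\,\beta^\ell f + \beta^{\ell+1}\,\rd_\beta f$, with no residual singular term. Collecting terms gives $\beta^\ell\big(\beta(\rd_\beta - in)f - (s-\ell)f\big) = \beta^\ell \dns{n}{s-\ell}f$, which is the claim.

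There is no genuine obstacle here; the only care needed is (i) the meaning of $\beta^\ell f$ — which, since $\beta^\ell \in C^\infty((0,\infty))$, is just the usual product of a smooth function with a distribution, read off locally on $(0,\infty)$ — and (ii) the validity of the Leibniz rule in that setting, which is standard. If one prefers to be fully rigorous, one can verify the identity first for $f \in C^\infty$ by the elementary computation above and then extend to $f \in \calS'(\bbRb)$ by continuity of the operators involved. In the applications later in the paper, $f$ always lies in one of the concrete spaces $C_b$, $C_b^\delta$, $\bbC\xi_0$, etc., on which $\beta^\ell$ acts by honest pointwise multiplication, so the point is essentially moot there.
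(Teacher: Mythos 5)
Your proposal is correct and is essentially the same argument the paper gives: the paper verifies the identity by pairing against test functions $g\in\calS(\bbRb)$ and dismisses the remaining computation as ``elementary calculus,'' which is precisely the Leibniz-rule manipulation you carry out explicitly. The only cosmetic difference is that you phrase the justification via the product rule for multiplication of a distribution by the smooth function $\beta^\ell$ rather than via the duality pairing, but these are two ways of saying the same thing.
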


\begin{proof}
	Considering the definition of tempered distribution, it suffices to check that
	\begin{equation*}
		\brk{\dns{n}{s} \left( \beta^\ell  f\right), g}=\brk{\beta^\ell \dns{n}{s-\ell}f, g}
	\end{equation*}
	for all $g \in \calS(\br{\bbR}_+)$. This is just elementary calculus.
\end{proof}

Next, we deal with commutativity of $\dnsi{n}{s}$ and $\beta^\ell$. Unlike Lemma \ref{comm in dns}, there is some remainder term.

\begin{proposition} \label{comm in dnsi}
	Suppose $f \in \dns{n}{s-\ell}C_b$,\, $\beta^\ell f \in \dns{n}{s}C_b$ for some $n, s, \ell \in \bbR$, and $s, s- \ell \neq 0$. Then, there exists a constant $c \in \bbC$ such that
	\begin{equation} \label{comm dnsi}
		\beta^\ell \dnsi{n}{s-\ell} f = \dnsi{n}{s} \left( \beta^\ell f \right) + c \beta^s e^{in\beta}.
	\end{equation}
	In addition, if $s$ and $s-\ell$ have a same sign, the constant $c$ should be zero.
\end{proposition}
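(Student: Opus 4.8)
The plan is to compute both sides of \eqref{comm dnsi} explicitly by solving the underlying first–order ODE and then subtracting. Set $g \coloneqq \dnsi{n}{s-\ell}f$ and $h \coloneqq \dnsi{n}{s}(\beta^\ell f)$; by hypothesis $g,h \in C_b$, and $\dns{n}{s-\ell}g = f$, $\dns{n}{s}h = \beta^\ell f$ by definition. First I would revisit the derivation behind \eqref{eq24}: the general solution of $\beta\rd_\beta u - in\beta u - \sigma u = F$ is $u = \beta^\sigma e^{in\beta}\bigl(c_0 + \int_1^\beta x^{-\sigma-1}F(x)e^{-inx}\,dx\bigr)$, and—since $\sigma\neq 0$ makes the homogeneous solution $\beta^\sigma e^{in\beta}$ unbounded, so that by Proposition \ref{injective on C_b} there is at most one solution in $C_b$—the membership $u\in C_b$ forces $c_0$ to be exactly the constant that allows the base point to be moved to $\infty$ (if $\sigma>0$) or to $0$ (if $\sigma<0$). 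Applying this with $(\sigma,F)=(s-\ell,f)$ and with $(\sigma,F)=(s,\beta^\ell f)$, and noting $x^{-s-1}\bigl(x^\ell f(x)\bigr)e^{-inx}=x^{-(s-\ell)-1}f(x)e^{-inx}$, yields
\begin{equation*}
	g(\beta)=\beta^{s-\ell}e^{in\beta}\int_{C_1}^\beta \frac{f(x)e^{-inx}}{x^{s-\ell+1}}\,dx,\qquad h(\beta)=\beta^{s}e^{in\beta}\int_{C_2}^\beta \frac{f(x)e^{-inx}}{x^{s-\ell+1}}\,dx,
\end{equation*}
with $C_1=\infty$ if $s-\ell>0$, $C_1=0$ if $s-\ell<0$, and $C_2=\infty$ if $s>0$, $C_2=0$ if $s<0$.

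Subtracting, the two integrands agree, so $\beta^\ell g(\beta)-h(\beta)=\beta^s e^{in\beta}\Bigl(\int_{C_1}^\beta-\int_{C_2}^\beta\Bigr)\frac{f(x)e^{-inx}}{x^{s-\ell+1}}\,dx=\beta^s e^{in\beta}\int_{C_1}^{C_2}\frac{f(x)e^{-inx}}{x^{s-\ell+1}}\,dx$, and the last integral is independent of $\beta$. Its convergence at the endpoint $C_1$ is precisely the convergence already built into the representation of $g$, and at $C_2$ the convergence already built into the representation of $h$; hence $c\coloneqq\int_{C_1}^{C_2}x^{-(s-\ell)-1}f(x)e^{-inx}\,dx\in\bbC$ is well defined and $\beta^\ell g-h=c\,\beta^{s}e^{in\beta}$, which is \eqref{comm dnsi}. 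For the addendum, if $s$ and $s-\ell$ share a sign then $C_1=C_2$, so $\int_{C_1}^{C_2}=0$ and $c=0$.

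A conceptual alternative that does not require $f$ to be a genuine function is to argue via the kernel: Lemma \ref{comm in dns} gives $\dns{n}{s}(\beta^\ell g)=\beta^\ell\dns{n}{s-\ell}g=\beta^\ell f=\dns{n}{s}h$, so $v\coloneqq\beta^\ell g-h$ lies in $\ker\dns{n}{s}$; on $(0,\infty)$ the equation $\beta v'=(in\beta+s)v$ integrates classically to $v=c\beta^s e^{in\beta}$, and the a priori growth of $v$ inherited from $g,h\in C_b$ promotes this to an equality of tempered distributions on $\bbRb$. When $s$ and $s-\ell$ have the same sign, $c=0$ follows from the endpoint asymptotics of $v$: if $s>0$ then $\ell<s$ and $\abs{v(\beta)}=O(\beta^{\max\{\ell,0\}})=o(\beta^s)$ as $\beta\to\infty$, while if $s<0$ then $\ell>s$ and $\abs{v(\beta)}=O(\beta^{\min\{\ell,0\}})=o(\beta^s)$ as $\beta\to 0^+$, each of which is incompatible with $c\neq0$.

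I expect the only genuine obstacle to be bookkeeping rather than a deep difficulty: one must (i) legitimately reconstruct the integral (or kernel) representations of $g$ and $h$ from the mere hypotheses $f\in\dns{n}{s-\ell}C_b$ and $\beta^\ell f\in\dns{n}{s}C_b$, and (ii) check that the two one-sided convergence conditions encoded in those representations splice to give convergence of the single $\beta$-independent integral defining $c$. The edge case in which $s$ or $s-\ell$ is a negative integer, where $\beta^s e^{in\beta}$ would need a finite-part interpretation near $0$, does not arise in the applications of this proposition, since the relevant shifts have the form $(2\pm n)\mu-1$ with $\mu>\tfrac23$ kept away from the resonant set.
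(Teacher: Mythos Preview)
Your proposal is correct. Your ``conceptual alternative'' is essentially the paper's own argument: the paper applies Lemma~\ref{comm in dns} to show $\dns{n}{s}\bigl(\beta^\ell g-h\bigr)=0$, invokes the kernel computation from Proposition~\ref{injective on C_b} to get \eqref{comm dnsi}, then divides both sides by $\beta^s e^{in\beta}$ and sends $\beta\to\infty$ (when $s,s-\ell>0$) or $\beta\to0$ (when $s,s-\ell<0$) to force $c=0$---which is exactly your endpoint-asymptotics reasoning phrased slightly differently.

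Your first approach, via the explicit integral representations from \eqref{eq24}, is a legitimate and somewhat more informative alternative: it not only proves the statement but produces the closed form $c=\int_{C_1}^{C_2}x^{-(s-\ell)-1}f(x)e^{-inx}\,dx$, from which the same-sign conclusion $C_1=C_2\Rightarrow c=0$ is immediate. The only delicate point is the one you flag yourself---recovering the integral formula for $g$ and $h$ from the bare hypotheses $g,h\in C_b$ rather than from $f\in C_b$---and your sketch handles it correctly: boundedness of $g$ forces $\beta^{-(s-\ell)}e^{-in\beta}g(\beta)\to0$ at the relevant endpoint, which is precisely the convergence of the improper integral there. The paper's kernel argument sidesteps this bookkeeping entirely, at the cost of not exhibiting $c$ explicitly.
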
 

\begin{proof}
	Since
	\begin{align*}
		& \dns{n}{s} \Bigg( \underbrace{\beta^\ell \underbrace{\dnsi{n}{s-\ell}f}_{\in C_b \text{ by assumption}}}_{\in \calS'(\br{\bbR}_+)} - \underbrace{\dnsi{n}{s} \left( \beta^\ell f \right)}_{\in C_b \hookrightarrow\calS'(\br{\bbR}_+) \text{ by assumption}} \Bigg) \\
		\underset{\text{Lemma } \ref{comm in dns}}&{=} \beta^\ell \dns{n}{s-\ell} \dnsi{n}{s-\ell} f - \dns{n}{s} \dnsi{n}{s} \beta^\ell f \\
		&= \beta^\ell f- \beta^\ell f \\
		&=0,
	\end{align*}
	
	$\beta^\ell \dnsi{n}{s-\ell} f - \dnsi{n}{s} \left(\beta^\ell f\right)$ lies in the kernel of the operator $\dns{n}{s}$. As the kernel of $\dns{n}{s}$ is $\bbC \beta^s e^{in\beta}$ as we saw in Proposition \ref{injective on C_b}, we get \eqref{comm dnsi}.
	
	Dividing both side of \eqref{comm dnsi} by $\beta^s e^{in \beta}$ gives \begin{equation*}
		e^{-in\beta} \beta^{\ell -s} \dnsi{n}{s- \ell}f = e^{-in\beta} \beta^{-s} \dnsi{n}{s} \left( \beta^\ell f \right) +c
	\end{equation*}
	Note here that $\dnsi{n}{s-\ell}f$ and $\dnsi{n}{s} \left( \beta^\ell f \right)$ are bounded by the assumption. If sgn($s$) and sgn($s-\ell$) are both positive, $\beta \rightarrow \infty$ gives $c=0$. On the contrary, if sgn($s$) and sgn($s-\ell$) are both negative, $\beta \rightarrow 0$ gives $c=0$.
\end{proof}

We are now ready to verify uniform boundedness of operator $id, Q, \n{P}, \n{P}Q$, which form an each Fourier mode of differential bar operators \eqref{eq17} $\sim$ \eqref{eq23} in $\br{L}$. These results will be combined later to establish $C^1$ continuity of $\br{L}$.

\begin{proposition} \label{continuity of id}
	$id: \Xnz \to \Wnz $ is uniformly bounded. Furthermore, if $n \neq 0$, we have an exact norm bound
	\begin{equation*}
		\nrm{id}_{\left[\Xnz, \Wnz\right]} \leq \frac{\brk{N}}{(N-2)\mu +\frac{5}{6}} \left( 2 \left( 1+ \nmt\right) \ncbd{\beta \rd_\beta \xi_0}+ 2\nmtn \ncbd{\beta\xi_0}+1 \right) \cdot \frac{1}{\brk{n}}.
	\end{equation*}
\end{proposition}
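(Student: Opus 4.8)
By Definition~\ref{X, Z space} the space $\Xnz$ is, by construction, the image of $\Wnz$ under the injective operator $\Qinv\Pnminv$, and Lemma~\ref{induced Banach space} equips it with the induced norm $\nrm{\Qinv\Pnminv f}_{\Xnz}=\nrm{f}_{\Wnz}$. Hence
\[
\nrm{id}_{[\Xnz,\Wnz]}=\nrm{\Qinv\Pnminv}_{[\Wnz,\Wnz]},
\]
so the whole statement reduces to bounding this operator norm uniformly in $n$. If $n\notin N\bbZ$ both spaces are $\{0\}$ and there is nothing to prove, so I may assume $n\in N\bbZ$; then $\abs n\ge N\ge 4$ whenever $n\ne 0$, so the two-sided estimate \eqref{dist bound} of Proposition~\ref{dist bound prop} applies, and --- writing $s:=(2-n)\mu-1$ for the parameter of $\Pnm=\dns{n}{s}$ --- we have $\dist([-\delta,\delta],s)\ge\frac{(N-2)\mu+\frac56}{\brk N}\brk n$, $\abs s\ge\dist([-\delta,\delta],s)$, and $-1\notin[-\delta,\delta]$ since $\delta\le\tfrac12$, so both $\Pnm$ and $Q+1$ are genuinely invertible on $\cbd$ by Proposition~\ref{D^{-1} in C_b^delta}.

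For $n\ne 0$ we have $\Wnz=\cbdn\oplus\cxizn$; I would split a generic $f=f_1+c\xi_0$ and push it through $\Pnminv$ and then $\Qinv$, tracking the $\cbd$- and $\bbC\xi_0$-components at each stage. Applying $\Pnminv$: on $f_1$ it stays in $\cbd$, with norm controlled by \eqref{Pninv cbd}; on $\xi_0$ the identity \eqref{D^{-1} xiz formula} gives $\Pnminv\xi_0=-\tfrac1s\xi_0+\tfrac1s(\Pnminv(\beta\rd_\beta\xi_0)-in\,\Pnminv(\beta\xi_0))$, whose second summand lies in $\cbd$ because $\beta\rd_\beta\xi_0$ and $\beta\xi_0$ already do. Thus $\Pnminv f=g-\tfrac cs\xi_0$ with $g\in\cbd$ an explicit combination of $\Pnminv f_1$, $\Pnminv(\beta\rd_\beta\xi_0)$ and $\Pnminv(\beta\xi_0)$. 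Applying $\Qinv$: on $g$ and on the $\cbd$-pieces below it acts inside $\cbd$ with the factor $\le 2$ of \eqref{Qinv cbd}, while on the leftover $-\tfrac cs\xi_0$ the same identity \eqref{D^{-1} xiz formula} (now with $n=0$, $s=-1$) gives $\Qinv\xi_0=\xi_0-\Qinv(\beta\rd_\beta\xi_0)$. Collecting the $\cbd$-components and the single surviving $\bbC\xi_0$-component leaves finitely many terms, each bounded using \eqref{Pninv cbd}, \eqref{Pninv xiz} and \eqref{Qinv xiz} together with the bounds for $\abs s$ and $\dist([-\delta,\delta],s)$ from \eqref{dist bound} and with $\abs n\le\brk n$; the decay $\brk n^{-1}$ drops out of $\dist([-\delta,\delta],s)^{-1}$, and summing up yields the asserted inequality.

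For $n=0$ the only new summand of $\Wnz$ is $\bbC$, on which $\Pnm=\dns{0}{2\mu-1}$ and $Q+1$ act as multiplication by nonzero scalars (Proposition~\ref{D^{-1} in C}, since $2\mu-1\ne 0$ for $\mu>\tfrac23$); together with the $\cbd$- and $\bbC\xi_0$-estimates already obtained this makes $\Qinv\Pnminv:\Wnz\to\Wnz$ bounded, which is all that is claimed at $n=0$. The proof is not deep but is essentially pure bookkeeping; the point requiring care is to resolve the images of the two inverse operators into their $\cbd$- and $\bbC\xi_0$-parts correctly through the composition $\Qinv\circ\Pnminv$ and to recombine the pieces into the precise constant, while checking at each step that the operator being inverted is some $\dns{n}{s}$ with $s\notin[-\delta,\delta]$ --- which is exactly where the standing hypotheses $\mu>\tfrac23$, $N\ge 4$ and the estimate of Proposition~\ref{dist bound prop} are used.
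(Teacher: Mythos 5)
Your proposal is correct and follows essentially the same route as the paper: reduce $\nrm{id}_{[\Xnz,\Wnz]}$ to $\nrm{\Qinv\Pnminv}_{[\Wnz,\Wnz]}$, split $\Wnz$ into its $\cbd$-, $\bbC\xi_0$- (and at $n=0$, $\bbC$-) components, treat the $\xi_0$ contribution via the identity \eqref{D^{-1} xiz formula} applied once for $\Pnminv$ and once for $\Qinv$, and then assemble the constant from \eqref{Pninv cbd}, \eqref{Qinv cbd}, and the distance estimate \eqref{dist bound}. The only thing your sketch leaves implicit is the final bookkeeping step that recovers the exact constant $\frac{\brk{N}}{(N-2)\mu+\frac56}\bigl(2(1+\nmt)\ncbd{\beta\rd_\beta\xi_0}+2\nmtn\ncbd{\beta\xi_0}+1\bigr)$, but the pieces you list are precisely the ones the paper collects, and the monotonicity $\abs n\ge N$, $\brk n\ge\abs n$ you invoke is exactly the last simplification used there.
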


\begin{proof}
	For $n \notin N \bbZ$, nothing to be considered since norm is trivially zero, so we only treat the case $n \in N\bbZ$. As
	\begin{align*}
		& \nrm{id}_{[\Xnz, \Wnz]}\\
		&= \nrm{id}_{\left[\Qinv \Pnminv \Wnz, \Wnz\right]} \\
		&= \nrm{\Qinv \Pnminv}_{[\Wnz, \Wnz]} \\
		&\leq \nrm{\Qinv \Pnminv}_{[\cbdn, \Wnz]} + \nrm{\Qinv \Pnminv}_{[\cxizn, \Wnz]} + \delta_{0n} \nrm{\Qinv \Pnminv}_{[\bbC, \Wnz]},
	\end{align*}
	norm bounds \eqref{Pninv cbd}, \eqref{Qinv cbd},\eqref{D^{-1} C}, \eqref{Pninv xiz}, \eqref{Qinv xiz} give uniform boundedness.
	
	For $n \neq 0$, we calculate norm bound more explicitly. As $\Pnminv \cbd \subset \cbd, \,\, \Qinv \cbd \subset \cbd \subset \Wnz$, we have
	\begin{equation*}
		\nrm{\Qinv \Pnminv}_{[\cbdn, \Wnz]} \leq \nrm{\Qinv}_{[\cbdn, \cbdn]} \nrm{\Pnminv}_{[\cbdn, \cbdn]} \underset{\eqref{Pninv cbd}, \eqref{Qinv cbd}}{=} 2 \nmtn \cdot \frac{1}{\brk{n}}
	\end{equation*}
	
	Also, since $\Qinv \Pnminv \xi_0$ could be calculated as
	\begin{align*}
		&\Qinv \Pnminv \xi_0 \\
		\underset{\eqref{D^{-1} xiz formula}}&{=} \frac{1}{(2-n)\mu -1} \Qinv \left( \Pnminv (\beta \rd_\beta \xi_0) - in \Pnminv (\beta \xi_0) - \xi_0  \right) \\
		\underset{\eqref{D^{-1} xiz formula}}&{=} \frac{1}{(2-n)\mu -1} \left( \Qinv \Pnminv (\beta \rd_\beta \xi_0) - in \Qinv \Pnminv (\beta \xi_0) + \Qinv (\beta \rd_\beta \xi_0 ) -\xi_0  \right),
	\end{align*}
	its norm can be estimated by
	\begin{align*}
		& \nrm{\Qinv \Pnminv}_{[\cxizn, \cbdn \oplus \cxizn]} \\ 
		&\leq \frac{1}{\abs{(2-n)\mu -1}} \Bigg( \nrm{\Qinv}_{[\cbdn, \cbdn]} \left( \nrm{\Pnminv}_{[\cbdn, \cbdn]}+1 \right) \nrm{\beta \rd_\beta \xi_0}_{\cbdn}  \\
		& \qquad \qquad \qquad \qquad \quad \abs{n} \nrm{\Qinv}_{[\cbdn, \cbdn]} \nrm{\Pnminv}_{[\cbdn, \cbdn]} \nrm{\beta \xi_0}_{\cbd} +1 \Bigg) \\
		\underset{\eqref{Pninv cbd},\eqref{Qinv cbd}}&{\leq} \nmtn \cdot \frac{1}{\brk{n}} \Bigg( 2 \left( \nmtn \cdot \frac{1}{\brk{n}} +1 \right) \nrm{\beta \rd_\beta \xi_0}_{\cbd} \\
		& \qquad \qquad \qquad \qquad \qquad \qquad  + 2\abs{n} \nmtn \cdot \frac{1}{\brk{n}} \nrm{\beta \xi_0}_{\cbd} +1  \Bigg) \\
		\underset{\substack{\abs{n} \geq \abs{N} \\ \brk{n} \geq \abs{n}}}&{\leq} \nmtn \left( 2 \left( 1+ \nmt \right) \nrm{\beta \rd_\beta \xi_0}_{\cbd} +2 \nmtn \nrm{\beta \xi_0}_{\cbd} +1 \right) \cdot \frac{1}{\brk{n}}.
	\end{align*}

\end{proof}

\begin{corollary} \label{continuity of inid}
	$in \cdot id: \Xnz \rightarrow \Wnm$ is uniformly bounded with an exact norm bound
	\begin{equation*}
		\nrm{in \cdot id}_{\left[\Xnz, \Wnz\right]} \leq \frac{\brk{N}}{(N-2)\mu +\frac{5}{6}} \left( 2 \left( 1+ \nmt\right) \ncbd{\beta \rd_\beta \xi_0}+ 2\nmtn \ncbd{\beta\xi_0}+1 \right).
	\end{equation*}
\end{corollary}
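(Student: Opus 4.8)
The plan is to read this corollary off directly from Proposition \ref{continuity of id}, using only that $in\cdot id$ is the composition of the inclusion $id:\Xnz\to\Wnz$ with multiplication by the scalar $in$, so that passing to operator norms merely produces a factor $\abs{n}$.

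Concretely, I would argue as follows. If $n=0$, the operator $in\cdot id$ is identically zero, so the asserted inequality holds trivially (its right-hand side is a sum of nonnegative terms). If $n\neq 0$, then since $in$ acts as a scalar one has $\nrm{in\cdot id}_{[\Xnz,\Wnz]}=\abs{n}\,\nrm{id}_{[\Xnz,\Wnz]}$; I would then insert the explicit bound for $\nrm{id}_{[\Xnz,\Wnz]}$ furnished by Proposition \ref{continuity of id} and invoke the elementary inequality $\abs{n}\le\brk{n}$ to cancel the factor $\brk{n}^{-1}$ occurring there. This yields the claimed estimate with a right-hand side independent of $n$, which is precisely the asserted uniform boundedness. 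The mapping statement ``$in\cdot id:\Xnz\to\Wnm$'' I would justify by revisiting the summand decomposition $\cbdn\oplus\cxizn\oplus\delta_{0n}\bbC$ used in the proof of Proposition \ref{continuity of id}, noting in particular that for $n=0$ the map vanishes so the constant summand plays no role.

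I do not expect any genuine obstacle: all the real work --- in particular the $\brk{n}^{-1}$ decay of $\nrm{id}_{[\Xnz,\Wnz]}$, which propagates from the decay of $\nrm{\Pnminv}$ and of $\nrm{\Qinv\Pnminv}$ on $\cbdn$ and on $\cxizn$ established via Corollaries \ref{Pninv in cbd} and \ref{Pninv in xiz} --- is already contained in Proposition \ref{continuity of id}, and the only new ingredient is the trivial scaling by $\abs{n}$ combined with $\abs{n}\le\brk{n}$.
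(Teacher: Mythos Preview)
Your proposal is correct and matches the paper's approach exactly: the paper gives no explicit proof of this corollary, treating it as immediate from Proposition \ref{continuity of id}, and adds only the remark that since the multiplier $in$ kills the zero Fourier mode the range can be taken as $\Wnm$. Your argument---multiply the $\brk{n}^{-1}$ bound from Proposition \ref{continuity of id} by $\abs{n}\le\brk{n}$ for $n\neq 0$, and note the $n=0$ case is trivial---is precisely this.
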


\begin{remark}
	Since the multiplier $in$ kills the zero Fourier mode, the range of $in \cdot id$ could be $\Wnm$.
\end{remark}

\begin{proposition} \label{continuity of Q}
	$Q: \Xnz \rightarrow \Wnm$ is uniformly bounded with an exact norm bound
	\begin{align*}
		\nrm{Q}_{[\Xnz, \Wnm]} &\leq \Bigg( 3\nmtn+ \nmtn \left( \frac{3}{(N-2)\mu +\frac{5}{6}} +2 \right) \ncbd{\beta \rd_\beta \xi_0} \\
		& \qquad +3 \left( \nmtn \right)^2 \ncbd{\beta \xi_0} \Bigg) \cdot \frac{1}{\brk{n}}.
	\end{align*}
\end{proposition}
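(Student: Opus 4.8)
The plan is to trade the apparently unbounded operator $Q=\beta\rd_\beta$ for the resolvent $\Qinv$, which \emph{is} bounded on $\cbd$ by Corollary~\ref{Pninv in cbd}. For $n\notin N\bbZ$ the domain $\Xnz$ is $\{0\}$ and there is nothing to prove, so fix $n\in N\bbZ$. By Definition~\ref{X, Z space} together with Lemma~\ref{induced Banach space}, $\Pnm(Q+1)\colon\Xnz\to\Wnz$ is an isometric isomorphism, so every $f\in\Xnz$ can be written $f=\Qinv\Pnminv h$ with $h\in\Wnz$ and $\nrm{h}_{\Wnz}=\nrm{f}_{\Xnz}$. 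Applying the resolvent identity $Q\Qinv=id-\Qinv$ gives
\begin{equation*}
	Qf=\left(id-\Qinv\right)\Pnminv h ,
\end{equation*}
and the whole question is thereby transferred to the much tamer operator $id-\Qinv$.

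First I would verify that $Qf\in\cbd\subset\Wnm$. Decompose $\Pnminv h\in\Wnz=\cbd\oplus\bbC\xi_0\oplus\delta_{0n}\bbC$ into its three components. By the explicit formula~\eqref{D^{-1} xiz formula} one has $\Qinv\xi_0=\xi_0-\Qinv\!\left(\beta\rd_\beta\xi_0\right)$ with $\Qinv\!\left(\beta\rd_\beta\xi_0\right)\in\cbd$, and by Proposition~\ref{D^{-1} in C} $\Qinv$ fixes constants. Hence $id-\Qinv$ sends $\xi_0$ to $\Qinv\!\left(\beta\rd_\beta\xi_0\right)\in\cbd$, annihilates constants, and preserves $\cbd$; in particular the $\xi_0$-mode and the constant mode of $\Pnminv h$ are killed, so $Qf\in\cbd$. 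This cancellation — which is precisely the reason $Q$ lands in $\Wnm$ rather than only in $\Wnz$ — is the conceptual crux; the remainder is quantitative bookkeeping.

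For that bookkeeping I would write $\Pnminv h=\Pnminv h_{1}+h_\xi\,\Pnminv\xi_0+\delta_{0n}(\cdots)$, where $h=h_1+h_\xi\xi_0+\delta_{0n}(\cdots)$ is the decomposition of $h$, read off from~\eqref{D^{-1} xiz formula} that the $\cbd$-part of $\Pnminv\xi_0$ is $u=\bigl((2-n)\mu-1\bigr)^{-1}\!\left(\Pnminv(\beta\rd_\beta\xi_0)-in\,\Pnminv(\beta\xi_0)\right)$ while its $\xi_0$-coefficient is $-\bigl((2-n)\mu-1\bigr)^{-1}$, and then estimate, using $\nrm{id-\Qinv}_{[\cbd,\cbd]}\le1+2=3$,
\begin{equation*}
	\nrm{Qf}_{\cbd}\le 3\nrm{\Pnminv h_{1}}_{\cbd}+3\abs{h_\xi}\nrm{u}_{\cbd}+2\abs{h_\xi}\,\bigl|(2-n)\mu-1\bigr|^{-1}\ncbd{\beta\rd_\beta\xi_0}.
\end{equation*}
Into this I plug $\nrm{\Pnminv}_{[\cbd,\cbd]}\le\nmtn\brk{n}^{-1}$ (Corollary~\ref{Pninv in cbd}), the bound $\bigl|(2-n)\mu-1\bigr|^{-1}\le\dist([-\delta,\delta],(2-n)\mu-1)^{-1}\le\nmtn\brk{n}^{-1}$ (Proposition~\ref{dist bound prop}), the above formula for $u$, and the elementary inequalities $\abs{n}\le\brk{n}$ and $\brk{N}\le\brk{n}$ for $n\in N\bbZ\setminus\{0\}$ (the last one used to absorb a stray $\brk{n}^{-2}$ into $\brk{n}^{-1}$); finally $\nrm{h_1}_{\cbd}+\abs{h_\xi}\le\nrm{h}_{\Wnz}=\nrm{f}_{\Xnz}$. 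Collecting the terms should reproduce exactly the asserted bound.

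The delicate points are not any single inequality but: (a) making the cancellation of the second paragraph watertight, since a careless treatment would only give $Q\colon\Xnz\to\Wnz$ — here the precise shapes of~\eqref{D^{-1} xiz formula} and of Proposition~\ref{D^{-1} in C} are indispensable; and (b) the mode $n=0$, which must be handled separately: the constant summand of the $n=0$ space is annihilated by $Q$ and contributes nothing, while the $\cbd$- and $\xi_0$-summands are estimated exactly as above except that $\dist([-\delta,\delta],2\mu-1)^{-1}$ — finite under the standing assumptions $\mu>\tfrac23$ and $\delta=\tfrac12\min\{2\mu-1,1\}$ — replaces the Proposition~\ref{dist bound prop} bound, which still yields uniform boundedness.
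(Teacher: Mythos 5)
Your argument is correct and is essentially the same as the paper's: both exploit the identity $Q\Qinv\Pnminv=(id-\Qinv)\Pnminv=\Pnminv-\Qinv\Pnminv$, then split the $\cbd$-, $\cxiz$-, and (for $n=0$) constant components of $\Wnz$, using \eqref{D^{-1} xiz formula} to read off that $(id-\Qinv)\xi_0=\Qinv(\beta\rd_\beta\xi_0)\in\cbd$ and Proposition~\ref{D^{-1} in C} to see that the constant mode is annihilated — which is exactly why the codomain can be taken to be $\Wnm$. The only cosmetic difference is that you first decompose $\Pnminv h$ and then apply $(id-\Qinv)$ piecewise, whereas the paper computes $(\Pnminv-\Qinv\Pnminv)\xi_0$ in one stroke; the coefficients that emerge from both bookkeepings (after $\abs{n}\le\brk{n}$ and $\brk{N}\le\brk{n}$) are identical and give the stated bound.
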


\begin{proof}
	It suffices to focus on $n \in N\bbZ$, since the norm is 0 for $n \notin N\bbZ$. Note here that the range of $Q$ is $\Wnm$, which is smaller than that of $id$. This is because, for a constant $c \in \bbC$, $\Qinv \left( P^{(0)}_- \right)^{-1}c$ is a constant by Proposition \ref{D^{-1} C} and $Q$ is equal to $\beta \rd_\beta$, which kills a constant function. As before, we can estimate the norm of $Q: \Xnz \rightarrow \Wnm$ when $n \in N\bbZ \backslash \{0\}$ as follows;
	\begin{align*}
		& \nrm{Q}_{[\Xnz, \Wnm]} \\
		&= \nrm{Q \Qinv \Pnminv}_{[\Wnz, \Wnm]} \\
		&= \nrm{\left( Q+1-1 \right) \Qinv \Pnminv}_{[\Wnz, \Wnm]} \\
		&= \nrm{\Pnminv-\Qinv \Pnminv}_{[\Wnz, \Wnm]} \\
		&\leq \nrm{\Pnminv-\Qinv \Pnminv}_{[\cbdn, \cbdn]} +\nrm{\Pnminv-\Qinv \Pnminv}_{[\cxizn, \cbdn \oplus \cxizn]}.
	\end{align*}
	
	By \eqref{Pninv cbd} and \eqref{Qinv cbd}, we have
	\begin{align*}
		& \nrm{\Pnminv-\Qinv \Pnminv}_{[\cbdn, \cbdn]} \\
		&\leq \nmtn \cdot \frac{1}{\brk{n}}+2\nmtn \cdot \frac{1}{\brk{n}} \\
		&= 3\nmtn \cdot \frac{1}{\brk{n}}.
	\end{align*}
	
	Also, from the formula
	\begin{align*}
		&\left( \Pnminv - \Qinv \Pnminv \right) \xi_0 \\
		\underset{\eqref{D^{-1} xiz formula}}&{=}\frac{1}{(2-n)\mu -1} \Bigg( \underbrace{\left( \Pnminv - \Qinv - \Qinv \Pnminv \right) (\beta \rd_\beta \xi_0)}_{\underset{\text{Prop }\ref{D^{-1} in C_b^delta}}{\in} \cbd} \\
		&\qquad \qquad \qquad \qquad + \underbrace{in \left( \Qinv \Pnminv - \Pnminv \right) (\beta \xi_0)}_{\underset{\text{Prop }\ref{D^{-1} in C_b^delta}}{\in} \cbd} \Bigg),
	\end{align*}
	we can estimate $\nrm{\Pnminv - \Qinv \Pnminv}_{[\cxizn, \cbdn \oplus \cxizn]}$ by
	\begin{align*}
		& \nrm{\Pnminv - \Qinv \Pnminv}_{[\cxizn, \cbdn \oplus \cxizn]} \\
		&\leq \frac{1}{\abs{(2-n)\mu -1}} \Bigg( \nrm{\Pnminv}_{[\cbdn, \cbdn]}+\nrm{\Qinv}_{[\cbdn, \cbdn]} \\
		&\qquad \qquad \qquad \qquad \quad  +\nrm{\Qinv}_{[\cbdn, \cbdn]} \nrm{\Pnminv}_{[\cbdn, \cbdn]} \Bigg) \ncbd{\beta \rd_\beta \xi_0} \\
		& \qquad +\frac{\abs{n}}{\abs{(2-n)\mu -1}} \left( \nrm{\Qinv}_{[\cbdn,\cbdn]} \nrm{\Pnminv}_{[\cbdn, \cbdn]} + \nrm{\Pnminv}_{[\cbdn, \cbdn]} \right) \ncbd{\beta \xi_0} \\
		\underset{\eqref{Pninv cbd},\eqref{Qinv cbd}}&{\leq} \nmtn \cdot \frac{1}{\brk{n}} \left( 3 \nmtn \cdot \frac{1}{\brk{n}}+2 \right) \ncbd{\beta \rd_\beta \xi_0} \\
		& \qquad \quad + \nmtn \cdot \frac{\abs{n}}{\brk{n}} \left( 3 \nmtn \cdot \frac{1}{\brk{n}} \right) \ncbd{\beta \xi_0} \\
		&\leq \left( \nmtn \left( \frac{3}{(N-2)\mu +\frac{5}{6}} +2 \right) \ncbd{\beta \rd_\beta \xi_0}  +3 \left( \nmtn \right)^2 \ncbd{\beta \xi_0} \right) \cdot \frac{1}{\brk{n}}.
	\end{align*}
	Combining these two estimates yields the result.
\end{proof}

\begin{proposition} \label{continuity of Pn(Q+1)}
	$\n{P}(Q+1): \Xnz \rightarrow \Wnm$ is uniformly bounded with an exact norm bound 
	\begin{equation*}
		\nrm{\n{P}(Q+1)}_{[\Xnz, \Wnm]} \leq 4+ \nmtn \ncbd{\beta \xi_0} + \nmt \ncbd{\beta \rd_\beta \xi_0}.
	\end{equation*}
\end{proposition}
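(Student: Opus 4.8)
The plan is to unwind the definition of $\Xnz$ and reduce everything to a bound for $\n{P}\Pnminv$ acting from $\Wnz$ to $\Wnm$. By Definition \ref{X, Z space} and Lemma \ref{induced Banach space}, every $f \in \Xnz$ is uniquely $f = (Q+1)^{-1}\Pnminv g$ with $g \in \Wnz$ and $\nrm{f}_{\Xnz} = \nrm{g}_{\Wnz}$, so $\n{P}(Q+1)f = \n{P}\Pnminv g$ and $\nrm{\n{P}(Q+1)}_{[\Xnz,\Wnm]} = \nrm{\n{P}\Pnminv}_{[\Wnz,\Wnm]}$. The crucial algebraic point is that, by \eqref{Pn} and \eqref{Pnm}, $\n{P}$ and $\n{P_-}$ differ only by the scalar $(2-n)\mu-1$, i.e. $\n{P} = \n{P_-} + ((2-n)\mu-1)\,id$; since $\n{P_-}\Pnminv = id$ by construction, this yields the identity $\n{P}\Pnminv = id + ((2-n)\mu-1)\Pnminv$, legitimate at the level of our function spaces because $\n{P}$ differs from $\n{P_-}$ only by a bounded operator (cf. Lemma \ref{comm in dns}).

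I would then split $\Wnz = \cbdn \oplus \cxizn \oplus \delta_{0n}\bbC$ and estimate $\n{P}\Pnminv$ on each summand, tracking where each image lands in $\Wnm = \cbdn \oplus \delta_{0n}\cxizn$. On $\cbdn$: for $g_1 \in \cbdn$ one has $\n{P}\Pnminv g_1 = g_1 + ((2-n)\mu-1)\Pnminv g_1 \in \cbdn$, and Proposition \ref{D^{-1} in C_b^delta} together with the identity $\tfrac{\abs{s}}{\dist([-\delta,\delta],s)} = 1 + \tfrac{\delta}{\dist([-\delta,\delta],s)}$, Proposition \ref{dist bound prop}, and $\delta \le \tfrac12$ bounds the resulting $\cbd$-norm by a universal constant below $4$ (roughly $\tfrac52$). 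On $\cxizn$ the essential cancellation occurs: substituting the explicit formula \eqref{D^{-1} xiz formula} for $\Pnminv\xi_0$ into $\n{P}\Pnminv\xi_0 = \xi_0 + ((2-n)\mu-1)\Pnminv\xi_0$, the two copies of $\xi_0$ cancel and one is left with $\n{P}\Pnminv\xi_0 = \Pnminv(\beta\rd_\beta\xi_0) - in\,\Pnminv(\beta\xi_0) \in \cbdn$; Corollary \ref{Pninv in cbd} and the elementary inequalities $\abs{n} \le \brk{n}$, $\brk{N} \le \brk{n}$ then bound its $\cbd$-norm by $\nmt\ncbd{\beta\rd_\beta\xi_0} + \nmtn\ncbd{\beta\xi_0}$, which is exactly the last two terms of the claimed bound. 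On the $\delta_{0n}\bbC$ summand (present only for $n = 0$): $\Pnminv$ sends a constant $c$ to $-\tfrac{c}{2\mu-1}$ by Proposition \ref{D^{-1} in C}, hence $\n{P}\Pnminv c = Q(-\tfrac{c}{2\mu-1}) = 0$ and this summand contributes nothing.

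Combining the three estimates with the definition of the $\oplus$-norm on $\Wnz$ gives $\nrm{\n{P}\Pnminv}_{[\Wnz,\Wnm]} \le \max\{\tfrac52,\ \nmt\ncbd{\beta\rd_\beta\xi_0} + \nmtn\ncbd{\beta\xi_0}\} \le 4 + \nmtn\ncbd{\beta\xi_0} + \nmt\ncbd{\beta\rd_\beta\xi_0}$, which is the asserted bound. I do not expect a genuine obstacle; the two points needing care are (i) the bookkeeping that each image really lies in the prescribed summand of $\Wnm$ — in particular the $\xi_0$-cancellation, which is what removes the otherwise ``bad'' term of size $\tfrac{1}{\dist([-\delta,\delta],(2-n)\mu-1)}$ — and (ii) the zero mode $n = 0$, where Proposition \ref{dist bound prop} is unavailable and one instead uses the crude bound $\dist([-\delta,\delta],2\mu-1) \ge \delta$ coming from \eqref{def of delta}, absorbing the constants into the $4$ (and, if necessary, into the later conditions imposed on $\xi_\infty$).
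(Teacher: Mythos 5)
Your proof is correct and follows the same decomposition $\n{P} = \n{P_-} + ((2-n)\mu-1)\,id$ and the same ingredients (Propositions \ref{D^{-1} in C_b^delta}, \ref{D^{-1} in xiz}, \ref{dist bound prop}, \ref{D^{-1} in C}) as the paper, arriving at the same final bound. The one place you are more careful is worth noting: you verify explicitly, via the cancellation $\n{P}\Pnminv \xi_0 = \xi_0 + ((2-n)\mu-1)\Pnminv\xi_0 = \Pnminv(\beta\rd_\beta\xi_0) - in\,\Pnminv(\beta\xi_0) \in \cbdn$, that the image lands in $\Wnm = \cbdn$ when $n \neq 0$. The paper asserts that the range of $\n{P}(Q+1)$ is contained in $\Wnm$ but only justifies this for the $\delta_{0n}\bbC$ summand at $n=0$; for $n \neq 0$ it replaces $\nrm{\cdot}_{[\Wnz,\Wnm]}$ by $\nrm{\cdot}_{[\Wnz,\Wnz]}$ with the annotation ``$n\neq 0$'' but without showing the $\xi_0$ component of $\n{P}\Pnminv\xi_0$ vanishes, which is precisely the cancellation you exhibit. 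Your argument also produces a slightly sharper constant (a $\max$ of the two summand estimates rather than their sum), but both are dominated by $4+\nmtn\ncbd{\beta\xi_0}+\nmt\ncbd{\beta\rd_\beta\xi_0}$, so the conclusion is unchanged.
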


\begin{proof}
	As always, it suffices to focus on $n \in N \bbZ$. As in the case of $Q$, the range of $\n{P}(Q+1)$ is contained in $\Wnm$. This is because, for a constant $c \in \bbC$, $\left( P^{(0)}_- \right)^{-1}c$ is a constant by Proposition \ref{D^{-1} in C} and
	\begin{equation*}
		P^{(0)} (Q+1) \Qinv \left( P^{(0)}_- \right)^{-1} c= \underbrace{P^{(0)}}_{=\beta \rd_\beta} \underbrace{\left( P^{(0)}_- \right)^{-1} c}_{\in \bbC}=0.
	\end{equation*}
	
	Now, we estimate the norm of $\n{P}(Q+1)$ for $n \in N \bbZ \backslash \{0\}$.
	
	\begin{align*}
		&\nrm{\n{P} (Q+1)}_{[\Xnz, \Wnm]} \\
		&= \nrm{\n{P} (Q+1) \Qinv \Pnminv}_{[\Wnz, \Wnm]} \\
		&= \nrm{\n{P} \Pnminv}_{[\Wnz, \Wnm]} \\
		&= \nrm{\left( \n{P}_- + \left( (2-n) \mu -1 \right) \right) \Pnminv}_{[\Wnz, \Wnm]} \\
		\underset{\text{(} n \neq 0 \text{)}}&{=} \nrm{\left( \n{P}_- + \left( (2-n) \mu -1 \right) \right) \Pnminv}_{[\Wnz, \Wnz]} \\
		&\leq \nrm{id}_{[\Wnz, \Wnz]} + \abs{(2-n)\mu -1} \nrm{\Pnminv}_{[\Wnz, \Wnz]} \\
		&\leq  \nrm{id}_{[\Wnz, \Wnz]}+\abs{(2-n)\mu -1} \nrm{\Pnminv}_{[\cbdn, \cbdn]}+\abs{(2-n)\mu -1} \nrm{\Pnminv}_{[\cxizn, \cbdn \oplus \cxizn]}  \\
		\underset{\eqref{D^{-1} C_b^delta},\eqref{D^{-1} xiz}}&{\leq} 1+ \underbrace{\frac{ \abs{(2-n)\mu -1}}{\distPnm}}_{\leq 2} +\nmtn \cdot \frac{1}{\brk{n}} \left( \ncbd{\beta \rd_\beta \xi_0} + \abs{n} \ncbd{\beta \xi_0} \right)+1  \\
		&\leq 4+ \nmtn \ncbd{\beta \xi_0} + \nmt \ncbd{\beta \rd_\beta \xi_0}.
	\end{align*}
\end{proof}

Even though a differential operator might be expected to damage the regularity, we are safe so far. This is because $\Xnz$ was made by taking inverse of differential operator, so even if we apply differential operator, it was cancelled out. However, when we deal with operator $P^{(n)}$, this cancelling does not occur directly which makes the proof harder. We first deal with how $\n{P}$ acts on $\cbd$.

\begin{proposition} \label{continuity of Pn in cbd}
	$\n{P}\Qinv \Pnminv: \cbdn \rightarrow \cbdn \oplus \cxiin$ is uniformly bounded. Furthermore, if $n \neq 0$, we have an exact norm bound
	\begin{align*}
		&\nrm{\n{P} \Qinv \Pnminv}_{[\cbdn, \cbdn \oplus \cxiin]} \\
		&\leq \frac{4}{(N-2)\mu -1} + \frac{66}{5} + \left( \frac{\brk{N}+1}{(N-2)\mu -1}+\frac{66}{5} \right) \maxi{\nrm{\beta^\delta \xi_0}_{C_b}, \nrm{\beta^{-\delta}\xi_\infty}_{C_b}}
	\end{align*}
\end{proposition}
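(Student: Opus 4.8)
The plan is to exploit the algebraic identity $\n{P}=\beta\rd_\beta-in\beta=Q-in\beta$ together with $Q\Qinv=\mathrm{id}-\Qinv$ (valid on $\cbdn$ by Proposition \ref{D^{-1} in C_b^delta}, since $-1\notin[-\delta,\delta]$ because $\delta\le\frac12$) to decompose
\[
	\n{P}\,\Qinv\,\Pnminv \;=\; \Pnminv-\Qinv\,\Pnminv-in\beta\,\Qinv\,\Pnminv .
\]
If $n\notin N\bbZ$ the operator is $0$, and if $n=0$ the last summand vanishes; so the only genuine work is for $n\in N\bbZ\setminus\set{0}$, where moreover $\abs{n}\ge N\ge 4$. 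For the first two summands: by \eqref{Pninv cbd} and \eqref{Qinv cbd}, $\Pnminv\colon\cbdn\to\cbdn$ has norm at most $\tfrac{\brk{N}}{(N-2)\mu+5/6}\cdot\tfrac{1}{\brk{n}}$ and $\Qinv\colon\cbdn\to\cbdn$ has norm at most $2$; since $\brk{N}\le\brk{n}$ and $(N-2)\mu+\tfrac56>(N-2)\mu-1>0$, the contribution of $\Pnminv-\Qinv\Pnminv$ to the operator norm is at most $\tfrac{3}{(N-2)\mu-1}$, and it maps into $\cbdn\subseteq\cbdn\oplus\cxiin$.

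The essential term is $in\beta\,\Qinv\,\Pnminv f$. Put $g\coloneqq\Pnminv f$ and $u\coloneqq\Qinv g$; then $(Q+1)u=g$ gives $(\beta u)'=g$, hence $\beta u(\beta)=\int_0^\beta g$, so $in\beta\,\Qinv\,\Pnminv f=in\int_0^\beta g(w)\,dw$. Estimating $\int_0^\beta g$ by $\nrm{g}_{\cbd}$ directly only yields $O(\beta^{1-\delta})$, which is much too weak; the decisive point is that $g$ solves the ODE $\n{P_-}g=f$, i.e. $\beta g'-in\beta g-((2-n)\mu-1)g=f$ on $(0,\infty)$ (here $g$ is genuinely $C^1$ away from $0$, being given by the explicit formula of Proposition \ref{D^{-1} in C_b^delta}). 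Dividing by $\beta$, rearranging to $ing=g'-\tfrac{(2-n)\mu-1}{\beta}g-\tfrac{f}{\beta}$, and integrating from $0$ to $\beta$ (legitimate because $f,g\in\cbd$ with $\delta>0$ forces $f(0)=g(0)=0$ and makes $\tfrac fw,\tfrac gw$ integrable near $0$) gives
\[
	in\int_0^\beta g(w)\,dw \;=\; g(\beta)-((2-n)\mu-1)\int_0^\beta\frac{g(w)}{w}\,dw-\int_0^\beta\frac{f(w)}{w}\,dw .
\]
Since any $h\in\cbd$ satisfies $\abs{h(w)}\le\nrm{h}_{\cbd}\min(w^\delta,w^{-\delta})$, one has $\int_0^\infty\tfrac{\abs{h(w)}}{w}\,dw\le\tfrac2\delta\nrm{h}_{\cbd}$, so both integrals on the right converge and their tails beyond $\beta$ are $O(\beta^{-\delta})$; the large factor $\abs{(2-n)\mu-1}$ is neutralised by the decay of $\nrm{\Pnminv}_{\cbd}$, because $\abs{(2-n)\mu-1}=\dist([-\delta,\delta],(2-n)\mu-1)+\delta$ and Proposition \ref{dist bound prop} give $\abs{(2-n)\mu-1}\,\nrm{\Pnminv}_{\cbd}\le 1+\delta$. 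Thus the right-hand side is a bounded continuous function vanishing like $\beta^\delta$ at the origin and converging at infinity to a finite constant $c$ with $\abs{c}\lesssim\tfrac1\delta\nrm{f}_{\cbd}$.

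It remains to split off this limit. I would write $in\int_0^\beta g=\bigl(in\int_0^\beta g-c\xi_\infty\bigr)+c\xi_\infty$: the second summand lies in $\cxiin$, while the first still vanishes like $\beta^{1+\delta}$ at the origin and, since the deviation $in\int_0^\beta g-c$ is $O(\beta^{-\delta})$ at infinity and $1-\xi_\infty$ is supported near the origin, is $O(\beta^{-\delta})$ there as well, so it lies in $\cbdn$. Bounding its $\cbdn$-norm is exactly where the quantity $\maxi{\nrm{\beta^\delta\xi_0}_{C_b},\nrm{\beta^{-\delta}\xi_\infty}_{C_b}}$ appears: on the transition region the correction $in\int_0^\beta g-c\xi_\infty$ carries a $c\xi_0$-type contribution whose weighted sup-norms are controlled by $\abs{c}$ times $\nrm{\beta^{\pm\delta}\xi_0}_{C_b}$ and $\nrm{\beta^{\pm\delta}\xi_\infty}_{C_b}$. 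Collecting the three pieces and bookkeeping the constants yields the stated bound.

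The main obstacle is precisely the term $in\beta\,\Qinv\,\Pnminv$: one must avoid the crude estimate of $\int_0^\beta g$ (which fails outright) and instead use the ODE satisfied by $g$ to trade the growing $\beta$-weight for integrable $\tfrac1w$-weights, then carefully detach the constant-at-infinity component into $\cxiin$ while keeping every constant explicit — in particular checking that the large factor $(2-n)\mu-1$ is absorbed by the smallness of $\nrm{\Pnminv}_{\cbd}$ and that the cutoff-dependent constants $\nrm{\beta^{\pm\delta}\xi_0}_{C_b}$, $\nrm{\beta^{\pm\delta}\xi_\infty}_{C_b}$ are charged to the correct place.
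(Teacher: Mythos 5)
Your plan is correct and proves the proposition, but it does so by a genuinely different route from the paper, so let me compare.

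The paper's proof runs the operators through the abstract commutation machinery of Lemma \ref{comm in dns} and Proposition \ref{comm in dnsi}: it rewrites $in\beta\Qinv\Pnminv f$ in two ways, once as $\beta^\delta(Q+\delta)^{-1}in\beta(\Pnm+\delta)^{-1}\beta^{-\delta}f$ and once as $\beta^{-\delta}(Q-\delta)^{-1}in\beta(\Pnm-\delta)^{-1}\beta^\delta f+c$, and then uses the algebraic cancellation $in\beta=(Q\pm\delta)-(\Pnm\pm\delta)-((2-n)\mu-1)$ to kill the bad $\beta$-growth. Each weight $\beta^{\pm\delta}$ thus gets its own representation, and the constant $c$ that pops out of Proposition \ref{comm in dnsi} (because $-1$ and $\delta$ have opposite signs) is exactly what is split off into $\cxii$. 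You instead integrate the ODE directly: $(Q+1)u=g$ gives $\beta u=\int_0^\beta g$, and then $\n{P_-}g=f$, rewritten as $ing=g'-\tfrac{(2-n)\mu-1}{\beta}g-\tfrac{f}{\beta}$ and integrated from $0$, yields $in\int_0^\beta g=g(\beta)-((2-n)\mu-1)\int_0^\beta\tfrac{g}{w}\,dw-\int_0^\beta\tfrac{f}{w}\,dw$. This is precisely the same cancellation — the one factor of $\beta$ you cannot afford has been traded for an integrable weight $1/w$ — but realized at the level of explicit functions rather than operators. One representation then serves both weights, and the finite limit $c$ at infinity is read off directly; what the paper obtains from Proposition \ref{comm in dnsi} you obtain as a conditionally convergent integral. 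Your observation that $\abs{(2-n)\mu-1}\,\nrm{\Pnminv}_{[\cbd,\cbd]}$ stays bounded (by $\tfrac{\abs{(2-n)\mu-1}}{\dist([-\delta,\delta],(2-n)\mu-1)}\le 1+\tfrac{\delta}{\dist}$) is exactly the absorption the paper exploits in its $I_4$ estimate. Net: your route is more elementary and arguably more transparent, but less reusable — the paper's commutator lemma is deployed again in Proposition \ref{continuity of Pn(Q+1)} and elsewhere, and the paper needs two applications of it here precisely because each weight forces a different integration constant.

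Two small imprecisions worth flagging. First, the correction term does not vanish like $\beta^{1+\delta}$ at the origin under the representation you must actually use; from $g(\beta)-((2-n)\mu-1)\int_0^\beta\tfrac{g}{w}-\int_0^\beta\tfrac{f}{w}$ each piece is only $O(\beta^\delta)$, which is what membership in $\cbd$ requires (the naive $O(n\beta^{1+\delta})$ bound from $\int_0^\beta g$ has a non-uniform constant in $n$, which is why you discard it). Second, of the four quantities $\nrm{\beta^{\pm\delta}\xi_0}_{C_b}$, $\nrm{\beta^{\pm\delta}\xi_\infty}_{C_b}$ that you invoke, only $\nrm{\beta^\delta\xi_0}_{C_b}$ and $\nrm{\beta^{-\delta}\xi_\infty}_{C_b}$ are finite ($\xi_0\equiv1$ near $0$ and $\xi_\infty\equiv1$ near $\infty$ kill the other two), and the cancellation of the would-be divergence of $c\xi_0$ near the origin against $in\int_0^\beta g-c\to -c$ is what saves you — that cancellation should be stated rather than estimated term by term. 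Finally, since the proposition asserts an explicit numerical bound, "collecting the constants" is not a throwaway step: your route produces $\tfrac{2}{\delta}$-type constants where the paper produces $6$ and $\tfrac{36}{5}$, so one would still have to check the arithmetic lands at (or below) the paper's $\tfrac{4}{(N-2)\mu-1}+\tfrac{66}{5}+\cdots$; the structure is right, the numbers are not yet verified.
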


\begin{proof}
	Since $P^{(0)}=Q$ and boundedness of $Q \Qinv \left( P^{(0)}_- \right)^{-1}$ was shown in Proposition \ref{continuity of Q}, we focus on $n \in N\bbZ \backslash \{0\}$. As $\n{P}=Q-in\beta$ and we already dealt with uniform boundedness of
	\begin{equation*}
		Q \Qinv \Pnminv: \cbdn \oplus \cxizn \rightarrow \cbdn
	\end{equation*}
	in Proposition \ref{continuity of Q}, we analyze
	\begin{equation*}
		in \beta \Qinv \Pnminv: \cbdn \oplus \cxizn \rightarrow \cbdn \oplus \cxiin.
	\end{equation*}
	For $f \in \cbd$,
	\begin{equation} \label{eq26}
		\begin{aligned}
			& in \beta \Qinv \Pnminv f \\
			&= in \beta \Qinv \underbrace{\Pnminv \beta^\delta \beta^{-\delta}f}_{(*)} \\
			\underset{\eqref{comm dnsi}}&{=} in \beta \Qinv \underbrace{\beta^\delta \left( \Pnm + \delta \right)^{-1} \beta^{-\delta}f}_{(*)} \\
			&= \beta^\delta \underbrace{in \beta^{1-\delta} \Qinv \beta^\delta \left( \Pnm - \delta \right)^{-1} \beta^{-\delta} f}_{(**)} \\
			\underset{\eqref{comm dnsi}}&{=}\beta^\delta \underbrace{\left(Q+ \delta\right)^{-1} in \beta \left( \Pnm + \delta \right)^{-1} \beta^{-\delta} f}_{(**)}.
		\end{aligned}
	\end{equation}
	
	We explain how the second and the last equality are justified by checking assumptions in Proposition \ref{comm in dnsi}. For second equality, note that $\beta^{-\delta}f \in C_b$ and $\left( \Pnm + \delta \right)^{-1} \beta^{\delta} f \in C_b$ as $f \in \cbd$. Also, by \eqref{def of delta}, $\left((2-n)\mu -1\right)$ and $\left((2-n)\mu -1-\delta\right)$ are not equal to zero. Therefore, the assumption of Proposition \ref{comm in dnsi} are satisfied. Also, the constant term $c$ does not appear as $\sgn((2-n)\mu -1) = \sgn ((2-n)\mu -1-\delta)$.
	
	For the last equality, we check whether $\Qinv \beta^\delta \left( \Pnm + \delta \right)^{-1} \beta^{-\delta} f$ lies in $C_b$. This is clear as
	\begin{equation*}
		\Qinv \beta^\delta \left( \Pnm + \delta \right)^{-1} \beta^{-\delta} f = \Qinv \Pnminv f \underset{\text{Prop }\ref{D^{-1} in C_b^delta}}{\in} \cbd \hookrightarrow C_b
	\end{equation*}
	Second, we check whether $\left( Q+ \delta \right)^{-1} in \beta \left( \Pnm + \delta \right)^{-1} \beta^{-\delta} f$ is contained in $C_b$. The key is to write $in\beta$ as
	\begin{equation} \label{eq27}
		in \beta = (Q+\delta)- \left( \Pnm + \delta \right) - \left( (2-n)\mu -1 \right).
	\end{equation}
	Then, we can enjoy the cancellation so that
	\begin{align*}
		& \left( Q+ \delta \right)^{-1} in \beta \left( \Pnm + \delta \right)^{-1} \beta^{-\delta} f \\
		&= \left( Q+ \delta \right)^{-1} \left( (Q+\delta) - \left(\Pnm + \delta \right) - ((2-n)\mu -1) \right)  \left( \Pnm + \delta \right)^{-1} \beta^{-\delta} f \\
		&= \underbrace{\left( \Pnm + \delta \right)^{-1}  \underbrace{\beta^{-\delta} f}_{\in C_b (\because f \in \cbd)}}_{\underset{\text{Prop }\ref{D^{-1} in C_b}}{\in}C_b} - \underbrace{(Q+\delta)^{-1} \underbrace{\beta^{-\delta}f}_{\in C_b}}_{\underset{\text{Prop }\ref{D^{-1} in C_b}}{\in}C_b}-\underbrace{((2-n)\mu -1) (Q+\delta)^{-1} \left( \Pnm + \delta \right)^{-1} \underbrace{\beta^{-\delta} f}_{\in C_b}}_{\underset{\text{Prop }\ref{D^{-1} in C_b}}{\in}C_b}.
	\end{align*}
	Third, $-1$ and $-\delta$ have same sign, so constant term $c$ does not appear. This justified the last equality.
	
	Since $(**) \in C_b$, $\beta^\delta$ decay near $\beta = 0$ is now guaranteed in
	\begin{equation*}
		in \beta \Qinv \Pnminv f = \beta (**)
	\end{equation*}
	But what happens near $\beta=\infty$? Repeating similar argument with little change, we have
	
	\begin{equation} \label{eq28}
		\begin{aligned}
			& in \beta \Qinv \Pnminv f \\
			&= in \beta \Qinv \underbrace{\Pnminv \beta^{-\delta} \beta^{\delta} f}_{(*)} \\
			\underset{\eqref{comm dnsi}}&{=} in \beta \Qinv \underbrace{\beta^{-\delta} \left( \Pnm - \delta \right)^{-1} \beta^\delta f}_{(*)} \\
			&= \beta^{-\delta} \Big( \underbrace{in \beta^{1+\delta} \Qinv \beta^{-\delta} \left( \Pnm - \delta \right)^{-1} \beta^\delta f}_{(***)} \Big) \\
			\underset{\eqref{comm dnsi}}&{=} \beta^{-\delta} \Big( \underbrace{(Q-\delta)^{-1} in\beta \left( \Pnm - \delta \right)^{-1} \beta^\delta f + c\beta^\delta}_{(***)} \Big) \\
			&= \beta^{-\delta} \underbrace{(Q-\delta)^{-1} in\beta \left( \Pnm - \delta \right)^{-1} \beta^\delta f}_{(****)} +c.
		\end{aligned}
	\end{equation}
	Checking the assumption of Proposition \ref{comm in dnsi} is similar to above, so we omit it. However, a big difference is that in the second application of Proposition \ref{comm in dnsi}, the sign of $-1$ and $\delta$ is different, so constant term $c$ appears. Since the term $(****)$ belongs to $C_b$ for the same reason as $(**)$ belongs to $C_b$,
	\begin{equation*}
		in \beta \Qinv \Pnminv f -c
	\end{equation*}
	shows $\beta^{-\delta}$ decay at $\beta=\infty$. Hence, to estimate $\cbd \oplus \cxii$ norm of $in \beta \Qinv \Pnm f$, we need to estimate a constant $c$ first. From \eqref{eq28}, we have
	\begin{equation*}
		c=in \beta \Qinv \Pnminv f -\beta^{-\delta} (Q-\delta)^{-1} in \beta \left( \Pnm - \delta \right)^{-1} \beta^\delta f.
	\end{equation*}
	Take $\beta =1$ and decomposing $in \beta$ to
	\begin{equation} \label{eq29}
		in \beta = (Q-\delta) - \left( \Pnm - \delta \right) -((2-n)\mu -1 )
	\end{equation}
	gives
	\begin{align*}
		c= & in \Qinv \Pnminv f \\
		&- \left( \Pnm - \delta \right)^{-1} \beta^\delta f \\
		&+ (Q-\delta)^{-1} \beta^\delta f \\
		&+ ((2-n)\mu -1) \left( Q-\delta \right) ^{-1} \left( \Pnm - \delta \right)^{-1} \beta^\delta f \\
		& \eqqcolon I_1+I_2+I_3+I_4.
	\end{align*}
	Let us estimate upper bound of $I_1 \sim I_4$ one by one.
	
	\begin{align*}
		\nrm{I_1}_{C_b} &= \abs{n} \nrm{\Qinv \Pnminv f}_{C_b} \\
		\underset{\eqref{D^{-1} C_b}}&{\leq}\frac{\abs{n}}{\dist ( \{0\}, (2-n)\mu -1)} \nrm{f}_{C_b} \\
		&\leq \frac{\abs{n}}{\dist ( [-\delta, \delta], (2-n)\mu -1)} \nrm{f}_{C_b} \\
		\underset{\eqref{dist bound}}&{\leq} \abs{n} \nmtn \cdot \frac{1}{\brk{n}} \nrm{f}_{C_b} \\
		&\leq \nmtn \ncbd{f},
	\end{align*}
	
	\bigskip
	
	\begin{align*}
		\nrm{I_2}_{C_b} &= \nrm{\left(\Pnm - \delta \right)^{-1} \beta^\delta f}_{C_b} \\
		\underset{\eqref{D^{-1} C_b}}&{\leq} \frac{1}{\abs{(2-n)\mu -1+\delta}} \nrm{\beta^\delta f}_{C_b} \\
		&\leq \frac{1}{\distPnm} \ncbd{f} \\
		\underset{\eqref{dist bound}}&{\leq} \nmtn \cdot \frac{1}{\brk{n}} \ncbd{f} \\ 
		\underset{(n \neq 0)}&{\leq} \nmt \ncbd{f},
	\end{align*}
	
	\bigskip
	
	\begin{align*}
		\nrm{I_3}_{C_b} &\leq \nrm{(Q-\delta)^{-1} \beta^\delta f}_{C_b} \\
		\underset{\eqref{D^{-1} C_b}}&{\leq} \frac{1}{\delta} \nrm{\beta^\delta f}_{C_b} \\
		&\leq \frac{1}{\delta} \ncbd{f} \\
		\underset{\eqref{def of delta}}&{\leq} 6 \ncbd{f},
	\end{align*}
	
	\bigskip
	
	\begin{align*}
		\nrm{I_4}_{C_b} &= \nrm{((2-n)\mu -1) (Q-\delta)^{-1} \left( \Pnm - \delta\right)^{-1} \beta^\delta f}_{C_b} \\
		\underset{\eqref{D^{-1} C_b}}&{\leq} \abs{(2-n)\mu -1} \cdot \frac{1}{\delta} \cdot \frac{1}{\distPnm} \ncbd{f} \\
		&\leq \frac{1}{\delta} \cdot \frac{1}{\dist ([-\delta,\delta], -1)} \ncbd{f} \\
		\underset{\eqref{def of delta}}&{\leq} \frac{36}{5} \ncbd{f}.
	\end{align*}
	
	Combining these result, we have
	\begin{equation} \label{eq30}
		\begin{aligned}
			\abs{c} &\leq \left( \frac{\brk{N}+1}{(N-2)\mu +\frac{5}{6}}+ \frac{66}{5} \right) \ncbd{f}.
		\end{aligned}
	\end{equation}
	
	As we have estimated $\bbC \xi_\infty$ part of $in \beta \Qinv \Pnminv f$, we now estimate $\cbd$ norm of $in \beta \Qinv \Pnminv f - c\xi_\infty$. First,
	\begin{align*}
		& \nrm{\beta^\delta \left( in \beta \Qinv \Pnminv f - c \xi_\infty \right)}_{C_b} \\
		\underset{\eqref{eq28}}&{=} \nrm{\beta^\delta \left( \beta^{-\delta} (Q-\delta)^{-1} in \beta \left( \Pnm - \delta \right)^{-1} \beta^\delta f + c(1-\xi_\infty) \right)}_{C_b} \\
		&\leq \nrm{(Q-\delta)^{-1} in \beta \left( \Pnm - \delta \right)^{-1} \beta^\delta f}_{C_b} + \abs{c} \nrm{\beta^\delta \xi_0}_{C_b} \\
		\underset{\eqref{eq29}}&{\leq} \nrm{I_2}_{C_b}+\nrm{I_3}_{C_b}+\nrm{I_4}_{C_b}+ \abs{c} \nrm{\beta^\delta \xi_0}_{C_b} \\
		\underset{\eqref{eq30}}&{\leq} \left( \nmt + \frac{66}{5} + \left( \frac{\brk{N}+1}{(N-2)\mu +\frac{5}{6}} + \frac{66}{5} \right) \nrm{\beta^\delta \xi_0}_{C_b}  \right) \ncbd{f}. \\
	\end{align*}
	
	Second,
	\begin{align*}
		& \nrm{\beta^{-\delta} \left( in \beta \Qinv \Pnminv f - c\xi_\infty \right) }_{C_b} \\
		\underset{\eqref{eq26}}&{=} \nrm{\beta^{-\delta} \left( \beta^\delta (Q+\delta)^{-1} in \beta \left( \Pnm + \delta \right)^{-1} \beta^{-\delta} f -c \xi_\infty \right)}_{C_b} \\
		&\leq \nrm{(Q+\delta)^{-1} in \beta \left( \Pnm + \delta \right)^{-1} \beta^{-\delta} f}_{C_b} + \abs{c} \nrm{\beta^{-\delta}\xi_\infty}_{C_b} \\
		\underset{\eqref{eq27}}&{\leq} \ncb{\left(\Pnm + \delta\right)^{-1} \beta^{-\delta} f}+ \ncb{(Q+\delta)^{-1} \beta^{-\delta} f} \\
		& \qquad + \abs{(2-n)\mu -1}\ncb{(Q+\delta)^{-1}\left( \Pnm + \delta \right)^{-1} \beta^{-\delta} f} + \abs{c} \ncb{\beta^{-\delta} \xi_\infty}\\
		&\leq \left( \nmt + \frac{66}{5} + \left( \frac{\brk{N}+1}{(N-2)\mu +\frac{5}{6}}+\frac{66}{5} \right) \nrm{\beta^{-\delta} \xi_\infty}_{C_b}  \right) \ncbd{f}.
	\end{align*}
	
	Combining these result, we have
	\begin{equation} \label{continuity of inbeta}
		\begin{aligned}
			& \nrm{in \beta \Qinv \Pnminv}_{[\cbdn, \cbdn \oplus \cxiin]} \\
			& \leq \nmt + \frac{66}{5} + \left( \frac{\brk{N}+1}{(N-2)\mu +\frac{5}{6}}+\frac{66}{5} \right) \maxi{\nrm{\beta^\delta \xi_0}_{C_b}, \nrm{\beta^{-\delta}\xi_\infty}_{C_b}}.
		\end{aligned}
	\end{equation}
	
	Then,
	\begin{align*}
		& \nrm{\n{P} \Qinv \Pnminv}_{[\cbdn, \cbdn \oplus \cxiin]} \\
		\underset{\eqref{Pn}, \eqref{Q}}&{\leq} \nrm{(Q-in\beta) \Qinv \Pnminv}_{[\cbdn, \cbdn \oplus \cxiin]} \\
		&\leq \nrm{Q \Qinv \Pnminv}_{[\cbdn, \cbdn]} + \nrm{in \beta \Qinv \Pnminv}_{[\cbdn, \cbdn \oplus \cxiin]} \\
		\underset{\eqref{D^{-1} C_b^delta},\eqref{continuity of inbeta}}&{\leq} 3\nmtn \cdot \frac{1}{\brk{n}}+\nmt + \frac{66}{5} + \left( \frac{\brk{N}+1}{(N-2)\mu +\frac{5}{6}}+\frac{66}{5} \right) \maxi{\nrm{\beta^\delta \xi_0}_{C_b}, \nrm{\beta^{-\delta}\xi_\infty}_{C_b}} \\
		&\leq \frac{4}{(N-2)\mu +\frac{5}{6}} + \frac{66}{5} + \left( \frac{\brk{N}+1}{(N-2)\mu +\frac{5}{6}}+\frac{66}{5} \right) \maxi{\nrm{\beta^\delta \xi_0}_{C_b}, \nrm{\beta^{-\delta}\xi_\infty}_{C_b}}.
	\end{align*} 
\end{proof}

\begin{proposition} \label{continuity of Pn in xiz}
	$\n{P} \Qinv \Pnminv: \cxizn \rightarrow \cbdn \oplus \cxiin$ is uniformly bounded. Furthermore, if $n \neq 0$, we have an exact norm bound
	\begin{align*}
		&\nrm{\n{P} \Qinv \Pnminv}_{[\cxizn, \cbdn \oplus \cxiin]}  \\
		&\leq \left( \frac{4}{(N-2)\mu -1} + \frac{66}{5} + \left( \frac{\brk{N}+1}{(N-2)\mu -1}+\frac{66}{5} \right) \maxi{\nrm{\beta^\delta \xi_0}_{C_b}, \nrm{\beta^{-\delta}\xi_\infty}_{C_b}} \right) \\
		& \qquad \cdot \Bigg(\left(1+\frac{2}{(N-2)\mu +\frac{5}{6}} \right) \ncbd{\beta \rd_\beta \xi_0}+ \nmtn \ncbd{\beta \xi_0}+\nmtn\ncbd{\beta^2 \rd_\beta \xi_0} \\
		&\qquad \qquad \quad + \nmt \ncbd{\beta^2 \rd^2_\beta \xi_0}  \Bigg)+ \nmt \ncbd{\beta \rd_\beta \xi_0}+ \nmtn \ncbd{\beta \xi_0}.
	\end{align*}
\end{proposition}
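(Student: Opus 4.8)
The plan is to follow the template of Proposition~\ref{continuity of Pn in cbd}, but to first strip the $\xi_0$ off using the identity established in Proposition~\ref{D^{-1} in xiz}, so that the bulk of the work is delegated to Proposition~\ref{continuity of Pn in cbd} and only a lower-order residual must be treated by hand. As usual it suffices to consider $n \in N\bbZ \setminus \{0\}$: the norm vanishes for $n \notin N\bbZ$, while for $n = 0$ one has $\n{P} = Q = P^{(0)}$, so $\n{P}\Qinv\Pnminv$ restricted to $\cxizn$ is a composition of operators already shown bounded (cf. Propositions~\ref{D^{-1} in C_b^delta}, \ref{D^{-1} in xiz} and Corollary~\ref{Pninv in xiz}; in fact the $\xi_0$-components cancel and the output lands in $\cbd$).

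First I would invoke \eqref{D^{-1} xiz formula} with $s = (2-n)\mu - 1$ to write
\begin{equation*}
	\Pnminv \xi_0 = \frac{1}{(2-n)\mu - 1}\left( \Pnminv(\beta \rd_\beta \xi_0) - in\, \Pnminv(\beta \xi_0) - \xi_0 \right),
\end{equation*}
noting that $\beta \rd_\beta \xi_0,\, \beta \xi_0 \in \cbd$ (they vanish near $\beta = 0$ because $\rd_\beta \xi_0$ does, and near $\beta = \infty$ because $\xi_0$ does). Applying $\n{P}\Qinv$ yields
\begin{equation*}
	\n{P}\Qinv\Pnminv \xi_0 = \frac{1}{(2-n)\mu - 1}\left( \n{P}\Qinv\Pnminv(\beta \rd_\beta \xi_0) - in\, \n{P}\Qinv\Pnminv(\beta \xi_0) - \n{P}\Qinv \xi_0 \right),
\end{equation*}
and the first two terms inside the bracket are controlled directly by Proposition~\ref{continuity of Pn in cbd}: their $\cbdn \oplus \cxiin$ norms are at most $\nrm{\n{P}\Qinv\Pnminv}_{[\cbdn, \cbdn \oplus \cxiin]}$ times $\ncbd{\beta \rd_\beta \xi_0}$, respectively $\ncbd{\beta \xi_0}$. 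Since $\abs{(2-n)\mu - 1}^{-1} \le \nmtn \brk{n}^{-1}$ by \eqref{dist bound} (the interval $[-\delta,\delta]$ contains $0$, so the distance is $\le \abs{(2-n)\mu-1}$), the first term is $O(\brk{n}^{-1})$ and the $in$-weighted second term is $O(1)$, producing the first summand of the asserted bound. It remains to estimate $\frac{1}{(2-n)\mu - 1}\, \n{P}\Qinv \xi_0$.

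For this residual I would apply \eqref{D^{-1} xiz formula} once more, now with $n = 0$, $s = -1$ and $\dns{0}{-1} = Q + 1$, to obtain $\Qinv \xi_0 = \xi_0 - \Qinv(\beta \rd_\beta \xi_0)$, so that, using $\n{P} = Q - in\beta$ and $Q\Qinv = id - \Qinv$,
\begin{equation*}
	\n{P}\Qinv \xi_0 = \left( \beta \rd_\beta \xi_0 - in\beta\, \xi_0 \right) - \left( \beta \rd_\beta \xi_0 - \Qinv(\beta \rd_\beta \xi_0) - in\beta\, \Qinv(\beta \rd_\beta \xi_0) \right).
\end{equation*}
The piece $\n{P}\xi_0 = \beta \rd_\beta \xi_0 - in\beta\, \xi_0 \in \cbd$ and the piece $\Qinv(\beta \rd_\beta \xi_0) \in \cbd$ (by Proposition~\ref{D^{-1} in C_b^delta}, since $-1 \notin [-\delta,\delta]$) are harmless; after the prefactor $\frac{1}{(2-n)\mu - 1}$ the first of these contributes exactly the trailing $\nmt\, \ncbd{\beta \rd_\beta \xi_0} + \nmtn\, \ncbd{\beta \xi_0}$ in the statement. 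The genuinely delicate piece is $in\beta\, \Qinv(\beta \rd_\beta \xi_0)$, which I would treat with the $\beta^{\pm\delta}$-commutator machinery of Proposition~\ref{continuity of Pn in cbd}: commuting $\beta^{\pm\delta}$ through $\Qinv$ by Lemmas~\ref{comm in dns}--\ref{comm in dnsi} (the extra constant vanishing because $-1$ and $-1\pm\delta$ share a sign), rewriting $in\beta$ in the cancelling forms \eqref{eq27}, \eqref{eq29}, and writing $\beta \rd_\beta \xi_0 = \Pnm\, \Pnminv(\beta \rd_\beta \xi_0)$ to reintroduce the already-estimated operator $\n{P}\Qinv\Pnminv$. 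In the course of tracking $\cbd$-seminorms through these identities, the action of $\n{P}$ — which combines $\beta\rd_\beta$ with multiplication by $\beta$ — on the $\beta$-weighted data $\beta\rd_\beta\xi_0$, $\beta\xi_0$ produces the higher-order seminorms $\ncbd{\beta^2 \rd_\beta \xi_0}$ (from $\beta\cdot\beta\rd_\beta\xi_0$) and $\ncbd{\beta^2 \rd_\beta^2 \xi_0}$ (from $\beta\rd_\beta(\beta\rd_\beta\xi_0) - \beta\rd_\beta\xi_0$), which is the origin of the two extra terms in the middle factor; collecting constants via Propositions~\ref{dist bound prop}, \ref{D^{-1} in C_b}, \ref{D^{-1} in C_b^delta} and Corollaries~\ref{Pninv in cbd}, \ref{Pninv in xiz} then assembles the claimed bound.

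The main obstacle is exactly this residual $\n{P}\Qinv \xi_0$. In Proposition~\ref{continuity of Pn in cbd} the operators $\n{P}$ and $\Pnm$ differ only by the constant $(2-n)\mu - 1$, so the factors of $in\beta$ in $\n{P} = Q - in\beta$ telescope cleanly against $\Qinv\Pnminv$; here $\Qinv$ alone has no such partner operator, so the multiplication by $\beta$ must be controlled without cancellation and the naive estimate loses a full power of $\abs{n}$. What rescues the argument is the uniformly small prefactor $\abs{(2-n)\mu - 1}^{-1} \lesssim \brk{n}^{-1}$ from Proposition~\ref{dist bound prop}, which absorbs that growth; but arranging every intermediate estimate in the commutator dance to be genuinely $O(\brk{n}^{-1})$ (or $O(1)$ only when a compensating factor $\abs{n}$ is present) is where the bookkeeping becomes the substance of the proof.
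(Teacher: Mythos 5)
Your decomposition differs from the paper's in a way that matters for the final constant, and one of your steps is circular. The paper does not iterate \eqref{D^{-1} xiz formula}. Instead it computes $\Pnm(Q+1)\xi_0$ explicitly,
\begin{equation*}
\Pnm(Q+1)\xi_0 = \bigl(2 - ((2-n)\mu - 1)\bigr)\beta\rd_\beta\xi_0 - in\beta\xi_0 - in\beta^2\rd_\beta\xi_0 + \beta^2\rd_\beta^2\xi_0 - ((2-n)\mu - 1)\xi_0,
\end{equation*}
solves for $\xi_0$, and applies $\n{P}\Qinv\Pnminv$ to both sides. Since $\Qinv\Pnminv\Pnm(Q+1)\xi_0 = \xi_0$, the cancellation is complete in one pass: the only things to bound are $\n{P}\Qinv\Pnminv$ acting on the $\cbd$-valued bracket (supplied directly by Proposition~\ref{continuity of Pn in cbd}) plus the explicit $\n{P}\xi_0 = \beta\rd_\beta\xi_0 - in\beta\xi_0$. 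No residual $\n{P}\Qinv\xi_0$ and no second round of commutator work arise. That identity is exactly where the terms $\ncbd{\beta^2\rd_\beta\xi_0}$ and $\ncbd{\beta^2\rd_\beta^2\xi_0}$ in the statement's middle factor come from; your decomposition produces $\cbd$-seminorms of different combinations (e.g.\ of $\beta\Qinv(\beta\rd_\beta\xi_0)$), so ``collecting constants \ldots\ then assembles the claimed bound'' is not automatic — your route would give a uniform bound of the same general shape but almost certainly not the stated one term-by-term.

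Moreover, the step ``writing $\beta\rd_\beta\xi_0 = \Pnm\Pnminv(\beta\rd_\beta\xi_0)$ to reintroduce the already-estimated operator $\n{P}\Qinv\Pnminv$'' does not do anything: the insertion is literally the identity, and since $\Qinv$ and $\Pnm$ do not commute ($[Q+1,\Pnm]=-in\beta$), any attempt to slide $\Pnminv$ past $\Qinv$ to reconstitute $\n{P}\Qinv\Pnminv$ regenerates $in\beta\Qinv$ and the argument runs in a circle. The rescue you correctly identify at the end — that $\abs{(2-n)\mu-1}^{-1}\abs{n}\le\nmtn$ absorbs the growth — is the right observation, but once you have it, no $\beta^{\pm\delta}$ gymnastics are needed for this piece: since $\Qinv f(\beta)=\beta^{-1}\int_0^\beta f$, one has $\beta\Qinv(\beta\rd_\beta\xi_0)=\int_0^\beta x\,\rd_\beta\xi_0(x)\,dx$, which vanishes on $[0,1]$ and is constant on $[2,\infty)$, hence trivially in $\cbd\oplus\cxii$ with $n$-independent norm. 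A direct estimate there is both simpler and more robust than your proposed route, but it still will not reproduce the paper's precise bound; for that you need the $\Pnm(Q+1)\xi_0$ identity.
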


\begin{proof}
	Since $P^{(0)}=Q$ for $n=0$ and this case is was already dealt with, we only prove for $n \in N \bbZ \backslash \{0\}$. Direct calculation using \eqref{D^{-1} xiz formula} gives
	\begin{align*}
		& \n{P} \Qinv \Pnminv \xi_0 \\
		&= \frac{1}{(2-n)\mu -1} \underbrace{\n{P} \Qinv \Pnminv \bigg( \underbrace{\Big( 2-((2-n)\mu -1) \Big  ) \beta \rd_\beta \xi_0-in \beta \xi_0 - in \beta^2 \rd_\beta \xi_0 + \beta^2 \rd^2_\beta \xi_0}_{\in \cbd} \bigg)}_{\underset{\text{Prop }\ref{continuity of Pn in cbd}}{\in} \cbd \oplus \cxii} \\
		& \qquad+\frac{1}{(2-n)\mu -1} (\underbrace{-\beta \rd_\beta \xi_0 + in \beta_0}_{\in \cbd}).
	\end{align*}
	Then, $\cbdn \oplus \cxiin$ norm of it could be bounded by
	\begin{align*}
		& \nrm{\n{P} \Qinv \Pnminv \xi_0}_{\cbdn \oplus \cxiin} \\
		&\leq \frac{1}{\abs{(2-n)\mu -1}} \nrm{\n{P} \Qinv \Pnminv}_{[\cbdn, \cbdn \oplus \cxiin]} \\
		& \qquad \cdot \nrm{\Big( 2-((2-n)\mu -1) \Big  ) \beta \rd_\beta \xi_0-in \beta \xi_0 - in \beta^2 \rd_\beta \xi_0 + \beta^2 \rd^2_\beta \xi_0}_{\cbd} \\
		& \qquad+\frac{1}{\abs{(2-n)\mu -1}} \ncbd{-\beta \rd_\beta \xi_0 + in\beta \xi_0} \\
		& \leq \left( \frac{4}{(N-2)\mu +\frac{5}{6}} + \frac{66}{5} + \left( \frac{\brk{N}+1}{(N-2)\mu -1}+\frac{66}{5} \right) \maxi{\nrm{\beta^\delta \xi_0}_{C_b}, \nrm{\beta^{-\delta}\xi_\infty}_{C_b}} \right) \\
		& \qquad \cdot \Bigg(\left(1+\frac{2}{(N-2)\mu +\frac{5}{6}} \right) \ncbd{\beta \rd_\beta \xi_0}+ \nmtn \ncbd{\beta \xi_0}+\nmtn\ncbd{\beta^2 \rd_\beta \xi_0} \\
		&\qquad \qquad \quad + \nmt \ncbd{\beta^2 \rd^2_\beta \xi_0}  \Bigg)+ \nmt \ncbd{\beta \rd_\beta \xi_0}+ \nmtn \ncbd{\beta \xi_0}
	\end{align*}
\end{proof}

Combining Proposition \ref{continuity of Pn in cbd} and Proposition \ref{continuity of Pn in xiz} yields the continuity of $\n{P}$ as follows.

\begin{proposition} \label{continuity of Pn}
	$\n{P}: \Xnz \rightarrow \Wnp$ is uniformly bounded. Furthermore, if $n \neq 0$, we have exact norm bound
	\begin{align*}
		&\nrm{\n{P}}_{[\Xnz, \Wnp]} \\
		&\leq \left(\frac{4}{(N-2)\mu -1} + \frac{66}{5} + \left( \frac{\brk{N}+1}{(N-2)\mu -1}+\frac{66}{5} \right) \maxi{\nrm{\beta^\delta \xi_0}_{C_b}, \nrm{\beta^{-\delta}\xi_\infty}_{C_b}}\right) \\
		& \qquad \cdot \Bigg(\left(1+\frac{2}{(N-2)\mu +\frac{5}{6}} \right) \ncbd{\beta \rd_\beta \xi_0}+ \nmtn \ncbd{\beta \xi_0}+\nmtn\ncbd{\beta^2 \rd_\beta \xi_0} \\
		&\qquad \qquad \quad + \nmt \ncbd{\beta^2 \rd^2_\beta \xi_0} +1 \Bigg)+ \nmt \ncbd{\beta \rd_\beta \xi_0}+ \nmtn \ncbd{\beta \xi_0}.
	\end{align*}
\end{proposition}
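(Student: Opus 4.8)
The proposition merely assembles the two previous results, so the plan is to split the domain $\Wnz$ into its direct summands and feed each into the appropriate estimate. I would begin with the trivial reductions. When $n \notin N\bbZ$ both $\Xnz$ and $\Wnp$ are $\{0\}$, so there is nothing to prove. When $n = 0$ one has $P^{(0)} = \dns{0}{0} = \beta\rd_\beta = Q$, so $\n{P}$ acts on $\Xnz$ exactly as $Q$ does; the uniform boundedness of $Q : \Xnz \to \Wnm$ at $n = 0$ is covered by Proposition \ref{continuity of Q}, and composing with the uniform embedding $\Wnm \unibed \Wnp$ from \eqref{unibed chain} settles this case (here $\Qinv\Pzminv$ sends the constant summand of $\Wnz$ into $\bbC$ by Proposition \ref{D^{-1} in C}, and $\beta\rd_\beta$ annihilates it). Thus it remains to treat $n \in N\bbZ \setminus \{0\}$.

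For such $n$, the space $\Xnz = \Qinv\Pnminv\Wnz$ of Definition \ref{X, Z space} carries the norm induced through Lemma \ref{induced Banach space}, so $\nrm{\n{P}}_{[\Xnz,\Wnp]} = \nrm{\n{P}\Qinv\Pnminv}_{[\Wnz,\Wnp]}$. Since $n \neq 0$ the summand $\bbC$ drops out and $\Wnz = \cbdn \oplus \cxizn$ with $\nrm{f_1+f_2}_{\Wnz} = \nrm{f_1}_{\cbdn} + \nrm{f_2}_{\cxizn}$, while the common target $\cbdn \oplus \cxiin$ of Propositions \ref{continuity of Pn in cbd} and \ref{continuity of Pn in xiz} sits isometrically inside $\Wnp = \cbdn \oplus \cxizn \oplus \cxiin$. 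Decomposing $f = f_1 + f_2 \in \Wnz$ accordingly and using linearity,
\begin{equation*}
	\nrm{\n{P}\Qinv\Pnminv f}_{\Wnp} \le \nrm{\n{P}\Qinv\Pnminv}_{[\cbdn,\cbdn\oplus\cxiin]}\nrm{f_1}_{\cbdn} + \nrm{\n{P}\Qinv\Pnminv}_{[\cxizn,\cbdn\oplus\cxiin]}\nrm{f_2}_{\cxizn}.
\end{equation*}
As $\nrm{f_1}_{\cbdn}, \nrm{f_2}_{\cxizn} \le \nrm{f}_{\Wnz}$, the right-hand side is at most the sum of the two displayed operator norms times $\nrm{f}_{\Wnz}$; inserting the explicit bounds of Proposition \ref{continuity of Pn in cbd} and Proposition \ref{continuity of Pn in xiz} and collecting terms gives exactly the asserted inequality.

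I do not anticipate a genuine obstacle: every analytic ingredient — the commutator identities of Proposition \ref{comm in dnsi} used to extract the $\beta^{\pm\delta}$ decay of $in\beta\,\Qinv\Pnminv f$ near $\beta = 0$ and $\beta = \infty$, and the accompanying estimate of the leftover constant — is already carried out within Propositions \ref{continuity of Pn in cbd} and \ref{continuity of Pn in xiz}. The only points to watch are that the extra $\bbC$-summand, present only at $n = 0$, is genuinely accounted for (it is, since $\beta\rd_\beta$ kills constants), and that the $\maxi{\nrm{\beta^\delta\xi_0}_{C_b}, \nrm{\beta^{-\delta}\xi_\infty}_{C_b}}$-type terms are summed so that the final printed constant matches the statement — all purely mechanical once the two source estimates are available.
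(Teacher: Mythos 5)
Your proposal is correct and takes essentially the same route as the paper: for $n \in N\bbZ\setminus\{0\}$ the paper writes $\nrm{\n{P}}_{[\Xnz,\Wnp]} = \nrm{\n{P}\Qinv\Pnminv}_{[\Wnz,\Wnp]} \le \nrm{\n{P}\Qinv\Pnminv}_{[\cbdn,\Wnp]} + \nrm{\n{P}\Qinv\Pnminv}_{[\cxizn,\Wnp]}$ and invokes Propositions \ref{continuity of Pn in cbd} and \ref{continuity of Pn in xiz}, which is exactly your decomposition of $\Wnz = \cbdn \oplus \cxizn$. Your extra care with the $n=0$ case (reducing $P^{(0)}=Q$ to Proposition \ref{continuity of Q} and the embedding $\Wnm\unibed\Wnp$) and with $n\notin N\bbZ$ fills in routine details the paper leaves implicit.
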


\begin{proof}
	\begin{align*}
		& \nrm{\n{P}}_{[\Xnz, \Wnp]} = \nrm{\n{P} \Qinv \Pnminv}_{[\Wnz, \Wnp]} \\
		&\leq \nrm{\n{P} \Qinv \Pnminv}_{[\cbdn, \Wnp]}+\nrm{\n{P} \Qinv \Pnminv}_{[\cxizn, \Wnp]}.
	\end{align*}
\end{proof}

\begin{proposition} \label{continuity of PnQ}
	$\n{P}Q: \Xnz \rightarrow \Wnp$ is uniformly bounded. Furthermore, if $n \neq 0$, we have exact norm bound
	\begin{align*}
		&\nrm{\n{P}Q}_{[\Xnz, \Wnp]} \\
		&\leq \left(\frac{4}{(N-2)\mu -1} + \frac{66}{5} + \left( \frac{\brk{N}+1}{(N-2)\mu -1}+\frac{66}{5} \right) \maxi{\nrm{\beta^\delta \xi_0}_{C_b}, \nrm{\beta^{-\delta}\xi_\infty}_{C_b}}\right) \\
		& \qquad \cdot \Bigg(\left(1+\frac{2}{(N-2)\mu +\frac{5}{6}} \right) \ncbd{\beta \rd_\beta \xi_0}+ \nmtn \ncbd{\beta \xi_0}+\nmtn\ncbd{\beta^2 \rd_\beta \xi_0} \\
		&\qquad \qquad \quad + \nmt \ncbd{\beta^2 \rd^2_\beta \xi_0} +1 \Bigg)+ \frac{2}{(N-2)\mu +\frac{5}{6}} \ncbd{\beta \rd_\beta \xi_0}+ \frac{2\brk{N}}{(N-2)\mu + \frac{5}{6}} \ncbd{\beta \xi_0}+2.
	\end{align*}
\end{proposition}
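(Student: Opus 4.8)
The plan is to reduce $\n{P}Q$ to operators that are already known to be uniformly bounded. For $n\notin N\bbZ$ the norm is zero, and for $n=0$ we have $P^{(0)}=Q$, so $\n{P}Q=Q^{2}$ and, using the operator identity $Q\Qinv=id-\Qinv$ (legitimate because $\Qinv=(Q+1)^{-1}$ is a genuine right inverse by Propositions \ref{D^{-1} in C_b}, \ref{D^{-1} in C_b^delta}, \ref{D^{-1} in xiz}), this reduces to the $Q$-estimates of Proposition \ref{continuity of Q}. So I focus on $n\in N\bbZ\setminus\{0\}$ and, exactly as in the earlier proofs, estimate the composite $\n{P}Q\Qinv\Pnminv$ on $\Wnz$, which satisfies $\nrm{\n{P}Q}_{[\Xnz,\Wnp]}=\nrm{\n{P}Q\Qinv\Pnminv}_{[\Wnz,\Wnp]}$ by Lemma \ref{induced Banach space}.

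The heart of the argument is the decomposition
\[
	\n{P}Q\Qinv\Pnminv=\n{P}(id-\Qinv)\Pnminv=\n{P}\Pnminv-\n{P}\Qinv\Pnminv .
\]
The second summand is precisely $\n{P}$ acting on $\Xnz$, whose $\Wnp$-bound is Proposition \ref{continuity of Pn}. For the first summand I write $\n{P}=\Pnm+\bigl((2-n)\mu-1\bigr)id$, so that $\n{P}\Pnminv=id+\bigl((2-n)\mu-1\bigr)\Pnminv$ on $\Wnz$. On the $\cbdn$-summand this is controlled by $1+\frac{\abs{(2-n)\mu-1}}{\distPnm}\le 3$, using $\frac{\abs{(2-n)\mu-1}}{\distPnm}\le 2$ from \eqref{dist bound}. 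On the $\cxizn$-summand the crucial observation is a cancellation coming from \eqref{D^{-1} xiz formula}:
\[
	\n{P}\Pnminv\xi_{0}=\xi_{0}+\bigl((2-n)\mu-1\bigr)\Pnminv\xi_{0}=\Pnminv(\beta\rd_{\beta}\xi_{0})-in\,\Pnminv(\beta\xi_{0}),
\]
so no $\xi_{0}$-component survives, $\n{P}\Pnminv$ in fact maps $\Wnz$ into $\cbdn=\Wnm\unibed\Wnp$, and by Propositions \ref{D^{-1} in C_b^delta} and \ref{dist bound prop} its $\cbdn$-norm on this summand is at most $\nmt\ncbd{\beta\rd_{\beta}\xi_{0}}+\nmtn\ncbd{\beta\xi_{0}}$. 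Recombining the $\cbdn$- and $\cxizn$-contributions exactly as in the proof of Proposition \ref{continuity of Pn(Q+1)}, and adding the bound from Proposition \ref{continuity of Pn} for the second summand, yields a bound of the stated form.

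The step I expect to be the main obstacle is the bookkeeping that keeps the image of $\n{P}\Pnminv$ inside $\Wnm$ rather than merely $\Wnz$: this rests on the cancellation above (the $\xi_{0}$ produced by $\bigl((2-n)\mu-1\bigr)\Pnminv\xi_{0}$ is annihilated by the $\xi_{0}$ coming from $\Pnm\Pnminv\xi_{0}=\xi_{0}$), and it is precisely what prevents the constant from deteriorating; without tracking it one would pick up a spurious $\bbC\xi_{0}$-factor and the estimate would not close in $\Wnp$. Everything else is a routine combination of the norm bounds \eqref{D^{-1} C_b^delta}, \eqref{D^{-1} xiz}, the distance estimate \eqref{dist bound}, and the identities $Q\Qinv=id-\Qinv$ and $\n{P}=\Pnm+\bigl((2-n)\mu-1\bigr)id$; one must only be mindful, when invoking $Q\Qinv=id-\Qinv$, that $\Qinv$ carries $\cbdn$ and $\cxizn$ into spaces on which $Q+1$ acts, so that the identity is valid on the relevant domains.
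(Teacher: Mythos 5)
Your proof is correct and uses essentially the same decomposition as the paper: your identity $\n{P}Q\Qinv\Pnminv = \n{P}\Pnminv - \n{P}\Qinv\Pnminv$ is precisely the paper's splitting $\n{P}Q = \n{P}(Q+1) - \n{P}$, with the two summands then controlled by Propositions~\ref{continuity of Pn(Q+1)} and~\ref{continuity of Pn} respectively, which is the paper's entire two-line proof. The only difference is that you re-derive the $\n{P}\Pnminv$ bound rather than citing Proposition~\ref{continuity of Pn(Q+1)} directly, and in doing so you make explicit the $\xi_0$-cancellation (needed to justify that $\n{P}\Pnminv$ maps into $\Wnm$, i.e.\ has no $\cxiz$-component, for $n\neq 0$) that the paper's statement of Proposition~\ref{continuity of Pn(Q+1)} asserts but whose norm computation does not visibly exploit.
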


\begin{proof}
	Apply Proposition \ref{continuity of Pn(Q+1)} and Proposition \ref{continuity of Pn} on
	\begin{align*}
		& \nrm{\n{P}Q}_{[\Xnz, \Wnp]} = \nrm{\n{P}(Q+1)-\n{P}}_{[\Xnz, \Wnp]} \\
		&\leq \nrm{\n{P}(Q+1)}_{[\Xnz, \Wnm]} +\nrm{\n{P}}_{[\Xnz, \Wnp]}.
	\end{align*}
\end{proof}

So far, we have proved

\begin{equation} \label{atomic diff operator}
	\begin{aligned}
		id: \Xnz \rightarrow \Wnz ,\quad in \cdot id &: \Xnz \rightarrow \Wnz, \quad Q: \Xnz \rightarrow \Wnm, \\
		\n{P}(Q+1): \Xnz \rightarrow \Wnm, \quad \n{P} &: \Xnz \rightarrow \Wnp, \quad \n{P}Q: \Xnz \rightarrow \Wnp
	\end{aligned}
\end{equation}
are uniformly bounded with explicit norm bound for $n \neq 0$. As $\rd_\phi$ is induced by $in \cdot id$, $\beta \rd_\beta$ is induced by $Q$, $\beta \rd_\varphi(\beta \rd_\beta+1)$ is induced by $-\n{P}(Q+1)$, $\beta \rd_\varphi$ is induced by $-\n{P}$ and $\beta\rd_\varphi \beta \rd_\beta$ is induced by $-\n{P}Q$, we can get the following proposition about the continuity of differential operators.

\begin{proposition} \label{continuity of atomic diff operator}
	\begin{align*}
		id &: X_{0 , N} \rightarrow W_{0, N}, \\
		\rd_\phi &: X_{0 , N} \rightarrow W_{0, N}, \\
		\beta \rd_\beta &: X_{0 , N} \rightarrow W_{-, N}, \\
		\beta \rd_\varphi &: X_{0 , N} \rightarrow W_{+, N}, \\
		\beta \rd_\varphi (\beta \rd_\beta +1) &: X_{0 , N}, \rightarrow W_{-, N}, \\
		\beta \rd_\varphi \beta \rd_\beta &: X_{0 , N} \rightarrow W_{+, N}
	\end{align*}
	is $C^1$.
\end{proposition}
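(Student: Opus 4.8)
The plan is to observe that the proposition is essentially a soft consequence of the Fourier-mode estimates already proved in this subsection. First I would record the elementary fact that every continuous linear map $T\colon E\to F$ between Banach spaces is automatically $C^1$ (indeed $C^\infty$): it is Fr\'echet differentiable at every point with $DT(x)=T$, and $x\mapsto DT(x)$ is constant, hence continuous, as a map $E\to L(E,F)$. Since each of the six operators in the statement is linear, it therefore suffices to prove that each is \emph{bounded} as a map between the indicated $\calA^{0.5}$-spaces.

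For boundedness I would appeal to Proposition \ref{continuity between A^s}. Recalling that $W_{-,N}=\calA^{0.5}(\Wnm)$, $W_{0,N}=\calA^{0.5}(\Wnz)$, $W_{+,N}=\calA^{0.5}(\Wnp)$ and (parallel to these) $X_{0,N}=\calA^{0.5}(\Xnz)$, I would note that the hypotheses of Proposition \ref{continuity between A^s} hold: $\Xnz \unibed C_b$ (checked around \eqref{eq25}) and $\Wnm \unibed \Wnz \unibed \Wnp \unibed C_b$ by \eqref{unibed chain}. The paper has already identified the inducing Fourier-mode sequences --- $id$ is induced by $\set{id}$, $\rd_\phi$ by $\set{in}$, $\beta\rd_\beta$ by $\set{Q}$, $\beta\rd_\varphi$ by $\set{-\n{P}}$, $\beta\rd_\varphi(\beta\rd_\beta+1)$ by $\set{-\n{P}(Q+1)}$, and $\beta\rd_\varphi\beta\rd_\beta$ by $\set{-\n{P}Q}$ --- and their uniform boundedness is exactly the content of \eqref{atomic diff operator}, i.e.\ of Proposition \ref{continuity of id}, Corollary \ref{continuity of inid}, and Propositions \ref{continuity of Q}, \ref{continuity of Pn(Q+1)}, \ref{continuity of Pn}, \ref{continuity of PnQ} (with the separately treated $n=0$ case folded in). Applying Proposition \ref{continuity between A^s} with $s=0.5$ to each of these uniformly bounded sequences then gives a bounded linear map between the corresponding $\calA^{0.5}$-spaces, which is the asserted statement; for $\rd_\phi$ one additionally composes with the (uniformly bounded) inclusion $\Wnm\unibed\Wnz$, again promoted to the $\calA^{0.5}$-level via Proposition \ref{continuity between A^s}, so that its image lies in $W_{0,N}$.

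There is no real analytic obstacle left: the genuine work --- the operator-norm bounds on each Fourier mode --- was done in the preceding propositions. The only thing requiring care is the bookkeeping: getting the Fourier symbols of $\beta\rd_\varphi$, $\beta\rd_\varphi(\beta\rd_\beta+1)$ and $\beta\rd_\varphi\beta\rd_\beta$ exactly right (using $\rd_\varphi=\rd_\phi-\rd_\beta$ together with the normalization $\beta(\rd_\beta-in)=\n{P}$, so that the symbol of $\beta\rd_\varphi$ is $\beta(in-\rd_\beta)=-\n{P}$), and matching the codomain of each induced map to the space named in the statement, if necessary by passing through one of the uniform embeddings in \eqref{unibed chain}.
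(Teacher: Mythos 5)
Your proposal is correct and matches the paper's own argument: the paper likewise reduces $C^1$ to boundedness via linearity, identifies the Fourier-mode symbols, and invokes the uniform norm bounds together with Proposition \ref{continuity between A^s}. Your explicit remark that $\rd_\phi$ lands in $W_{-,N}$ and must be post-composed with the uniform inclusion $\Wnm\unibed\Wnz$ to match the stated codomain $W_{0,N}$ is a minor bookkeeping detail the paper leaves implicit, but it is the right observation and does not constitute a different approach.
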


\begin{proof}
	As all operator in Proposition \ref{continuity of atomic diff operator} is linear, its Fr\'echet derivative is same as itself. Then, boundedness of these operators follow directly by uniform boundedness of \eqref{atomic diff operator} and Proposition \ref{continuity between A^s}.
\end{proof}

As each Fourier mode of differential bard operator in \eqref{eq19} $\sim$ \eqref{eq23} is linear combination of operator in \eqref{atomic diff operator}, the following proposition about differential bar operator follows directly.

\begin{proposition} \label{continuity of diff bar operator}
	\begin{align*}
		\dbbeta &: X_{0 , N} \rightarrow W_{0, N}, \\
		\dbvarphi &: X_{0 , N} \rightarrow  W_{+, N}, \\
		\dbvarphibeta &: X_{0 , N} \rightarrow W_{+, N}, \\
		\dbphibeta &: X_{0 , N} \rightarrow W_{0, N}
	\end{align*}
	is $C^1$
\end{proposition}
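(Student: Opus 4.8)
The plan is to obtain this proposition as an immediate assembly of the mode-wise bounds already established, exactly as the sentence preceding the statement anticipates. All four maps $\dbbeta$, $\dbvarphi$, $\dbvarphibeta$, $\dbphibeta$ are linear, so their Fréchet derivatives coincide with the maps themselves and $C^1$ regularity is equivalent to boundedness. By Proposition \ref{continuity between A^s} it is enough to verify, for each of the four operators, that the sequence of its $n$-th Fourier components is uniformly bounded from $\Xnz$ into the appropriate member of $\set{\Wnz,\Wnm,\Wnp}$; boundedness on the full $\calA^{0.5}$-spaces $X_{0,N}$, $W_{\triangle,N}$ then follows.

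First I would read the Fourier components off \eqref{eq19}, \eqref{eq20}, \eqref{eq22}, \eqref{eq23}: the $n$-th component of $\dbbeta$ is $Q+(1-2\mu)id$; of $\dbvarphi$ is $-\n{P}+(2\mu-1)id$; of $\dbvarphibeta$ is $-\n{P}Q+(2\mu-1)\n{P}+2\mu Q+2\mu(1-2\mu)id$; and of $\dbphibeta$ is $inQ+in(1-2\mu)id$. Every summand here is, up to a fixed scalar, one of the atomic operators $id$, $in\cdot id$, $Q$, $\n{P}(Q+1)$, $\n{P}$, $\n{P}Q$ collected in \eqref{atomic diff operator}, which Propositions \ref{continuity of id}, \ref{continuity of inid}, \ref{continuity of Q}, \ref{continuity of Pn(Q+1)}, \ref{continuity of Pn}, \ref{continuity of PnQ} show to be uniformly bounded from $\Xnz$ into $\Wnz$, $\Wnm$, or $\Wnp$. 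Using the uniform embedding chain $C_b^\delta\unibed\n{W}_-\unibed\n{W}_0\unibed\n{W}_+$ from \eqref{unibed chain} to put all summands of a given operator into a common target, one reads off: $\dbbeta$ maps uniformly into $\Wnz$; $\dbvarphi$ and $\dbvarphibeta$ map uniformly into $\Wnp$ (the $-\n{P}$, resp.\ $-\n{P}Q$, term forcing the target up to $\n{W}_+$); and $\dbphibeta$ maps uniformly into $\Wnm\unibed\Wnz$. These are precisely the codomains in the statement, so a final application of Proposition \ref{continuity between A^s} concludes.

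The one step that requires a moment of care is the term $inQ$ appearing in $\dbphibeta$, which is not literally a scalar multiple of an atomic operator because of the factor $n$. Here I would use that the norm estimate for $Q$ in Proposition \ref{continuity of Q} carries a factor $\brk{n}^{-1}$, so that $\nrm{inQ}_{[\Xnz,\Wnm]}\le\abs{n}\,\nrm{Q}_{[\Xnz,\Wnm]}$ is bounded by $\abs{n}\brk{n}^{-1}$ times a constant, hence uniformly bounded in $n$ (and vanishing at $n=0$); the companion term $in(1-2\mu)id$ is a scalar multiple of $in\cdot id$, uniformly bounded with range $\Wnm$ by Corollary \ref{continuity of inid} and the remark after it. I do not expect any genuine obstacle: the proposition is pure bookkeeping on top of the mode-wise estimates already proved, the only subtleties being this absorption of the $in$-factor by the $\brk{n}^{-1}$-decay of the $Q$-bound and keeping track of which of $\n{W}_-,\n{W}_0,\n{W}_+$ is the correct codomain for each sum.
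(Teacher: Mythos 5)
Your proposal is correct and follows the same route the paper intends: read off the Fourier modes from \eqref{eq19}--\eqref{eq23}, invoke the uniform mode-wise bounds for the atomic operators together with the embedding chain \eqref{unibed chain}, and conclude via Proposition \ref{continuity between A^s}. Your remark on the $inQ$ term in $\dbphibeta$ is a welcome clarification of a detail the paper's phrase ``linear combination'' glosses over, since the coefficient $in$ is unbounded and must be absorbed by the $\brk{n}^{-1}$ factor in the bound of Proposition \ref{continuity of Q}.
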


Remember, our goal is to show that $\br{L}$ given by

\begin{align*}
		 	&\br{L}(\br{\psi},\Omega)=\br{\rd}_\varphi \left( \frac{2\br{\rd}_\beta \br{\psi} \cdot  \br{\rd}_\varphi \br{\psi}}{\left( \bar{\rd}_\varphi +1 \right) \br{\rd}_\beta \br{\psi}} \left( 1+ \left( \frac{\rd_\phi \br{\rd}_\beta \br{\psi}}{2\br{\rd}_\beta} \right)^2  \right) - \frac{\rd_\phi \br{\rd}_\beta \br{\psi} \cdot  \rd_\phi \br{\psi}   }{2  \br{\rd}_\beta \br{\psi}}  \right) \\
		 	& \qquad \qquad + \rd_\phi \left(  \frac{ \left( \bar{\rd}_\varphi +1 \right) \br{\rd}_\beta \br{\psi} \cdot  \rd_\phi \br{\psi} - \rd_\phi \br{\rd}_\beta \br{\psi} \cdot \br{\rd}_\varphi \br{\psi}}{2\br{\rd}_\beta \br{\psi}  }\right)
	+ \frac{\left( \bar{\rd}_\varphi +1 \right) \br{\rd}_\beta \br{\psi} \cdot \left( \br{\rd}_\varphi \br{\psi} \right) ^{-\frac{1}{2\mu}} }{2 \mu} \Omega
\end{align*}
is $C^1$. For this aim, we need to avoid zero value in the denominators in \eqref{L bar}. First, note that trivial solution $\br{\psi}_0$ is a constant function so that only zeroth Fourier mode is non-trivial. Therefore,
\begin{equation*}
	\br{\psi}_0 \in X_{0,N}.
\end{equation*}
Since $\dbbeta: \Xz \rightarrow \Wz$ is continuous by Proposition \ref{continuity of diff bar operator} and $\dbbeta\br{\psi}_0 \equiv -1 \in \Xz$, there exists $\delta_1 >0$ such that
\begin{equation*}
	\br{\psi} \in B_{\Xz}(\br{\psi}_0, \delta_1) \Rightarrow \dbbeta \br{\psi} \in B_{\Wz} \left(-1, \frac{1}{2} \right). \footnote{We denote $\delta$-ball centered at $x_0$ in space $K$ by $B_K (x_0, \delta)$.}
\end{equation*}
As
\begin{equation*}
	\nrm{\dbbeta \br{\psi} - (-1)}_{\Wz} \underset{\text{def}}{=} \nrm{\left( \dbbeta \br{\psi} \right)^{(0)} - (-1)}_{\cbd \oplus \cxiz \oplus \bbC} + \sum_{n \in N\bbZ \backslash \{0\}} \brk{n}^{0.5} \nrm{\left( \dbbeta \br{\psi} \right)^{(n)}}_{\cbd \oplus \cxiz} < \frac{1}{2}
\end{equation*}
and
\begin{equation*}
	\cbd \hookrightarrow C_b, \qquad \cbd \oplus \cxiz \oplus \bbC \hookrightarrow C_b
\end{equation*}
with
\begin{equation*}
	\nrm{f}_{C_b} \leq \nrm{f}_{\cbd \oplus \cxiz \oplus \bbC}, \qquad \nrm{f}_{C_b} \leq \ncbd{f},
\end{equation*}
we have
\begin{equation*}
	\nrm{\left( \dbbeta \br{\psi} \right)^{(0)} - (-1)}_{C_b} + \sum_{n \in N\bbZ \backslash \{0\}} \nrm{\left( \dbbeta \br{\psi} \right)^{(n)}}_{C_b} < \frac{1}{2}.
\end{equation*}
Then,
\begin{equation} \label{eq31}
	\nrm{\dbbeta \br{\psi} -(-1)}_{C_b \left( \RT \right)} \leq \sum_{n \in N\bbZ} \nrm{\left( \dbbeta \br{\psi} \right)^{(n)} - (-1)^{(n)}}_{C_b} < \frac{1}{2},
\end{equation}
so that $\dbbeta \br{\psi}$ is safely away from zero for $\br{\psi} \in \ball{\Xz}{\br{\psi}_0}{\delta_1}$.

Similarly, as $\dbvarphi: \Xz \rightarrow \Wp$ is continuous by Proposition \ref{continuity of diff bar operator} and $\dbvarphi \br{\psi}_0 \equiv 1$, there exists $\delta_2 >0$ such that
\begin{equation*}
	\br{\psi} \in \ball{\Xz}{\br{\psi}_0}{\delta_2} \Rightarrow \dbvarphi \br{\psi} \in \ball{\Wp}{1}{\frac{1}{2}}
\end{equation*}
and
\begin{equation} \label{eq32}
	\nrm{\dbvarphi \br{\psi} -1}_{C_b \left( \RT \right)} < \frac{1}{2} \qquad \text{for} \qquad \br{\psi} \in \ball{\Xz}{\br{\psi}_0}{\delta_2}.
\end{equation}
Also, since $\dbvarphibeta: \Xz \rightarrow \Wp$ is continuous with $\dbvarphibeta \br{\psi}_0 = -2\mu $, there exists $\delta_3 >0$ such that
\begin{equation*}
	\br{\psi} \in \ball{\Xz}{\br{\psi}_0}{\delta_3} \Rightarrow \dbvarphibeta \br{\psi} \in \ball{\Wp}{-2\mu}{\mu}
\end{equation*}
and
\begin{equation} \label{eq33}
	\nrm{\dbvarphibeta \br{\psi} - (-2\mu)}_{C_b \left( \RT \right)}< \mu \qquad \text{for} \qquad \br{\psi} \in \ball{\Xz}{\br{\psi}_0}{\delta_3}.
\end{equation}

Although we have avoided zero-value in denominator through \eqref{eq31}, \eqref{eq32}, and \eqref{eq33}, we continue to bound $\rd_\phi \dbbeta \br{\psi}$ and $\br{\psi}$ for the future aim. Due to the fact that $\rd_\phi, \rd_\phi \dbbeta: \Xz \rightarrow \Wz$ is continuous with $\rd_\phi \br{\psi}_0 = \rd_\phi \dbbeta \br{\psi}_0 \equiv 0$, there exists $\delta_4>0$ such that 
\begin{equation*}
	\br{\psi} \in \ball{\Xz}{\br{\psi}}{\delta_4} \Rightarrow \rd_\phi \br{\psi},  \rd_\phi \dbbeta \br{\psi} \in \ball{\Wz}{0}{1}
\end{equation*}
and
\begin{equation} \label{phi bound}
	\maxi{\nrm{\rd_\phi \br{\psi}}_{C_b\left( \RT \right)},\nrm{\rd_\phi \dbbeta \br{\psi}}_{C_b\left( \RT \right)}} < 1 \qquad \text{for} \qquad \br{\psi} \in \ball{\Xz}{\br{\psi}_0}{\delta_4}
\end{equation}
Lastly, since $id: \Xz \rightarrow \Wz$ is continuous with $\br{\psi}_0 \equiv \frac{1}{2\mu -1}$, there exists $\delta_5>0$ such that
\begin{equation*}
	\br{\psi} \in \ball{\Xz}{\br{\psi}_0}{\delta_5} \Rightarrow \br{\psi}_0 \in \ball{\Wz}{\frac{1}{2\mu-1}}{\frac{1}{4\mu -2}}
\end{equation*}
and
\begin{equation} \label{id bound}
	\nrm{\br{\psi}- \frac{1}{2\mu -1}}_{C_b \left( \RT \right)} \leq \frac{1}{4\mu -2} \qquad \text{for} \qquad \br{\psi} \in \ball{\Xz}{\br{\psi}_0}{\delta_5}.
\end{equation}

Now that we have made denominator in $\br{L}$ strictly away from zero, we will get a little more closer to $C^1$ continuity.

\begin{proposition} \label{continuity of diff beta bar inv}
	There exists $\delta'_1>0$ such that $\delta'_1 < \delta_1$ and a map
	\begin{equation*}
		\frac{1}{\dbbeta}: \ball{\Xz}{\br{\psi}_0}{\delta'_1} \rightarrow \Wz
	\end{equation*}
	defined by $\frac{1}{\dbbeta} \left( \br{\psi} \right) = \frac{1}{\dbbeta \br{\psi}}$ is $C^1$.
\end{proposition}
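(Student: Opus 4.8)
The plan is to write $\frac{1}{\dbbeta}$ as the composition $\iota \circ \dbbeta$, where $\dbbeta : \Xz \rightarrow \Wz$ is the bounded linear (hence $C^1$) operator of Proposition \ref{continuity of diff bar operator} and $\iota$ is the inversion map $g \mapsto g^{-1}$ of the Banach algebra $\Wz$. The structural fact I would establish first is that $\Wz$ is a \emph{unital commutative Banach algebra}: Proposition \ref{W algebra} gives $\nrm{fg}_{\Wz} \le M \nrm{f}_{\Wz}\nrm{g}_{\Wz}$ for some constant $M$, so $\nrm{\cdot}' \coloneqq M\nrm{\cdot}_{\Wz}$ is an equivalent submultiplicative norm; the convolution product \eqref{eq12} is manifestly commutative; and the constant function $\mathbf{1}$ is a two-sided identity, which lies in $\Wz$ because the zeroth Fourier mode $\Wnz$ (recall $0 \in N\bbZ$) contains the summand $\bbC$ of constant functions, and $(\mathbf{1}\cdot g)^{(n)} = \sum_k \mathbf{1}^{(n-k)}g^{(k)} = g^{(n)}$ by \eqref{eq12}. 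I would also record that whenever $g \in \Wz$ is invertible \emph{in $\Wz$}, its algebra inverse coincides with the pointwise reciprocal $1/g$: since $\Wz \hookrightarrow C_b(\RT)$, the identity $gh = \mathbf{1}$ in $\Wz$ forces $g(\beta,\phi)h(\beta,\phi) = 1$ for every $(\beta,\phi)$; in particular the eventual conclusion will be that $1/\dbbeta\br{\psi}$ really is an element of $\Wz$, with norm controlled.

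Next I would invoke the standard fact that in a unital Banach algebra the invertible elements form an open set and inversion $\iota$ is analytic, hence $C^1$, with Fréchet derivative $D\iota(g)[h] = -g^{-1}hg^{-1}$. The short justification: for $\nrm{g^{-1}h}' < 1$ the Neumann series $(g+h)^{-1} = \sum_{k \ge 0}(-g^{-1}h)^k g^{-1}$ converges in $\Wz$, showing openness; isolating the $k=0$ and $k=1$ terms gives $\iota(g+h) - \iota(g) + g^{-1}hg^{-1} = O(\nrm{h}_{\Wz}^2)$, so $\iota$ is Fréchet differentiable with the stated derivative, and $g \mapsto D\iota(g)$ is continuous because inversion and multiplication in $\Wz$ are continuous. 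Since $-\mathbf{1}$ is invertible in $\Wz$ (its inverse being $-\mathbf{1}$), there is an open ball $U \subset \Wz$ centered at $-\mathbf{1}$ on which $\iota$ is defined and $C^1$.

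Finally, since $\dbbeta : \Xz \rightarrow \Wz$ is continuous and $\dbbeta\br{\psi}_0 \equiv -1 = -\mathbf{1} \in U$, continuity yields $\delta_1' \in (0,\delta_1)$ with $\dbbeta\br{\psi} \in U$ for all $\br{\psi} \in \ball{\Xz}{\br{\psi}_0}{\delta_1'}$ (one simply shrinks the $\delta_1$ of \eqref{eq31}); on this ball $\frac{1}{\dbbeta} = \iota \circ \dbbeta$ is a composition of $C^1$ maps, hence $C^1$, and by the previous remark it takes values in $\Wz$. I do not anticipate a genuine obstacle; the one subtlety worth flagging — and the reason for passing to the $\Wz$-neighborhood $U$ rather than resting on the $C_b$-estimate \eqref{eq31} alone — is that "invertible" must be read in the Banach algebra $\Wz$, since only then is the reciprocal guaranteed to lie in $\Wz$ (with a bound furnished by the Neumann series), which is precisely what is needed to feed this map into the later compositions.
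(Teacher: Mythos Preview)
Your proposal is correct and follows essentially the same route as the paper: both argue that $\Wz$ is a unital Banach algebra (via Proposition~\ref{W algebra} and the presence of constants), that inversion is analytic on a neighborhood of the invertible element $-\mathbf{1}=\dbbeta\trisol$, and then pull this back along the continuous linear map $\dbbeta$ to obtain $\delta'_1<\delta_1$. Your write-up simply supplies more of the standard Banach-algebra details (Neumann series, explicit Fr\'echet derivative, identification of the algebra inverse with the pointwise reciprocal) that the paper leaves implicit.
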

 \begin{proof}
 	By Proposition \ref{A^s is Banach} and \ref{W algebra}, $\Wz$ is a Banach algebra. Since $\Wz$ contains constant function, $\Wz$ is an unital Banach algebra. Then, there exists $\varepsilon_1 >0$ such that inverse map
 	\begin{equation*}
 		\text{inv}: f(\beta, \phi) \mapsto \frac{1}{f(\beta, \phi)}
 	\end{equation*}
 	is analytic on $\ball{\Wz}{-1}{\varepsilon_1}$. Since
 	\begin{equation*}
 		\dbbeta: \Xz \rightarrow \Wz
 	\end{equation*}
 	is continuous, there exists $\delta'_1 < \delta_1$, where $\delta_1$ is as in \eqref{eq31}, such that
 	\begin{equation*}
 		\dbbeta \left( \ball{\Xz}{\br{\psi}_0}{\delta'_1} \right) \subset \ball{\Wz}{-1}{\varepsilon_1}.
 	\end{equation*}
 	Then, 
 	\begin{equation*}
 		\text{inv} \circ \dbbeta: \ball{\Xz}{\br{\psi}_0}{\delta'_1} \rightarrow	\Wz
 	\end{equation*}
 	is a desired map.
 \end{proof}

\begin{proposition} \label{continuity of diff varphi bar inv}
	There exists $\delta'_2>0$ such that $\delta'_2 < \delta_2$ and two maps
	\begin{align*}
		\frac{1}{\dbvarphi} &: \ball{\Xz}{\br{\psi}_0}{\delta'_2} \rightarrow \Wp, \\
		\left( \dbvarphi \right)^{-\frac{1}{2\mu}} &: \ball{\Xz}{\br{\psi}_0}{\delta'_2} \rightarrow \Wp
	\end{align*}
	defined by
	\begin{align*}
		\frac{1}{\dbvarphi}\left( \br{\psi} \right) &= \frac{1}{\dbvarphi \br{\psi}}, \\
		\left( \dbvarphi \right)^{-\frac{1}{2\mu}} \left( \br{\psi} \right) &= \left( \dbvarphi \br{\psi} \right)^{-\frac{1}{2\mu}}
	\end{align*}
	is $C^1$
\end{proposition}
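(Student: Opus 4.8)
\emph{Proof proposal.} The plan is to follow the proof of Proposition~\ref{continuity of diff beta bar inv} essentially verbatim, since $\frac{1}{\dbvarphi}$ is handled exactly as $\frac{1}{\dbbeta}$ was. The one genuinely new ingredient is the fractional power $\zeta\mapsto\zeta^{-1/(2\mu)}$, and that is the step which requires a little care.

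First I would recall that $\Wp=\calA^{0.5}(\Wnp)$ is a commutative Banach algebra by Proposition~\ref{A^s is Banach} together with the embedding $\Wp\cdot\Wp\hookrightarrow\Wp$ from Proposition~\ref{W algebra}, and that it is unital, the unit $\mathbf 1$ being the element whose only nonzero Fourier mode is the constant function $1=\xi_0+\xi_\infty\in\Wnp$. Next I would observe that $\zeta\mapsto 1/\zeta$ is holomorphic near $\zeta=1$, and that the principal-branch power $\zeta\mapsto\zeta^{-1/(2\mu)}=\exp\bigl(-\tfrac{1}{2\mu}\,\mathrm{Log}\,\zeta\bigr)$ is holomorphic on $\bbC\setminus(-\infty,0]$, hence on an open disc about $1$; on such a disc both are given by convergent power series in $\zeta-1$ (a geometric series, respectively the binomial series). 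Substituting $f-\mathbf 1$ for $\zeta-1$ and using the submultiplicativity bound $\nrm{(f-\mathbf 1)^k}_{\Wp}\le M^{k-1}\nrm{f-\mathbf 1}_{\Wp}^{k}$, with $M$ the algebra constant of Proposition~\ref{W algebra}, to control the tails of these series, one obtains $\varepsilon_2>0$ so that the induced substitution maps $\mathrm{inv}\colon f\mapsto 1/f$ and $\mathrm{pow}\colon f\mapsto f^{-1/(2\mu)}$ are well defined and analytic, in particular $C^1$, on the ball $\ball{\Wp}{1}{\varepsilon_2}$. (Alternatively this follows from the holomorphic functional calculus on $\Wp$, the only point being that for $f$ close to $\mathbf 1$ the spectrum satisfies $\sigma(f)\subset\overline{D}\bigl(1,\nrm{f-\mathbf 1}_{\Wp}\bigr)$, which stays away from $(-\infty,0]$, so that $f^{-1/(2\mu)}$ is unambiguous.)

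Finally, $\dbvarphi\colon\Xz\to\Wp$ is $C^1$, hence continuous, by Proposition~\ref{continuity of diff bar operator}, and $\dbvarphi\br{\psi}_0\equiv 1$ (see \eqref{diff of trivial sol}); therefore the preimage of $\ball{\Wp}{1}{\varepsilon_2}$ under the map $\br{\psi}\mapsto\dbvarphi\br{\psi}$ is an open neighbourhood of $\br{\psi}_0$ in $\Xz$, and it contains a ball $\ball{\Xz}{\br{\psi}_0}{\delta'_2}$ which, after shrinking, we may take with $\delta'_2<\delta_2$. On that ball the two maps in the statement are precisely the compositions $\mathrm{inv}\circ\dbvarphi$ and $\mathrm{pow}\circ\dbvarphi$ of $C^1$ maps, hence $C^1$, and the proof is complete. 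The main obstacle, really the only one, is the well-definedness of $(\dbvarphi\br{\psi})^{-1/(2\mu)}$ through the principal branch: it forces us to measure closeness to $\mathbf 1$ in the $\Wp$-norm rather than merely in $C_b(\RT)$ as in \eqref{eq32}, but shrinking $\delta'_2$ settles this and the remainder is a line-by-line transcription of Proposition~\ref{continuity of diff beta bar inv}.
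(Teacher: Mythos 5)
Your proof is correct and follows exactly the method the paper intends when it says ``use same method in Proposition~\ref{continuity of diff beta bar inv}'': pass to the unital Banach algebra $\Wp$, use analyticity of the substitution map near the unit, and compose with the $C^1$ map $\dbvarphi\colon\Xz\to\Wp$. You also correctly identify and fill the one detail the paper elides, namely that the fractional power $\zeta\mapsto\zeta^{-1/(2\mu)}$ requires a branch choice and a power-series (or holomorphic functional calculus) argument rather than the geometric series used for $\mathrm{inv}$; the only cosmetic imprecision is the spectral containment $\sigma(f)\subset\overline{D}(1,\nrm{f-\mathbf 1}_{\Wp})$, which should carry the algebra constant $M$ (or assume the algebra is renormed so $M=1$), but this does not affect the argument.
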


\begin{proof}
	Use same method in Proposition \ref{continuity of diff beta bar inv}.
\end{proof}

\begin{proposition} \label{continuity of diff crucial}
	There exists $\delta'_3>0$ such that $\delta'_3 < \delta_3$ and a map
	\begin{equation*}
		\frac{\dbvarphi}{\dbvarphibeta}: \ball{\Xz}{\br{\psi}_0}{\delta'_3} \rightarrow \Wz
	\end{equation*}
	defined by
	\begin{equation*}
		\left( \frac{\dbvarphi}{\dbvarphibeta} \right) \left( \br{\psi} \right) = \frac{\dbvarphi \br{\psi}}{\dbvarphibeta \br{\psi}}
	\end{equation*}
	is $C^1$
\end{proposition}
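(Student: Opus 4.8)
The plan is to mimic Propositions~\ref{continuity of diff beta bar inv} and~\ref{continuity of diff varphi bar inv} --- reduce the statement to inversion in a unital Banach algebra --- but with one extra algebraic identity that pushes the quotient into the \emph{smaller} space $\Wz$ rather than only $\Wp$. A direct computation from the bar-operator definitions \eqref{diff beta bar}, \eqref{diff varphi bar} (using $\rd_\varphi = \rd_\phi - \rd_\beta$, whence $[\beta\rd_\beta,\beta\rd_\varphi]=\beta\rd_\phi=\beta\rd_\varphi+\beta\rd_\beta$) gives the operator identities $\dbvarphibeta = (Q - 2\mu)\dbvarphi$ and $Q\dbvarphi = \beta\rd_\varphi(\beta\rd_\beta + 1) + 2\mu\,\beta\rd_\beta$; equivalently this is the relation $\psi_{\beta\varphi}=\rd_\beta\psi_\varphi$ read through \eqref{varphi and bar var phi} and \eqref{betavarphi and bar betavarphi}. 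In particular $2\mu\,\dbvarphi\br{\psi} + \dbvarphibeta\br{\psi} = Q\dbvarphi\br{\psi}$, and since $\dbvarphibeta\br{\psi}_0 = -2\mu \neq 0$, for $\br{\psi}$ near $\br{\psi}_0$ one rewrites
\begin{equation*}
  \frac{\dbvarphi\br{\psi}}{\dbvarphibeta\br{\psi}} = -\frac{1}{2\mu} + \frac{Q\dbvarphi\br{\psi}}{2\mu\,\dbvarphibeta\br{\psi}},
\end{equation*}
so it suffices to show the second summand is a $C^1$ map into $\Wz$.

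I would then treat the two factors of $\tfrac{Q\dbvarphi\br{\psi}}{\dbvarphibeta\br{\psi}}$ separately. For the numerator: by the identity above, $Q\dbvarphi = \beta\rd_\varphi(\beta\rd_\beta+1) + 2\mu\,\beta\rd_\beta$, which by Proposition~\ref{continuity of atomic diff operator} is a bounded linear --- hence $C^1$ --- map from $\Xz$ into $\Wm$. For the denominator: $\dbvarphibeta\colon \Xz \to \Wp$ is $C^1$ (Proposition~\ref{continuity of diff bar operator}) with $\dbvarphibeta\br{\psi}_0 = -2\mu$; since $\Wp$ is a unital Banach algebra (Propositions~\ref{A^s is Banach} and~\ref{W algebra}) in which the constant $-2\mu$ is invertible, inversion is analytic on a ball about $-2\mu$ in $\Wp$, so by continuity of $\dbvarphibeta$ there is $\delta'_3 < \delta_3$ (with $\delta_3$ as in \eqref{eq33}) such that $\dbvarphibeta$ carries $\ball{\Xz}{\br{\psi}_0}{\delta'_3}$ into that ball; hence $\br{\psi} \mapsto \tfrac{1}{\dbvarphibeta\br{\psi}}$ is $C^1$ from $\ball{\Xz}{\br{\psi}_0}{\delta'_3}$ into $\Wp$, exactly as in the proofs of Propositions~\ref{continuity of diff beta bar inv} and~\ref{continuity of diff varphi bar inv}. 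Now $\br{\psi}\mapsto Q\dbvarphi\br{\psi}$ is $C^1$ into $\Wm$, $\br{\psi}\mapsto \tfrac{1}{\dbvarphibeta\br{\psi}}$ is $C^1$ into $\Wp$, and multiplication $\Wm\times\Wp\to\Wm$ is bounded bilinear by \eqref{pm embedding}; composing, $\br{\psi}\mapsto \tfrac{Q\dbvarphi\br{\psi}}{\dbvarphibeta\br{\psi}}$ is $C^1$ into $\Wm$. Adding the constant $-\tfrac{1}{2\mu}\in\Wz$ and using the bounded inclusion $\Wm\hookrightarrow\Wz$ induced by \eqref{unibed chain} via Proposition~\ref{continuity between A^s}, we conclude $\br{\psi}\mapsto \tfrac{\dbvarphi\br{\psi}}{\dbvarphibeta\br{\psi}}$ is $C^1$ from $\ball{\Xz}{\br{\psi}_0}{\delta'_3}$ into $\Wz$.

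The main obstacle is precisely the gap between $\Wz$ and $\Wp$. The crude route --- invert $\dbvarphibeta\br{\psi}$ in $\Wp$ and multiply by $\dbvarphi\br{\psi}$, which also lies only in $\Wp$ (its $\xi_\infty$ part coming from $\n{P}$, cf.\ Proposition~\ref{continuity of Pn}) --- yields a map into $\Wp$, not $\Wz$, so the $\xi_\infty$ (i.e.\ $\beta=\infty$) component must be dealt with by hand. The device that does this is the identity $\dbvarphibeta = (Q - 2\mu)\dbvarphi$: subtracting the constant $-\tfrac1{2\mu}$ trades the numerator $\dbvarphi\br{\psi}\in\Wp$ for $Q\dbvarphi\br{\psi} = \beta\rd_\varphi(\beta\rd_\beta+1)\br{\psi} + 2\mu\,\beta\rd_\beta\br{\psi}\in\Wm$, which decays at $\beta = \infty$; this one order of gain at infinity is invisible to any crude product bound, and isolating it through the algebraic identity is the step that must be gotten right.
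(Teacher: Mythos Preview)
Your proof is correct and rests on the same algebraic identity as the paper's --- namely $\dbvarphibeta = (Q-2\mu)\dbvarphi$, equivalently $\dbvarphibeta\br\psi = \beta\rd_\varphi(\beta\rd_\beta+1)\br\psi + 2\mu\,\beta\rd_\beta\br\psi - 2\mu\,\dbvarphi\br\psi$ --- and the same two function-space facts: $\beta\rd_\varphi(\beta\rd_\beta+1),\ \beta\rd_\beta:\Xz\to\Wm$ and the product estimate $\Wm\cdot\Wp\hookrightarrow\Wm$. The organization differs slightly: the paper first shows the \emph{reciprocal} quotient $\frac{\dbvarphibeta\br\psi}{\dbvarphi\br\psi}$ lands in $\Wz$ (by inverting $\dbvarphi\br\psi$ in $\Wp$, multiplying by the $\Wm$ pieces, and adding $-2\mu$), and then inverts a second time in the unital algebra $\Wz$; you instead invert $\dbvarphibeta\br\psi$ once in $\Wp$ and use the identity to write $\frac{\dbvarphi\br\psi}{\dbvarphibeta\br\psi} = -\tfrac{1}{2\mu} + \tfrac{1}{2\mu}\cdot\frac{Q\dbvarphi\br\psi}{\dbvarphibeta\br\psi}$ directly. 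Your route is marginally more economical (one inversion rather than two), while the paper's has the small advantage that the inverse $(\dbvarphi)^{-1}$ it constructs is reused elsewhere (Proposition~\ref{continuity of diff varphi bar inv}); substantively the two arguments are the same.
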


\begin{proof}
	Focus on the range of the map. This is very interesting. Since, as can be seen in Proposition \ref{continuity of diff bar operator}, the range of $\dbvarphi$ and $\dbvarphibeta$ is $\Wp$, it seems natural that the range of $\frac{\dbvarphi}{\dbvarphibeta}$ be the $\Wp$. Let us see why this interesting phenomenon happens.
	
	First, as in the proof of Proposition \ref{continuity of diff beta bar inv}, take $\varepsilon_2>0$ so that inverse map
	\begin{equation*}
		\text{inv}: f(\beta, \phi) \mapsto \frac{1}{f(\beta, \phi)}
	\end{equation*}
	is analytic on $\ball{\Wz}{-2\mu}{\varepsilon_2}$. Note that
	\begin{align*}
		\frac{\dbvarphibeta \br{\psi}}{\dbvarphi \br{\psi}} \underset{\eqref{diff beta bar}, \eqref{diff varphi bar}}&{=} \frac{(\beta \rd_\varphi +2 \mu)\left(\beta \rd_\beta + (1-2\mu)\right) \br{\psi}}{\left(\beta \rd_\varphi + (2\mu -1 ) ) \br{\psi}\right)} \\
		&= \frac{\beta \rd_\varphi (\beta \rd_\beta +1) \br{\psi} - 2\mu \beta \rd_\varphi \br{\psi} + 2\mu \beta \rd_\beta \br{\psi} + 2\mu (1-2\mu ) \br{\psi}}{\left(\beta \rd_\varphi + (2\mu -1 ) ) \br{\psi}\right)} \\
		&= \frac{\beta \rd_\varphi (\beta \rd_\beta +1) \br{\psi}+2\mu \beta \rd_\beta \br{\psi}-2\mu \left( \beta \rd_\varphi + (2\mu -1) \right) \br{\psi}}{\left(\beta \rd_\varphi + (2\mu -1 ) ) \br{\psi}\right)} \\
		&= \frac{\beta \rd_\varphi (\beta \rd_\beta +1) \br{\psi}}{\dbvarphi \br{\psi}} + 2\mu \frac{\beta \rd_\beta \br{\psi}}{\dbvarphi \br{\psi}} -2\mu
	\end{align*}
	As
	\begin{equation*}
		\beta \rd_\varphi (\beta \rd_\beta +1): \Xz \rightarrow \Wm, \qquad \beta \rd_\beta : \Xz \rightarrow \Wm
	\end{equation*}
	is $C^1$ by Proposition \ref{continuity of atomic diff operator} and
	\begin{equation*}
		\frac{1}{\dbvarphi}: \ball{\Xz}{\trisol}{\delta'_2} \rightarrow \Wp
	\end{equation*}
	defined as in Proposition \ref{continuity of diff varphi bar inv} is $C^1$,
	\begin{equation*}
		\frac{\beta \rd_\varphi (\beta \rd_\beta +1)}{\dbvarphi}: \ball{\Xz}{\trisol}{\delta'_2} \underset{\eqref{pm embedding} }{\rightarrow}  \Wm \hookrightarrow \Wz, \qquad\frac{\beta \rd_\beta}{\dbvarphi}: \ball{\Xz}{\trisol}{\delta'_2} \underset{\eqref{pm embedding} }{\rightarrow} \Wm \hookrightarrow \Wz
	\end{equation*}
	defined by
	\begin{equation*}
		\frac{\beta \rd_\varphi (\beta \rd_\beta +1)}{\dbvarphi} \left( \br{\psi} \right) = \frac{\beta \rd_\varphi (\beta \rd_\beta +1) \br{\psi}}{\dbvarphi \br{\psi}}, \qquad \frac{\beta \rd_\beta}{\dbvarphi} \left( \br{\psi} \right) = \frac{\beta \rd_\beta  \br{\psi}}{\dbvarphi \br{\psi}}
	\end{equation*}
	is $C^1$ by Proposition \ref{W algebra}. Since the constant function $2\mu$ is an element of $\Wz$,
	\begin{equation*}
		\frac{\dbvarphibeta}{\dbvarphi}: \ball{\Xz}{\trisol}{\delta'_2} \rightarrow \Wz
	\end{equation*}
	is $C^1$ as the sum of $C^1$ function. If we take $\delta'_3 < \text{min(}\delta'_2, \delta_3$) so that
	\begin{equation*}
		\frac{\dbvarphibeta}{\dbvarphi} \left( \ball{\Xz}{\trisol}{\delta'_3} \right) \subset \ball{\Wz}{-2\mu}{\varepsilon_2},
	\end{equation*}
	then the map
	\begin{equation*}
		\frac{\dbvarphi}{\dbvarphibeta} \coloneqq \text{inv} \circ \frac{\dbvarphibeta}{\dbvarphi}: \ball{\Xz}{\trisol}{\delta'_3} \rightarrow \Wz
	\end{equation*}
	is $C^1$.
\end{proof}

\begin{proposition} \label{continuity of L bar intermediate}
 	There exists $\delta>0$ such that
 	\begin{equation*}
 		R: \ball{\Xz}{\trisol}{\delta} \rightarrow \Wz, \qquad S: \ball{\Xz}{\trisol}{\delta} \rightarrow \Wp
 	\end{equation*}
 	defined by
 	\begin{align*}
 		R \left( \br{\psi} \right) &= \frac{2\br{\rd}_\beta \br{\psi} \cdot  \br{\rd}_\varphi \br{\psi}}{\left( \bar{\rd}_\varphi +1 \right) \br{\rd}_\beta \br{\psi}} \left( 1+ \left( \frac{\rd_\phi \br{\rd}_\beta \br{\psi}}{2\br{\rd}_\beta} \right)^2  \right) - \frac{\rd_\phi \br{\rd}_\beta \br{\psi} \cdot  \rd_\phi \br{\psi}   }{2  \br{\rd}_\beta \br{\psi}}, \\	
 		S \left( \br{\psi} \right) &= \frac{ \left( \bar{\rd}_\varphi +1 \right) \br{\rd}_\beta \br{\psi} \cdot  \rd_\phi \br{\psi} - \rd_\phi \br{\rd}_\beta \br{\psi} \cdot \br{\rd}_\varphi \br{\psi}}{2\br{\rd}_\beta \br{\psi}  }
 	\end{align*}
 	is $C^1$
\end{proposition}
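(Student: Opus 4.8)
The plan is to realize both $R$ and $S$ as explicit finite expressions formed, out of the handful of $C^1$ maps of $\br{\psi}$ already established, by sums, pointwise products and reciprocals, and then to conclude by the usual calculus of $C^1$ maps between Banach spaces — sums, compositions and bounded bilinear maps of $C^1$ maps are $C^1$ — together with the algebra laws of Proposition \ref{W algebra}. The only genuinely non-routine input will be that one piece of $R$ has to land in the \emph{smaller} space $\Wz$, and that is exactly what Proposition \ref{continuity of diff crucial} provides; everything else is bookkeeping about which of $\Wm,\Wz,\Wp$ a given factor lives in.

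First I would collect the building blocks, valid on $\Xz$ or on a suitable ball around $\trisol$. By Proposition \ref{continuity of diff bar operator}, $\br{\psi}\mapsto\br{\rd}_\beta\br{\psi}$ and $\br{\psi}\mapsto\rd_\phi\br{\rd}_\beta\br{\psi}$ are $C^1$ into $\Wz$, while $\br{\psi}\mapsto\br{\rd}_\varphi\br{\psi}$ and $\br{\psi}\mapsto(\br{\rd}_\varphi+1)\br{\rd}_\beta\br{\psi}$ are $C^1$ into $\Wp$; by Proposition \ref{continuity of atomic diff operator}, $\br{\psi}\mapsto\rd_\phi\br{\psi}$ is $C^1$ into $\Wz$. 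Taking $\delta$ smaller than the three radii $\delta'_1,\delta'_2,\delta'_3$ furnished by Propositions \ref{continuity of diff beta bar inv}, \ref{continuity of diff varphi bar inv} and \ref{continuity of diff crucial}, the reciprocal $\br{\psi}\mapsto\frac{1}{\br{\rd}_\beta\br{\psi}}$ is $C^1$ into $\Wz$ (Proposition \ref{continuity of diff beta bar inv}) and the ratio $\br{\psi}\mapsto\frac{\br{\rd}_\varphi\br{\psi}}{(\br{\rd}_\varphi+1)\br{\rd}_\beta\br{\psi}}$ is $C^1$ into $\Wz$ (Proposition \ref{continuity of diff crucial}). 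Finally I would record that, by Proposition \ref{W algebra}, $\Wz\cdot\Wz\hookrightarrow\Wz$ and $\Wp\cdot\Wp\hookrightarrow\Wp$ are bounded bilinear, hence $C^1$; that $\Wz\hookrightarrow\Wp$ modewise by \eqref{unibed chain}; and that $\Wz$ and $\Wp$ contain the constant functions.

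For $R$ I would rewrite the defining expression (reading the inner denominator $2\br{\rd}_\beta$ as $2\br{\rd}_\beta\br{\psi}$) as
\begin{equation*}
	R(\br{\psi})=2\,\br{\rd}_\beta\br{\psi}\cdot\frac{\br{\rd}_\varphi\br{\psi}}{(\br{\rd}_\varphi+1)\br{\rd}_\beta\br{\psi}}\cdot\left(1+\left(\tfrac{1}{2}\,\rd_\phi\br{\rd}_\beta\br{\psi}\cdot\tfrac{1}{\br{\rd}_\beta\br{\psi}}\right)^{2}\right)-\tfrac{1}{2}\,\rd_\phi\br{\rd}_\beta\br{\psi}\cdot\rd_\phi\br{\psi}\cdot\tfrac{1}{\br{\rd}_\beta\br{\psi}}.
\end{equation*}
Every factor occurring here — $\br{\rd}_\beta\br{\psi}$, $\frac{\br{\rd}_\varphi\br{\psi}}{(\br{\rd}_\varphi+1)\br{\rd}_\beta\br{\psi}}$, $\rd_\phi\br{\rd}_\beta\br{\psi}$, $\rd_\phi\br{\psi}$, $\frac{1}{\br{\rd}_\beta\br{\psi}}$ and the constant $1$ — is a $C^1$ map of $\br{\psi}$ valued in $\Wz$, and every product and sum stays in $\Wz$ because $\Wz\cdot\Wz\hookrightarrow\Wz$; hence $R:\ball{\Xz}{\trisol}{\delta}\to\Wz$ is $C^1$. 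The step that needs care is that the first summand must land in $\Wz$, not merely in $\Wp$: this is why the quotient $\frac{\br{\rd}_\varphi\br{\psi}}{(\br{\rd}_\varphi+1)\br{\rd}_\beta\br{\psi}}$ is taken as the single $C^1$ map of Proposition \ref{continuity of diff crucial} and not split as $\br{\rd}_\varphi\br{\psi}\cdot\frac{1}{(\br{\rd}_\varphi+1)\br{\rd}_\beta\br{\psi}}$, which is a product of two merely $\Wp$-valued maps.

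For $S$ I would rewrite
\begin{equation*}
	S(\br{\psi})=\left((\br{\rd}_\varphi+1)\br{\rd}_\beta\br{\psi}\cdot\rd_\phi\br{\psi}-\rd_\phi\br{\rd}_\beta\br{\psi}\cdot\br{\rd}_\varphi\br{\psi}\right)\cdot\tfrac{1}{2}\cdot\tfrac{1}{\br{\rd}_\beta\br{\psi}}.
\end{equation*}
Here $(\br{\rd}_\varphi+1)\br{\rd}_\beta\br{\psi}$ and $\br{\rd}_\varphi\br{\psi}$ are $\Wp$-valued, whereas $\rd_\phi\br{\psi}$, $\rd_\phi\br{\rd}_\beta\br{\psi}$ and $\frac{1}{\br{\rd}_\beta\br{\psi}}$ are $\Wz$-valued, hence also $\Wp$-valued through $\Wz\hookrightarrow\Wp$; therefore both products $(\br{\rd}_\varphi+1)\br{\rd}_\beta\br{\psi}\cdot\rd_\phi\br{\psi}$ and $\rd_\phi\br{\rd}_\beta\br{\psi}\cdot\br{\rd}_\varphi\br{\psi}$ lie in $\Wp$ by $\Wp\cdot\Wp\hookrightarrow\Wp$, their difference lies in $\Wp$, and the closing multiplication by $\tfrac{1}{2}\tfrac{1}{\br{\rd}_\beta\br{\psi}}\in\Wp$ keeps the result in $\Wp$. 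Since every map involved is $C^1$, so is $S:\ball{\Xz}{\trisol}{\delta}\to\Wp$, with the same $\delta$. The main obstacle here is just the bookkeeping already indicated: arranging each of the several products so that it falls under one of the three algebra laws of Proposition \ref{W algebra} (combined with $\Wz\hookrightarrow\Wp$), the one non-mechanical step being the appeal to Proposition \ref{continuity of diff crucial} for the first term of $R$.
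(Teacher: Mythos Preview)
Your proof is correct and follows essentially the same approach as the paper: assemble $R$ and $S$ out of the $C^1$ building blocks supplied by Propositions \ref{continuity of atomic diff operator}, \ref{continuity of diff bar operator}, \ref{continuity of diff beta bar inv}, \ref{continuity of diff crucial}, and close up under the algebra laws of Proposition \ref{W algebra}. You have in fact made the one substantive point more explicit than the paper does, namely that the first summand of $R$ must be parsed with $\frac{\br{\rd}_\varphi\br{\psi}}{(\br{\rd}_\varphi+1)\br{\rd}_\beta\br{\psi}}$ as a single $\Wz$-valued factor via Proposition \ref{continuity of diff crucial}, rather than as a product of two $\Wp$-valued pieces; the paper also chooses $\delta=\min(\delta'_1,\delta'_2,\delta'_3,\delta_4,\delta_5)$, but the extra $\delta_4,\delta_5$ are included only for later convenience and are not needed for the $C^1$ claim here.
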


\begin{proof}
	Use continuity results on each differential operator (Proposition \ref{continuity of atomic diff operator}, \ref{continuity of diff bar operator}, \ref{continuity of diff beta bar inv}, \ref{continuity of diff crucial}) and the fact that product of $C^1$ map is $C^1$ by Proposition \ref{W algebra}. The constant $\delta$ could be chosen $\delta=\text{min}(\delta'_1, \delta'_2, \delta'_3, \delta_4, \delta_5)$ where $\delta'_1, \delta'_2, \delta'_3$ are the one in Proposition \ref{continuity of diff beta bar inv}, \ref{continuity of diff varphi bar inv}, \ref{continuity of diff crucial} and $\delta_4, \delta_5$ as in \eqref{phi bound}, \eqref{id bound}.
\end{proof}

Note that Proposition \ref{continuity of L bar intermediate} gives $C^1$ continuity of the ones inside the differential operator $\dbvarphi, \rd_\phi$ of $\br{L} \left( \br{\psi}, \Omega \right)$. To complete the answer for $C^1$ continuity of $\br{L}$, we need to show that $\dbvarphi, \rd_\phi$ are also $C^1$. But what is the range space for $\dbvarphi, \rd_\phi$? To make $\Pnp \Pnm (Q+1)$ isometric isomorphism, the answer should be
\begin{equation*}
	\Zz \coloneqq \calA^{0.5} \left( \Znz \right),
\end{equation*}
where $\Znz$ is as in Definition \ref{X, Z space}. And then, the way we use to show continuity of
\begin{equation*}
	\dbvarphi: \Wz \rightarrow \Zz , \qquad \rd_\phi: \Wp \rightarrow \Zz
\end{equation*}
is almost same as before; decompose the operator into each Fourier mode and show uniform boundedness of it. However, before going further, we verify $\Znz = \Pnp \Wnz$ is $\brk{n}^q$ - uniformly embedded in Banach space so that we can actually define the space $\calA^{0.5} \left( \Znz \right)$.

\begin{proposition}
	$\Znz=\Pnp \Wnz$ is $\brk{n}^1$ - uniformly embedded in
	\begin{equation*}
		(Q+1) \cbd + \beta \cbd + \cbd + \bbC + \cxiz \in \calS' \left( \br{\bbR}_+ \right)
	\end{equation*}
\end{proposition}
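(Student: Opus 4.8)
The plan is to reduce everything to one explicit algebraic identity for $\Pnp$ and then simply track where each piece lands. First I would record two preliminaries. (i) Since $(2+n)\mu-1\neq 0$ for every $n\in N\bbZ$ under \eqref{condition on N, mu} — for $n=0$ because $\mu>\tfrac23>\tfrac12$, and for $|n|\geq N\geq 2$ because $(2+n)\mu-1\geq 4\mu-1>0$ when $n\geq 2$ and $(2+n)\mu-1\leq -1<0$ when $n\leq -2$ — Proposition \ref{injective on C_b} shows $\Pnp$ is injective on $C_b\supset\Wnz$, so by Lemma \ref{induced Banach space} the space $\Znz=\Pnp\Wnz$ carries the induced norm $\nrm{\Pnp f}_{\Znz}=\nrm{f}_{\Wnz}$. (ii) Each of $(Q+1)\cbd$, $\beta\cbd$, $\cbd$, $\bbC$, $\cxiz$ is a Banach space continuously embedded in $\calS'(\br{\bbR}_+)$, so their sum is again a Banach space in $\calS'(\br{\bbR}_+)$ by the discussion preceding \eqref{Cb}. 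For $n\notin N\bbZ$ the space $\Znz$ is $\{0\}$ and there is nothing to do, so I would fix $n\in N\bbZ$.

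The heart of the argument is the identity, obtained directly from \eqref{Pnp} and \eqref{Q},
\begin{equation*}
	\Pnp = \beta(\rd_\beta - in) - \big((2+n)\mu - 1\big) = (Q+1) - in\beta - (2+n)\mu .
\end{equation*}
The conceptual point — and exactly the reason the target contains the summand $(Q+1)\cbd$ rather than just $\cbd$ — is that the unbounded operator $Q=\beta\rd_\beta$ sitting inside $\Pnp$ should not be fought but absorbed into the $(Q+1)\cbd$ factor. Given $f = f_1 + c\,\xi_0 + \delta_{0n}d \in \Wnz$ with $f_1\in\cbd$ and $c,d\in\bbC$, I would compute $\Pnp f$ summand by summand. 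The $\cbd$-part gives $\Pnp f_1 = (Q+1)f_1 - in\beta f_1 - (2+n)\mu f_1 \in (Q+1)\cbd + \beta\cbd + \cbd$, with norms bounded by $\nrm{f_1}_{\cbd}$, $|n|\,\nrm{f_1}_{\cbd}$ and $|(2+n)\mu|\,\nrm{f_1}_{\cbd}$, all $\leq C\brk{n}\nrm{f_1}_{\cbd}$. The $\cxiz$-part gives $\Pnp(c\,\xi_0) = c\,\beta\rd_\beta\xi_0 - inc\,\beta\xi_0 + c\,\xi_0 - (2+n)\mu\,c\,\xi_0$, which lands in $\cbd+\cbd+\cxiz+\cxiz$ once one notes that $\rd_\beta\xi_0$ is compactly supported and $0<\delta<1$, so that $\beta\rd_\beta\xi_0\in\cbd$ and $\beta\xi_0\in\cbd$, with norms $\leq C\brk{n}|c|$. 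Finally, for $n=0$ the constant part contributes $P^{(0)}_+d = \big(\beta\rd_\beta-(2\mu-1)\big)d = -(2\mu-1)d\in\bbC$ with norm $\leq C|d|=C\brk{0}|d|$.

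Summing the three contributions realizes $g\coloneqq\Pnp f$ as an element of $(Q+1)\cbd+\beta\cbd+\cbd+\bbC+\cxiz$ with
\begin{equation*}
	\nrm{g}_{(Q+1)\cbd+\beta\cbd+\cbd+\bbC+\cxiz} \leq C\brk{n}\big(\nrm{f_1}_{\cbd}+|c|+\delta_{0n}|d|\big) \leq C\brk{n}\nrm{f}_{\Wnz} = C\brk{n}\nrm{g}_{\Znz},
\end{equation*}
the last equality by the induced norm on $\Znz$; with $C$ independent of $n$, this is precisely the statement that $\Znz$ is $\brk{n}^1$-uniformly embedded in the stated space. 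I do not expect a genuine obstacle here: the only things needing a moment of care are the bookkeeping of which of the five summands each term falls into, the elementary verifications $\beta\xi_0,\beta\rd_\beta\xi_0\in\cbd$ (compact support of $\rd_\beta\xi_0$ together with $0<\delta<1$), and the conceptual move of keeping $Qf_1$ as $(Q+1)f_1-f_1$ instead of trying to estimate it in $\cbd$ — which is exactly why the space in the statement has the shape it does.
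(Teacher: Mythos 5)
Your proof is correct and takes essentially the same route as the paper: decompose $\Pnp$ algebraically as $(Q+1)-in\beta-(\text{constant})$, feed through each of the three summands $\cbd$, $\cxiz$, $\delta_{0n}\bbC$ of $\Wnz$, and read off which of the five target summands each piece lands in, using the induced norm on $\Znz$ from Lemma \ref{induced Banach space}. (Incidentally, your identity $\Pnp=(Q+1)-in\beta-(2+n)\mu$ has the correct constant; the paper's display writes $+\bigl((2+n)\mu-2\bigr)f$, which is a small arithmetic slip that does not affect the $\brk{n}$-bound.)
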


\begin{proof}
	First, suppose $n \neq 0$ so that $\Wnz=\cbd \oplus \cxiz$. For $f \in \cbd$,
	\begin{equation} \label{eq34}
		\begin{aligned}
			& \nrm{\Pnp f}_{(Q+1) \cbd + \beta \cbd + \cbd + \bbC + \cxiz} \\
			\underset{\eqref{Pnp}}&{=} \nrm{\left(\beta(\rd_\beta - in ) - \left( (2+n)\mu -1\right) \right)f}_{(Q+1) \cbd + \beta \cbd + \cbd + \bbC + \cxiz}  \\
			&= \Big\Vert \underbrace{(Q+1)f}_{\in (Q+1) \cbd} - \underbrace{n \cdot i\beta f}_{\beta \cbd} + \underbrace{\left( (2+n)\mu -2 \right) f}_{\in \cbd} \Big\Vert _{(Q+1) \cbd + \beta \cbd + \cbd + \bbC + \cxiz} \\
			&\leq \ncbd{f}+ \brk{n} \ncbd{f} + \abs{(2+n)\mu -2} \ncbd{f} \\
			&\lesssim \brk{n} \ncbd{f} \\
			&= \brk{n} \nrm{\Pnp f}_{\Pnp \Wnz}.
		\end{aligned}
	\end{equation}
	
	Also,
	
	\begin{equation} \label{eq35}
		\begin{aligned}
			&\nrm{\Pnp \xi_0}_{(Q+1) \cbd + \beta \cbd + \cbd + \bbC + \cxiz} \\
			&\leq \Big\Vert \underbrace{\beta \rd_\beta \xi_0}_{\in \cbd} - \underbrace{in \beta \xi_0}_{\cbd} - \underbrace{\left( (2+n)\mu -1 \right) \xi_0}_{\in \bbC \xi_0} \Big\Vert_{(Q+1) \cbd + \beta \cbd + \cbd + \bbC + \cxiz} \\
			&\leq \ncbd{\beta \rd_\beta \xi_0} + \abs{n} \ncbd{\beta \xi_0} + \abs{(2+n)\mu -1} \nrm{\xi_0}_{\cxiz} \\
			&\lesssim \brk{n} \Big( \underbrace{\ncbd{\beta \rd_\beta \xi_0} + \ncbd{\beta \xi_0} +1}_{\text{constant}} \Big) \\
			&\lesssim \brk{n} \\
			&= \brk{n} \nrm{\Pnp \xi_0}_{\Pnp \Wnz}.
		\end{aligned}
	\end{equation}
	
	Combining \eqref{eq34} and \eqref{eq35}, we have $\Znz = \Pnp \Wnz$ is $\brk{n}^1$ - uniformly embedded in Banach space $(Q+1) \cbd + \beta \cbd + \cbd + \bbC + \cxiz$ for $n \neq 0$. The only difference for $W^{(0)}_{0,N}$ compared to $\Wnz \, \, (n \neq 0)$ is $\bbC$. Since
	\begin{equation*}
		P^{(0)}_+ c=\beta \rd_\beta c - (2\mu -1)c = \underbrace{-(2\mu-1)c}_{\in \bbC},
	\end{equation*}
	we get the result without any trouble.
\end{proof}

Now, we verify uniform boundedness of
\begin{equation*}
	id: \Wnp \rightarrow \Znz , \qquad \n{P}: \Wnz \rightarrow \Znz.
\end{equation*}
Since $\Wnp$ contains $\cbd, \cxiz, \cxii$ regardless of $n$, we need the following lemma.

\begin{lemma} \label{Pninv in xii}
	$\Pnpinv: \cxiin \rightarrow \cbdn \oplus \delta_{0n} \cxiin$ is uniformly bounded. furthermore, if $n \neq 0$, we have an exact norm bound
	\begin{equation} \label{Pninv xii}
		\begin{aligned}
			&\nrm{\Pnpinv}_{[\cxiin, \cbdn]}\\
			&\leq \frac{1}{\brk{n}} \left( \frac{1.05}{(N-2)\mu + \frac{5}{6}}   \left( \ncbd{\rd_\beta \xi_\infty}+2\mu \ncbd{\frac{\xi_\infty}{\beta}} \right)+\left( \frac{\brk{N}\mu}{(N-2)\mu +\frac{5}{6}}+1.05 \right) \ncbd{\frac{\xi_\infty}{\beta}}\right).
		\end{aligned}
	\end{equation}
\end{lemma}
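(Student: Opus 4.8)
The plan is to reduce $\Pnpinv\xi_\infty$ to quantities already controlled by Propositions~\ref{D^{-1} in C_b} and~\ref{D^{-1} in C_b^delta}, exploiting the fact that although $\xi_\infty\notin\cbd$, both $\rd_\beta\xi_\infty$ and $\xi_\infty/\beta$ do lie in $\cbd$: the first is smooth and supported in a compact annulus away from $0$ and $\infty$, and the second vanishes near $0$ and decays like $\beta^{-1}$ at $\infty$, so $\beta^{\pm\delta}(\xi_\infty/\beta)$ is bounded for $\delta<1$. (Finiteness of $\ncbd{\rd_\beta\xi_\infty}$ and $\ncbd{\xi_\infty/\beta}$ follows from the conditions imposed on $\xi_\infty$, in analogy with $\beta\xi_0,\beta\rd_\beta\xi_0\in\cbd$ used earlier.)

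First I would record the elementary identity obtained by applying $\dns{n}{s}$ with $s=(2+n)\mu-1$ to $\xi_\infty/\beta$. A direct computation gives
\[
	\dns{n}{s}\Big(\frac{\xi_\infty}{\beta}\Big)=\rd_\beta\xi_\infty-(2+n)\mu\,\frac{\xi_\infty}{\beta}-in\,\xi_\infty ,
\]
so that $in\,\xi_\infty=\rd_\beta\xi_\infty-(2+n)\mu\,\xi_\infty/\beta-\dns{n}{s}(\xi_\infty/\beta)$. Every term on the right lies in $C_b$ — for $\dns{n}{s}(\xi_\infty/\beta)$ this is visible from the displayed formula — so by Propositions~\ref{injective on C_b} and~\ref{D^{-1} in C_b} we may apply $\dnsi{n}{s}$ term by term, and since $\xi_\infty/\beta\in C_b$ and $\dns{n}{s}$ is injective on $C_b$ we have $\dnsi{n}{s}\dns{n}{s}(\xi_\infty/\beta)=\xi_\infty/\beta$ with no spurious kernel term $\bbC\beta^s e^{in\beta}$. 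Hence, for $n\neq0$,
\[
	\Pnpinv\xi_\infty=\frac{1}{in}\Big(\dnsi{n}{s}(\rd_\beta\xi_\infty)-(2+n)\mu\,\dnsi{n}{s}\Big(\frac{\xi_\infty}{\beta}\Big)-\frac{\xi_\infty}{\beta}\Big).
\]
Because $s=(2+n)\mu-1\notin[-\delta,\delta]$ for $|n|\ge N\ge4$ (Proposition~\ref{dist bound prop}, with $\delta$ as in~\eqref{def of delta}), Proposition~\ref{D^{-1} in C_b^delta} gives $\dnsi{n}{s}(\rd_\beta\xi_\infty),\dnsi{n}{s}(\xi_\infty/\beta)\in\cbd$ with $\cbd$-norm at most $\frac{1}{\distPnp}\ncbd{\cdot}$; together with $\xi_\infty/\beta\in\cbd$ this already shows $\Pnpinv\xi_\infty\in\cbdn$ for $n\neq0$.

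For the quantitative bound~\eqref{Pninv xii} I would feed in $\distPnp\ge\frac{(N-2)\mu+\frac56}{\brk{N}}\brk{n}$ from Proposition~\ref{dist bound prop}, the crude splitting $|(2+n)\mu|\le|n|\mu+2\mu$, and the elementary inequalities $\brk{n}/|n|\le\brk{N}/N\le\sqrt{17}/4<1.05$ and $\brk{N}\le\brk{n}$, all valid for $|n|\ge N\ge4$. Term by term: the $\dnsi{n}{s}(\rd_\beta\xi_\infty)/(in)$ contribution yields the $\frac{1.05}{(N-2)\mu+\frac56}\ncbd{\rd_\beta\xi_\infty}$ summand; the $(2+n)\mu\,\dnsi{n}{s}(\xi_\infty/\beta)/(in)$ contribution splits, via $|(2+n)\mu|/|n|\le\mu+2\mu/|n|$, into the $\frac{\brk{N}\mu}{(N-2)\mu+\frac56}\ncbd{\xi_\infty/\beta}$ part (from $\mu$) and the $\frac{1.05\cdot2\mu}{(N-2)\mu+\frac56}\ncbd{\xi_\infty/\beta}$ part (from $2\mu/|n|$); and the $(\xi_\infty/\beta)/(in)$ contribution gives $1.05\,\ncbd{\xi_\infty/\beta}$. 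Each of these already carries the common factor $\brk{n}^{-1}$, and collecting them gives exactly~\eqref{Pninv xii}.

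Finally, the case $n=0$ is treated separately: here $P^{(0)}_+=\dns{0}{2\mu-1}$ with $2\mu-1\neq0$ since $\mu>\frac23$, so $\Pzpinv$ is well-defined on $C_b$ by Proposition~\ref{D^{-1} in C_b}, and from $\dns{0}{2\mu-1}\xi_\infty=\beta\rd_\beta\xi_\infty-(2\mu-1)\xi_\infty$ with $\beta\rd_\beta\xi_\infty\in\cbd$ one obtains $\Pzpinv\xi_\infty=\frac{1}{2\mu-1}\big(\Pzpinv(\beta\rd_\beta\xi_\infty)-\xi_\infty\big)\in\cbd\oplus\cxii$, which is bounded (a single value of $n$), accounting for the $\delta_{0n}\cxiin$ summand in the codomain. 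The only point requiring genuine care is the justification that $\dnsi{n}{s}$ may be moved through the identity in the second step — i.e. that every intermediate term really lies in $C_b$, so the $C_b$-inverse of the injective operator $\dns{n}{s}$ applies and returns $\xi_\infty/\beta$ with no additional constant; everything else is bookkeeping with Propositions~\ref{D^{-1} in C_b^delta} and~\ref{dist bound prop}.
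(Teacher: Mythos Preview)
Your proof is correct and follows essentially the same approach as the paper: both exploit the key observation that $\xi_\infty/\beta\in\cbd$, compute $\Pnp(\xi_\infty/\beta)$ to express $\xi_\infty$ in terms of $\cbd$ elements and $\Pnp(\xi_\infty/\beta)$, then apply $\Pnpinv$ and estimate using Propositions~\ref{D^{-1} in C_b^delta} and~\ref{dist bound prop}. The only cosmetic difference is that the paper handles the $n=0$ case first rather than last.
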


\begin{proof}
	We first treat $n=0$ case. From
	\begin{equation*}
		P^{(0)}_+ \xi_\infty = \beta \rd_\beta \xi_\infty - (2\mu -1) \xi_\infty,
	\end{equation*}
	we have
	\begin{equation*}
		\left( P^{(0)}_+ \right)^{-1} \xi_\infty = \frac{1}{2\mu -1} \Bigg( \underbrace{\left( P^{(0)}_+ \right)^{-1} (\beta \rd_\beta \xi_\infty)}_{\underset{\text{Prop } \ref{D^{-1} in C_b^delta}} {\in \cbd}} - \underbrace{\xi_\infty}_{\in \cxii} \Bigg)
	\end{equation*}
	with
	\begin{equation*}
		\nrm{\left( P^{(0)}_+ \right)^{-1}}_{[\cxii, \cbd \oplus \cxii]} \leq \frac{1}{\abs{2\mu -1}} \left( \frac{\ncbd{\beta \rd_\beta \xi_\infty}}{\dist \left( [-\delta, \delta], 2\mu -1 \right)}+1 \right).
	\end{equation*}
	
	Now, we deal with $n \in N \bbZ \backslash \{0\}$ case. Unfortunately, above strategy does not work since $in \beta \xi_\infty$ does not belong to $C_b$. The trick is to consider $\frac{\xi_\infty}{\beta}$ which belongs to $\cbd$. From
	\begin{align*}
		\Pnp \left( \frac{\xi_\infty}{\beta} \right) &= \beta \rd_\beta \left(\frac{\xi_\infty}{\beta} \right)- in \beta \cdot \frac{\xi_\infty}{\beta} -  \left((2+n)\mu -1 \right)  \frac{\xi_\infty}{\beta} \\
		&= \rd_\beta \xi_\infty - in\xi_\infty - (2+n)\mu \frac{\xi_\infty}{\beta},
	\end{align*}
	we can represent $\xi_\infty$ as
	\begin{equation*}
		\xi_\infty = \frac{1}{in} \Bigg( \underbrace{\rd_\beta \xi_\infty}_{\in \cbd}-(2+n)\mu \underbrace{\frac{\xi_\infty}{\beta}}_{\in \cbd} - \Pnp \left( \frac{\xi_\infty}{\beta} \right) \Bigg).
	\end{equation*}
	Then,
	\begin{align*}
		& \ncbd{\Pnpinv \xi_\infty} \\
		&= \bigg\Vert \underbrace{\Pnpinv(\rd_\beta \xi_\infty) -(2+n)\mu  \Pnpinv \left( \frac{\xi_\infty}{\beta} \right)-\frac{\xi_\infty}{\beta}}_{\in \cbd}  \bigg\Vert_{\cbd} \\
		\underset{\eqref{D^{-1} C_b}}&{\leq} \frac{1}{\abs{n}} \left( \frac{\ncbd{\rd_\beta \xi_\infty}+\abs{(2+n)\mu}\ncbd{\frac{\xi_\infty}{\beta}}}{\distPnp} + \ncbd{\frac{\xi_\infty}{\beta}} \right) \\
		\underset{\eqref{dist bound}}&{\leq} \frac{1}{\abs{n}} \left( \nmtn \cdot \frac{1}{\brk{n}} \left( \ncbd{\rd_\beta \xi_\infty}+ (2+\abs{n})\mu \ncbd{\frac{\xi_\infty}{\beta}}\right) + \ncbd{\frac{\xi_\infty}{\beta}} \right) \\
		&= \frac{1}{\brk{n}} \left( \nmtn \cdot \frac{1}{\abs{n}} \left( \ncbd{\rd_\beta \xi_\infty}+ (2+\abs{n})\mu \ncbd{\frac{\xi_\infty}{\beta}}\right)+ \frac{\brk{n}}{\abs{n}}\ncbd{\frac{\xi_\infty}{\beta}} \right) \\
		& \quad \left(\text{Use the fact that for } n \in N\bbZ \backslash \set{0}, \frac{\brk{n}}{\abs{n}} \text{ achieves maximum at } n=N\geq 4 \text{ with } \frac{\brk{4}}{4} \leq 1.05 \right) \\
		& \leq \frac{1}{\brk{n}} \left( \frac{1.05}{(N-2)\mu + \frac{5}{6}}   \left( \ncbd{\rd_\beta \xi_\infty}+2\mu \ncbd{\frac{\xi_\infty}{\beta}} \right)+\left( \frac{\brk{N}\mu}{(N-2)\mu +\frac{5}{6}}+1.05 \right) \ncbd{\frac{\xi_\infty}{\beta}}\right).
		\end{align*}
\end{proof}

\begin{proposition} \label{continuity of id second}
	$id: \Wnp \rightarrow \Znz$ is uniformly bounded. Furthermore, if $n \neq 0$, we have an exact norm bound
	\begin{equation} \label{continuity of id second formula}
		\begin{aligned}
		&\nrm{id}_{[\Wnp, \Znz]} \\
		&\leq \Bigg( \nmtn \left( \frac{\nrm{\beta \rd_\beta \xi_0}_{\cbd}+\brk{N} \ncbd{\beta \xi_0}}{(N-2)\mu +\frac{5}{6}} +2 \right) + \frac{1.05}{(N-2)\mu +\frac{5}{6}} \left( \ncbd{\rd_\beta \xi_\infty}+2\mu \ncbd{\frac{\xi_\infty}{\beta}} \right) \\
		& \qquad + \left( \frac{\brk{N}\mu}{(N-2)\mu +\frac{5}{6}}+1.05 \right) \ncbd{\frac{\xi_\infty}{\beta}} \Bigg) \cdot \frac{1}{\brk{n}}.
	\end{aligned}
	\end{equation}
	
\end{proposition}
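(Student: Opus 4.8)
The plan is to reduce the statement to the three norm bounds on $\Pnpinv$ established above, exploiting that $\Znz=\Pnp\Wnz$ carries the norm induced from $\Wnz$ through $\Pnp$ (Lemma~\ref{induced Banach space}), so that $\nrm{g}_{\Znz}=\nrm{\Pnpinv g}_{\Wnz}$ for every $g\in\Znz$. Hence $id:\Wnp\to\Znz$ being bounded amounts to $\Pnpinv$ carrying each of the three summands of $\Wnp=\cbdn\oplus\cxizn\oplus\cxiin$ into $\Wnz$, uniformly in $n$; the norm bound then follows by the triangle inequality applied along the direct-sum decomposition of $\Wnp$.

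For $n\notin N\bbZ$ there is nothing to prove. For $n\in N\bbZ\setminus\{0\}$, write $f=f_1+c_0\xi_0+c_\infty\xi_\infty$ with $f_1\in\cbd$. Corollary~\ref{Pninv in cbd} gives $\Pnpinv f_1\in\cbd\subset\Wnz$; Corollary~\ref{Pninv in xiz} gives $\Pnpinv\xi_0\in\cbdn\oplus\cxizn=\Wnz$ (the equality because $n\neq0$); Lemma~\ref{Pninv in xii} gives $\Pnpinv\xi_\infty\in\cbd\subset\Wnz$. Therefore $\Pnpinv f\in\Wnz$, so $f=\Pnp(\Pnpinv f)\in\Pnp\Wnz=\Znz$, which shows $id$ indeed maps $\Wnp$ into $\Znz$. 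Along the $\ell^1$ direct sum $\Wnp=\cbdn\oplus\cxizn\oplus\cxiin$ the operator norm of $id$ is at most the sum of the three restriction norms,
\[
\nrm{id}_{[\Wnp,\Znz]}\;\le\;\nrm{\Pnpinv}_{[\cbdn,\cbdn]}+\nrm{\Pnpinv}_{[\cxizn,\cbdn\oplus\cxizn]}+\nrm{\Pnpinv}_{[\cxiin,\cbdn]},
\]
and inserting \eqref{Pninv cbd}, \eqref{Pninv xiz}, \eqref{Pninv xii} and simplifying (the two contributions proportional to $\nmtn$ merge, promoting the ``$+1$'' inside \eqref{Pninv xiz} to a ``$+2$'') gives exactly \eqref{continuity of id second formula}.

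The case $n=0$ is handled in the same way, now with $P^{(0)}_+=Q-(2\mu-1)$ and with the three required bounds supplied by Proposition~\ref{D^{-1} in C_b^delta} on $\cbd$ (valid since $2\mu-1\notin[-\delta,\delta]$), Proposition~\ref{D^{-1} in xiz} on $\bbC\xi_0$, and the $n=0$ branch of Lemma~\ref{Pninv in xii} on $\bbC\xi_\infty$. Here $(P^{(0)}_+)^{-1}\xi_\infty$ only lands in $\cbd\oplus\bbC\xi_\infty$ rather than in $\cbd$, but $\xi_\infty=1-\xi_0$ shows $\bbC\xi_\infty\subset\bbC\oplus\bbC\xi_0\subset W^{(0)}_{0,N}$, so $(P^{(0)}_+)^{-1}$ still maps $W^{(0)}_{+,N}$ boundedly into $W^{(0)}_{0,N}$; thus $id:W^{(0)}_{+,N}\to Z^{(0)}_{0,N}$ is bounded, with some finite constant we do not record. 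Combining the cases yields uniform boundedness over all $n\in\bbZ$.

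I do not expect a genuine difficulty: the proposition is a straightforward assembly of Corollary~\ref{Pninv in cbd}, Corollary~\ref{Pninv in xiz} and Lemma~\ref{Pninv in xii} together with the induced-norm identity for $\Znz$. The only place asking for a little care is the $n=0$ case, where $\xi_\infty$ is not constant and must be recognised as an element of $W^{(0)}_{0,N}$ through $\xi_\infty=1-\xi_0$.
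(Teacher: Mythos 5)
Your proposal is correct and is essentially the paper's own argument: use the isometry $\nrm{g}_{\Znz}=\nrm{\Pnpinv g}_{\Wnz}$ to convert the problem into bounding $\Pnpinv$ on the three direct summands of $\Wnp$, apply Corollary~\ref{Pninv in cbd}, Corollary~\ref{Pninv in xiz}, and Lemma~\ref{Pninv in xii}, and sum via the $\ell^1$ triangle inequality (the merging of the two $\nmtn$ contributions into the ``$+2$'' is exactly the simplification the paper performs). The one place your write-up is slightly more explicit than the paper is the $n=0$ case, where you correctly observe that $\Pzpinv\xi_\infty$ lands in $\cbd\oplus\cxii$ and that $\xi_\infty=1-\xi_0$ identifies $\cxii$ as a subspace of $W^{(0)}_{0,N}$; the paper leaves that identification implicit.
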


\begin{proof}
	By direct calculation, we have
	\begin{align*}
		&\nrm{id}_{[\Wnp, \Znz]}\\
		&= \nrm{\Pnpinv}_{[\Wnp, \Wnz]} \\
		&\leq \nrm{\Pnpinv}_{[\cbdn, \Wnz]}+\nrm{\Pnpinv}_{[\cxizn, \Wnz]}+\nrm{\Pnpinv}_{[\cxiin,\Wnz]} \\
		\underset{\substack{\text{Cor } \ref{Pninv in cbd}, \ref{Pninv in xiz} \\ \text{Lem }\ref{Pninv in xii}}}&{\leq} \nmtn \cdot \frac{1}{\brk{n}} + \frac{\brk{N}}{(N-2)\mu + \frac{5}{6}} \left( \frac{\nrm{\beta \rd_\beta \xi_0}_{\cbd}+\brk{N} \ncbd{\beta \xi_0}}{(N-2)\mu +\frac{5}{6}} +1 \right) \cdot \frac{1}{\brk{n}} \\
		& \qquad +  \left( \frac{1.05}{(N-2)\mu + \frac{5}{6}}   \left( \ncbd{\rd_\beta \xi_\infty}+2\mu \ncbd{\frac{\xi_\infty}{\beta}} \right)+\left( \frac{\brk{N}\mu}{(N-2)\mu +\frac{5}{6}}+1.05 \right) \ncbd{\frac{\xi_\infty}{\beta}}\right) \cdot \frac{1}{\brk{n}} \\
		&= \Bigg( \nmtn \left( \frac{\nrm{\beta \rd_\beta \xi_0}_{\cbd}+\brk{N} \ncbd{\beta \xi_0}}{(N-2)\mu +\frac{5}{6}} +2 \right) + \frac{1.05}{(N-2)\mu +\frac{5}{6}} \left( \ncbd{\rd_\beta \xi_\infty}+2\mu \ncbd{\frac{\xi_\infty}{\beta}} \right) \\
		& \qquad + \left( \frac{\brk{N}\mu}{(N-2)\mu +\frac{5}{6}}+1.05 \right) \ncbd{\frac{\xi_\infty}{\beta}} \Bigg) \cdot \frac{1}{\brk{n}}.
	\end{align*}
\end{proof}

To show continuity of $\dbvarphi$ from $\Wz$ to $\Zz$, we need uniform boundeness of $\n{P}$.

\begin{proposition} \label{continuity of Pn second}
	$\n{P}: \Wnz \rightarrow \Znz$ is uniformly bounded. Furthermore, if $n \neq 0$, we have an exact norm bound
	\begin{align*}
		& \nrm{\n{P}}_{[\Wnz, \Znz]} \\
		&\leq 1+ \Bigg( \nmtn \left( \frac{\nrm{\beta \rd_\beta \xi_0}_{\cbd}+\brk{N} \ncbd{\beta \xi_0}}{(N-2)\mu +\frac{5}{6}} +2 \right) + \frac{1.05}{(N-2)\mu +\frac{5}{6}} \left( \ncbd{\rd_\beta \xi_\infty}+2\mu \ncbd{\frac{\xi_\infty}{\beta}} \right) \\
		& \qquad + \left( \frac{\brk{N}\mu}{(N-2)\mu +\frac{5}{6}}+1.05 \right) \ncbd{\frac{\xi_\infty}{\beta}} \Bigg) \cdot \frac{\abs{(2+n)\mu -1}}{\brk{n}}.
	\end{align*}
\end{proposition}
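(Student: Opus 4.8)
The plan is to reduce the claim to the already-established boundedness of $id:\Wnp\to\Znz$ (Proposition \ref{continuity of id second}) by means of a one-line algebraic identity between the differential operators $\n{P}$ and $\Pnp$. As usual it suffices to treat $n\in N\bbZ$, the norm being $0$ otherwise. Comparing \eqref{Pn} and \eqref{Pnp}, these operators differ only by a scalar, namely $\n{P}=\Pnp+\big((2+n)\mu-1\big)\,id$, so that for every $f\in\Wnz$,
\begin{equation*}
	\n{P}f=\Pnp f+\big((2+n)\mu-1\big)f .
\end{equation*}
By Definition \ref{X, Z space} and Lemma \ref{induced Banach space}, the space $\Znz=\Pnp\Wnz$ carries exactly the induced norm $\nrm{\Pnp h}_{\Znz}=\nrm{h}_{\Wnz}$, so the first summand lies in $\Znz$ with $\nrm{\Pnp f}_{\Znz}=\nrm{f}_{\Wnz}$. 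For the second summand I would use that $\Wnz\unibed\Wnp$ (by \eqref{unibed chain}, and in fact with embedding constant $1$, since a member of $\Wnz$ has trivial $\cxiin$-component in the direct sum $\Wnp=\cbdn\oplus\cxizn\oplus\cxiin$): Proposition \ref{continuity of id second} then gives $f\in\Znz$ with $\nrm{f}_{\Znz}\le\nrm{id}_{[\Wnp,\Znz]}\nrm{f}_{\Wnz}$.

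Combining the two summands by the triangle inequality yields $\nrm{\n{P}}_{[\Wnz,\Znz]}\le 1+\abs{(2+n)\mu-1}\,\nrm{id}_{[\Wnp,\Znz]}$, and substituting the explicit estimate \eqref{continuity of id second formula} produces precisely the stated bound; since $\abs{(2+n)\mu-1}/\brk{n}$ is itself bounded, this also re-confirms the qualitative uniform boundedness. The case $n=0$ (for which the statement only asks for uniform boundedness) runs the same way: $P^{(0)}=Q=P^{(0)}_+ +(2\mu-1)\,id$, and one checks that $\big(P^{(0)}_+\big)^{-1}$ maps the extra summand $\bbC\subset W^{(0)}_{0,N}$ into $\bbC$ by Proposition \ref{D^{-1} in C}, and the $\cbd$, $\bbC\xi_0$ summands into $W^{(0)}_{0,N}$ by Propositions \ref{D^{-1} in C_b^delta} and \ref{D^{-1} in xiz}, whence $id:W^{(0)}_{+,N}\to Z^{(0)}_{0,N}$ is bounded and the same triangle-inequality argument applies.

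The only delicate point — and the closest thing to an obstacle here — is to make sure that $\Pnpinv$ sends a member of $\Wnz$ back into $\Wnz$, rather than creating a genuine $\cxiin$-component that would fall outside $\Wnz$ and spoil the identification of the $\Znz$-norm. This is exactly what the range statements in Corollaries \ref{Pninv in cbd} and \ref{Pninv in xiz} guarantee, and it is already packaged into Proposition \ref{continuity of id second}; no new estimate is required, and what remains is only the bookkeeping of the explicit constants appearing in \eqref{continuity of id second formula}.
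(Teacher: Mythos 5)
Your proof is correct and takes essentially the same route as the paper's: both decompose $\n{P}=\Pnp+\big((2+n)\mu-1\big)\,id$, exploit the isometry $\Pnp:\Wnz\to\Znz$ built into the definition of $\Znz$, and control the scalar term via the bound on $id:\Wnp\to\Znz$ from Proposition \ref{continuity of id second}. The only difference is presentational: you make explicit the embedding $\Wnz\unibed\Wnp$ with constant $1$ for $n\neq 0$ and spell out the $n=0$ case, steps the paper leaves tacit.
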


\begin{proof}
	We can estimate norm of $\n{P}: \Wnz \rightarrow \Znz$ by
	\begin{align*}
		& \nrm{\n{P}}_{[\Wnz, \Znz]} \\
		&= \nrm{\Pnp + \left( (2+n)\mu -1 \right) \cdot id}_{[\Wnz, \Znz]} \\
		&\leq \bigg\Vert \underbrace{\Pnp}_{\text{isometry}} \bigg\Vert_{[\Wnz, \Znz]} + \abs{(2+n)\mu -1} \nrm{id}_{[\Wnz, \Znz]} \\
		&= 1+\abs{(2+n)\mu -1} \nrm{id}_{[\Wnz, \Znz]}.
	\end{align*}
	Hence, it is uniformly bounded by Proposition \ref{continuity of id second}. Also, for $n \neq 0$,
	\begin{align*}
		&\nrm{\n{P}}_{[\Wnz, \Znz]} \\
		&\leq 1+ \abs{(2+n)\mu -1} \nrm{id}_{[\Wnz, \Znz]} \\
		&\leq 1+ \Bigg( \nmtn \left( \frac{\nrm{\beta \rd_\beta \xi_0}_{\cbd}+\brk{N} \ncbd{\beta \xi_0}}{(N-2)\mu +\frac{5}{6}} +2 \right) + \frac{1.05}{(N-2)\mu +\frac{5}{6}} \left( \ncbd{\rd_\beta \xi_\infty}+2\mu \ncbd{\frac{\xi_\infty}{\beta}} \right) \\
		& \qquad + \left( \frac{\brk{N}\mu}{(N-2)\mu +\frac{5}{6}}+1.05 \right) \ncbd{\frac{\xi_\infty}{\beta}} \Bigg) \cdot \frac{\abs{(2+n)\mu -1}}{\brk{n}}.
	\end{align*}
\end{proof}

Observing how $\dbvarphi$ and $\rd_\phi$ are decomposed into each Fourier mode in \eqref{eq20} and \eqref{eq21}, the following proposition comes directly from Proposition \ref{continuity between A^s}.

\begin{proposition} \label{continuity of second diff operator}
	\begin{align*}
		\dbvarphi &: \Wz \rightarrow \Zz, \\
		\rd_\phi &: \Wp \rightarrow \Zz
	\end{align*}
	is $C^1$.
\end{proposition}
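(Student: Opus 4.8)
The plan is to use that both $\dbvarphi$ and $\rd_\phi$ are linear, so that — exactly as in the proof of Proposition~\ref{continuity of atomic diff operator} — their Fr\'echet derivatives are the (constant) operators themselves and $C^1$ continuity is equivalent to boundedness. Since $\Wz = \calA^{0.5}(\Wnz)$, $\Wp = \calA^{0.5}(\Wnp)$ and $\Zz = \calA^{0.5}(\Znz)$, and since by \eqref{eq20}, \eqref{eq21} these operators act on the $n$-th Fourier mode as
\begin{equation*}
	\n{\left( \dbvarphi \right)} = -\n{P} + (2\mu-1)\,id \colon \Wnz \to \Znz, \qquad \n{\left( \rd_\phi \right)} = in\cdot id \colon \Wnp \to \Znz,
\end{equation*}
Proposition~\ref{continuity between A^s} reduces the claim to showing that these two families of Fourier-mode maps are uniformly bounded in $n$. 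For $n\notin N\bbZ$ every space involved is $\set{0}$, so only $n\in N\bbZ$ (including $n=0$) needs attention.

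For $\rd_\phi$, Proposition~\ref{continuity of id second} gives uniform boundedness of $id\colon\Wnp\to\Znz$, with the explicit bound of order $\brk{n}^{-1}$ for $n\neq 0$; multiplying by $in$ gives $\nrm{in\cdot id}_{[\Wnp,\Znz]}\lesssim \abs{n}\brk{n}^{-1}\leq 1$ uniformly (the mode $n=0$ being trivial as $in=0$ there), and Proposition~\ref{continuity between A^s} then yields a bounded linear $\rd_\phi\colon\Wp\to\Zz$, hence $C^1$. For $\dbvarphi$, uniform boundedness of $\n{P}\colon\Wnz\to\Znz$ is precisely Proposition~\ref{continuity of Pn second}; for the identity term one composes the uniform embedding $\Wnz\unibed\Wnp$ from \eqref{unibed chain} with $id\colon\Wnp\to\Znz$ of Proposition~\ref{continuity of id second} to get $id\colon\Wnz\to\Znz$ uniformly bounded. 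Thus $-\n{P}+(2\mu-1)id\colon\Wnz\to\Znz$ is uniformly bounded as a finite sum of uniformly bounded maps, and Proposition~\ref{continuity between A^s} gives a bounded linear $\dbvarphi\colon\Wz\to\Zz$, hence $C^1$.

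There is essentially no obstacle at this stage: the substantive estimates — Lemma~\ref{Pninv in xii} and Propositions~\ref{continuity of id second} and \ref{continuity of Pn second}, where the $\bbC\xi_\infty$ component of $\Wnp$ necessitated the $\frac{\xi_\infty}{\beta}$ trick — have already been carried out. The one point deserving a moment's care is that $\dbvarphi$ is posed on $\Wz$ rather than on $\Wp$, which is why the identity term must be routed through the uniform embedding $\Wnz\unibed\Wnp$ instead of appealing to Proposition~\ref{continuity of id second} directly.
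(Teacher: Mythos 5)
Your argument is correct and is essentially the paper's own proof, which likewise reduces the claim to the uniform boundedness of the Fourier-mode maps $\n{(\dbvarphi)} = -\n{P} + (2\mu-1)\,id$ and $\n{(\rd_\phi)} = in\cdot id$, appeals to Propositions~\ref{continuity of id second} and \ref{continuity of Pn second}, and then invokes Proposition~\ref{continuity between A^s}. Your remark about routing the identity term for $\dbvarphi$ through the embedding $\Wnz\unibed\Wnp$ makes explicit a step the paper leaves implicit (it is also implicitly used inside the proof of Proposition~\ref{continuity of Pn second} itself).
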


\begin{corollary} \label{continuity of L bar almost}
	$\dbvarphi \circ R+\rd_\phi \circ S$ is $C^1$ from $\ball{\Xz}{\trisol}{\delta}$ to $\Zz$ where $R, S, \delta$ are defined as in Proposition \ref{continuity of L bar intermediate}. 
\end{corollary}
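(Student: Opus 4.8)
The statement is a direct consequence of the chain rule together with the stability of the $C^1$ class under composition and addition, with all the analytic content already in place. First, recall from Proposition \ref{continuity of L bar intermediate} that, for the $\delta$ chosen there, the maps $R: \ball{\Xz}{\trisol}{\delta} \to \Wz$ and $S: \ball{\Xz}{\trisol}{\delta} \to \Wp$ are $C^1$, and from Proposition \ref{continuity of second diff operator} that $\dbvarphi: \Wz \to \Zz$ and $\rd_\phi: \Wp \to \Zz$ are $C^1$. In fact the latter two are bounded linear operators: on each Fourier mode $\dbvarphi$ equals $-\n{P} + (2\mu-1)\,\mathrm{id}$ and $\rd_\phi$ equals $in\cdot\mathrm{id}$ (see \eqref{eq20}, \eqref{eq21}), these are uniformly bounded by Propositions \ref{continuity of Pn second} and \ref{continuity of id second}, hence lift to bounded operators on the $\calA^{0.5}$-spaces by Proposition \ref{continuity between A^s}. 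A bounded linear map equals its own Fréchet derivative and is therefore trivially $C^1$.

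I would then assemble these as follows. Since $R$ takes values in $\Wz$, which is exactly the domain of $\dbvarphi: \Wz \to \Zz$, the composite $\dbvarphi \circ R$ is well-defined on $\ball{\Xz}{\trisol}{\delta}$; by the chain rule for maps between Banach spaces it is Fréchet differentiable with $D(\dbvarphi \circ R)(\br{\psi}) = \dbvarphi \circ DR(\br{\psi})$, which depends continuously on $\br{\psi}$ because $DR$ does (this is precisely the $C^1$-property of $R$) and $\dbvarphi$ is a fixed bounded operator. Hence $\dbvarphi \circ R: \ball{\Xz}{\trisol}{\delta} \to \Zz$ is $C^1$, and by the same argument $\rd_\phi \circ S: \ball{\Xz}{\trisol}{\delta} \to \Zz$ is $C^1$. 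Finally, the sum of two $C^1$ maps into the Banach space $\Zz$ is $C^1$, with Fréchet derivative $\dbvarphi \circ DR(\br{\psi}) + \rd_\phi \circ DS(\br{\psi})$; this proves the corollary.

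There is essentially no obstacle here, the corollary being a packaging step. The only points deserving a line of care are bookkeeping, namely that $R$ lands in $\mathrm{dom}(\dbvarphi)=\Wz$ and $S$ in $\mathrm{dom}(\rd_\phi)=\Wp$ so the compositions are legitimate, and that $\Zz = \calA^{0.5}(\Znz)$ is a genuine Banach space, which is guaranteed by Proposition \ref{A^s is Banach} once $\Znz=\Pnp\Wnz$ is known to be $\brk{n}^1$-uniformly embedded in a Banach space, the content of the (unlabeled) proposition preceding Lemma \ref{Pninv in xii}. The substantive work was all done earlier: the algebra property of $\Wz$ (Proposition \ref{W algebra}) underlying the $C^1$-ness of $R$ and $S$, and the uniform operator bounds of Section \ref{subsec: Continuity results for differential operator in function space} underlying the $C^1$-ness of $\dbvarphi$ and $\rd_\phi$.
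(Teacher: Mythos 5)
Your proposal is correct and matches the paper's (implicit) argument: the paper states this as a corollary with no separate proof precisely because it is, as you say, a packaging step combining Proposition~\ref{continuity of L bar intermediate} ($R$, $S$ are $C^1$ into $\Wz$, $\Wp$) with Proposition~\ref{continuity of second diff operator} ($\dbvarphi$, $\rd_\phi$ are bounded linear into $\Zz$), via the chain rule and closure of $C^1$ under sums. Your bookkeeping remarks about domains/codomains matching and $\Zz$ being a Banach space are exactly the right points to flag.
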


We almost reach to $C^1$ continuity of $\br{L}$. The last thing to deal with is the term
\begin{equation*}
	\frac{\left( \bar{\rd}_\varphi +1 \right) \br{\rd}_\beta \br{\psi} \cdot \left( \br{\rd}_\varphi \br{\psi} \right) ^{-\frac{1}{2\mu}} }{2 \mu} \Omega.
\end{equation*}
As a result of Proposition \ref{continuity of diff bar operator}, \ref{continuity of diff varphi bar inv}, we already know
\begin{equation} \label{eq36}
	\frac{\left( \bar{\rd}_\varphi +1 \right) \br{\rd}_\beta \br{\psi} \cdot \left( \br{\rd}_\varphi \br{\psi} \right) ^{-\frac{1}{2\mu}} }{2 \mu}  \in \calA^{0.5} \left( \Wnp \right)
\end{equation}
Then, what is the choice for Banach space for $\Omega$ so that
\begin{equation*}
	\frac{\left( \bar{\rd}_\varphi +1 \right) \br{\rd}_\beta \br{\psi} \cdot \left( \br{\rd}_\varphi \br{\psi} \right) ^{-\frac{1}{2\mu}} }{2 \mu} \Omega
\end{equation*}
could be an element of $\calA^{0.5} \left( \Znz \right)$?

Let $\bbC$ be a space of constant function on $\br{\bbR}_+$, and similar to $\n{W}_{\triangle, N}$ space, define $\n{\bbC}_N$ by
\begin{equation*}
	\n{\bbC}_N = \begin{cases}
		\bbC & (n \in N \bbZ) \\
		0 & (n \notin N\bbZ)
	\end{cases}
\end{equation*}
It is obvious  that $\cn \unibed C_b \in \calS' \left( \br{\bbR}_+ \right)$ and for all $i, j \in \bbZ$, embedding
\begin{equation} \label{eq37}
	W^{(i)}_{+, N} \cdot \bbC^{(j)}_N \unibed W^{(i+j)}_{+, N}
\end{equation}
holds. then, if we define a Banach space $Y_{0,N}$ by
\begin{equation*}
	\Yz \coloneqq \calA^{-0.5} \left( \cn \right),
\end{equation*}
the following proposition holds.

\begin{proposition} \label{embedding to Zn}
	\begin{equation*}
		\Wp \cdot \Yz \hookrightarrow \Zz.
	\end{equation*}
\end{proposition}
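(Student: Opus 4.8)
The plan is to factor the claimed continuous multiplication through the intermediate space $\calA^{-0.5}(\Wnp)$, using that the natural inclusion $\Wnp\hookrightarrow\Znz$ carries a gain of a full power of $\brk{n}^{-1}$ — which is exactly what is needed to convert an $\calA^{-0.5}$-bound into an $\calA^{0.5}$-bound.

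First I would check the Fourier-mode-wise product. For $f=\sigmaz f^{(n)}e^{in\phi}\in\Wp$ and $g=\sigmaz g^{(n)}e^{in\phi}\in\Yz$, each $g^{(k)}$ is a constant function (vanishing unless $k\in N\bbZ$), so by the embedding \eqref{eq37} one has $f^{(n-k)}g^{(k)}\in\Wnp$ with $\nrm{f^{(n-k)}g^{(k)}}_{\Wnp}\le M\nrm{f^{(n-k)}}_{W^{(n-k)}_{+,N}}\nrm{g^{(k)}}_{\bbC^{(k)}_N}$. Since $\Wnp\unibed C_b\in\calS'(\br{\bbR}_+)$ and $\cn\unibed C_b\in\calS'(\br{\bbR}_+)$, Proposition \ref{A^s algebra}(2), applied with $s=0.5$, $\n X=\n Z=\Wnp$ and $\n Y=\cn$, shows that $fg$ (defined by $(fg)^{(n)}=\sum_k f^{(n-k)}g^{(k)}$) lies in $\calA^{-0.5}(\Wnp)$ with
\begin{equation*}
	\nrm{fg}_{\calA^{-0.5}(\Wnp)}\le\sqrt{2}\,M\,\nrm{f}_{\Wp}\nrm{g}_{\Yz}.
\end{equation*}
Then I would upgrade $\calA^{-0.5}(\Wnp)$ to $\Zz=\calA^{0.5}(\Znz)$. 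By Proposition \ref{continuity of id second}, $id:\Wnp\to\Znz$ is uniformly bounded and, for $n\ne0$, satisfies $\nrm{id}_{[\Wnp,\Znz]}\le C\brk{n}^{-1}$; absorbing the $n=0$ value (a constant, equal to $C_0\brk{0}^{-1}$) into the constant, we obtain $\nrm{id}_{[\Wnp,\Znz]}\le C'\brk{n}^{-1}$ for every $n$ (and it is $0$ for $n\notin N\bbZ$). Hence for any $T=\set{\n T}_{n\in\bbZ}\in\calA^{-0.5}(\Wnp)$,
\begin{equation*}
	\nrm{T}_{\calA^{0.5}(\Znz)}=\sigmaz\brk{n}^{0.5}\nrm{\n T}_{\Znz}\le C'\sigmaz\brk{n}^{0.5}\brk{n}^{-1}\nrm{\n T}_{\Wnp}=C'\nrm{T}_{\calA^{-0.5}(\Wnp)},
\end{equation*}
so $\calA^{-0.5}(\Wnp)\hookrightarrow\Zz$. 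Chaining the two displays gives $\nrm{fg}_{\Zz}\le\sqrt{2}\,C'M\,\nrm{f}_{\Wp}\nrm{g}_{\Yz}$, i.e. $\Wp\cdot\Yz\hookrightarrow\Zz$.

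I do not expect a genuine obstacle here; the statement is essentially bookkeeping over the apparatus already built. The one point that must not be overlooked is that the weight mismatch between $\brk{n}^{-0.5}$ (natural to $\Yz$) and $\brk{n}^{0.5}$ (required by $\Zz$) is compensated precisely by the $\brk{n}^{-1}$ decay in Proposition \ref{continuity of id second}, which in turn traces back to $\Znz=\Pnp\Wnz$ being $\brk{n}^{1}$-uniformly embedded; so in the write-up one should invoke the sharp $n\ne0$ bound there, not merely uniform boundedness.
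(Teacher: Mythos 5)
Your proof is correct and follows essentially the same route as the paper: first factor through $\calA^{-0.5}(\Wnp)$ via Proposition \ref{A^s algebra}(2) and the mode-wise embedding \eqref{eq37}, then convert $\calA^{-0.5}(\Wnp)\hookrightarrow\calA^{0.5}(\Znz)$ using the sharp $\brk{n}^{-1}$ decay of $\nrm{id}_{[\Wnp,\Znz]}$ from Proposition \ref{continuity of id second}. The only cosmetic difference is that the paper splits off the $n=0$ term explicitly before summing, while you absorb it into the constant; these are equivalent.
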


\begin{proof}
	By Proposition \ref{A^s algebra}, \eqref{eq37} implies
	\begin{equation} \label{eq38}
		\underbrace{\calA^{0.5} \left( \Wnp \right)}_{\Wp} \cdot \underbrace{\calA^{-0.5} \left( \cn \right)}_{\Yz} \hookrightarrow \calA^{-0.5} \left( \Wnp \right)
	\end{equation}
	Therefore, we have to show
	\begin{equation} \label{eq39}
		\calA^{-0.5} \left( \Wnp \right) \hookrightarrow \calA^{0.5} \left( \Znz \right)
	\end{equation}
	to complete the proof. For $f= \set{\n{f}}_{n \in \bbZ} \in \calA^{-0.5} \left( \Wnp \right)$, we have
	\begin{align*}
		\nrm{f}_{\calA^{0.5} \left( \Znz \right)} &= \sum_{n \in \bbZ} \brk{n}^{0.5} \nrm{\n{f}}_{\Znz} \\
		&\leq \sum_{n \in \bbZ} \brk{n}^{0.5} \nrm{id}_{[\Wnp, \Znz]} \nrm{\n{f}}_{\Wnp} \\
		&= \nrm{id}_{[W^{(0)}_{+, N}, Z^{(0)}_{0, N}]} \nrm{f^{(0)}}_{W^{(0)}_{+,N}} + \sum_{n \in N\bbZ \backslash \{0\}} \brk{n}^{0.5} \nrm{id}_{[\Wnp, \Znz]} \nrm{\n{f}}_{\Wnp} .
	\end{align*}
	Using uniform norm bound \eqref{continuity of id second formula}, i.e.,
	\begin{equation*}
		\nrm{id}_{[\Wnp, \Znz]} \leq \frac{M}{\brk{n}} \qquad \text{for} \qquad n\neq 0
	\end{equation*}
	we have
	\begin{align*}
		\nrm{f}_{\calA^{0.5} \left( \Znz \right)} &\leq \nrm{id}_{[W^{(0)}_{+, N}, Z^{(0)}_{0, N}]} \nrm{f^{(0)}}_{W^{(0)}_{+, N}} + M \sum_{n \in N\bbZ \backslash \{0\}} \brk{n}^{0.5} \brk{n}^{-1} \nrm{\n{f}}_{\Wnp} \\
		& \leq \left( \nrm{id}_{[W^{(0)}_{+, N}, Z^{(0)}_{0, N}]} +M \right) \sum_{n \in N \bbZ} \brk{n}^{-0.5} \nrm{\n{f}}_{\Wnp} \\
		& =  \left( \nrm{id}_{[W^{(0)}_{+, N}, Z^{(0)}_{0, N}]} +M \right) \nrm{f}_{\calA^{-0.5} \left( \Wnp \right)}.
	\end{align*}
	Combining \eqref{eq38} and \eqref{eq39}, then we get the result.
\end{proof}

As a function
\begin{equation*}
	\frac{\dbvarphibeta \cdot \left( \dbvarphi \right)^{-\frac{1}{2\mu}}}{2\mu} : \ball{\Xz}{\trisol}{\delta'_2} \rightarrow \Wp
\end{equation*}
defined by
\begin{equation*}
	\left( \frac{\dbvarphibeta \cdot \left( \dbvarphi \right)^{-\frac{1}{2\mu}}}{2\mu} \right) \left( \br{\psi} \right) =\frac{\dbvarphibeta \br{\psi} \cdot \left( \dbvarphi \br{\psi} \right)^{-\frac{1}{2\mu}}}{2\mu}
\end{equation*}
is $C^1$ by Proposition \ref{continuity of diff bar operator}, \ref{continuity of diff varphi bar inv}, Proposition \ref{embedding to Zn} tells us
\begin{equation} \label{eq40}
	\left( \br{\psi}, \Omega \right) \rightarrow \frac{\dbvarphibeta \br{\psi} \cdot \left( \dbvarphi \br{\psi} \right)^{-\frac{1}{2\mu}}}{2\mu} \Omega
\end{equation}
is $C^1$ map from $\Xz \times \Yz$ to $\Zz$.

Then, finally, we have
\begin{proposition} \label{continuity of L bar}
	There exists $\delta'_3>0$ such that
	\begin{equation*}
		\br{L}: \ball{\Xz}{\trisol}{\delta'_3} \times \Yz \rightarrow \Zz
	\end{equation*}
	is $C^1$
\end{proposition}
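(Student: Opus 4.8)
The plan is to read off the splitting of $\br{L}$ from \eqref{L bar},
\[
	\br{L}(\br{\psi},\Omega) = (\dbvarphi \circ R)(\br{\psi}) + (\rd_\phi \circ S)(\br{\psi}) + \frac{\dbvarphibeta \br{\psi}\cdot(\dbvarphi \br{\psi})^{-\frac{1}{2\mu}}}{2\mu}\,\Omega ,
\]
where $R,S$ are the maps of Proposition \ref{continuity of L bar intermediate}, and to check that each of the three summands is $C^1$ into $\Zz$ on one common neighbourhood of $(\trisol,\Omega_0)$; since a finite sum of $C^1$ maps between Banach spaces is again $C^1$, this settles the proposition.

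For the first two summands, I would simply invoke Corollary \ref{continuity of L bar almost}: the map $\dbvarphi \circ R + \rd_\phi \circ S$ is $C^1$ from $\ball{\Xz}{\trisol}{\delta}$ to $\Zz$, where $\delta$ is the radius produced in Proposition \ref{continuity of L bar intermediate}. These two terms do not depend on $\Omega$, so they are trivially $C^1$ as maps on $\ball{\Xz}{\trisol}{\delta}\times\Yz$.

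For the third summand, the argument already recorded around \eqref{eq40} does all the work. By Propositions \ref{continuity of diff bar operator} and \ref{continuity of diff varphi bar inv}, the map $\br{\psi}\mapsto \frac{\dbvarphibeta \br{\psi}\cdot(\dbvarphi \br{\psi})^{-\frac{1}{2\mu}}}{2\mu}$ is $C^1$ from $\ball{\Xz}{\trisol}{\delta'_2}$ into $\Wp$, being a product of the $C^1$ maps $\dbvarphibeta$ and $(\dbvarphi)^{-1/2\mu}$ in the Banach algebra $\Wp$ (Proposition \ref{W algebra}). Composing with the bounded bilinear multiplication $\Wp\times\Yz\to\Zz$ of Proposition \ref{embedding to Zn}, which is $C^\infty$ because it is bounded bilinear, gives a $C^1$ map $(\br{\psi},\Omega)\mapsto \frac{\dbvarphibeta \br{\psi}\cdot(\dbvarphi \br{\psi})^{-1/2\mu}}{2\mu}\,\Omega$ from $\ball{\Xz}{\trisol}{\delta'_2}\times\Yz$ to $\Zz$. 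Since $\delta\le\delta'_2$ by the way $\delta$ is assembled in Proposition \ref{continuity of L bar intermediate}, I would simply set $\delta'_3:=\delta$; then all three summands are $C^1$ on $\ball{\Xz}{\trisol}{\delta'_3}\times\Yz$, and their sum is $\br{L}$.

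I do not expect any genuine obstacle here, since every quantitative estimate has already been paid for in the preceding subsections. The only points that deserve a moment's care are that the $\Omega$-dependence of $\br{L}$ is \emph{linear} --- it enters solely through the bounded bilinear map of Proposition \ref{embedding to Zn}, so no smoothness estimate in the variable $\Omega$ is needed --- and that the radius $\delta'_3$ must not exceed any of the radii generated along the way, which is exactly why $\delta'_3=\delta$ (already the minimum of all the earlier $\delta'_i$ and $\delta_i$) is the natural choice.
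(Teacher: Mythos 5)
Your proof is correct and follows exactly the approach the paper intends: the paper's own proof is the one-line remark ``Corollary \ref{continuity of L bar almost} and \eqref{eq40},'' and your write-up simply unpacks that reference, including the minor bookkeeping that $\delta \le \delta'_2$ so the neighbourhoods are compatible.
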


\begin{proof}
	Corollary \ref{continuity of L bar almost} and \eqref{eq40}.
\end{proof}

\bigskip

\subsection{Isomorphism in high periodicity and implicit function theorem}
\label{subsec: Isomorphism in high periodicity and implicit function theorem}

Proposition \ref{continuity of L bar} gives positive answer to \hyperref[Q1]{Question 1} and \hyperref[Q2]{Question 2}. To apply implicit function theorem on $\br{L}$, we have to show
\begin{equation*}
	\frac{\rd \br{L}}{\rd \br{\psi}}\left( \br{\psi}_0, \Omega_0 \right) =  \frac{1}{2\mu^2}\left( (\br{\rd}_\varphi \br{\rd}_\varphi +\mu^2 \rd_\phi \rd_\phi)(\br{\rd}_\beta +2\mu)+(2\mu-1)(\br{\rd}_\beta +\br{\rd}_\varphi )  \right)
\end{equation*}
is an isomorphism, and as always our philosophy is that we decompose $\frac{\rd \br{L}}{\rd \br{\psi}}\left( \br{\psi}_0, \Omega_0 \right)$ into each Fourier mode and verify
\begin{equation*}
	\left( \frac{\rd \br{L}}{\rd \br{\psi}}\left( \br{\psi}_0, \Omega_0 \right) \right)^{(n)} = \frac{1}{2\mu^2} \left( \Pnp \Pnm (Q+1) + (2\mu -1) \left( Q-\n{P} \right) \right)
\end{equation*}
is an isomorphism with uniformly lower and upper bounded norm, i.e., there exists $m, M>0$ such that
\begin{equation} \label{eq41}
	m \leq \nrm{\Pnp \Pnm (Q+1) + (2\mu -1) \left( Q-\n{P} \right)}_{[\Xnz, \Znz]} \leq M
\end{equation}
for all $n \in \bbZ$. Here, note that by the way of construction of $\Xnz, \Znz$ space,
\begin{equation*}
	\Pnp \Pnm (Q+1): \Xnz \rightarrow \Znz
\end{equation*}
is an isometry, hence has norm exactly 1. Therefore, to show \eqref{eq41}, it suffices to show that
\begin{equation} \label{eq42}
	\Big\Vert (2\mu -1) \underbrace{\Big( Q -\n{P} \Big)}_{in \beta} \Big\Vert_{[\Xnz, \Znz]} \leq \alpha <1
\end{equation}
for all $n \in \bbZ$. However, observe that for $n=0$, $Q=P^{(0)}$ so that $Q-\n{P}$ is a zero map. Therefore, it suffices to show \eqref{eq42} for $n \in N\bbZ \backslash \{0\}$ and this is why we have estimated norm of differential operators only when $n \in N\bbZ \backslash \{0\}$. For $n \in N \bbZ \backslash \set{0}$,

\begin{equation} \label{eq43}
	\begin{aligned}
		& \nrm{(2\mu -1) in\beta}_{[\Xnz, \Znz]} \\
		&\leq (2\mu -1)\nrm{id}_{[\Wnp, \Znz]} \nrm{in \beta}_{[\Xnz, \Wnp]} \\
		&\leq (2\mu -1)\nrm{id}_{[\Wnp, \Znz]} \\
		& \qquad \cdot \left( \nrm{in \beta \Qinv \Pnminv}_{[\cbdn, \cbdn \oplus \cxiin]}+ \nrm{in \beta \Qinv \Pnminv}_{[\cxizn, \cbdn \oplus \cxiin]} \right) \\
		\underset{\text{Prop }\ref{continuity of id second}, \eqref{continuity of inbeta}}&{\leq} \frac{2\mu -1}{\brk{N}}  \Bigg[  \nmtn \left( \frac{\nrm{\beta \rd_\beta \xi_0}_{\cbd}+\brk{N} \ncbd{\beta \xi_0}}{(N-2)\mu +\frac{5}{6}} +2 \right) \\
		& \qquad + \frac{1.05}{(N-2)\mu +\frac{5}{6}} \left( \ncbd{\rd_\beta \xi_\infty}+2\mu \ncbd{\frac{\xi_\infty}{\beta}} \right) + \left( \frac{\brk{N}\mu}{(N-2)\mu +\frac{5}{6}}+1.05 \right) \ncbd{\frac{\xi_\infty}{\beta}} \Bigg]\\
		& \qquad \cdot \Bigg[ \left( \nmt + \frac{66}{5} + \left( \frac{\brk{N}+1}{(N-2)\mu +\frac{5}{6}}+\frac{66}{5} \right) \maxi{\nrm{\beta^\delta \xi_0}_{C_b}, \nrm{\beta^{-\delta}\xi_\infty}_{C_b}} \right) \\
		& \qquad \qquad \Bigg( \left( 1+ \frac{2}{(N-2)\mu + \frac{5}{6}} \right) \ncbd{\beta \rd_\beta \xi_0} + \nmtn \ncbd{\beta \xi_0} + \nmtn \ncbd{\beta^2 \rd_\beta \xi_0} \\
		& \qquad \qquad \qquad + \nmt \ncbd{\beta^2 \rd^2_\beta \xi_0}+1 \Bigg)+ \nmtn \ncbd{\beta \xi_0} \Bigg].
	\end{aligned}
\end{equation}

Now, it is time to specify $\xi_\infty$ so that we can calculate $\cbd$ norm of various functions in the last inequality. Let
\begin{equation*}
	\eta(\beta) = \begin{cases}
		C e^{\frac{1}{\left( \beta - \frac{3}{2} \right)^2 -\frac{1}{4}}} & (1<\beta<2) \\
		0 & (\text{otherwise}),
	\end{cases}
\end{equation*}
where the constant $C$ is chosen so that $\int_0^\infty \eta(\beta)\,d\beta=1$.\footnote{$C$ is approximately 142.25034 and local maximum of $\eta(\beta)$ is approximately 2.60541}. This is nothing but translated standard mollifier. And then, define $\xi_\infty, \xi_0$ by
\begin{equation*}
	\xi_\infty = \int_0^\beta \eta (s) \, ds , \qquad \xi_0 = 1-\xi_\infty.
\end{equation*}
With this explicit function and the fact that $\frac{1}{6} < \delta < \frac{1}{2}$ by \eqref{def of delta}, we have

\begin{equation} \label{exact bound of cbd norm}
	\begin{aligned}
		\ncbd{\beta \rd_\beta \xi_0} &\leq 5, \\
		\ncbd{\beta \xi_0} & \leq 2.83, \\
		\ncbd{\rd_\beta \xi_\infty} & \leq 3.5, \\
		\ncbd{\frac{\xi_\infty}{\beta}} & \leq 1,\\
		\ncbd{\beta^2 \rd_\beta \xi_0} & \leq 7.5, \\
		\ncbd{\beta^2 \rd^2_\beta \xi_0} & \leq 42, \\
		\maxi{\ncbd{\beta^\delta \xi_0}, \ncbd{\beta^{-\delta} \xi_\infty}} & \leq 1.42.
	\end{aligned}
\end{equation}

Substitute \eqref{exact bound of cbd norm} to \eqref{eq43} gives
\begin{align*}
	& \nrm{(2\mu -1)\left( Q-\n{P} \right)}_{[\Xnz, \Znz]} \\
	&\leq \frac{2\mu-1}{\brk{N}} \Bigg[ \nmtn \left( \frac{5+2.83\brk{N}}{(N-2)\mu+\frac{5}{6}}+2 \right)+\frac{1.05}{(N-2)\mu+\frac{5}{6}}(3.5+2\mu ) +\frac{\brk{N}\mu}{(N-2)\mu+\frac{5}{6}}+1.05 \Bigg] \cdot \\
	& \qquad \Bigg[\left( \frac{1.42\brk{N}}{(N-2)\mu +\frac{5}{6}}+ \frac{2.42}{(N-2)\mu +\frac{5}{6}}+31.95 \right) \left( \frac{10.33 \brk{N}}{(N-2)\mu +\frac{5}{6}}+ \frac{52}{(N-2)\mu +\frac{5}{6}}+6 \right)+\frac{2.83\brk{N}}{(N-2)\mu +\frac{5}{6}} \Bigg] \\
	& \eqqcolon K.
\end{align*}

To estimate K, we give a restriction
\begin{equation} \label{eq44}
	N>2000
\end{equation}
This restriction seems radical at the first glance. However, we set this lower bound, because estimation of $K$ with an assumption \eqref{eq44} gives the range of $N$ larger than 2000 for $K$ to be less than 1. For $N>2000$, we have (remember $\mu >\frac{2}{3}$)
\begin{equation} \label{eq45}
	\nmt<0.00076, \qquad \nmtn < \frac{1.00101}{\mu}.
\end{equation}
Substitute \eqref{eq45} to $K$, then we have
\begin{equation*}
	K \leq \frac{2\mu -1}{\brk{N}} \left(397+\frac{1090}{\mu} + \frac{1264}{\mu^2}+\frac{999}{\mu^3}+\frac{42}{\mu^4} \right).
\end{equation*}
Hence, if we choose $N$ so that
\begin{equation} \label{range of N}
	N > (2\mu -1)\left(397+\frac{1090}{\mu} + \frac{1264}{\mu^2}+\frac{999}{\mu^3}+\frac{42}{\mu^4} \right),
\end{equation}
then
\begin{equation*}
	\nrm{(2\mu -1)\left( Q-\n{P}\right)}_{[\Xnz, \Znz]} \leq K <1
\end{equation*}
for all $n \in N\bbZ \backslash \{0\}$ and accordingly
\begin{equation*}
	\frac{\rd \br{L}}{\rd \br{\psi}}\left( \br{\psi}_0, \Omega_0 \right) \in L\left(\Xz, \Zz \right)
\end{equation*}
becomes isomorphism. Combining all results in this section, we have:

\begin{theorem} \label{main thm in special coordinate}
	Let $\mu \in (\frac{2}{3}, \infty)$ be given. If
	\begin{equation*}
		N > (2\mu -1)\left(397+\frac{1090}{\mu} + \frac{1264}{\mu^2}+\frac{999}{\mu^3}+\frac{42}{\mu^4} \right),
	\end{equation*}
	then there exists $\varepsilon_N^*>0, \delta>0$ and $C^1$ map
	\begin{equation*}
		G: \ball{\Yz}{\Omega_0}{\varepsilon_N^*} \rightarrow \ball{\Xz}{\trisol}{\delta}
	\end{equation*}
	so that
	\begin{equation*}
		\br{L} \left( G(\Omega), \Omega \right)=0
	\end{equation*}
	for all $\Omega \in \ball{\Yz}{\Omega_0}{\varepsilon_N^*}$.
\end{theorem}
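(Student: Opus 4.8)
The plan is to read off Theorem \ref{main thm in special coordinate} as a direct application of the implicit function theorem for $C^1$ maps between Banach spaces, applied to $\br{L}$ at the trivial solution $(\trisol, \Omega_0)$. Three inputs are needed, and essentially all three have already been prepared. First, $(\trisol,\Omega_0)$ is by construction a zero of $\br{L}$: this is exactly how the trivial solution \eqref{trivial solz bar} was selected in Section \ref{subsec: Rescaling to eliminate decay}, so $\br{L}(\trisol,\Omega_0)=0$. Second, Proposition \ref{continuity of L bar} already supplies a radius $\delta'_3>0$ for which $\br{L}\colon \ball{\Xz}{\trisol}{\delta'_3}\times\Yz\to\Zz$ is $C^1$; this settles the choice of Banach spaces ($X=\Xz$, $Y=\Yz$, $Z=\Zz$) and the regularity requirement. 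Hence the only remaining task is to check that $\frac{\rd\br{L}}{\rd\br{\psi}}(\trisol,\Omega_0)\in L(\Xz,\Zz)$ is a topological isomorphism.

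For the isomorphism, I would decompose the Fréchet derivative \eqref{diff L bar} into Fourier modes via \eqref{diff L bar n}, obtaining in mode $n$ the operator
\[
  \Big(\tfrac{\rd\br{L}}{\rd\br{\psi}}(\trisol,\Omega_0)\Big)^{(n)}
  =\frac{1}{2\mu^2}\Big(\Pnp\Pnm(Q+1)+(2\mu-1)\big(Q-\n{P}\big)\Big).
\]
By Definition \ref{X, Z space} and Lemma \ref{induced Banach space}, $\Pnp\Pnm(Q+1)\colon \Xnz\to\Znz$ is an isometric isomorphism — this is precisely why $\Xnz,\Znz$ were defined as images under the inverse operators. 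For $n\notin N\bbZ$ both $\Xnz$ and $\Znz$ are trivial; for $n=0$ one has $Q=P^{(0)}$ so the perturbation $(2\mu-1)(Q-\n{P})$ vanishes; and for $n\in N\bbZ\backslash\{0\}$ the estimates assembled in \eqref{eq43}--\eqref{exact bound of cbd norm}, under hypothesis \eqref{range of N}, give the uniform bound $\nrm{(2\mu-1)(Q-\n{P})}_{[\Xnz,\Znz]}\le K<1$. Writing the mode as $\Pnp\Pnm(Q+1)\big(I+(\Pnp\Pnm(Q+1))^{-1}(2\mu-1)(Q-\n{P})\big)$ and noting that $\Pnp\Pnm(Q+1)$ being an isometric isomorphism has inverse of norm $1$, a Neumann series argument makes each mode invertible with inverse of norm at most $\frac{1}{1-K}$, a bound independent of $n$. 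Applying Proposition \ref{continuity between A^s} to this uniformly bounded sequence of inverses produces a bounded operator $\Zz\to\Xz$ that is a two-sided inverse of $\frac{\rd\br{L}}{\rd\br{\psi}}(\trisol,\Omega_0)$ (composition of induced maps being induced by composition of modes), so the derivative is an isomorphism.

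With $\br{L}(\trisol,\Omega_0)=0$, $\br{L}$ of class $C^1$ near $(\trisol,\Omega_0)$, and $\frac{\rd\br{L}}{\rd\br{\psi}}(\trisol,\Omega_0)$ an isomorphism, the implicit function theorem yields $\varepsilon_N^*>0$, a radius $\delta\in(0,\delta'_3]$, and a unique $C^1$ map $G\colon\ball{\Yz}{\Omega_0}{\varepsilon_N^*}\to\ball{\Xz}{\trisol}{\delta}$ with $G(\Omega_0)=\trisol$ and $\br{L}(G(\Omega),\Omega)=0$ for all $\Omega\in\ball{\Yz}{\Omega_0}{\varepsilon_N^*}$, which is the claim. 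The genuine difficulty — the long chain of operator-norm estimates in Sections \ref{subsec: Properties of operator dns}--\ref{subsec: Continuity results for differential operator in function space} that feeds into \eqref{eq43} — has already been carried out, so the only remaining subtlety is bookkeeping: one must make sure the constant $K$ is honestly uniform in $n\in N\bbZ\backslash\{0\}$ (each estimate carries an explicit $\brk{n}^{-1}$, or an $\brk{n}^{-1}\abs{(2\pm n)\mu-1}$ that is $O(1)$, factor), and that passage to $\calA^{0.5}$ preserves both the isomorphism and the norm bounds through Proposition \ref{continuity between A^s}. Once those are in place, invoking the implicit function theorem is routine.
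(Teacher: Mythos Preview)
Your proposal is correct and follows essentially the same approach as the paper: both invoke the implicit function theorem after establishing $C^1$-regularity via Proposition \ref{continuity of L bar} and the isomorphism property of $\frac{\rd\br{L}}{\rd\br{\psi}}(\trisol,\Omega_0)$ via the modewise decomposition where $\Pnp\Pnm(Q+1)$ is an isometric isomorphism and the perturbation $(2\mu-1)(Q-\n{P})$ has norm $K<1$ under hypothesis \eqref{range of N}. Your write-up simply makes explicit the Neumann-series and uniform-in-$n$ bookkeeping that the paper leaves implicit in its brief proof.
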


\begin{proof}
	Proposition \ref{continuity of L bar} says that
	\begin{equation*}
		\br{L}: \ball{\Xz}{\trisol}{\delta} \times \Yz \rightarrow \Zz
	\end{equation*}
	is $C^1$ and condition \eqref{range of N} implies
	\begin{equation*}
		\frac{\rd \br{L}}{\rd \br{\psi}}\left( \br{\psi}_0, \Omega_0 \right) \in L\left(\Xz, \Zz \right)
	\end{equation*}
	is an isomorphism. Then, implicit function theorem yields the result.
\end{proof}

\begin{remark}
	For the meaningful result, we hope $G(\Omega)$ is an real function when $\Omega$ is real (distribution). If we restrict our Banach space $\Xz, \Yz, \Zz$ to real function (or distribution), which is closed subspace of them as a fixed point of conjugate map, $\br{L}$ and $ \frac{\rd \br{L}}{\rd \br{\psi}}$ are still well-defined since differential operators send real distribution to the real one. Applying same argument to this real restriction of $L$ then gives the desired result.
\end{remark}

\bigskip

\section{Main theorem in original coordinate}
\label{sec: Main theorem in original coordinate}

\subsection{Regularity of stream function and change of coordinate}
\label{subsec: Regularity of stream function and change of coordinate}

So far, we have constructed solution $\br{\psi}^{(\Omega)} = G(\Omega)$ in Theorem \ref{main thm in special coordinate} for given $\Omega \in \ball{\Yz}{\Omega_0}{\varepsilon^*}$ which satisfies $\br{L} \left( \br{\psi}^{(\Omega)}, \Omega \right)=0$. Although $\br{\psi}^{(\Omega)}$ is determined by $\Omega$, we simply write $\br{\psi}$ instead of $\br{\psi}^{(\Omega)}$ for simplicity. The remained thing is to recover physical variable $w(x,t), u(x,t)$ and $\psi(x,t)$ from $\br{\psi}(\beta, \phi), \Omega(\phi)$ and to verify how the conditions and results on Theorem \ref{main theorem} is derived.

First, we eliminate bar notation. Remember, $\psi$ and $\br{\psi}$ are related through
\begin{align*}
	\psi_\beta &= \beta^{-2\mu} \br{\rd}_\beta \br{\psi},  \\
	\psi_\varphi &= \beta^{-2\mu} \br{\rd}_\varphi \br{\psi},  \\
	\psi_\phi &= \beta^{1-2\mu} \rd_\phi \br{\psi},  \\
	\psi_{\beta \phi} &= \beta^{-2\mu} \rd_\phi \br{\rd}_\beta \br{\psi},  \\
	\psi_{\beta \varphi} &= \beta^{-2\mu -1} \left( \bar{\rd}_\varphi +1 \right) \br{\rd}_\beta \br{\psi}.
	\end{align*}
Since  $\br{\psi}$ satisfies $\nrm{\br{\psi}-\trisol}_{\Xz}<\delta<$ min$(\delta_1, \delta_2, \delta_3, \delta_4, \delta_5)$ where $\delta_1, \delta_2, \delta_3, \delta_4, \delta_5$ is as in \eqref{eq31}, \eqref{eq32}, \eqref{eq33}, \eqref{phi bound} and \eqref{id bound}, we have boundedness

\begin{equation} \label{boundedness of bar derivative}
	\begin{aligned}
		-\frac{3}{2} \leq \dbbeta \br{\psi} \leq -\frac{1}{2}, \quad\frac{1}{2} \leq \dbvarphi & \br{\psi} \leq \frac{3}{2}, \quad-3\mu \leq \dbvarphibeta \br{\psi} \leq -\mu, \\
		-1 \leq \rd_\phi \br{\psi} \leq 1, \quad -1 \leq \rd_\phi \dbbeta & \br{\psi} \leq 1 , \quad \frac{1}{4\mu-2} \leq \br{\psi} \leq \frac{3}{4\mu -2}
	\end{aligned}
\end{equation}
for all $(\beta, \phi) \in \RT$. Hence,

\begin{equation} \label{never touch zero}
	\psi_\beta <0, \qquad \psi_\varphi>0, \qquad \psi_{\beta \varphi}<0
\end{equation}
so that \eqref{L before} is well-defined. Also, various assumption we imposed on $\psi(\beta,\phi), a(\beta,\phi)$ in Proposition \ref{new coordinate prop} and Section \ref{subsec: Equations in new coordinate} are now justified. Furthermore, by Proposition \ref{continuity of atomic diff operator} and Proposition \ref{continuity of diff bar operator},
\begin{equation*}
	\br{\psi}, \qquad \dbbeta \br{\psi}, \qquad \dbvarphi \br{\psi}, \qquad \dbvarphibeta \br{\psi}, \qquad \dbphibeta \br{\psi}
\end{equation*}
are all belong to $\Wp=\calA^{0.5} \left( \cbdn \oplus \cxizn \oplus \cxiin   \right) $. Through an embedding
\begin{equation*}
	\calA^{0.5} \left( \cbdn \oplus \cxizn \oplus \cxiin \right)  \hookrightarrow \calA^{0.5} \left( C_b \right) \hookrightarrow \calA^0 \left( C_b \right) \underset{\text{M-test}}{\hookrightarrow} C_b \left( \RT \right),
\end{equation*}
it implies
\begin{equation} \label{psi regularity}
	\psi, \qquad\psi_\beta, \qquad \psi_\varphi, \qquad \psi_\phi, \qquad \psi_{\beta \phi}, \qquad \psi_{\beta \varphi}
\end{equation}
all belong to $C \left( \bbR_+ \times \bbT \right)$, where $C \left( \bbR_+ \times \bbT \right)$ is a set of continuous (not need to be bounded) function on $\bbR_+ \times \bbT$. Now, we can justify special change of coordinate we did in Section \ref{subsec: Special change of coordinate: From spiral to line}.

\begin{proposition} \label{C^1 change of coordinate}
	A map $T \coloneqq B \circ A: \bbR_+ \times \bbT \overset{A}{\longrightarrow} \bbR \times \bbT \overset{B}{\longrightarrow} \bbR^2 \backslash \set{0}$ defined by composition of two maps 
	\begin{equation*}
		(\beta, \phi) \quad \xrightarrow[\begin{cases}
			a=\frac{1}{2} \log \left( -\frac{\psi_\beta}{\mu} \right) \\
			\theta = \beta + \phi
		\end{cases}]{A} \quad  (a, \theta) \quad \xrightarrow[\begin{cases}
			z_1=e^a \cos \theta \\
			z_2=e^a \sin \theta
		\end{cases}]{B} \quad  (z_1, z_2)
	\end{equation*}
\end{proposition}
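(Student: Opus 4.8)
The plan is to split $T$ through its two factors. The map $B:(a,\theta)\mapsto(e^{a}\cos\theta,e^{a}\sin\theta)$ is the classical log-polar chart: it is a $C^{\infty}$-diffeomorphism of $\bbR\times\bbT$ onto $\rmz$, with inverse $z\mapsto(\log|z|,\arg z)$ and $\det J_{B}=e^{2a}$. Hence it remains to show that
\[
A:\bbR_+\times\bbT\to\bbR\times\bbT,\qquad A(\beta,\phi)=\Bigl(\tfrac12\log\bigl(-\tfrac{\psi_\beta}{\mu}\bigr),\,\beta+\phi\Bigr),
\]
is a $C^{1}$-diffeomorphism; the composition $T=B\circ A$ is then a $C^{1}$-diffeomorphism of $\bbR_+\times\bbT$ onto $\rmz$.

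\textbf{Step 1: $C^{1}$ regularity of $A$.} The second component is smooth, so it suffices to treat $a=\tfrac12\log(-\psi_\beta/\mu)$. By \eqref{never touch zero} we have $\psi_\beta<0$ on $\bbR_+\times\bbT$, so the logarithm is applied to a strictly positive function; since $\psi_\beta$, $\psi_{\beta \phi}$, $\psi_{\beta \varphi}$ all lie in $C(\bbR_+\times\bbT)$ by \eqref{psi regularity} and $\rd_\varphi=\rd_\phi-\rd_\beta$ gives $\psi_{\beta\beta}=\psi_{\beta \phi}-\psi_{\beta \varphi}\in C(\bbR_+\times\bbT)$, the function $\psi_\beta$ is in fact $C^{1}$ there. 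Consequently $a\in C^{1}(\bbR_+\times\bbT)$ with $a_\beta=\tfrac12\,\psi_{\beta\beta}/\psi_\beta$ and $a_\phi=\tfrac12\,\psi_{\beta \phi}/\psi_\beta$, so $A\in C^{1}$.

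\textbf{Step 2: nondegeneracy and bijectivity.} The Jacobian matrix of $A$ is $J_A=\left(\begin{smallmatrix} a_\beta & a_\phi\\ 1 & 1\end{smallmatrix}\right)$, so $\det J_A=a_\beta-a_\phi=-a_\varphi$ (and $\det J_T=\det J_B\cdot\det J_A=-e^{2a}a_\varphi$, consistent with \eqref{determinant}). From $a_\varphi=\tfrac12\,\psi_{\beta \varphi}/\psi_\beta$ and the signs $\psi_{\beta \varphi}<0$, $\psi_\beta<0$ of \eqref{never touch zero} we get $a_\varphi>0$ everywhere; hence $J_A$ is invertible at every point and, by the inverse function theorem, $A$ is a local $C^{1}$-diffeomorphism, in particular an open map. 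For bijectivity fix $(a_0,\theta_0)\in\bbR\times\bbT$: any preimage $(\beta,\phi)$ must satisfy $\phi\equiv\theta_0-\beta\pmod{2\pi}$, so it is enough to show that $h(\beta):=a(\beta,\theta_0-\beta)$ is a bijection of $\bbR_+$ onto $\bbR$. It is $C^{1}$ with $h'(\beta)=a_\beta-a_\phi=-a_\varphi<0$, hence strictly decreasing; and using $\psi_\beta=\beta^{-2\mu}\dbbeta\br{\psi}$ together with the uniform bound $-\tfrac32\le\dbbeta\br{\psi}\le-\tfrac12$ from \eqref{boundedness of bar derivative}, one gets $\tfrac{1}{2\mu}\beta^{-2\mu}\le-\psi_\beta/\mu\le\tfrac{3}{2\mu}\beta^{-2\mu}$, so $a(\beta,\phi)=-\mu\log\beta+O(1)$ uniformly in $\phi$; thus $h(\beta)\to+\infty$ as $\beta\to0^{+}$ and $h(\beta)\to-\infty$ as $\beta\to\infty$. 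Therefore $h$ is a continuous strictly monotone surjection onto $\bbR$, which produces a unique $\beta$ with $h(\beta)=a_0$ and then $\phi=\theta_0-\beta\bmod 2\pi$; this shows $A$ is a bijection.

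\textbf{Step 3: conclusion and the hard part.} A bijective $C^{1}$ local diffeomorphism is a global $C^{1}$-diffeomorphism: the inverse is continuous because $A$ is open, and locally it coincides with the $C^{1}$ inverses supplied by Step 2. Hence $A$, and therefore $T=B\circ A$, is a $C^{1}$-diffeomorphism of $\bbR_+\times\bbT$ onto $\rmz$. The step requiring the most care is Step 2: extracting from the Banach-space bounds \eqref{boundedness of bar derivative}–\eqref{never touch zero} the global sign $a_\varphi>0$ and the endpoint behavior of $a$ at $\beta\to0^{+}$ and $\beta\to\infty$, and checking that $h(\beta)=a(\beta,\theta_0-\beta)$ is genuinely well defined even though $\theta_0-\beta$ winds around $\bbT$ infinitely often. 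The regularity of Step 1 and the final assembly of Step 3 are routine.
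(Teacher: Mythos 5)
Your proof is correct and follows essentially the same route as the paper: split $T=B\circ A$, observe $B$ is the standard smooth log-polar chart, and prove $A$ is a $C^1$-diffeomorphism by combining the strict sign of $-a_\varphi$ (which gives both local invertibility and monotonicity along the level sets $\beta+\phi=\mathrm{const}$) with the endpoint asymptotics $a\to\pm\infty$ coming from $\psi_\beta=\beta^{-2\mu}\dbbeta\br{\psi}$ and the bound \eqref{boundedness of bar derivative}. Your Steps 1 and 2 are somewhat more explicit than the paper's (notably in verifying $\psi_{\beta\beta}=\psi_{\beta\phi}-\psi_{\beta\varphi}\in C$ so that $a$ is genuinely $C^1$, and in spelling out the $\beta\to 0^+$ and $\beta\to\infty$ behavior rather than appealing tersely to the intermediate value theorem), but the underlying argument and key facts used are the same.
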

is a $C^1$-diffeomorphism.

\begin{proof}
	It is well known that polar coordinate change map $B$ is a $C^\infty$-diffeomorphism, so it suffices to prove the result for $A$.
	
	First,
	\begin{equation*}
		a(\beta, \theta) =\frac{1}{2} \log \left( -\frac{\psi_\beta}{\mu} \right)
	\end{equation*}
	is well defined since $\psi_\beta$ is strictly negative by \eqref{never touch zero}.
	
	Second, we prove surjectivity. Since
	\begin{equation*}
		\psi_\beta = \beta^{-2\mu} \dbbeta \br{\psi}
	\end{equation*}
	and
	\begin{equation*}
		 \, \dbbeta \br{\psi} \in C \left( \bbR_+ \times \bbT \right) \qquad \text{with} \qquad -\frac{3}{2} \leq \dbbeta \br{\psi} \leq -\frac{1}{2},
	\end{equation*}
	intermediate value theorem guarantees surjectivity.
	
	Last, we prove injectivity. Suppose
	\begin{equation*}
		\left( a(\beta_1, \phi_1),  \beta_1+\phi_1 \right)=\left( a(\beta_2, \phi_2), \beta_2+\phi_2 \right) = ( a_0, \theta_0).
	\end{equation*}
	The set of $(\beta, \phi) \in \bbR_+ \times \bbT$ which satisfies $\beta + \phi = \theta_0$ is represented as Figure \ref{Fig 3}.
	
	\begin{figure}[h]
	\includegraphics[trim = 40mm 190mm 40mm 33mm, clip, width=12cm]{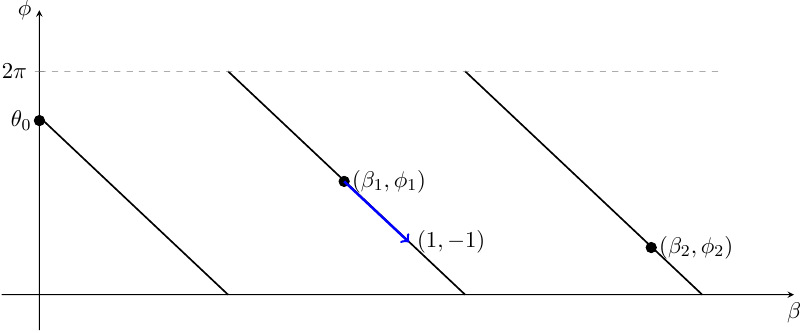}
	\caption{A level set of $\beta+\phi$} \label{Fig 3}
	\end{figure}
	
	
	Then, it can be easily seen that $(\beta_1, \phi_1 ) \neq (\beta_2, \phi_2)$ as in Figure \ref{Fig 3} leads to contradiction since directional derivative of $a(\beta, \phi)$ to the direction of $(1,-1)$ is
	\begin{equation*}
		D_{(1, -1)} a(\beta, \phi) = \rd_\beta a(\beta, \phi) - \rd_\phi a(\beta, \phi) = -a_\varphi \underset{\eqref{new coordinate}}{=} -\frac{\psi_{\beta \varphi}}{\psi_\beta}
	\end{equation*}
	which is strictly negative for all $(\beta, \phi) \in \bbR_+ \times \bbT$. Therefore, $A$ is $C^1$ bijective map. Furthermore, since
	\begin{equation*}
		J_A = \det \begin{pmatrix}
			a_\beta & a_\phi \\
			1 & 1
		\end{pmatrix}= -a_\varphi <0,
	\end{equation*}
	for all $(\beta, \phi) \in \bbR_+ \times \bbT$, its inverse is also $C^1$ by inverse function theorem.
\end{proof}

Before doing change of coordinate, we analyze the regularity of vorticity $w(\beta, \phi)$. As we have relation
\begin{equation*}
	w(\beta, \phi) = \left( \psi_\varphi \right)^{-\frac{1}{2\mu}} \Omega(\phi),
\end{equation*}
we have to investigate regularity of $\Omega(\phi)$. However, the problem is that $\Omega(\phi)$ belongs to $\calA^{-0.5} \left( \cn \right)$, from which we cannot guarantee that $\Omega$ is continuous, or even a function! Therefore, we impose further restriction on $\Omega(\phi)$. We assume 
\begin{equation*}
	\Omega \in \calA^{-0.5} \left( \cn \right) \cap L^p (\bbT)
\end{equation*}
so that 
\begin{equation*}
	w(\beta, \phi) \in L^p_{\textnormal{loc}}(\bbR_+ \times \bbT)
\end{equation*}
for some fixed $p \in [1,2\mu)$\footnote{The reason we require $p \in [1,2\mu)$ is to guarantee the local integrability of $w(x,t)$. This will be seen at Proposition \ref{w is L^p}}. Then, we have

\begin{equation} \label{regularity of dependent variables}
		\begin{aligned}
			&\psi(\beta, \phi) \in C^1(\bbR_+ \times \bbT), \quad   w(\beta, \phi) \in L^p_{\textnormal{loc}}(\bbR_+ \times \bbT) \\
			\underset{\text{Prop } \ref{C^1 change of coordinate}}&{\Longrightarrow} \quad  \tld{\psi}(z) \in C^1 \left( \bbR^2 \backslash \set{0} \right), \,  \tld{u}(z) \coloneqq \nabla^\perp_{z} \tld{\psi}(z) \in C \left( \bbR^2 \backslash \set{0} \right),\tld{w}(z) \in L^p_{\textnormal{loc}} \left( \bbR^2 \backslash \set{0} \right)   \\
			\underset{\eqref{tilde func}}&{\Longrightarrow} \psi(\cdot,t) \in C^1 \left( \bbR^2 \backslash \set{0} \times \bbR_+ \right), \, u(\cdot,t) \in C \left( \bbR^2 \backslash \set{0} \times \bbR_+ \right), \, w(\cdot,t) \in L^p_{\textnormal{loc}} \left( \bbR^2 \backslash \set{0} \times \bbR_+ \right).
		\end{aligned}
	\end{equation}
	
	Also, reading the Section \ref{subsec: Equations in new coordinate} and \ref{subsec: Rescaling to eliminate decay} backwards, we can recover vorticity stream relation
\begin{equation} \label{vorticity stream relation}
\begin{aligned}
	& \quad \br{L}(\br{\psi}(\beta, \phi), \Omega(\phi))=0 \\
	& \Rightarrow L\left( \psi(\beta, \phi), \Omega(\phi) \right) =0 \\
	& \Rightarrow \begin{pmatrix}
		\rd_\beta \\
		\rd_\phi \\
	\end{pmatrix} \cdot \Bigg( \underbrace{\abs{J_T} J_T^{-1}}_{\text{adj}(J_T)} \underbrace{\left(J_T^{-1} \right)^t \begin{pmatrix}
		\rd_\beta \\
		\rd_\phi \\
	\end{pmatrix}}_{\nabla_z} \psi \Bigg) = \abs{J_T}w \\
	\underset{\eqref{adjoint formula}}&{\Rightarrow} \nabla_z \cdot \left(\nabla_z \tld{\psi} \right) = \tld{w}
\end{aligned}
\end{equation}
in the sense of distribution.

\bigskip

\subsection{Weak solution on $\rmz \times [a,\infty)$ for $a>0$}
\label{subsec: Weak solution; weak version}
	
	To show $w(x,t), u(x,t)$ is a weak solution of Euler equation, we mollify $w(\beta, \phi)$, due to the lacks of regularity. Let $\eta$ be a standard mollifier in one dimension and we write $\Omega^\varepsilon(\phi)$ to denote $\left( \Omega(\cdot) * \frac{1}{\varepsilon}\eta \left( \frac{\cdot}{\varepsilon} \right) \right) (\phi)$. Then, for $w(\beta, \phi)= \left( \psi_\varphi (\beta, \phi) \right)^{-\frac{1}{2 \mu}} \Omega(\phi)$, we define $w^{\epdot} (\beta, \phi)$ by
	\begin{equation} \label{one directed mollification}
		w^{\epdot}(\beta, \phi) = \left( \psi_\varphi (\beta, \phi) \right)^{-\frac{1}{2 \mu}} \Omega^\varepsilon (\phi)
	\end{equation}
(note that we only mollify $\Omega$ since $\left( \psi_\varphi \right)^{-\frac{1}{2 \mu}}$ is already $C^1$). Now we show that $w^{\epdot} \xrightarrow[\varepsilon \searrow 0]{} w$ in $L^p_{\textnormal{loc}}(\bbR_+ \times \bbT)$ for $p < \infty$.

\begin{proposition} \label{L^p convergence of w}
	For any $1 \leq p < \infty$ and $0 < a < b  < \infty$,
	\begin{equation*}
		w^{\epdot} \longrightarrow w
	\end{equation*}
	in $L^p \left( [a, b] \times \bbT \right)$. 
\end{proposition}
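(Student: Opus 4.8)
The plan is to reduce the convergence $w^{\dot\varepsilon}\to w$ in $L^p([a,b]\times\bbT)$ to the convergence $\Omega^\varepsilon\to\Omega$ in $L^p(\bbT)$, which is the standard mollification statement. First I would factor the difference: since $w^{\dot\varepsilon}(\beta,\phi)-w(\beta,\phi)=\left(\psi_\varphi(\beta,\phi)\right)^{-\frac{1}{2\mu}}\left(\Omega^\varepsilon(\phi)-\Omega(\phi)\right)$, it suffices to control the prefactor $\left(\psi_\varphi\right)^{-\frac{1}{2\mu}}$ uniformly on $[a,b]\times\bbT$. By \eqref{boundedness of bar derivative} and \eqref{varphi and bar var phi}, we have $\psi_\varphi=\beta^{-2\mu}\dbvarphi\br\psi$ with $\tfrac12\le\dbvarphi\br\psi\le\tfrac32$, so on $[a,b]\times\bbT$ the quantity $\psi_\varphi$ is bounded above and below by positive constants depending only on $a,b,\mu$; hence $\left(\psi_\varphi\right)^{-\frac{1}{2\mu}}\le C(a,b,\mu)$ there. (Its continuity, hence measurability, follows from \eqref{psi regularity}.)

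Next I would estimate, using this bound,
\begin{equation*}
	\nrm{w^{\dot\varepsilon}-w}_{L^p([a,b]\times\bbT)}^p=\int_a^b\int_{\bbT}\left(\psi_\varphi\right)^{-\frac{p}{2\mu}}\abs{\Omega^\varepsilon(\phi)-\Omega(\phi)}^p\,d\phi\,d\beta\le C(a,b,\mu)^p\,(b-a)\,\nrm{\Omega^\varepsilon-\Omega}_{L^p(\bbT)}^p.
\end{equation*}
Since $\Omega\in L^p(\bbT)$ by the standing assumption $\Omega\in\calA^{-0.5}(\cn)\cap L^p(\bbT)$, and $\eta$ is a standard mollifier, the classical fact that mollification converges in $L^p$ for $1\le p<\infty$ gives $\nrm{\Omega^\varepsilon-\Omega}_{L^p(\bbT)}\to0$ as $\varepsilon\searrow0$. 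Plugging this in yields $\nrm{w^{\dot\varepsilon}-w}_{L^p([a,b]\times\bbT)}\to0$, which is the claim.

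There is essentially no hard obstacle here: the only point requiring care is that the mollification $\Omega^\varepsilon=\Omega*\frac1\varepsilon\eta(\tfrac\cdot\varepsilon)$ is a mollification on the circle $\bbT$ (periodic convolution), so one should either invoke the periodic version of the approximate-identity theorem or lift $\Omega$ to a periodic function on $\bbR$ and restrict; either way this is routine. A minor technical remark is that one must also know $w^{\dot\varepsilon}$ itself lies in $L^p([a,b]\times\bbT)$, which follows from the same prefactor bound together with $\Omega^\varepsilon\in C^\infty(\bbT)\subset L^p(\bbT)$. No use of the finer $\calA^{-0.5}$ structure of $\Omega$ is needed for this proposition; it enters only later when one passes to the equation on all of $\bbR_+\times\bbT$.
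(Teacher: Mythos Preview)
Your proof is correct and follows essentially the same approach as the paper: factor out the bounded prefactor $(\psi_\varphi)^{-1/(2\mu)}$ on $[a,b]\times\bbT$ and reduce to the standard mollifier convergence $\Omega^\varepsilon\to\Omega$ in $L^p(\bbT)$. The paper's argument is slightly terser (it simply writes the sup of the prefactor without invoking \eqref{boundedness of bar derivative} and \eqref{varphi and bar var phi} explicitly), but the substance is identical.
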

\begin{proof}
	\begin{align*}
		\nrm{w^{\epdot}-w}_{L^p \left( [a, b] \times \bbT \right)} &= \nrm{\left( \psi_\varphi \right)^{-\frac{1}{2\mu}}\left( \Omega^\varepsilon - \Omega \right)}_{L^p \left( [a, b] \times \bbT \right)} \\
		&\leq \left(\sup_{(\beta, \phi) \in [a, b] \times \bbT} \left( \psi_\varphi \right)^{-\frac{1}{2\mu}} \right)  \big\Vert \underbrace{\Omega^\varepsilon - \Omega}_{\substack{\text{does not} \\ \text{depend on }\beta}} \big\Vert_{L^p \left( [a, b] \times \bbT \right)} \\
		&= \underbrace{(b-a)^{\frac{1}{p}} \left(\sup_{(\beta, \phi) \in [a, b] \times \bbT} \left( \psi_\varphi \right)^{-\frac{1}{2\mu}} \right)}_{< \infty}  \nrm{\Omega^\varepsilon - \Omega}_{L^p(\bbT)}  \\
		& \xrightarrow[\varepsilon \searrow 0]{} 0
	\end{align*}
	by a property of standard mollifier.
\end{proof}

\begin{remark} \label{L^p conv remark}
	If $p=\infty$, by the inclusion $L^q (\bbT) \subset L^r (\bbT)$ for all $1 \leq r \leq q \leq \infty$, the convergence holds for all $1 \leq p < \infty$. In particular, $w^{\epdot} \rightarrow w$ in $L^1 ([a,b] \times \bbT )$ regardless of $p \geq 1$.
\end{remark}

Since $w^{\epdot}(\beta, \phi)$ is $C^1$ and $\left( \psi_\varphi \right)^{\frac{1}{2\mu}} w^{\epdot}$ depends only on $\phi$, reading \eqref{EE to ODE} backwards gives
\begin{equation} \label{eq47}
	-\tld{w}^{\epdot}(z)+ \left( \nabla^\perp_z \tld{\psi}(z) -\mu z \right) \cdot \nabla_z \tld{w}^{\epdot}(z)=0.
\end{equation}
Using \eqref{eq47}, we can verify the first equation of \eqref{EEz}.

\begin{proposition} \label{weak sol for EEz}
	\begin{equation*}
		(-1+2\mu) \tld{w} + \nabla_z \cdot \left( \tld{w} \left( \nabla^\perp_z \tld{\psi} - \mu z \right) \right) =0
	\end{equation*}
	on $\bbR^2 \backslash \set{0}$ in weak sense, i.e., for any test function $\tld{f} \in C^\infty_c \left( \bbR^2 \backslash \set{0} \right)$,
	\begin{equation*}
		\int_{\bbR^2 \backslash \set{0}} (-1+2\mu) \tld{w} \tld{f} - \tld{w}\left( \left( \nabla^\perp_z \tld{\psi} - \mu z \right) \cdot \nabla_z \tld{f} \right)=0.
	\end{equation*}
\end{proposition}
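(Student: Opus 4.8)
The plan is to obtain the weak formulation for $\tilde w$ by passing to the limit $\varepsilon \searrow 0$ in the mollified identity \eqref{eq47}, which holds pointwise (even classically) because $w^{\epdot}$ is $C^1$. First I would rewrite \eqref{eq47} in divergence form: since $\nabla_z^\perp \tld\psi - \mu z$ is \emph{not} divergence free (one computes $\nabla_z \cdot (\nabla_z^\perp \tld\psi - \mu z) = -2\mu$), the identity $-\tld w^{\epdot} + (\nabla_z^\perp \tld\psi - \mu z)\cdot \nabla_z \tld w^{\epdot} = 0$ is equivalent, after adding $-2\mu \tld w^{\epdot}$ inside a divergence and correcting, to
\begin{equation*}
	(-1 + 2\mu)\tld w^{\epdot} + \nabla_z \cdot \big( \tld w^{\epdot}(\nabla_z^\perp \tld\psi - \mu z)\big) = 0
\end{equation*}
on $\bbR^2 \backslash \set{0}$ in the classical sense. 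Concretely, $\nabla_z \cdot (\tld w^{\epdot} V) = (\nabla_z \tld w^{\epdot})\cdot V + \tld w^{\epdot}(\nabla_z \cdot V)$ with $V = \nabla_z^\perp \tld\psi - \mu z$ and $\nabla_z \cdot V = -2\mu$, so this rearrangement is purely algebraic given \eqref{eq47}.

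Next I would test this classical identity against an arbitrary $\tld f \in C_c^\infty(\bbR^2 \backslash \set{0})$ and integrate by parts in the divergence term (legitimate because $\tld w^{\epdot} V$ is $C^1$ and $\tld f$ has compact support away from the origin), obtaining
\begin{equation*}
	\int_{\bbR^2 \backslash \set{0}} (-1 + 2\mu)\tld w^{\epdot} \tld f - \tld w^{\epdot}\big( (\nabla_z^\perp \tld\psi - \mu z)\cdot \nabla_z \tld f \big) \, dz = 0
\end{equation*}
for every $\varepsilon > 0$. The final step is to let $\varepsilon \searrow 0$. On $\supp \tld f$, which is a compact subset of $\bbR^2 \backslash \set{0}$, I would pull back to $(\beta,\phi)$-coordinates via the $C^1$-diffeomorphism $T$ of Proposition \ref{C^1 change of coordinate}; the preimage is contained in some $[a,b]\times \bbT$ with $0 < a < b < \infty$, and Proposition \ref{L^p convergence of w} (with Remark \ref{L^p conv remark}, taking $p = 1$) gives $w^{\epdot} \to w$ in $L^1([a,b]\times\bbT)$, hence $\tld w^{\epdot} \to \tld w$ in $L^1(\supp \tld f)$ after accounting for the bounded Jacobian. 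Since $\tld f$ and $(\nabla_z^\perp \tld\psi - \mu z)\cdot \nabla_z \tld f$ are continuous (the former compactly supported, the latter continuous on $\supp \tld f$ because $\tld\psi \in C^1$ by \eqref{regularity of dependent variables}) and therefore bounded on $\supp \tld f$, both integrals converge to their counterparts with $\tld w^{\epdot}$ replaced by $\tld w$, which yields the claimed weak identity.

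The only mild subtlety — the place I would be most careful — is the justification that reading \eqref{EE to ODE} backwards is valid for $w^{\epdot}$, i.e.\ that \eqref{eq47} genuinely holds: this requires that $w^{\epdot}(\beta,\phi) = (\psi_\varphi)^{-1/2\mu}\Omega^\varepsilon(\phi)$ is of the form $(\psi_\varphi)^{-1/2\mu}$ times a function of $\phi$ alone (true by construction), that $(\psi_\varphi)^{\frac{1}{2\mu}} w^{\epdot}$ is $C^1$ in $(\beta,\phi)$ (true since $\psi_\varphi \in C^0$ and is bounded away from $0$ by \eqref{boundedness of bar derivative}–\eqref{never touch zero}, and $\Omega^\varepsilon$ is smooth), and that the chain-rule manipulations of \eqref{EE to ODE} are reversible given the nonvanishing of $\psi_\varphi$ and $a_\varphi$ established in \eqref{never touch zero}. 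Everything else — the algebraic rearrangement into divergence form, the integration by parts, and the $L^1$ passage to the limit — is routine.
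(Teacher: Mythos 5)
Your overall strategy — test the mollified identity \eqref{eq47} against $\tld f$, integrate by parts to move derivatives off $\tld w^{\epdot}$, and pass to the limit via the $L^1_{\textnormal{loc}}$ convergence of Proposition \ref{L^p convergence of w} — is the same as the paper's, and the limiting argument you outline is correct. However, there is one step that as you phrase it is not justified by the regularity actually established.

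You assert that $\nabla_z \cdot(\nabla_z^\perp\tld\psi - \mu z) = -2\mu$ as a \emph{classical} (pointwise) identity, and call the rearrangement into divergence form ``purely algebraic.'' This uses $\nabla_z\cdot\nabla_z^\perp\tld\psi = -\partial_{z_1}\partial_{z_2}\tld\psi + \partial_{z_2}\partial_{z_1}\tld\psi = 0$, which requires $\tld\psi \in C^2$. But \eqref{regularity of dependent variables} only gives $\tld\psi \in C^1(\bbR^2\backslash\{0\})$; the second derivatives of $\tld\psi$ need not exist pointwise. Consequently, $\nabla_z\cdot(\tld w^{\epdot}(\nabla_z^\perp\tld\psi-\mu z))$ does not make classical sense, and the Leibniz expansion $\nabla_z\cdot(\tld w^{\epdot}V) = \nabla_z\tld w^{\epdot}\cdot V + \tld w^{\epdot}\nabla_z\cdot V$ cannot be used literally. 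The fix — which is what the paper does — is to keep the velocity-direction derivative on the \emph{product} $\tld w^{\epdot}\tld f \in C^1_c$ rather than on $V$: write
\begin{equation*}
\int (-1+2\mu)\tld w^{\epdot}\tld f - \tld w^{\epdot}\,V\cdot\nabla_z\tld f
= \int 2\mu\,\tld w^{\epdot}\tld f - V\cdot\nabla_z(\tld w^{\epdot}\tld f),
\end{equation*}
then split $V = \nabla_z^\perp\tld\psi - \mu z$ and use that $\nabla_z^\perp\tld\psi$ is \emph{weakly} divergence free (which only needs $\tld\psi\in C^1$, tested against $C^1_c$ functions by approximation) to kill the first piece, and integrate by parts classically only in the smooth vector field $\mu z$ for the second. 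This yields $\int V\cdot\nabla_z(\tld w^{\epdot}\tld f) = -\int 2\mu\,\tld w^{\epdot}\tld f$, hence the claimed identity for each $\varepsilon$, and your $\varepsilon\searrow 0$ limit then finishes. So the conclusion is right, and the route is the same as the paper's; you just should not treat $\nabla_z\cdot V$ as a pointwise quantity.
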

\begin{proof}
	For any test function  $\tld{f} \in C^\infty_c \left( \bbR^2 \backslash \set{0} \right)$, we have
	
	\begin{align*}
		& \int_{\rmz} \tld{w}^{\epdot} \tld{f} + \tld{w}^{\epdot} \left( \left( \nabla^\perp_z \tld{\psi} - \mu z \right) \cdot \nabla_z \tld{f} \right) \\
		\underset{\eqref{eq47}}&{=} \int_{\rmz} \left( \left( \nabla^\perp_z \tld{\psi} - \mu z \right) \cdot \nabla_z \tld{w}^{\epdot} \right) \tld{f} + \tld{w}^{\epdot} \left( \nabla^\perp_z \tld{\psi} - \mu z \right) \cdot \nabla_z \tld{f} \\
		\underset{\text{Leibniz}}&{=} \int_{\rmz} \left( \nabla^\perp_z \tld{\psi} - \mu z \right) \cdot \nabla_z \left( \tld{w}^{\epdot} \tld{f} \right) \\
		&= \int_{\rmz} \underbrace{\nabla^\perp_z \tld{\psi} \cdot \nabla_z \left( \tld{w}^{\epdot} \tld{f} \right)}_{\substack{=0  \,( \because \, \nabla^\perp_z \psi \text{ is weakly}\\ \text{divergence free)}} } - \mu z \cdot \nabla_z \left( \tld{w}^{\epdot} \tld{f} \right) \\
		\underset{IBP}&{=} \int_{\rmz} \left( \mu \nabla_z \cdot z \right) \tld{w}^{\epdot} \tld{f} \\
		&= \int_{\rmz} 2\mu \tld{w}^{\epdot} \tld{f}.
	\end{align*}
	Therefore, we have
	\begin{equation} \label{eq48}
		\int_{\bbR^2 \backslash \set{0}} (-1+2\mu) \tld{w}^{\epdot} \tld{f} - \tld{w}^{\epdot} \left( \left( \nabla^\perp_z \tld{\psi} - \mu z \right) \cdot \nabla_z \tld{f} \right)=0
	\end{equation}
	for all $\varepsilon>0$. Let $\text{supp}\tld{f}=K$ and take $0<a < b < \infty$ such that $T^{-1}(K) \subset [a, b] \times \bbT$. Then,
	\begin{align*}
		& \abs{\int_{\rmz} \left(\tld{w}^{\epdot} \tld{f} - \tld{w}\tld{f}\right) \, dz}  \\
		&= \abs{\int_{T \left( [a,b] \times \bbT \right)} \left(\tld{w}^{\epdot} \tld{f} - \tld{w}\tld{f}\right) \, dz} \\
		&= \abs{\int_{[a,b] \times \bbT} \left( w^{\epdot}f-wf \right) \abs{\det J_T}\, d\beta d\phi} \\
		&\leq \nrm{w^{\epdot}-w}_{L^1 \left( [a,b]\times \bbT \right)} \underbrace{\nrm{f \det J_T}_{L^\infty \left( [a,b]\times \bbT \right)}}_{< \infty \, (\because \, f, J_T \text{ is continuous})} \\
		&\xrightarrow[\varepsilon \searrow 0]{\text{Prop } \ref{L^p convergence of w}} 0.
	\end{align*}
	In similar way, we have
	\begin{equation*}
		\abs{\int_{\rmz} \left( \tld{w}^{\epdot} - \tld{w} \right) \left( \left( \nabla^\perp_z \tld{\psi} -\mu z \right) \cdot
		 \nabla_z \tld{f} \right)} \xrightarrow[\varepsilon \searrow 0]{}0.
	\end{equation*}
	Therefore, sending $\varepsilon$ to 0 in \eqref{eq48} gives desired result.
\end{proof}

Having proved $\tld{w}(z)$ is a weak solution for \eqref{EEz}, we now prove vorticity in space-time coordinate, i.e., $w(x,t)=t^{-1} \tld{w}\left( xt^{-\mu} \right)$, is a weak solution of \eqref{EE}. Let $w^{\epdot}(x,t) \coloneqq t^{-1} \tld{w}^{\epdot} \left( xt^{-\mu} \right)$ and remember $u(x,t)=t^{\mu -1} \nabla^\perp_z \tld{\psi} \left( xt^{-\mu} \right)$.

\begin{lemma} \label{weak solution of EE mollified}
	For all $\varepsilon>0$,
	\begin{equation*}
		w^{\epdot}_t + \nabla_x \cdot \left(w^{\epdot} u \right) =0
	\end{equation*}
	on $\rmz \times [a, \infty)$ in distribution sense for any $a>0$.
\end{lemma}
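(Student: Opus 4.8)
The plan is to transport the weak form of the self-similar profile equation --- already available as \eqref{eq48}, which holds for \emph{every} $\varepsilon>0$ --- back to space-time through the self-similar change of variables $x=t^\mu z$. Fix $a>0$; since the statement concerns the interior of $\rmz\times[a,\infty)$, it suffices to prove
\begin{equation*}
	\int_0^\infty\int_{\rmz} w^{\epdot}\, h_t + w^{\epdot}\,u\cdot\nabla_x h \,dx\,dt=0
\end{equation*}
for every $h\in C^\infty_c(\rmz\times(a,\infty))$ and then to let $a\downarrow 0$. Because $w^{\epdot}(\beta,\phi)$ is $C^1$ and $T$ is a $C^1$-diffeomorphism (Proposition \ref{C^1 change of coordinate}), $\tld{w}^{\epdot}$ is continuous on $\rmz$, and $\nabla^\perp_z\tld{\psi}\in C^0(\rmz)$ by \eqref{regularity of dependent variables}; hence $w^{\epdot}h_t$ and $w^{\epdot}u\cdot\nabla_x h$ are continuous on the compact set $\supp h$, so Fubini applies and we may treat the $x$-integral slice by slice in $t$.

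First I would change variables $x=t^\mu z$ (a diffeomorphism of $\rmz$ with Jacobian $t^{2\mu}$) and set $H(z,t)\coloneqq h(t^\mu z,t)\in C^\infty_c(\rmz\times(a,\infty))$. The chain rule gives $(\nabla_x h)(t^\mu z,t)=t^{-\mu}\nabla_z H(z,t)$ and $h_t(t^\mu z,t)=H_t(z,t)-\mu t^{-1}z\cdot\nabla_z H(z,t)$, while $w^{\epdot}(x,t)=t^{-1}\tld{w}^{\epdot}(z)$ and $u(x,t)=t^{\mu-1}\nabla^\perp_z\tld{\psi}(z)$ by \eqref{one directed mollification} and \eqref{tilde func}. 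Substituting, the inner integral becomes
\begin{equation*}
	I(t)\coloneqq t^{2\mu-1}\!\int_{\rmz}\tld{w}^{\epdot}H_t\,dz-\mu t^{2\mu-2}\!\int_{\rmz}\tld{w}^{\epdot}\,z\cdot\nabla_z H\,dz+t^{2\mu-2}\!\int_{\rmz}\tld{w}^{\epdot}\,\nabla^\perp_z\tld{\psi}\cdot\nabla_z H\,dz.
\end{equation*}
Then I would apply \eqref{eq48} with the admissible test function $\tld{f}=H(\cdot,t)\in C^\infty_c(\rmz)$, which rewrites the last integral as $(2\mu-1)\int\tld{w}^{\epdot}H+\mu\int\tld{w}^{\epdot}\,z\cdot\nabla_z H$. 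The $z\cdot\nabla_z H$ terms cancel and one is left with
\begin{equation*}
	I(t)=t^{2\mu-1}\!\int_{\rmz}\tld{w}^{\epdot}H_t\,dz+(2\mu-1)t^{2\mu-2}\!\int_{\rmz}\tld{w}^{\epdot}H\,dz=\frac{d}{dt}\!\left(t^{2\mu-1}\!\int_{\rmz}\tld{w}^{\epdot}(z)H(z,t)\,dz\right),
\end{equation*}
the last equality being differentiation under the integral sign (legitimate since $H$ is smooth and compactly supported). As $t\mapsto t^{2\mu-1}\int_{\rmz}\tld{w}^{\epdot}(z)H(z,t)\,dz$ is $C^1$ with support in a compact subinterval of $(a,\infty)$, the fundamental theorem of calculus yields $\int_0^\infty I(t)\,dt=0$, which is the claim.

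This argument is essentially bookkeeping: the only points needing (minor) care are the consistent tracking of the powers of $t$ produced by the chain rule for $h_t$, $\nabla_x h$ and by the Jacobian, the verification that \eqref{eq48} may be invoked for each fixed $t$ with $\tld{f}=H(\cdot,t)$, and the differentiation under the integral sign --- all routine given the continuity and compact-support facts noted above. I do not expect a genuine obstacle; the substance of the lemma is contained entirely in \eqref{eq48} and in Proposition \ref{C^1 change of coordinate}.
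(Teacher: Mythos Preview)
Your computation is correct and the argument goes through, but the route is genuinely different from the paper's. The paper exploits that $\tld{w}^{\epdot}$ is $C^1$: it invokes the \emph{classical} identity \eqref{eq47} (not the weak form \eqref{eq48}), applies the chain rule to the self-similar substitution $w^{\epdot}(x,t)=t^{-1}\tld{w}^{\epdot}(xt^{-\mu})$ to obtain $w^{\epdot}_t+u\cdot\nabla_x w^{\epdot}=0$ pointwise on $\rmz\times\bbR_+$, and only then integrates by parts against a test function. Your proof instead stays entirely in weak form: you pull the test function back through $x=t^\mu z$, feed $H(\cdot,t)$ into \eqref{eq48}, and recognise the result as a total time derivative. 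The paper's approach is a line or two shorter and more transparent, since the whole point of mollifying was precisely to make $\tld{w}^{\epdot}$ regular enough for classical differentiation; your approach has the virtue of needing only continuity of $\tld{w}^{\epdot}$ and is closer in spirit to how one would argue without mollification.

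One small correction: the phrase ``it suffices to prove \ldots\ for $h\in C^\infty_c(\rmz\times(a,\infty))$ and then to let $a\downarrow 0$'' is muddled. The lemma, as used downstream in Proposition~\ref{weak solution of EE in vorticity form}, needs the formulation on $[a,\infty)$ \emph{with} the boundary term $\int w^{\epdot}(\cdot,a)f(\cdot,a)$. Your own computation already delivers this for free: simply allow $h\in C^\infty_c(\rmz\times[a,\infty))$, and then the fundamental theorem of calculus applied to $t\mapsto t^{2\mu-1}\int\tld{w}^{\epdot}H(\cdot,t)$ produces the boundary contribution $-a^{2\mu-1}\int\tld{w}^{\epdot}(z)H(z,a)\,dz$, which after undoing the change of variables is exactly $-\int w^{\epdot}(x,a)h(x,a)\,dx$.
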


\begin{proof}
	Remember that
	\begin{equation*}
		-\tld{w}^{\epdot}(z)+ \left( \nabla^\perp_z \tld{\psi}(z) -\mu z \right) \cdot \nabla_z \tld{w}^{\epdot}(z)=0
	\end{equation*}
	in classical sense and $\tld{w}^{\epdot}(z)$ is $C^1$. Then, we can trace back from \eqref{EEz} to \eqref{EE}, which leads to 
	\begin{equation} \label{eq49}
		w^{\epdot}_t + u \cdot \nabla_x w^{\epdot}=0
	\end{equation}
	in classical sense on $\rmz \times \bbR_+$. Then, for a test function $f \in C^\infty_c \left( \rmz \right)$,
	\begin{align*}
		& \int_{\bbR^2 \backslash \set{0}} w(\cdot,t)f(\cdot,t)\, dx+ \int_a^\infty \int_{\bbR^2 \backslash \set{0}}w^{\epdot}f_t+w^{\epdot}(u \cdot \nabla_x f) \, dxdt \\
		\underset{\substack{\text{FTC, Fubini,} \\ \text{Leibniz}}}&{=} -\int_{\rmz \times [a,\infty)} (w^{\epdot}f)_t \, dxdt + \int_{\rmz \times [a,\infty)} w^{\epdot}f_t-f(u \cdot \nabla_x w^{\epdot})+u \cdot \nabla_x(w^{\epdot}f)\, dxdt  \\
		& = \int_{\rmz \times [a, \infty)} -\underbrace{\left( w^{\epdot}_t + u \cdot \nabla_x w^{\epdot} \right)}_{=0 \text{ by} \eqref{eq49}} f + \underbrace{u \cdot \nabla_x \left( w^{\epdot} f \right)}_{\substack{=0 \,(\because \, u \text{ is weakly }\\ \text{divergence free})}} \, dxdt. \\ 
		&=0
	\end{align*}
\end{proof}

\begin{proposition} \label{weak solution of EE in vorticity form}
	The $(w(x,t), u(x,t), \psi(x,t))$ given in \eqref{regularity of dependent variables} satisfies
		\begin{equation*} 
	\left\{\begin{aligned}
		& w_t+ \nabla_x \cdot (wu) =0 & on \quad \rmz \times [a, \infty), \\
		& u=\nabla_x^\perp \psi ,\quad w= \Delta_x \psi &  on\quad \rmz
	\end{aligned}\right.
		\end{equation*}
		in distribution sense for any $a>0$.
\end{proposition}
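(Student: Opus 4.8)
The plan is to pass to the limit $\varepsilon \searrow 0$ in the mollified identities, using the $L^p_{\textnormal{loc}}$-convergence already established in Proposition~\ref{L^p convergence of w} (and Remark~\ref{L^p conv remark}), together with the fact that the coordinate change $T$ is a $C^1$-diffeomorphism by Proposition~\ref{C^1 change of coordinate}. Concretely, the vorticity transport equation $w_t + \nabla_x \cdot (wu) = 0$ on $\rmz \times [a, \infty)$ has already been proved at the mollified level in Lemma~\ref{weak solution of EE mollified}, so the first step is simply to fix a test function $f \in C^\infty_c(\rmz \times [a, \infty))$, write out the weak formulation with $w$ replaced by $w^{\epdot}$, and send $\varepsilon \to 0$. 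The two terms to control are $\int w^{\epdot} f_t$ and $\int w^{\epdot} (u \cdot \nabla_x f)$; for each one I would restrict to the compact time-space support of $f$, pull the spatial integral back through $T$ (for each fixed $t$, the spatial slice $xt^{-\mu} = z$ lies in a fixed compact subset of $\rmz$, hence in $T([a',b'] \times \bbT)$ for suitable $0 < a' < b'$), and bound the difference by $\|w^{\epdot} - w\|_{L^1([a',b'] \times \bbT)}$ times an $L^\infty$ bound on the (continuous) remaining factors including $\abs{\det J_T}$ and powers of $t$, exactly as in the proof of Proposition~\ref{weak solution of EE mollified} but now with the extra time integration handled by dominated convergence.

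The second part, the vorticity-stream relations $u = \nabla_x^\perp \psi$ and $w = \Delta_x \psi$ on $\rmz$, does not require any limiting argument: it follows by unwinding the coordinate changes. By \eqref{vorticity stream relation} we already have $\nabla_z \cdot (\nabla_z \tld{\psi}) = \tld{w}$ in distribution sense on $\rmz$, and by \eqref{regularity of dependent variables} we have $\tld{\psi} \in C^1(\rmz)$ with $\tld{u} \coloneqq \nabla_z^\perp \tld{\psi}$; the self-similar scaling \eqref{tilde func}, namely $\psi(x,t) = t^{2\mu-1}\tld{\psi}(xt^{-\mu})$, $u(x,t) = t^{\mu-1}\tld{u}(xt^{-\mu})$, $w(x,t) = t^{-1}\tld{w}(xt^{-\mu})$, then transfers these identities verbatim to each time slice. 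I would state this as: for each fixed $t > 0$ the chain rule gives $\nabla_x^\perp \psi(\cdot, t) = t^{\mu-1} (\nabla_z^\perp \tld{\psi})(\cdot t^{-\mu}) = u(\cdot, t)$ pointwise, and $\nabla_x \cdot (\nabla_x \psi(\cdot, t)) = t^{-1}(\nabla_z \cdot \nabla_z \tld{\psi})(\cdot t^{-\mu}) = w(\cdot, t)$ in distribution sense, where the scaling of the distributional Laplacian under dilation is routine.

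The main obstacle, such as it is, is bookkeeping rather than conceptual: one must be careful that the test function $f(x,t)$ is compactly supported in \emph{both} $x \in \rmz$ and $t \in [a, \infty)$, so that after the substitution $z = x t^{-\mu}$ the relevant $z$-region stays uniformly away from the origin and from infinity over the whole (compact) range of $t$ appearing in $\supp f$. This is what lets me choose a single $0 < a' < b' < \infty$ with $T^{-1}(\{x t^{-\mu} : (x,t) \in \supp f\}) \subset [a',b'] \times \bbT$, and hence apply Proposition~\ref{L^p convergence of w} once; the time integral is then a bounded integral of a uniformly convergent integrand, and dominated convergence finishes it. I would also remark that the $L^1$ rather than $L^p$ convergence (Remark~\ref{L^p conv remark}) is exactly what is needed here since $u$ and $\nabla_x f$ are continuous and the integration domain is of finite measure.

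Accordingly the proof would read, in outline: (i) recall \eqref{vorticity stream relation} and \eqref{regularity of dependent variables}, and apply the scaling \eqref{tilde func} slicewise to obtain $u = \nabla_x^\perp \psi$ and $w = \Delta_x \psi$ on $\rmz$ for every $t > 0$; (ii) fix $f \in C^\infty_c(\rmz \times [a,\infty))$ and write down the weak transport identity with $w^{\epdot}$ in place of $w$, valid for all $\varepsilon > 0$ by Lemma~\ref{weak solution of EE mollified}; (iii) choose $0 < a' < b' < \infty$ adapted to $\supp f$ via the diffeomorphism $T$, estimate each of the two error terms $\bigl|\int (w^{\epdot} - w) f_t\bigr|$ and $\bigl|\int (w^{\epdot} - w)(u \cdot \nabla_x f)\bigr|$ by $\|w^{\epdot} - w\|_{L^1([a',b'] \times \bbT)}$ times a finite constant depending on $f$, $\det J_T$ and the compact $t$-range, using Proposition~\ref{L^p convergence of w} and dominated convergence; (iv) conclude $\int w(\cdot,0)f(\cdot,0)\,dx + \int_a^\infty\!\int w f_t + w(u\cdot\nabla_x f)\,dx\,dt = 0$, i.e.\ $w_t + \nabla_x \cdot (wu) = 0$ on $\rmz \times [a,\infty)$ in distribution sense.
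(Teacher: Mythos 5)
Your proposal takes essentially the same approach as the paper: the vorticity--stream relations follow by slicewise scaling of \eqref{vorticity stream relation} via \eqref{tilde func}, and the transport equation follows by passing to the limit $\varepsilon \searrow 0$ in the mollified identity of Lemma~\ref{weak solution of EE mollified}, with the error controlled by the $L^1$-convergence of Proposition~\ref{L^p convergence of w}/Remark~\ref{L^p conv remark} after pulling the $(x,t)$-integral back through the time-dependent dilation $z=xt^{-\mu}$ onto a fixed compact set. One small correction to step (iv): the weak formulation on $\rmz \times [a,\infty)$ carries a boundary term at time $t=a$ (not $t=0$), namely $\int_K w^{\epdot}(\cdot,a)f(\cdot,a)\,dx$, and this needs to be included among the error terms you estimate in step (iii) --- it converges by the same $L^1(Q)$ bound applied at the single time slice $t=a$, exactly as in the paper.
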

\begin{proof}
	First, $u=\nabla^\perp_x \psi$ is clear by the definition of $u(x,t)$ in  \eqref{tilde func} and \eqref{regularity of dependent variables}. Also, $\Delta_x \psi = w$ could be obtained directly from \eqref{vorticity stream relation}.
	
	To prove the first equation, take $f \in C^\infty_c \left( \rmz \times [a, \infty) \right)$. Then, we can take annulus $K=\br{B(r, R)} \coloneqq \set{z \in \bbR^2 \, \vert \, r \leq \abs{z} \leq R}$ and $0< a<b<\infty$ such that
	\begin{equation*}
		\text{supp}f \in \br{B(r,R)} \times [a,b].
 	\end{equation*}
 	Let $Q \coloneqq \br{B\left( \frac{r}{b^\mu}, \frac{R}{a^\mu} \right)}$. By Proposition \ref{L^p convergence of w} and Remark \ref{L^p conv remark}, we have
 	\begin{equation*}
 		\tld{w}^{\epdot}(z) \rightarrow \tld{w}(z)
 	\end{equation*}
 	in $L^1(Q)$. Then,
 	\begin{align*}
 		& \int_{K \times [a,b]} \abs{w^{\epdot}(x,t) - w(x,t)} \, dxdt \\
 		\underset{\text{Fubini}}&{=} \int_a^b \int_K t^{-1} \abs{ \tld{w}^{\epdot}(xt^{-\mu}) -\tld{w}(xt^{-\mu}) } \, dxdt \\
 		\underset{xt^{-\mu} \rightarrow z}&{=} \int_a^b \int_{t^{-\mu}K} t^{-1} \abs{\tld{w}^{\epdot}(z)-\tld{w}(z)} t^{2\mu}\, dzdt \\
 		\underset{\substack{t^{-\mu}K \subset Q \\ \text{for } t\in [a,b]}}&{\leq} \int_a^b t^{2\mu -1} \int_Q \abs{\tld{w}^{\epdot}(z)-\tld{w}(z)} \, dzdt \\
 		&\leq \frac{1}{2\mu} \left( b^{2\mu} - a^{2\mu} \right) \cdot \underbrace{\int_Q \abs{\tld{w}^{\epdot}(z)-\tld{w}(z)} \, dz}_{\xrightarrow[\varepsilon \searrow 0]{}0 \text{ by Prop } \ref{L^p convergence of w}},
 	\end{align*}
 	which means
 	\begin{equation} \label{eq105}
 		w^{\epdot}(x,t) \xrightarrow[\varepsilon \searrow 0]{} w(x,t)
 	\end{equation}
 	in $L^1 \left( K \times [a, b] \right)$. Then, we have
 	\begin{align*}
 		0 \underset{\text{Lem } \ref{weak solution of EE mollified} }&{=} -\int_K w^{\epdot}(\cdot,a)f(\cdot,a)- \int_{K \times [a, b]} w^{\epdot} (f_t+u \cdot \nabla f) \\
 		&\xrightarrow[\varepsilon \searrow 0]{} - \int_K w(\cdot,a)f(\cdot,a) - \int_{K \times [a, b]} w (f_t+u \cdot \nabla f),
 	\end{align*}
 	because
 	\begin{align*}
 		& \abs{\int_K \left( w^{\epdot}(x,a)-w(x,a) \right) f(x,a)\, dx}
 		= \abs{\int_K \left( \tld{w}^{\epdot}(xa^{-\mu})-\tld{w}(xa^{-\mu}) \right) f(x,a)\, dx} \\
 		\underset{xa^{-\mu}\rightarrow z}&{=} \abs{\int_{a^{-\mu}K} \left( \tld{w}^{\epdot}(z)-\tld{w}(z) \right) f(a^\mu z,a)a^{2\mu}\, dz} \underset{a^{-\mu}K \subset Q}{\leq } \abs{\int_{Q} \left( \tld{w}^{\epdot}(z)-\tld{w}(z) \right) f(a^\mu z,a)a^{2\mu}\, dz} \\
 		&\leq \underbrace{\nrm{\tld{w}^{\epdot}(z)-\tld{w}(z)}_{L^1(Q)}}_{\xrightarrow[\varepsilon \searrow 0]{}0 \,(\because \, \text{Prop }\ref{L^p convergence of w})} \underbrace{\nrm{a^{2\mu}f(a^\mu z, a)}_{L^\infty(Q)}}_{< \infty \, (\because \, f \text{ is continuous)}}
 	\end{align*}
 	and
 	\begin{equation*}
 		\abs{\int_{K \times [a, b]} (w^{\epdot}-w) (f_t+u \cdot \nabla f)} \leq  \underbrace{\nrm{w^{\epdot}-w}_{L^1(K \times [a,b])}}_{\xrightarrow[\varepsilon \searrow 0]{}0} \underbrace{\nrm{f_t+u \cdot \nabla f}_{L^\infty(K \times [a,b] )}}_{\substack{< \infty \, ( \because \, u \text{ and }f \\ \text{is continuous}}}
 	\end{equation*}
\end{proof}

Having treated Euler equation in vorticity form, we now deal with velocity form. First, we show the following lemma.

\begin{lemma} \label{weak solution for u intermediate 1}
	\begin{equation*}
	\nabla_x \cdot(wu) = \nabla^\perp_x \cdot \left( \nabla_x \cdot ( u \otimes u ) \right)
	\end{equation*}
	on $\rmz$ in the sense of distribution.
\end{lemma}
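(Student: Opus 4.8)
The plan is to recognise this as the classical identity that the curl of the Euler momentum balance equals the vorticity transport equation, which holds pointwise for smooth divergence free fields, and then to remove smoothness by a mollification argument, using only the regularity recorded in \eqref{regularity of dependent variables}: $\psi \in C^1(\rmz)$, $u = \nabla_x^\perp\psi \in C(\rmz)$, and $w = \Delta_x\psi \in L^p_{\textnormal{loc}}(\rmz)$. In particular these guarantee $wu \in L^1_{\textnormal{loc}}(\rmz)$ and $u \otimes u \in L^1_{\textnormal{loc}}(\rmz)$, so both sides of the asserted equality are a priori well-defined distributions on $\rmz$.

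First I would record the smooth identity. For a smooth $v = \nabla_x^\perp\varphi$ on an open set, writing $\omega \coloneqq \partial_{x_1}v^2 - \partial_{x_2}v^1 = \Delta_x\varphi$, the relation $\nabla_x\cdot v = 0$ gives $\nabla_x\cdot(v\otimes v) = (v\cdot\nabla_x)v$ and $\nabla_x\cdot(\omega v) = (v\cdot\nabla_x)\omega$, while expanding $\nabla_x^\perp\cdot = -\partial_{x_2}(\cdot)^1 + \partial_{x_1}(\cdot)^2$ and collecting the terms of $\nabla_x^\perp\cdot\big((v\cdot\nabla_x)v\big)$ into factors of $\partial_{x_1}v^2 - \partial_{x_2}v^1$ gives $\nabla_x^\perp\cdot\big((v\cdot\nabla_x)v\big) = (v\cdot\nabla_x)\omega + (\nabla_x\cdot v)\,\omega$. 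Chaining these three facts shows
\begin{equation*}
	\nabla_x^\perp\cdot\big(\nabla_x\cdot(v\otimes v)\big) = \nabla_x\cdot(\omega v);
\end{equation*}
the only inputs used are $\nabla_x\cdot v = 0$ and $\omega = \Delta_x\varphi$, and for $v = \nabla_x^\perp\varphi$ both hold.

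Next I would mollify. Fix $\tld f \in C^\infty_c(\rmz)$, set $K \coloneqq \supp\tld f$, pick $\rho > 0$ with $\dist(K,\set{0}) > 2\rho$, and for $0 < \delta < \rho$ let $\psi_\delta \coloneqq \psi * \eta_\delta$ with $\eta_\delta$ a standard two-dimensional mollifier. Then $\psi_\delta$ is smooth on the $\rho$-neighbourhood of $K$, $u_\delta \coloneqq \nabla_x^\perp\psi_\delta = u * \eta_\delta$ is smooth and divergence free there, and $w_\delta \coloneqq \Delta_x\psi_\delta = w * \eta_\delta$; applying the smooth identity to $v = u_\delta$, pairing with $\tld f$, and integrating by parts to put all derivatives on $\tld f$ yields the ordinary-integral identity
\begin{equation*}
	\int_{\rmz} w_\delta\, u_\delta \cdot \nabla_x\tld f \, dx + \int_{\rmz} (u_\delta \otimes u_\delta) : \nabla_x(\nabla_x^\perp\tld f)\,dx = 0 .
\end{equation*}
By standard mollifier estimates $u_\delta \to u$ uniformly on $K$ and $w_\delta \to w$ in $L^p(K)$ as $\delta \searrow 0$, hence $w_\delta u_\delta \to wu$ in $L^1(K)$ and $u_\delta \otimes u_\delta \to u \otimes u$ uniformly on $K$; letting $\delta \searrow 0$ in the displayed identity and reading it back as the pairing of $\nabla_x\cdot(wu) - \nabla_x^\perp\cdot(\nabla_x\cdot(u\otimes u))$ with $\tld f$ gives $0$, which is the assertion.

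The work here is bookkeeping rather than conceptual: the statement is the standard vorticity-equation identity. The only points needing care are that the mollification must be carried out on a neighbourhood of $\supp\tld f$ that stays away from the origin --- unavoidable because $u$ is merely continuous, so one has to argue on compact subsets of $\rmz$ --- and that the convergence $w_\delta u_\delta \to wu$ holds in $L^1_{\textnormal{loc}}$, which is exactly the topology in which $\nabla_x\cdot$ can be passed to the limit. I do not expect any genuine obstacle beyond this.
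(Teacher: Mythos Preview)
Your proposal is correct and follows essentially the same approach as the paper: mollify $\psi$ (equivalently $u$ and $w$) on a neighbourhood of $\supp\tld f$ bounded away from the origin, invoke the classical identity for the smooth approximants, and pass to the limit using $u_\delta\to u$ uniformly and $w_\delta\to w$ in $L^1$ on compact sets. The paper's argument is identical in structure, differing only in notation and in that you spell out the smooth identity more explicitly.
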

\begin{proof}
	Take a test function $h \in C^\infty_c(\rmz)$. What we have to show is
	\begin{equation*}
		-\int_{\rmz} w(u \cdot \nabla h) = \int_{\rmz} (u \otimes u) \, : \, \nabla \nabla^\perp h.
	\end{equation*}
	Let $\text{supp}(h) =K$, $U = \bigcup_{x \in K} B \left( x, \frac{\dist (K, \set{0}}{4} \right)$ and $\eta(x)$ be a standard mollifier in $\bbR^2$. We define
	\begin{equation*}
		w^\varepsilon (x,t) \coloneqq \int_{\bbR^2} w(x-y,t) \cdot \frac{1}{\varepsilon^2} \eta \left( \frac{y}{\varepsilon} \right) \, dy \footnote{Note that $w^\varepsilon(x,t)$ and $w^{\epdot}(x,t)$ is different. While $w^{\epdot}(x,t)$ is defined through \eqref{one directed mollification}, $w^\varepsilon(x,t)$ is just a standard mollification on $\bbR^2$. }
	\end{equation*}
	for $x \in U, \varepsilon< \frac{\dist (K, \set{0})}{4}$, and $u^\varepsilon (x,t), \psi^\varepsilon (x,t)$ accordingly. From the compatibility of mollifier with differential operator and the fact that
	\begin{equation*}
		\Delta_x \psi = w, \qquad \nabla_x \cdot u = 0, \qquad \nabla^\perp_x \psi = u, \qquad \nabla^\perp_x \cdot u = w
	\end{equation*}
	in distribution sense, we have
	\begin{equation*}
		\Delta_x \psi^\varepsilon = w^\varepsilon, \qquad \nabla_x \cdot u^\varepsilon = 0, \qquad \nabla^\perp_x \psi^\varepsilon = u^\varepsilon, \qquad \nabla^\perp_x \cdot u^\varepsilon = w^\varepsilon
	\end{equation*}
	in classical sense on $U$. Then, it is clear that
	\begin{equation*}
	\nabla_x \cdot(w^\varepsilon u^\varepsilon) = \nabla^\perp_x \cdot \left( \nabla_x \cdot ( u^\varepsilon \otimes u^\varepsilon ) \right),
	\end{equation*}
	and this implies
	\begin{equation} \label{eq65}
		-\int_{\rmz} w^\varepsilon (u^\varepsilon \cdot \nabla h) = - \int_{\rmz} (u^\varepsilon \otimes u^\varepsilon) \, : \, \nabla \nabla^\perp h.
	\end{equation}
	Since $w^\varepsilon \xrightarrow[\varepsilon \searrow 0]{} w$ in $L^1(K)$  and $u^\varepsilon \xrightarrow[\varepsilon \searrow 0]{}u$ uniformly on K, sending $\varepsilon$ to 0 in \eqref{eq65} yields the result.
\end{proof}

\begin{proposition} \label{weak solution for u intermediate 2}
	\begin{equation*}
		\nabla^\perp_x \cdot \left( u_t + \nabla_x \cdot ( u \otimes u ) \right)=0
	\end{equation*}
	on $\rmz \times [a,\infty)$ in the sense of distribution for any $a>0$.
\end{proposition}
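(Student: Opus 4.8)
The plan is to reduce this curl identity to the vorticity equation already established in Proposition~\ref{weak solution of EE in vorticity form}. The operator $\nabla_x^\perp\cdot$ is purely spatial, so it commutes with $\partial_t$ in the sense of distributions, and the relations $u=\nabla_x^\perp\psi$, $w=\Delta_x\psi$ from Proposition~\ref{weak solution of EE in vorticity form} give the spatial identity $\nabla_x^\perp\cdot u=w$ on $\rmz$ for every $t\geq a$. Consequently, as distributions on $\rmz\times[a,\infty)$,
\begin{align*}
	\nabla_x^\perp\cdot\big(u_t+\nabla_x\cdot(u\otimes u)\big)
	&=\partial_t\big(\nabla_x^\perp\cdot u\big)+\nabla_x^\perp\cdot\big(\nabla_x\cdot(u\otimes u)\big)\\
	&=w_t+\nabla_x\cdot(wu)=0,
\end{align*}
where the middle equality uses Lemma~\ref{weak solution for u intermediate 1} and the last one is Proposition~\ref{weak solution of EE in vorticity form}. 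So the entire content of the proposition is this three-line manipulation; the only work is in justifying each equality at the level of test functions.

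Concretely, one pairs against a scalar $h\in C_c^\infty(\rmz\times[a,\infty))$ and transfers the spatial derivative in $\nabla_x^\perp\cdot$ onto $h$. The $u_t$-term is handled by commuting the spatial curl with $\partial_t$ and using $\nabla_x^\perp\cdot u=w$ on each time slice, which turns $\langle\nabla_x^\perp\cdot u_t,h\rangle$ into $\langle w_t,h\rangle$, the $t=a$ boundary term produced by integrating by parts in $t$ becoming exactly $-\int_{\rmz}w(\cdot,a)h(\cdot,a)\,dx$. For the $u\otimes u$-term, one writes $\langle\nabla_x^\perp\cdot\nabla_x\cdot(u\otimes u),h\rangle=\int_a^\infty\big(\int_{\rmz}(u(\cdot,t)\otimes u(\cdot,t)):\nabla_x\nabla_x^\perp h(\cdot,t)\,dx\big)\,dt$ by Fubini and then applies Lemma~\ref{weak solution for u intermediate 1} to the fixed-time test function $h(\cdot,t)$ to rewrite the inner integral as $-\int_{\rmz}w(\cdot,t)\,\big(u(\cdot,t)\cdot\nabla_x h(\cdot,t)\big)\,dx$; re-integrating in $t$ yields $\langle\nabla_x\cdot(wu),h\rangle$. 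Adding the two pieces reproduces precisely the weak form of $w_t+\nabla_x\cdot(wu)=0$ on $\rmz\times[a,\infty)$ from Proposition~\ref{weak solution of EE in vorticity form}, which vanishes.

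I expect the only delicate points to be bookkeeping rather than analysis. First, one must check the Fubini interchange and the slice-by-slice application of Lemma~\ref{weak solution for u intermediate 1}: this is fine because $u$ is continuous on $\rmz$ (hence locally bounded, by \eqref{regularity of dependent variables}) and $w\in L^p_{\mathrm{loc}}$, so all the integrands are dominated on $\supp h$. Second, one must verify that the $t=a$ boundary term generated here matches, with the correct sign and form, the boundary term appearing in the statement of Proposition~\ref{weak solution of EE in vorticity form}, so that the two distributional conventions on $\rmz\times[a,\infty)$ are genuinely compatible; if instead one reads ``distribution on $\rmz\times[a,\infty)$'' with test functions supported in the open set $\rmz\times(a,\infty)$, the boundary terms simply drop out and the argument is even shorter. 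Neither point is hard, but they are where attention is needed; the algebraic heart of the proof is the short computation above.
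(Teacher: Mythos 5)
Your proof is correct and follows exactly the route the paper takes: the paper's own proof is the one-line citation ``Proposition~\ref{weak solution of EE in vorticity form} and Lemma~\ref{weak solution for u intermediate 1},'' and your three-line computation $\nabla_x^\perp\cdot u_t=w_t$ plus $\nabla_x^\perp\cdot\nabla_x\cdot(u\otimes u)=\nabla_x\cdot(wu)$ plus $w_t+\nabla_x\cdot(wu)=0$ is precisely what that citation unpacks to. The bookkeeping you flag (Fubini, the slice-by-slice use of Lemma~\ref{weak solution for u intermediate 1}, and the $t=a$ boundary-term convention) is the right thing to check and is handled by the same local boundedness of $u$ and $L^p_{\mathrm{loc}}$ bound on $w$ that the paper already establishes.
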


\begin{proof}
	Proposition \ref{weak solution of EE in vorticity form} and Lemma \ref{weak solution for u intermediate 1}.
\end{proof}

\bigskip

\subsection{Convergence to initial data and time independent boundedness}
\label{subsec: Convergence to initial data and time independent boundedness}

Note that Proposition \ref{weak solution of EE in vorticity form} tells us that constructed $w(x,t)$ is a weak solution of Euler equation on time $(0,\infty)$, not on $[0,\infty)$. To show $w$ and $u$ is a weak solution on time $[0,\infty)$, we have to investigate the initial data corresponding to $(w(x,t), u(x,t), \psi(x,t))$.

\begin{proposition} \label{initial data for stream function prop}
	The stream function $\psi(x,t)$ recovered from $\br{\psi}(\beta, \phi)\underset{\text{Thm } \ref{main thm in special coordinate}}{=}G(\Omega)$ through \eqref{regularity of dependent variables} satisfies
	\begin{equation} \label{initial data for stream function}
		\lim_{t \searrow 0} \psi(x,t) = \abs{x}^{2-\frac{1}{\mu}} \left( -\frac{\dbbeta \br{\psi}(0,\theta)}{\mu} \right)^{\frac{1}{2\mu}-1} \br{\psi}( 0, \theta)
	\end{equation}
	locally uniformly on $\rmz$. 
\end{proposition}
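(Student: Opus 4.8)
The plan is to establish, for every $t>0$, an exact self-similar formula expressing $\psi(x,t)$ through the adapted-coordinate data $\overline{\psi}$, and then to pass to the limit $t\searrow0$ by letting the adapted coordinate run into the spiral center $\beta=0$, where $\overline{\psi}$ and $\overline{\partial}_\beta\overline{\psi}$ are continuous. \emph{Step 1 (unwinding the coordinate changes).} Fix $x\in\rmz$ and $t>0$, put $z\coloneqq xt^{-\mu}$ and $(\beta,\phi)\coloneqq T^{-1}(z)$, which is well-defined by Proposition~\ref{C^1 change of coordinate}. By \eqref{map T}, $|z|=|T(\beta,\phi)|=e^{a(\beta,\phi)}$, and the definition of $a$ in \eqref{new coordinate} gives $\psi_\beta(\beta,\phi)=-\mu e^{2a}=-\mu|z|^2$. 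Combining with $\psi_\beta=\beta^{-2\mu}\overline{\partial}_\beta\overline{\psi}$ from \eqref{beta and bar beta} yields
\begin{equation*}
	\beta^{2\mu}=\frac{-\overline{\partial}_\beta\overline{\psi}(\beta,\phi)}{\mu\,|z|^{2}},\qquad\text{hence}\qquad
	\beta^{1-2\mu}=|z|^{\,2-\frac1\mu}\left(-\frac{\overline{\partial}_\beta\overline{\psi}(\beta,\phi)}{\mu}\right)^{\frac1{2\mu}-1}.
\end{equation*}
Substituting into $\psi(\beta,\phi)=\beta^{1-2\mu}\overline{\psi}(\beta,\phi)$ from \eqref{psi and psi bar}, using $\widetilde\psi(z)=\psi(\beta,\phi)$ from \eqref{pull back notation} and $\psi(x,t)=t^{2\mu-1}\widetilde\psi(z)$ from \eqref{tilde func}, and noting that $|z|^{2-1/\mu}=|x|^{2-1/\mu}t^{1-2\mu}$ so the powers of $t$ cancel, we obtain
\begin{equation*}
	\psi(x,t)=|x|^{\,2-\frac1\mu}\left(-\frac{\overline{\partial}_\beta\overline{\psi}(\beta,\phi)}{\mu}\right)^{\frac1{2\mu}-1}\overline{\psi}(\beta,\phi),\qquad (\beta,\phi)=T^{-1}(xt^{-\mu}),
\end{equation*}
valid for all $t>0$.

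\emph{Step 2 (the adapted coordinate converges to the spiral center).} I claim $T^{-1}(xt^{-\mu})\to(0,\theta)$ as $t\searrow0$, locally uniformly in $x$, where $\theta$ is the angular coordinate of $x$. The bounds \eqref{boundedness of bar derivative} give $\tfrac12\le-\overline{\partial}_\beta\overline{\psi}\le\tfrac32$ on all of $\RT$, so the identity for $\beta^{2\mu}$ above forces
\begin{equation*}
	\left(\tfrac1{2\mu}\right)^{\frac1{2\mu}}|z|^{-\frac1\mu}\ \le\ \beta\ \le\ \left(\tfrac3{2\mu}\right)^{\frac1{2\mu}}|z|^{-\frac1\mu};
\end{equation*}
since $|z|=|x|t^{-\mu}\to\infty$ uniformly for $x$ in a compact subset of $\rmz$, we get $\beta\to0$ uniformly. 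Moreover $z$ is a positive multiple of $x$, so by \eqref{map T} the quantity $\theta=\beta+\phi$ equals the argument of $z$, hence of $x$; therefore $\phi=\theta-\beta\to\theta$ in $\bbT$.

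\emph{Step 3 (passing to the limit).} Since $\overline{\psi}$ and $\overline{\partial}_\beta\overline{\psi}$ belong to $\Wp$ and $\Wp\hookrightarrow C_b(\RT)$ (see the discussion preceding \eqref{regularity of dependent variables}), both are continuous on $\RT$ and in particular extend continuously to $\beta=0$. As $-\overline{\partial}_\beta\overline{\psi}$ takes values in the compact interval $[\tfrac12,\tfrac32]\subset(0,\infty)$, the map $s\mapsto(-s/\mu)^{\frac1{2\mu}-1}$ is continuous on that range, and $x\mapsto|x|^{2-1/\mu}$ is continuous on $\rmz$. Letting $t\searrow0$ in the identity of Step 1 and using Step 2, every factor converges uniformly on compact subsets of $\rmz$, which gives
\begin{equation*}
	\lim_{t\searrow0}\psi(x,t)=|x|^{\,2-\frac1\mu}\left(-\frac{\overline{\partial}_\beta\overline{\psi}(0,\theta)}{\mu}\right)^{\frac1{2\mu}-1}\overline{\psi}(0,\theta)
\end{equation*}
locally uniformly on $\rmz$, i.e. \eqref{initial data for stream function}. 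I expect the only real work to lie in Step 1 — the careful bookkeeping of the three nested coordinate changes and of the exponents of $\beta$, $|z|$ and $t$; once the exact self-similar identity is in place, Steps 2 and 3 follow routinely from the uniform bounds \eqref{boundedness of bar derivative} and the continuity already recorded around \eqref{regularity of dependent variables}.
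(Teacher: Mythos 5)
Your proof is correct and takes essentially the same approach as the paper: derive the exact self-similar identity $\psi(x,t)=|x|^{2-1/\mu}\bigl(-\tfrac{1}{\mu}\overline{\partial}_\beta\overline{\psi}(\beta,\phi)\bigr)^{\frac{1}{2\mu}-1}\overline{\psi}(\beta,\phi)$ (the paper's \eqref{eq53}), show $(\beta,\phi)\to(0,\theta)$ locally uniformly via the bound \eqref{boundedness of bar derivative} on $\overline{\partial}_\beta\overline{\psi}$, and pass to the limit using the continuity of $\overline{\psi},\overline{\partial}_\beta\overline{\psi}$ on $\overline{\bbR}_+\times\bbT$. The paper phrases the $\beta$ bound as a linear-in-$t$ estimate \eqref{eq54} while you phrase it via $|z|\to\infty$, but these are the same calculation.
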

\begin{proof}
	We have
	\begin{equation} \label{eq50}
		\psi(x,t) \underset{\eqref{tilde func}}{=} t^{2\mu -1} \tld{\psi}(xt^{-\mu}) \underset{\substack{(\beta, \phi) = T^{-1}(xt^{-\mu}) \\ \psi(\beta, \phi) =\beta^{1-2\mu}\br{\psi}(\beta, \phi)}}{=} \left( \frac{\beta}{t} \right)^{1-2\mu} \br{\psi} (\beta, \phi).
	\end{equation}
	Also,
	\begin{equation} \label{eq51}
		\abs{x} = t^\mu \abs{z} \underset{\eqref{log-polar coordinate}}{=} t^\mu e^{a(\beta, \phi)} \underset{\eqref{new coordinate}}{=} t^\mu \left( -\frac{\psi_\beta}{\mu} \right)^{\frac{1}{2}} \underset{\eqref{beta and bar beta}}{=} \left( \frac{t}{\beta} \right)^\mu \left( -\frac{\dbbeta \br{\psi}}{\mu} \right)^{\frac{1}{2}}.
	\end{equation}
	From \eqref{eq51}, we have
	\begin{equation} \label{eq52}
		\frac{\beta}{t} = \abs{x}^{-\frac{1}{\mu}} \left( -\frac{\dbbeta \br{\psi} (\beta, \phi)}{\mu} \right)^{\frac{1}{2\mu}},
	\end{equation}
	and substituting \eqref{eq52} to \eqref{eq50} gives
	\begin{equation} \label{eq53}
		\psi (x,t) = \abs{x}^{2-\frac{1}{\mu}} \left( -\frac{\dbbeta \br{\psi}(\beta, \phi)}{\mu} \right)^{\frac{1}{2\mu}-1} \br{\psi}( \beta, \phi),
	\end{equation}
	where $(\beta, \phi)=T^{-1}(xt^{-\mu})$. Therefore, we have to investigate $\lim_{t \searrow 0} T^{-1}(xt^{-\mu})$. By using boundedness of $\dbbeta \br{\psi}$ as in \eqref{boundedness of bar derivative}, we get from \eqref{eq52} that
	\begin{equation} \label{eq54}
		\beta \leq \abs{x}^{-\frac{1}{\mu}} \left( \frac{3}{2\mu}\right)^{\frac{1}{2\mu}}t.
	\end{equation}
	Therefore, for all compact region $R$ on $\bbR^2 \backslash \set{0}$, $\beta \rightarrow 0$ as $t \rightarrow 0$ for uniform rate $0 \leq \beta < C_R t$.\footnote{Explicitly, $C_R=\left( \frac{3}{2\mu} \right)^{\frac{1}{2\mu}} \cdot \dist (0,R)^{-\frac{1}{\mu}}$.} From this relation of $\beta$ and $t$, we have for $x \in R$,
	\begin{equation*}
		\abs{T^{-1}(xt^{-\mu})-(0,\theta)} = \abs{(\beta, \phi)-(0,\theta)} \underset{\theta=\beta+\phi}{=}\sqrt{2} \beta < \sqrt{2}C_R t,
	\end{equation*}
	i.e., $(\beta,\phi)$ in \eqref{eq53} converges uniformly to $(0,\theta)$ as times goes to zero for any compact set $R \in \bbR^2 \backslash \set{0}$. Since
	\begin{equation*}
		\left( -\frac{\dbbeta \br{\psi}(\beta, \phi)}{\mu} \right) ^{\frac{1}{2\mu}-1} \br{\psi}(\beta, \phi)
	\end{equation*}
	is uniformly continuous on compact subset of $\RT$, $\psi(x,t)$ in \eqref{eq53} converges uniformly to
	\begin{equation*}
		 \abs{x}^{2-\frac{1}{\mu}} \left( -\frac{\dbbeta \br{\psi}(0,\theta)}{\mu} \right)^{\frac{1}{2\mu}-1} \br{\psi}( 0, \theta).
	\end{equation*}
\end{proof}

Next, we investigate the initial velocity.

\begin{proposition} \label{initial data for velocity prop}
	The velocity field $u(x,t)$ satisfies
	\begin{equation} \label{initial data for velocity}
		\begin{aligned}
		\lim_{t \searrow 0} u(x,t) &= \abs{x}^{1-\frac{1}{\mu}} \left(-\frac{\dbbeta \br{\psi}(0,\theta)}{\mu} \right)^{\frac{1}{2\mu}-1} \frac{2 \dbbeta \br{\psi}(0, \theta)}{\dbvarphibeta \br{\psi}(0,\theta)} \\
		&  \cdot \left( \frac{\dbphibeta \br{\psi}(0,\theta) \cdot \dbvarphi \br{\psi}(0,\theta)-\dbvarphibeta \br{\psi} (0,\theta) \cdot \rd_\phi \br{\psi}(0,\theta)}{2\dbbeta \br{\psi}(0,\theta)} \begin{pmatrix}
			\cos \theta \\
			\sin \theta
		\end{pmatrix} + \dbvarphi \br{\psi} (0,\theta) \begin{pmatrix}
			-\sin \theta \\
			\cos \theta
		\end{pmatrix} \right)
		\end{aligned}
	\end{equation}
	locally uniformly on $\bbR^2 \backslash \set{0}$.
\end{proposition}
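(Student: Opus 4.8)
The plan is to mimic the proof of Proposition~\ref{initial data for stream function prop}: first write $u(x,t)$ explicitly in the $(\beta,\phi)$-coordinates as $\abs{x}^{1-\frac1\mu}$ times a continuous $\bbR^2$-valued function of $(\beta,\phi)=T^{-1}(xt^{-\mu})$, and then pass to the limit $t\searrow 0$ using that $(\beta,\phi)\to(0,\theta)$ locally uniformly.

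\textbf{Step 1 (explicit formula).} Since $u(x,t)=t^{\mu-1}\tld u(xt^{-\mu})$ with $\tld u=\nabla_z^\perp\tld\psi$, the Cartesian vector $\tld u(z)$, written in the pulled-back variables, is
\begin{equation*}
	\tld u(T(\beta,\phi))=\begin{pmatrix}0&-1\\1&0\end{pmatrix}\left(J_T^{-1}\right)^t\begin{pmatrix}\rd_\beta\\\rd_\phi\end{pmatrix}\psi,
\end{equation*}
which by \eqref{jacobian}--\eqref{diff change} equals $\frac{1}{e^a a_\varphi}$ times an explicit combination of $\psi_\beta,\psi_\phi$ with coefficients built from $a_\beta,a_\phi$ and the vectors $(\cos\theta,\sin\theta)$, $(-\sin\theta,\cos\theta)$. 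Substituting $a_\beta=\psi_{\beta\beta}/(2\psi_\beta)$, $a_\phi=\psi_{\beta\phi}/(2\psi_\beta)$, $a_\varphi=\psi_{\beta\varphi}/(2\psi_\beta)$ from \eqref{new coordinate}, then rewriting every $\psi$-derivative via \eqref{beta and bar beta}--\eqref{betavarphi and bar betavarphi} in bar derivatives of $\br\psi$ (using $\psi_{\beta\beta}=\psi_{\beta\phi}-\psi_{\beta\varphi}$), all powers of $\beta$ combine; inserting the scaling relations \eqref{eq51}, \eqref{eq52} (i.e. $\abs{x}=(t/\beta)^\mu(-\dbbeta/\mu)^{1/2}$ and $\beta/t=\abs{x}^{-1/\mu}(-\dbbeta/\mu)^{1/2\mu}$) the surviving $\beta$-power collapses to $\abs{x}^{1-\frac1\mu}$, leaving
\begin{equation*}
	u(x,t)=\abs{x}^{1-\frac1\mu}\,F(\beta,\phi),\qquad (\beta,\phi)=T^{-1}(xt^{-\mu}),
\end{equation*}
where $F$ is the $\bbR^2$-valued function whose value at $\beta=0$ is exactly the bracket on the right of \eqref{initial data for velocity}. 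Getting the pullback of $\nabla_z^\perp\tld\psi$ right, tracking signs, and verifying the $\beta$-powers cancel precisely is the only real work here and the one place an error could hide; it can be cross-checked at $\br\psi\equiv\br\psi_0$, where it must reproduce $u_0=\nabla_x^\perp\psi_0$ with $\psi_0$ as in \eqref{trivial sol}, and indeed one finds $\abs{x}^{1-1/\mu}\mu^{-1/2\mu}(-\sin\theta,\cos\theta)$.

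\textbf{Step 2 (limit).} The map $F$ is a rational expression in $\br\psi$, $\dbbeta$, $\dbvarphi$, $\dbvarphibeta$, $\dbphibeta$, $\rd_\phi\br\psi$, each of which lies in $\Wp\hookrightarrow C_b(\RT)$ by Proposition~\ref{continuity of atomic diff operator} and Proposition~\ref{continuity of diff bar operator}, and whose denominators ($\dbbeta$ near $-1$ and $\dbvarphibeta$ near $-2\mu$) never vanish by \eqref{boundedness of bar derivative}; hence $F$ is continuous, and uniformly continuous on compact subsets of $\RT$. Since \eqref{boundedness of bar derivative} keeps $\dbbeta\in[-\tfrac32,-\tfrac12]$, relation \eqref{eq52} yields the uniform bound $0\le\beta\le C_R t$ on any compact $R\subset\rmz$, exactly as in \eqref{eq54}, so $T^{-1}(xt^{-\mu})\to(0,\theta)$ uniformly on $R$ as $t\searrow0$. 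Composing this uniform convergence with the uniform continuity of $F$ and the boundedness of $\abs{x}^{1-\frac1\mu}$ on $R$ gives $u(x,t)\to\abs{x}^{1-\frac1\mu}F(0,\theta)$ uniformly on $R$, which is the claim. This last argument is verbatim the end of Proposition~\ref{initial data for stream function prop}; the main obstacle is purely the algebra of Step~1.
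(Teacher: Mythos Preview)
Your proposal is correct and follows essentially the same approach as the paper: both compute $u(x,t)=\abs{x}^{1-\frac1\mu}F(\beta,\phi)$ explicitly by pulling back $\nabla_z^\perp\tld\psi$ via \eqref{diff change}, substituting the $a$-derivatives through \eqref{new coordinate}, converting to bar derivatives via \eqref{beta and bar beta}--\eqref{betavarphi and bar betavarphi}, and absorbing the powers of $\beta/t$ using \eqref{eq51}--\eqref{eq52}, then invoke the argument of Proposition~\ref{initial data for stream function prop} to pass to the limit. The paper carries out the algebra line by line (simplifying at the $a,\psi$ level via $\psi_\beta=\psi_\phi-\psi_\varphi$ and $a_\beta=a_\phi-a_\varphi$ before substituting), whereas you describe the computation at a higher level and add the trivial-solution cross-check, but the method is the same.
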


\begin{proof}
	\begin{align*}
		u(x,t) \underset{\eqref{tilde func}}&{=} t^{\mu-1} \nabla^\perp_z \tld{\psi}(xt^{-\mu}) \\
		\underset{\eqref{diff change}}&{=} \frac{t^{\mu -1}}{e^a a_\varphi} \begin{pmatrix}
			-a_\phi \cos \theta +\sin \theta & a_\beta \cos \theta - \sin \theta \\
			-a_\phi \sin \theta - \cos \theta & a_\beta \sin \theta + \cos \theta \\
		\end{pmatrix} \begin{pmatrix}
			\rd_\beta \\
			\rd_\phi \\
		\end{pmatrix} \psi \\
		&= \frac{t^{\mu-1}}{e^a a_\varphi} \left(
		(-a_\phi \psi_\beta + a_\beta \psi_\phi) \begin{pmatrix}
			\cos \theta \\
			\sin \theta \\
		\end{pmatrix} + \psi_\varphi \begin{pmatrix}
			-\sin \theta \\
			\cos \theta \\
		\end{pmatrix} \right) \\
		&= \frac{t^{\mu-1}}{e^a a_\varphi} \left(
		(-a_\phi(\psi_\phi - \psi_\varphi ) + (a_\phi - a_\varphi)\psi_\phi) \begin{pmatrix}
			\cos \theta \\
			\sin \theta \\
		\end{pmatrix} + \psi_\varphi \begin{pmatrix}
			-\sin \theta \\
			\cos \theta \\
		\end{pmatrix} \right) \\
		&= \frac{t^{\mu-1}}{e^a a_\varphi} \left(
		(a_\phi \psi_\varphi - a_\varphi  \psi_\phi) \begin{pmatrix}
			\cos \theta \\
			\sin \theta \\
		\end{pmatrix} + \psi_\varphi \begin{pmatrix}
			-\sin \theta \\
			\cos \theta \\
		\end{pmatrix} \right) \\
		\underset{\eqref{new coordinate}}&{=} 2 t^{\mu -1} \sqrt{-\frac{\mu}{\psi_\beta}} \frac{\psi_\beta}{\psi_{\beta \varphi}} \left(
		\frac{1}{2} \left( \frac{\psi_{\beta \phi}\psi_\varphi - \psi_{\beta \varphi} \psi_\phi}{\psi_\beta} \right) \begin{pmatrix}
			\cos \theta \\
			\sin \theta \\
		\end{pmatrix}+ \psi_\varphi \begin{pmatrix}
			-\sin \theta \\
			\cos \theta \\
		\end{pmatrix} \right) \\
		\underset{\eqref{beta and bar beta} \sim \eqref{betavarphi and bar betavarphi}} &{=} 2 \left( \frac{\beta}{t} \right)^{1-\mu} \sqrt{-\frac{\mu}{\dbbeta \br{\psi}}} \frac{\dbbeta \br{\psi}}{\dbvarphibeta \br{\psi}} \left(
		\frac{\rd_\phi \dbbeta \br{\psi} \cdot \dbvarphi \br{\psi} - \dbvarphibeta \br{\psi} \cdot \rd_\phi \br{\psi}}{2 \dbbeta \br{\psi}} \begin{pmatrix}
			\cos \theta \\
			\sin \theta \\
		\end{pmatrix} + \dbvarphi \br{\psi} \begin{pmatrix}
			- \sin \theta \\
			\cos \theta \\
		\end{pmatrix} \right) \\
		\underset{\eqref{eq51}}&{=} \abs{x}^{1-\frac{1}{\mu}} \left( -\frac{\dbbeta \br{\psi}(\beta, \phi)}{\mu} \right)^{\frac{1}{2\mu} -1}  \frac{2 \dbbeta \br{\psi}(\beta, \phi)}{\dbvarphibeta \br{\psi}(\beta, \phi)} \\
		& \qquad \cdot \left(
		\frac{\rd_\phi \dbbeta \br{\psi}(\beta, \phi) \cdot \dbvarphi \br{\psi}(\beta, \phi) - \dbvarphibeta \br{\psi}(\beta, \phi) \cdot \rd_\phi \br{\psi}(\beta, \phi)}{2 \dbbeta \br{\psi}(\beta, \phi)} \begin{pmatrix}
			\cos \theta \\
			\sin \theta \\
		\end{pmatrix} + \dbvarphi \br{\psi}(\beta, \phi) \begin{pmatrix}
			- \sin \theta \\
			\cos \theta \\
		\end{pmatrix} \right)
	\end{align*}
	Then, the same argument for sending $t$ to zero as in Proposition \ref{initial data for stream function prop} gives the result.
\end{proof}

Lastly, we investigate initial data for $w(x,t)$. The situation is a little bit different from $\psi(x,t)$ or $u(x,t)$, because $w(x,t)$ is not continuous generally due to lacks of regularity on $\Omega(\phi)$. Therefore, we cannot guarantee local uniform convergence of $w(x,t)$ as $t$ goes to zero. However, we could guarantee local $L^p$-convergence for $w(x,t)$, actually \textit{including the origin}. Before dealing with convergence, we show:

\begin{proposition} \label{w is L^p}
	Assume $1 \leq p < 2 \mu$. Then, 
	\begin{equation*}
		w(\cdot, t) \in L^p_{\textnormal{loc}}(\bbR^2)
	\end{equation*}
	for all $t \in (0,\infty)$ with its local $L^p$-norm independent of $t$.
\end{proposition}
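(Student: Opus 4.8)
The plan is to reduce the estimate to a neighbourhood of the origin and exploit the self-similar structure together with the uniform bounds \eqref{boundedness of bar derivative}. Every compact $K\subset\bbR^2$ is contained in a ball $B(0,R)$, so it suffices to bound $\nrm{w(\cdot,t)}_{L^p(B(0,R))}$ by a constant depending only on $R,p,\mu$ and $\nrm{\Omega}_{L^p(\bbT)}$, uniformly in $t\in(0,\infty)$.

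First I would use $w(x,t)=t^{-1}\tld w(xt^{-\mu})$ from \eqref{tilde func} together with the substitution $x=t^\mu z$ to write
\begin{equation*}
	\int_{B(0,R)}\abs{w(x,t)}^p\,dx = t^{2\mu-p}\int_{\abs{z}<Rt^{-\mu}}\abs{\tld w(z)}^p\,dz .
\end{equation*}
Next, since $T\colon\bbR_+\times\bbT\to\rmz$ is a $C^1$-diffeomorphism (Proposition \ref{C^1 change of coordinate}) with $\tld w(T(\beta,\phi))=w(\beta,\phi)=(\psi_\varphi)^{-\frac1{2\mu}}\Omega(\phi)$, I would pull the $z$-integral back to $(\beta,\phi)$-coordinates. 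Here $\abs{\det J_T}=\tfrac1{2\mu}\abs{\psi_{\beta\varphi}}=\tfrac1{2\mu}\beta^{-2\mu-1}\abs{\dbvarphibeta \br{\psi}}$ by \eqref{determinant}, \eqref{new coordinate} and \eqref{betavarphi and bar betavarphi}, and $\abs{T(\beta,\phi)}=\beta^{-\mu}\bigl(-\tfrac1\mu\dbbeta \br{\psi}\bigr)^{1/2}$ by \eqref{eq51}. Two consequences of \eqref{boundedness of bar derivative} — namely $-\tfrac32\le\dbbeta \br{\psi}\le-\tfrac12$, $\tfrac12\le\dbvarphi \br{\psi}\le\tfrac32$, and $-3\mu\le\dbvarphibeta \br{\psi}\le-\mu$ — drive the argument: the region $\set{\abs{T(\beta,\phi)}<\rho}$ is contained in $\set{\beta>c_1\rho^{-1/\mu}}\times\bbT$ for an explicit constant $c_1=c_1(\mu)>0$, and, using $\psi_\varphi=\beta^{-2\mu}\dbvarphi \br{\psi}$ from \eqref{varphi and bar var phi}, the integrand is dominated by a \emph{product}
\begin{equation*}
	\abs{w(\beta,\phi)}^p\abs{\det J_T}=(\psi_\varphi)^{-\frac p{2\mu}}\abs{\Omega(\phi)}^p\abs{\det J_T}\le C_0\,\beta^{p-2\mu-1}\abs{\Omega(\phi)}^p ,
\end{equation*}
with $C_0=C_0(p,\mu)$.

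Then Fubini's theorem, applied over the larger product region, gives
\begin{equation*}
	\int_{\abs{z}<\rho}\abs{\tld w(z)}^p\,dz\le C_0\,\nrm{\Omega}_{L^p(\bbT)}^p\int_{c_1\rho^{-1/\mu}}^{\infty}\beta^{p-2\mu-1}\,d\beta ,
\end{equation*}
and the $\beta$-integral converges \emph{precisely because} $p<2\mu$ — this is the only point where the hypothesis is used — and equals a constant multiple of $\rho^{(2\mu-p)/\mu}$. Setting $\rho=Rt^{-\mu}$ and substituting back into the first display, the powers of $t$ cancel exactly, leaving $\nrm{w(\cdot,t)}_{L^p(B(0,R))}^p\le C\,R^{(2\mu-p)/\mu}$ with $C=C(p,\mu,\nrm{\Omega}_{L^p(\bbT)})$ independent of $t$. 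Hence $w(\cdot,t)\in L^p_{\textnormal{loc}}(\bbR^2)$ for every $t>0$, with local $L^p$-norm bounded uniformly in $t$.

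The one point that genuinely needs care is that $\Omega$ lies only in $L^p(\bbT)$ and need not be bounded, so no $L^\infty$-control of $\Omega$ is available; the argument nonetheless goes through because the pointwise bound on $\abs{w}^p\abs{\det J_T}$ factors as $(\textnormal{function of }\beta)\times\abs{\Omega(\phi)}^p$, whence the $\phi$-integral yields exactly $\nrm{\Omega}_{L^p(\bbT)}^p$. The remaining work is merely the bookkeeping of the explicit constants coming from \eqref{boundedness of bar derivative} and \eqref{eq51}, which I would not spell out in detail.
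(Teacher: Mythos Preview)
Your proposal is correct and follows essentially the same approach as the paper: self-similar rescaling $x=t^\mu z$, pull-back via $T$ to $(\beta,\phi)$-coordinates, the uniform bounds \eqref{boundedness of bar derivative} to control both the domain of integration and the Jacobian factor, and Fubini to separate $\nrm{\Omega}_{L^p(\bbT)}^p$ from the $\beta$-integral $\int\beta^{p-2\mu-1}\,d\beta$, whose convergence uses $p<2\mu$. The only cosmetic difference is that the paper works on the annulus $\br{B(\varepsilon,R)}$ and sends $\varepsilon\to0$ at the end, whereas you integrate directly over $B(0,R)\setminus\{0\}$; both routes yield the same time-independent bound $\nrm{w(\cdot,t)}_{L^p(B(0,R))}^p\le C\,R^{2-\frac{p}{\mu}}$.
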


\begin{proof}
	Take an annulus $\br{B(\varepsilon, R)}$ for $R, \varepsilon>0$. Then,
	\begin{equation*}
		\int_{\br{B(\varepsilon, R)}} \abs{w(x,t)}^p \, dx \underset{\eqref{tilde func} }{=} \int_{\br{B(\varepsilon, R)}} \frac{1}{t^p} \abs{\tld{w}(xt^{-\mu})}^p \,dx \underset{xt^{-\mu} \rightarrow z}{=} \int_{\br{B(\varepsilon t^{-\mu}, Rt^{-\mu})}} \frac{1}{t^p}\abs{\tld{w}(z)}^p t^{2\mu} \, dz.
	\end{equation*}
	By \eqref{eq52} and \eqref{boundedness of bar derivative}, we have
	\begin{equation*}
		\abs{x}^{-\frac{1}{\mu}} \left( \frac{1}{2\mu} \right) ^{\frac{1}{2\mu}} t \leq \beta \leq \abs{x}^{-\frac{1}{\mu}} \left( \frac{3}{2\mu} \right) ^{\frac{1}{2\mu}} t.
	\end{equation*}
	Therefore, $T^{-1} \left( \br{B(\varepsilon t^{-\mu}, Rt^{-\mu})} \right)$, where $T$ is the coordinate change map in Proposition \ref{C^1 change of coordinate}, is contained in
	\begin{equation*}
		\left[ R^{-\frac{1}{\mu}} \left( \frac{1}{2\mu} \right)^{\frac{1}{2\mu}} t , ~~ \varepsilon^{-\frac{1}{\mu}} \left( \frac{3}{2\mu} \right)^{\frac{1}{2\mu}} t \right] \times \bbT.
	\end{equation*}
	Hence,
	\begin{equation} \label{eq90}
		\begin{aligned}
			& \int_{\br{B(\varepsilon t^{-\mu}, Rt^{-\mu})}} \frac{1}{t^p}\abs{\tld{w}(z)}^p t^{2\mu} \, dz \\
			\underset{(z_1, z_2) \xrightarrow[]{T^{-1}}(\beta, \phi),\,  \eqref{determinant}}&{\leq} t^{2 \mu -p} \int_{ R^{-\frac{1}{\mu}} \left( \frac{1}{2\mu} \right)^{\frac{1}{2\mu}} t}^{\varepsilon^{-\frac{1}{\mu}} \left( \frac{3}{2\mu}  \right)^{\frac{1}{2\mu}} t} \int_{\bbT} \abs{w(\beta, \phi)}^p e^{2a} a_\varphi \, d\beta d\phi \\
			\underset{\eqref{new coordinate}, \eqref{w formula} }&{=} \int_{ R^{-\frac{1}{\mu}} \left( \frac{1}{2\mu} \right)^{\frac{1}{2\mu}} t}^{\varepsilon^{-\frac{1}{\mu}} \left( \frac{3}{2\mu}  \right)^{\frac{1}{2\mu}} t} \int_{\bbT} \left( - \frac{\psi_{\beta \varphi}}{\mu} \right) ( \psi_\varphi)^{-\frac{p}{2\mu}} \abs{\Omega}^p \, d\beta d\phi \\
			\underset{\eqref{varphi and bar var phi}, \eqref{betavarphi and bar betavarphi} }&{=}  \int_{ R^{-\frac{1}{\mu}} \left( \frac{1}{2\mu} \right)^{\frac{1}{2\mu}} t}^{\varepsilon^{-\frac{1}{\mu}} \left( \frac{3}{2\mu}  \right)^{\frac{1}{2\mu}} t} \int_{\bbT} \beta^{p-2\mu -1} \frac{\dbvarphibeta \br{\psi}}{-\mu} ( \dbvarphi \br{\psi} )^{-\frac{p}{2\mu}} \abs{\Omega}^p \, d\beta d \phi \\
			\underset{\eqref{boundedness of bar derivative}, \text{Fubini} }&{\leq} 3 \cdot 2^{\frac{p}{2\mu}} t^{2\mu -p} \cdot \int_\bbT \abs{\Omega}^p \, d\phi \cdot \int_{[ R^{-\frac{1}{\mu}} \left( \frac{1}{2\mu} \right)^{\frac{1}{2\mu}} t , ~~ \varepsilon^{-\frac{1}{\mu}} \left( \frac{3}{2\mu}  \right)^{\frac{1}{2\mu}} t ]} \beta^{p-2\mu-1} \, d\beta \\
			&= \frac{3 \cdot 2^{\frac{p}{2\mu}}}{2\mu -p} \cdot \int_\bbT \abs{\Omega}^p \, d\phi \cdot \left( \left( \frac{1}{2\mu} \right)^{\frac{p}{2\mu}-1} R^{2-\frac{p}{\mu}} - \left( \frac{3}{2\mu} \right)^{\frac{1}{2\mu}} \varepsilon^{2-\frac{p}{\mu}} \right).
		\end{aligned}
	\end{equation}
	Taking $\varepsilon \rightarrow 0$ in \eqref{eq90}, we have
	\begin{equation*}
		w(\cdot, t) \in L^p ( \br{B(0, R)})
	\end{equation*}
	with its time-independent norm bound
	\begin{equation} \label{eq91}
		\nrm{w(\cdot, t )}_{L^p(\br{B(0,R)})} \leq \left( \frac{6\mu}{2\mu-p} \right)^{\frac{1}{p}} \mu^{-\frac{1}{2\mu}} \nrm{\Omega})_{L^p(\bbT)} R^{\frac{2}{p}-\frac{1}{\mu}}.
	\end{equation}
\end{proof}

Now, we state convergence of $w$ as $t \rightarrow 0$ in $L^p_{\textnormal{loc}}(\bbR^2)$.

\begin{proposition} \label{initial data for vorticity prop}
	For $1 \leq p<2\mu$, the vorticity $w(x,t)$ satisfies
	\begin{equation} \label{initial data for vorticity}
		\lim_{t \searrow 0} w(x,t) = \abs{x}^{-\frac{1}{\mu}} \left( \frac{\dbbeta \br{\psi}(0,\theta)}{-\mu \dbvarphi \br{\psi} (0, \theta)} \right)^{\frac{1}{2\mu}} \Omega(\theta),
	\end{equation}
	where the convergence is local $L^p$-convergence on $\bbR^2$.
	\end{proposition}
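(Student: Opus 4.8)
The plan is to put $w(x,t)$ into an explicit $(\beta,\phi)$-form analogous to \eqref{eq53}, peel off the continuous prefactor from the rough factor $\Omega$, and then run a density argument in $L^p(\bbT)$, using Proposition \ref{w is L^p} to control the contribution near the origin uniformly in $t$. First I would derive the formula: combining $w(\beta,\phi)=(\psi_\varphi)^{-1/(2\mu)}\Omega(\phi)$ from \eqref{w formula} with \eqref{varphi and bar var phi} gives $w(\beta,\phi)=\beta\,(\dbvarphi\br{\psi})^{-1/(2\mu)}\Omega(\phi)$, and then $w(x,t)=t^{-1}\tld{w}(xt^{-\mu})=t^{-1}w(\beta,\phi)=\tfrac{\beta}{t}(\dbvarphi\br{\psi})^{-1/(2\mu)}\Omega(\phi)$ with $(\beta,\phi)=T^{-1}(xt^{-\mu})$; inserting the identity \eqref{eq52} for $\beta/t$ yields
\begin{equation*}
	w(x,t)=\abs{x}^{-\frac1\mu}\left(\frac{\dbbeta\br{\psi}(\beta,\phi)}{-\mu\,\dbvarphi\br{\psi}(\beta,\phi)}\right)^{\frac1{2\mu}}\Omega(\phi),\qquad (\beta,\phi)=T^{-1}(xt^{-\mu}).
\end{equation*}
Since scaling by $t^{-\mu}>0$ does not change the polar angle, $\phi=\theta-\beta$ with $\theta=\arg x$ and $\beta=\beta(x,t)\in[\,0,\ \abs{x}^{-1/\mu}(3/2\mu)^{1/2\mu}t\,]$ by \eqref{eq54}; this is exactly the right-hand side of \eqref{initial data for vorticity} evaluated at $(\beta,\phi)$ rather than $(0,\theta)$.

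Next I would isolate a uniform linear estimate. For $h\in L^p(\bbT)$ let $\mathcal{W}_t[h](x)$ and $\mathcal{W}_0[h](x)$ denote the expressions obtained from the right-hand side of the display above, respectively of \eqref{initial data for vorticity}, by replacing $\Omega$ with $h$; thus $\mathcal{W}_t[\Omega]=w(\cdot,t)$ and $\mathcal{W}_0[\Omega]$ is the claimed limit. The change-of-variables computation in the proof of Proposition \ref{w is L^p} never used continuity of $\Omega$, only $\Omega\in L^p(\bbT)$, and the resulting bound \eqref{eq91} depends on $\Omega$ solely through $\nrm{\Omega}_{L^p(\bbT)}$. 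Hence for every $R>0$ there is $C_R$, \emph{independent of $t$}, with $\nrm{\mathcal{W}_t[h]}_{L^p(\ball{}{0}{R})}\le C_R\nrm{h}_{L^p(\bbT)}$, and, tracking the radius in that same computation, $\nrm{\mathcal{W}_t[h]}_{L^p(\ball{}{0}{\rho})}\le C\rho^{2/p-1/\mu}\nrm{h}_{L^p(\bbT)}$. The analogous bounds for $\mathcal{W}_0[h]$ are easier, since they reduce to the $t$-free polar-coordinate integral $\int_0^R\!\int_{\bbT}r^{-p/\mu}(\text{bounded})\abs{h(\theta)}^p\,r\,dr\,d\theta$, finite precisely because $p<2\mu$.

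Then I would treat continuous data and conclude by density. If $h\in C^0(\bbT)$, then on each region $\{\rho\le\abs{x}\le R\}$ one has $\beta(x,t)\to0$ uniformly by \eqref{eq54}, hence $\phi=\theta-\beta\to\theta$ uniformly; since $\dbbeta\br{\psi},\dbvarphi\br{\psi}$ are uniformly continuous on compact subsets of $\RT$ with $\dbvarphi\br{\psi}$ bounded away from $0$ by \eqref{boundedness of bar derivative}, and $h$ is uniformly continuous, the product $\mathcal{W}_t[h]\to\mathcal{W}_0[h]$ uniformly on $\{\rho\le\abs{x}\le R\}$, exactly as in Proposition \ref{initial data for stream function prop}, hence in $L^p$ there. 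On $\ball{}{0}{\rho}$ the estimates of the previous step give $\nrm{\mathcal{W}_t[h]-\mathcal{W}_0[h]}_{L^p(\ball{}{0}{\rho})}\le C\rho^{2/p-1/\mu}\nrm{h}_{L^p(\bbT)}\to0$ as $\rho\to0$; letting $\rho\to0$ shows $\mathcal{W}_t[h]\to\mathcal{W}_0[h]$ in $L^p(\ball{}{0}{R})$ for continuous $h$. Finally, given $\varepsilon>0$ choose $h\in C^0(\bbT)$ with $\nrm{\Omega-h}_{L^p(\bbT)}<\varepsilon$, and estimate
\begin{equation*}
	\nrm{w(\cdot,t)-\mathcal{W}_0[\Omega]}_{L^p(\ball{}{0}{R})}\le\nrm{\mathcal{W}_t[\Omega-h]}_{L^p}+\nrm{\mathcal{W}_t[h]-\mathcal{W}_0[h]}_{L^p}+\nrm{\mathcal{W}_0[h-\Omega]}_{L^p}\le 2C_R\varepsilon+o_{t\to0}(1),
\end{equation*}
using the uniform linear bound twice; since $\varepsilon$ is arbitrary, this is the asserted local $L^p$ convergence on $\bbR^2$.

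The main obstacle is the uniform linear estimate: one must verify that the Jacobian/change-of-variables argument of Proposition \ref{w is L^p} genuinely produces a bound on $\nrm{\mathcal{W}_t[h]}_{L^p(\ball{}{0}{R})}$ that is linear in $\nrm{h}_{L^p(\bbT)}$ and uniform in $t$ (and carries the correct $\rho$-power), i.e. that nothing in that proof secretly used properties of $\Omega$ beyond membership in $L^p(\bbT)$. Everything else is either a reprise of Proposition \ref{initial data for stream function prop} (uniform convergence of the coordinates and of the continuous prefactor as $t\searrow0$) or a routine density argument.
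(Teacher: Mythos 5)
Your proof is correct but takes a genuinely different route from the paper's. The paper proves the convergence directly: after changing variables $x \mapsto (\beta,\phi)$ and then $\beta \mapsto t\gamma$, the quantity $\abs{w(x,t)-w(x,0)}^p$ is split (via $\abs{a+b}^p\le 2^p(\abs{a}^p+\abs{b}^p)$) into a term carrying $\int_\bbT\abs{\Omega(\phi)-\Omega(t\gamma+\phi)}^p\,d\phi$, which vanishes by $L^p$-continuity of translation, and a term in which the difference of the continuous prefactors vanishes pointwise while $\abs{\Omega}^p$ stays fixed; both pieces are dominated by an integrable envelope $\sim\gamma^{p-2\mu-1}\abs{\Omega}^p$, and Lebesgue's dominated convergence theorem finishes the argument. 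You instead package the formula as a linear operator $h\mapsto\mathcal{W}_t[h]$, read off from the proof of Proposition \ref{w is L^p} a uniform-in-$t$ bound $\nrm{\mathcal{W}_t[h]}_{L^p(B(0,\rho))}\lesssim\rho^{2/p-1/\mu}\nrm{h}_{L^p(\bbT)}$ (which is indeed linear in $\nrm{h}_{L^p}$ with no further dependence on the function, as you correctly observe), prove the convergence for continuous $h$ by uniform convergence on annuli plus smallness near the origin, and transfer to general $\Omega\in L^p(\bbT)$ by density. The density route is more modular: the only quantitative input is the uniform operator bound, and the passage to the limit is soft. The paper's route is shorter and self-contained inside one change-of-variables computation, at the cost of invoking $L^p$-translation continuity and a careful identification of the dominating function. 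Both hinge on the same arithmetic fact that $p<2\mu$ makes the radial integral $\int_0^R r^{1-p/\mu}\,dr$ converge, which is what gives you the favorable power $\rho^{2/p-1/\mu}$.
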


\begin{proof}
	We first anticipate $\lim_{t\rightarrow 0}w(x,t)$. From the equality
	\begin{equation} \label{eq92}
	\begin{aligned}
		w(x,t) \underset{\eqref{tilde func}}&{=} t^{-1} \tld{w} (xt^{-\mu}) \underset{(\beta, \phi) = T^{-1} (xt^{-\mu})}{\overset{\eqref{w formula}}{=}} t^{-1} \left( \psi_\varphi (\beta, \phi) \right)^{-\frac{1}{2\mu}} \Omega(\phi)\\
		 \underset{\eqref{varphi and bar var phi}}&{=} \frac{\beta}{t}\left( \dbvarphi \br{\psi} (\beta, \phi) \right)^{-\frac{1}{2\mu}} \Omega(\phi) \\
		 \underset{\beta+\phi=\theta,\, \eqref{eq52}}&{=} \abs{x}^{-\frac{1}{\mu}} \left( -\frac{\dbbeta \br{\psi} (\beta, \theta - \beta)}{\mu \dbvarphi \br{\psi} (\beta, \theta - \beta)} \right)^{\frac{1}{2\mu}} \Omega(\theta - \beta)
	\end{aligned}
	\end{equation}
	and the fact that $t \rightarrow 0$ corresponds to $(\beta, \phi) \rightarrow (0, \theta)$ on each compact set of $\rmz$ as seen in \eqref{eq54}, we can guess\footnote{This is not so rigorous, because the Proposition \ref{initial data for vorticity prop} deal with local $L^p$-convergence including the origin, while \eqref{eq54} does not.}
	\begin{equation*}
		\lim_{t \searrow 0} w(x,t) = \abs{x}^{-\frac{1}{\mu}} \left( \frac{\dbbeta \br{\psi}(0,\theta)}{-\mu \dbvarphi \br{\psi} (0, \theta)} \right)^{\frac{1}{2\mu}} \Omega(\theta)
	\end{equation*}
	in $L^p_{\textnormal{loc}}(\bbR^2)$. We prove this rigorously. For $R>0, t>0$,
	\begin{equation} \label{eq93}
		\begin{aligned}
			& \int_{\br{B(0,R)}} \abs{w(x,t)-\abs{x}^{-\frac{1}{\mu}} \left( \frac{\dbbeta \br{\psi}(0,\theta)}{-\mu \dbvarphi \br{\psi} (0, \theta)} \right)^{\frac{1}{2\mu}} \Omega(\theta)}^p \, dx \\
			\underset{\eqref{tilde func} }&{=} \int_{\br{B(0,R)}} \abs{\frac{1}{t}\tld{w}(xt^{-\mu}) - \abs{x}^{-\frac{1}{\mu}} \left( \frac{\dbbeta \br{\psi}(0,\theta)}{-\mu \dbvarphi \br{\psi} (0, \theta)} \right)^{\frac{1}{2\mu}} \Omega(\theta) }^p \, dx \\
			\underset{xt^{-\mu} \rightarrow z}&{=} \int_{\br{B(0,Rt^{-\mu})}} \abs{\frac{1}{t} \tld{w}(z)-\frac{1}{t} \abs{z}^{-\frac{1}{\mu}} \left( \frac{\dbbeta \br{\psi}(0,\theta)}{-\mu \dbvarphi \br{\psi} (0, \theta)} \right)^{\frac{1}{2\mu}} \Omega(\theta) }^p t^{2\mu} \, dz \\
			\underset{(z_1, z_2) \rightarrow (\beta, \phi) }&{=} \int \int_{T^{-1}(\br{B(0, Rt^{-\mu})})} t^{2\mu -p} \abs{w(\beta, \phi)-e^{-\frac{a(\beta, \phi)}{\mu}} \left( \frac{\dbbeta \br{\psi}(0,\beta+\phi)}{-\mu \dbvarphi \br{\psi} (0, \beta+\phi)} \right)^{\frac{1}{2\mu}} \Omega(\beta+\phi)  }^p e^{2a} a_\varphi \, d\beta d\phi \\
			\underset{\eqref{w formula}, \eqref{new coordinate} }&{=} \int \int_{T^{-1}(\br{B(0, Rt^{-\mu})})} t^{2\mu -p} \abs{ \left( \psi_\varphi (\beta, \phi) \right)^{-\frac{1}{2\mu}} \Omega(\phi)- \left( -\frac{\psi_\beta(\beta, \phi)}{\mu} \right)^{-\frac{1}{2\mu}} \left( \frac{\dbbeta \br{\psi}(0,\beta+\phi)}{-\mu \dbvarphi \br{\psi} (0, \beta+\phi)} \right)^{\frac{1}{2\mu}} \Omega(\beta+\phi)  }^p \\
			& \qquad \qquad \cdot \left( -\frac{\psi_{\beta \varphi}}{\mu} \right) \, d\beta d\phi \\
			\underset{\eqref{beta and bar beta}, \eqref{varphi and bar var phi}, \eqref{betavarphi and bar betavarphi} }&{=} \int \int_{T^{-1}(\br{B(0, Rt^{-\mu})})} t^{2\mu -p} \beta^{p-2\mu-1} \cdot \left( - \frac{\dbvarphibeta \br{\psi}(\beta, \phi)}{\mu} \right) \\
			& \qquad \qquad \abs{\frac{\Omega(\phi)}{(\dbvarphi \br{\psi} (\beta, \phi) )^{\frac{1}{2\mu}}} - \frac{1}{(\dbvarphi \br{\psi} (0, \beta+ \phi) )^{\frac{1}{2\mu}}} \left( \frac{-\dbbeta \br{\psi} (0, \beta+\phi)}{-\dbbeta \br{\psi} (\beta, \phi)} \right)^{\frac{1}{2\mu}} \Omega(\beta+\phi) }^p \, d\beta d\phi \\
			\underset{\eqref{boundedness of three functions} }&{\leq} 3 \int \int_{T^{-1}(\br{B(0, Rt^{-\mu})})} t^{2\mu -p} \beta^{p-2\mu-1}  \\
			& \qquad \qquad \abs{\frac{\Omega(\phi)}{(\dbvarphi \br{\psi} (\beta, \phi) )^{\frac{1}{2\mu}}} - \frac{1}{(\dbvarphi \br{\psi} (0, \beta+ \phi) )^{\frac{1}{2\mu}}} \left( \frac{-\dbbeta \br{\psi} (0, \beta+\phi)}{-\dbbeta \br{\psi} (\beta, \phi)} \right)^{\frac{1}{2\mu}} \Omega(\beta+\phi) }^p \, d\beta d\phi \\
			\underset{\beta \rightarrow t \gamma, \, \eqref{eq52}, \eqref{boundedness of bar derivative} }&{\leq} 3 \int_\bbT \int_{(2\mu)^{-\frac{1}{2\mu}} R^{-\frac{1}{\mu}} }^\infty \gamma^{p-2\mu-1} \\
			 & \qquad \qquad \qquad \abs{\frac{\Omega(\phi)}{(\dbvarphi \br{\psi} (t \gamma, \phi) )^{\frac{1}{2\mu}}} - \frac{1}{(\dbvarphi \br{\psi} (0, t \gamma + \phi) )^{\frac{1}{2\mu}}} \left( \frac{-\dbbeta \br{\psi} (0, t \gamma+\phi)}{-\dbbeta \br{\psi} (t \gamma, \phi)} \right)^{\frac{1}{2\mu}} \Omega(t \gamma+\phi) }^p \, d\gamma d\phi \\
		\end{aligned}
	\end{equation}
	\begin{equation*}
		\begin{aligned}
		&\leq 3\cdot 2^p \int_\bbT \int_{(2\mu)^{-\frac{1}{2\mu}}R^{-\frac{1}{\mu}} }^\infty \gamma^{p-2\mu-1} \frac{1}{(\dbvarphi \br{\psi} (t\gamma, \phi))^{\frac{p}{2\mu}}} \abs{\Omega(\phi)-\Omega(t \gamma +\phi) }^p \, d\gamma d\phi \\
			 & \qquad + 3\cdot 2^p \int_\bbT \int_{(2\mu)^{-\frac{1}{2\mu}}R^{-\frac{1}{\mu}} }^\infty \gamma^{p-2\mu-1} \Bigg\lvert \left( \frac{1}{\dbvarphi \br{\psi} (t\gamma, \phi)} \right)^{\frac{1}{2\mu}} \\
			 & \qquad \qquad \qquad - \frac{1}{(\dbvarphi \br{\psi} (0, t \gamma + \phi) )^{\frac{1}{2\mu}}} \left( \frac{-\dbbeta \br{\psi} (0, t \gamma+\phi)}{-\dbbeta \br{\psi} (t \gamma, \phi)} \right)^{\frac{1}{2\mu}}\Bigg\rvert ^p \abs{\Omega(t \gamma+\phi)}^p \, d\gamma d\phi \\
			\underset{\eqref{boundedness of bar derivative},\, \text{Fubini} }&{\leq} 3 \cdot 2^{p+\frac{p}{2\mu}} \int_{(2\mu)^{-\frac{1}{2\mu}} R^{-\frac{1}{\mu}}}^\infty \gamma^{p-2\mu-1} \int_\bbT \abs{\Omega(\phi) - \Omega(t\gamma+\phi) }^p \, d\phi d\gamma \\
			 & \qquad + 3\cdot 2^p \int_\bbT \int_{(2\mu)^{-\frac{1}{2\mu}}R^{-\frac{1}{\mu}} }^\infty \gamma^{p-2\mu-1} \Bigg\lvert \left( \frac{1}{\dbvarphi \br{\psi} (t\gamma, \phi)} \right)^{\frac{1}{2\mu}} \\
			 & \qquad \qquad \qquad - \frac{1}{(\dbvarphi \br{\psi} (0, t \gamma + \phi) )^{\frac{1}{2\mu}}} \left( \frac{-\dbbeta \br{\psi} (0, t \gamma+\phi)}{-\dbbeta \br{\psi} (t \gamma, \phi)} \right)^{\frac{1}{2\mu}}\Bigg\rvert ^p \abs{\Omega(t \gamma+\phi)}^p \, d\gamma d\phi. \\
		\end{aligned}
	\end{equation*}
	Since
	\begin{equation*} 
	\left\{\begin{aligned}
		& \int_\bbT \abs{\Omega(\phi) - \Omega( t\gamma + \phi)}^p \xrightarrow[t \searrow 0]{} 0 \qquad \text{ for all }\gamma>0, \\
		& \Bigg\lvert \left( \frac{1}{\dbvarphi \br{\psi} (t\gamma, \phi)} \right)^{\frac{1}{2\mu}} - \frac{1}{(\dbvarphi \br{\psi} (0, t \gamma + \phi) )^{\frac{1}{2\mu}}} \left( \frac{-\dbbeta \br{\psi} (0, t \gamma+\phi)}{-\dbbeta \br{\psi} (t \gamma, \phi)} \right)^{\frac{1}{2\mu}}\Bigg\rvert ^p \xrightarrow[t \searrow 0]{} 0 \qquad \text{ for all } \gamma>0, \phi \in \bbT, \\
		& \int_\bbT \int_{(2\mu)^{-\frac{1}{2\mu}}}^\infty \gamma^{p-2\mu -1} \abs{\Omega(\phi)}^p \, d\gamma d\phi \qquad \text{ is integrable},
	\end{aligned}\right.
\end{equation*}
Lebesgue's dominate convergence theorem yields
\begin{equation*}
	\lim_{t \searrow 0 } \nrm{w(\cdot, t) - w(\cdot, 0)}_{L^p(\br{B(0,R)})} =0.
\end{equation*}
\end{proof}

\begin{definition} \label{initial data}
	From now on, we define initial data $(w(\cdot,0), u(\cdot, 0), \psi(\cdot,0))$ by the right hand side of \eqref{initial data for vorticity}, \eqref{initial data for velocity}, \eqref{initial data for stream function}.
\end{definition}

Combining results and their proof of Proposition \ref{initial data for stream function prop}, \ref{initial data for velocity prop}, \ref{w is L^p}, \ref{initial data for vorticity prop}, we have the following boundedness property.

\begin{corollary} \label{boundedness of three functions}
	There exists a positive constant $A=A(\Omega)$ and $C=C(R)$ such that
	\begin{equation*}
		\abs{\psi(x,t)} \leq A \abs{x}^{2-\frac{1}{\mu}}, \qquad \abs{u(x,t)} \leq A \abs{x}^{1-\frac{1}{\mu}}, \qquad \nrm{w(\cdot,t)}_{L^p(\br{B(0,R)})} \leq C
	\end{equation*}
	for all $1\leq p <2\mu$ and $t \in [0, \infty)$. 
\end{corollary}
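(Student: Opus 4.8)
The plan is to read all three bounds off the explicit representation formulas obtained while identifying the initial data, together with the uniform pinching estimates \eqref{boundedness of bar derivative} for the bar-derivatives of $\br{\psi}=G(\Omega)$; no genuinely new estimate is required, so this is essentially a bookkeeping argument.

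First I would handle the stream function. Equation \eqref{eq53} inside the proof of Proposition \ref{initial data for stream function prop} shows that for every $t>0$,
\[
	\psi(x,t)=\abs{x}^{2-\frac1\mu}\Big(-\tfrac1\mu\,\dbbeta\br{\psi}(\beta,\phi)\Big)^{\frac1{2\mu}-1}\br{\psi}(\beta,\phi),\qquad (\beta,\phi)=T^{-1}(xt^{-\mu}).
\]
Since $\br{\psi}\in\ball{\Xz}{\trisol}{\delta}$, \eqref{boundedness of bar derivative} gives $-\dbbeta\br{\psi}\in[\tfrac12,\tfrac32]$ and $\br{\psi}\in[\tfrac1{4\mu-2},\tfrac3{4\mu-2}]$ uniformly on $\RT$; as $\mu>\tfrac23$ forces $\tfrac1{2\mu}-1<0$, the power factor lies between two positive constants depending only on $\mu$, so $\abs{\psi(x,t)}\le A\abs{x}^{2-1/\mu}$ with $A$ independent of $x,t$. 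The endpoint $t=0$ is covered by Definition \ref{initial data}: there $\psi(\cdot,0)$ is given by \eqref{initial data for stream function}, namely the same formula evaluated at $(\beta,\phi)=(0,\theta)$, so the bound persists.

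Next I would treat the velocity analogously. The penultimate display in the proof of Proposition \ref{initial data for velocity prop} gives, for all $t>0$,
\[
	u(x,t)=\abs{x}^{1-\frac1\mu}\Big(-\tfrac1\mu\,\dbbeta\br{\psi}\Big)^{\frac1{2\mu}-1}\frac{2\,\dbbeta\br{\psi}}{\dbvarphibeta\br{\psi}}\left(\frac{\dbphibeta\br{\psi}\cdot\dbvarphi\br{\psi}-\dbvarphibeta\br{\psi}\cdot\rd_\phi\br{\psi}}{2\,\dbbeta\br{\psi}}\begin{pmatrix}\cos\theta\\\sin\theta\end{pmatrix}+\dbvarphi\br{\psi}\begin{pmatrix}-\sin\theta\\\cos\theta\end{pmatrix}\right),
\]
all bar-derivatives evaluated at $T^{-1}(xt^{-\mu})$. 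Every factor $\dbbeta\br{\psi},\dbvarphi\br{\psi},\dbvarphibeta\br{\psi},\dbphibeta\br{\psi},\rd_\phi\br{\psi}$ is pinched above and below by \eqref{boundedness of bar derivative}, with $\dbbeta\br{\psi}$ and $\dbvarphibeta\br{\psi}$ bounded away from $0$, and $\abs{\cos\theta},\abs{\sin\theta}\le1$; multiplying these finitely many bounds yields $\abs{u(x,t)}\le A\abs{x}^{1-1/\mu}$, and $t=0$ follows from Definition \ref{initial data} and \eqref{initial data for velocity}. Replacing $A$ by the larger of the two constants gives the first two inequalities simultaneously.

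Finally, the local $L^p$ bound on $w$ is already Proposition \ref{w is L^p}: estimate \eqref{eq91} reads $\nrm{w(\cdot,t)}_{L^p(\br{B(0,R)})}\le(\tfrac{6\mu}{2\mu-p})^{1/p}\mu^{-1/(2\mu)}\nrm{\Omega}_{L^p(\bbT)}R^{2/p-1/\mu}$ for all $t>0$ and $1\le p<2\mu$, which I would take as $C(R)$; the case $t=0$ follows from the local $L^p$-convergence of Proposition \ref{initial data for vorticity prop}, which forces the limit $w(\cdot,0)$ to satisfy the same estimate. I expect no real obstacle here; the only point requiring a little care is that Propositions \ref{initial data for stream function prop} and \ref{initial data for velocity prop} phrase their formulas as $t\searrow0$ limits, so one must quote the pointwise identities valid for every $t>0$ from inside those proofs and then close the endpoint $t=0$ via Definition \ref{initial data}.
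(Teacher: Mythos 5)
Your proposal is correct and follows exactly the route the paper intends: Corollary \ref{boundedness of three functions} is stated there without a separate argument, just as ``combining results and their proof'' of Propositions \ref{initial data for stream function prop}, \ref{initial data for velocity prop}, \ref{w is L^p}, \ref{initial data for vorticity prop}, and your bookkeeping makes that combination explicit in the natural way, reading the bounds off \eqref{eq53}, the analogous display for $u$, and \eqref{eq91}, with the $t=0$ endpoint handled via Definition \ref{initial data} and the convergence results.
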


\bigskip

\subsection{The spiral roll-up phenomenon}
\label{subsec: The spiral roll-up phenomenon}

As we clarified initial vorticity in Proposition \ref{initial data for velocity prop}, we can show spiral roll-up phenomenon of $w(x,t)$. Let $\phi_0 \in \bbT$ be an angle such that $\Omega(\phi_0)=0$. Then, the line in $(\beta, \phi)$ - coordinate
\begin{equation*}
	\ell \coloneqq \set{(\beta, \phi_0) \in \bbR_+ \times \bbT \, \vert \, \beta \in \bbR_+}
\end{equation*}
is transformed to
\begin{equation} \label{eq91}
	T(\ell) = \left( -\frac{\dbbeta \br{\psi}(\beta, \phi_0))}{\mu} \right)^{\frac{1}{2}} \beta^{-\mu} \Big( \cos (\beta+\phi_0 ), \sin (\beta+\phi_0 ) \Big), \qquad \beta \in \bbR_+
\end{equation}
in $(z_1, z_2)$ - coordinate by \eqref{new coordinate}, \eqref{map T} and \eqref{beta and bar beta}. Since
\begin{equation*}
	\sqrt{\frac{1}{2\mu}} \leq \left( -\frac{\dbbeta \br{\psi} (\beta, \phi_0)}{\mu} \right)^{\frac{1}{2}} \leq \sqrt{\frac{3}{2\mu}}
\end{equation*}
for all $(\beta, \phi) \in \RT$ by \eqref{boundedness of bar derivative}, $T(\ell)$ is nearly an algebraic spiral.

\begin{figure}[!h]	
	\includegraphics[trim = 31mm 206mm 7mm 33mm, clip, width=\textwidth]{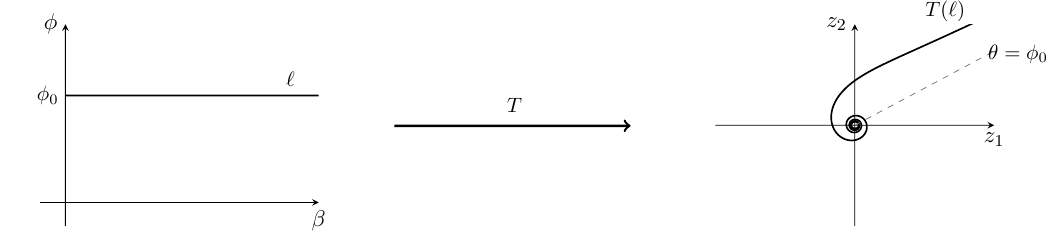}
	\caption{Line $\ell=\{ \phi=\phi_0 \}$ is push forwarded to nearly algebraic spiral, i.e., $T(\ell) \sim \abs{z}^{-\mu}$.} \label{Fig 4}
\end{figure}

As
\begin{equation} \label{eq92}
	\tld{w} \left( T ( \beta, \phi_0 ) \right) \underset{\eqref{pull back notation}}{=} w(\beta, \phi_0 ) \underset{\eqref{w formula}}{=} \beta \left( \dbvarphi \br{\psi}(\beta, \phi_0 ) \right)^{-\frac{1}{2\mu}} \Omega(\phi_0)=0
\end{equation}
for all $\beta \in \bbR_+$, we have $\tld{w}(z)=0$ for all $z \in T(\ell) \subset \bbR^2 \backslash \set{0}$. Since
\begin{equation*}
	w(x,t)=\frac{1}{t} \tld{w}(xt^{-\mu})
\end{equation*}
we get that
\begin{equation*}
	w(x,t)=0
\end{equation*}
for all 
 \begin{equation*}
 	x \in t^\mu T(\ell) \underset{\eqref{eq91} }{=}  \left( -\frac{\dbbeta \br{\psi}(\beta, \phi_0))}{\mu} \right)^{\frac{1}{2}}  \underbrace{\left( \frac{t}{\beta} \right)^{\mu} \Big( \cos (\beta+\phi_0 ), \sin (\beta+\phi_0 ) \Big)}_{\text{algebraic spiral for all }\phi_0 \in \bbT, \, t>0 } \subset \bbR^2 \backslash \set{0}.
 \end{equation*} Also, \eqref{eq92} implies that  $\tld{w}(T(\beta, \phi_1)) \neq 0$  for $\phi_1$ such that $\Omega(\phi_1) \neq 0$. This shows spiral roll-up phenomenon in Definition \ref{spiral roll-up}. Note that as $t \rightarrow 0$, $t^\mu T(\ell)$ converges to half-line with angle $\theta=\phi_0$ as shown in Proposition \ref{initial data for vorticity prop}.

\begin{figure}[!h]
	\includegraphics[trim = 31mm 208mm 7mm 33mm, clip, width=\textwidth]{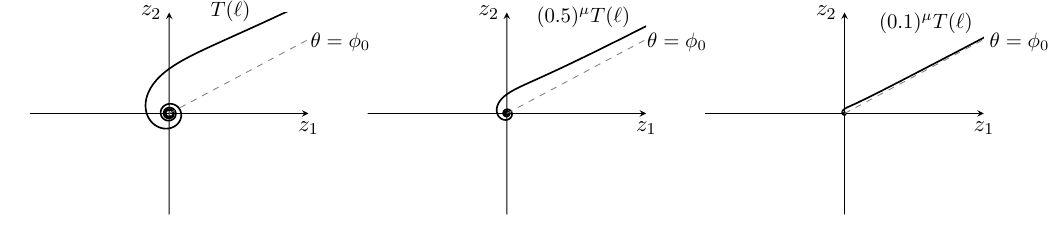}
	\caption{Spiral roll-up phenomenon.} \label{Fig 5}
\end{figure}

\bigskip

\subsection{Weak solution on $\bbR^2 \times [0,\infty)$}
\label{subsec: Weak solution; strong version }

Having clarified initial data as in Definition \ref{initial data} and investigated time independent boundedness in Corollary \ref{boundedness of three functions}, we can prove that $\psi, u, w$ is a weak solution of Euler equation including time zero and origin. First, we treat velocity form. The proof is almost same as in [Elling \cite{Elling-2013}, Section 8.3.].

\begin{proposition} \label{weak solution of EE in velocity form upgraded}
	\begin{equation*}
		u_t+\nabla_x \cdot (u \otimes u) + \nabla_x p =0
	\end{equation*}
	on $\bbR^2 \times [0,\infty)$ in the sense of distribution, i.e., for any divergence free vector field $g \in C^\infty_c \left( \bbR^2 \times [0, \infty) ; \bbR^2 \right)$,
	\begin{equation*}
		\int_{\bbR^2} u(x,0) \cdot g(x,0) \, dx + \int_{\bbR^2 \times [0, \infty)} u \cdot g_t + (u \otimes u): \nabla_x g \, dxdt=0.
	\end{equation*}
\end{proposition}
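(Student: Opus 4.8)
## Proof Proposal

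The plan is to upgrade the weak-solution statement from $\rmz \times [a,\infty)$ (which we already have, in the form of Proposition \ref{weak solution for u intermediate 2}, together with the velocity identities $u = \nabla_x^\perp \psi$, $w = \Delta_x \psi$) to a genuine weak solution on all of $\bbR^2 \times [0,\infty)$. There are three separate gaps to close: (i) the origin $\{0\}$ in space, (ii) the initial time $t = 0$, and (iii) passing from the curl-free statement $\nabla_x^\perp \cdot (u_t + \nabla_x \cdot(u \otimes u)) = 0$ to the existence of a pressure $p$ with $u_t + \nabla_x\cdot(u\otimes u) + \nabla_x p = 0$. I would handle (iii) first, then (i), then (ii).

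For (iii): on $\rmz \times [a,\infty)$ we know $u_t + \nabla_x\cdot(u\otimes u)$ is a distribution whose $\nabla_x^\perp \cdot$ vanishes; by the Poincar\'e lemma on the simply connected... — wait, $\rmz$ is \emph{not} simply connected, so I would instead localize: on any simply connected open $U \subset \rmz$ there exists $p \in \mathcal{D}'(U)$ with $u_t + \nabla_x\cdot(u\otimes u) = -\nabla_x p$ on $U \times (a,\infty)$. The self-similar scaling \eqref{tilde func} forces $p(x,t) = t^{2\mu - 2}\tilde p(xt^{-\mu})$ for a function $\tilde p$ on $\rmz$ determined up to an additive constant on each component; since $\rmz$ has only one "hole," the monodromy obstruction around the origin is a single constant, and one checks it vanishes because $u \otimes u$ is single-valued and decays like $\abs{x}^{2-2/\mu}$ near $0$ and like $\abs{z}^{2-2/\mu}$ at $\infty$ (using Corollary \ref{boundedness of three functions}, $\abs{u} \le A\abs{x}^{1-1/\mu}$). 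So $p$ is globally defined on $\rmz$, with $\abs{p(x,t)} \lesssim \abs{x}^{2-2/\mu}$, hence $p(\cdot,t) \in L^1_{\mathrm{loc}}(\bbR^2)$ since $2 - 2/\mu > -2$ for $\mu > 1/2$.

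For (i) and (ii): the standard device (as in Elling \cite{Elling-2013}, Section 8.3) is a cutoff argument. Given a divergence-free test field $g \in C^\infty_c(\bbR^2 \times [0,\infty); \bbR^2)$, I would introduce radial cutoffs $\chi_\epsilon(x)$ vanishing on $B(0,\epsilon)$ and equal to $1$ outside $B(0,2\epsilon)$, and temporal cutoffs $\rho_a(t)$ vanishing on $[0,a/2]$, equal to $1$ on $[a,\infty)$. Testing the already-established equation on $\rmz \times (0,\infty)$ against $\chi_\epsilon \rho_a g$ — being careful that this is no longer divergence-free, so the pressure term contributes $\int p\, \nabla_x\chi_\epsilon \cdot \rho_a g$ — one sends $\epsilon \to 0$ and $a \to 0$. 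The spatial error terms are controlled by the growth bounds: $\abs{u \otimes u}, \abs{p} \lesssim \abs{x}^{2-2/\mu}$ and $\abs{u} \lesssim \abs{x}^{1-1/\mu}$, while $\abs{\nabla_x\chi_\epsilon} \lesssim 1/\epsilon$ supported on an annulus of area $\sim \epsilon^2$, giving errors $O(\epsilon^{2-2/\mu}) \to 0$ since $\mu > 1$ (for the $u\otimes u$ and $p$ terms) — note here we genuinely use $\mu > 1$, consistent with the hypotheses of \circled{1}; one should double-check whether the $2/3 < \mu \le 1$ case needs the weaker $L^{(2-\mu)/(1-\mu)}_{\mathrm{loc}}$ velocity space and a correspondingly modified estimate. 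The temporal limit $a \to 0$ uses the $L^1_{\mathrm{loc}}$ (indeed $L^p_{\mathrm{loc}}$) convergence of $w(\cdot,t)$, $u(\cdot,t)$, $\psi(\cdot,t)$ to their $t = 0$ values established in Propositions \ref{initial data for stream function prop}, \ref{initial data for velocity prop}, \ref{initial data for vorticity prop}, plus the uniform-in-time bounds of Corollary \ref{boundedness of three functions} to justify dominated convergence, and produces exactly the boundary term $\int_{\bbR^2} u(x,0)\cdot g(x,0)\,dx$.

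The main obstacle I anticipate is step (iii), constructing the pressure globally: one must verify that the self-similar pressure profile $\tilde p$ has no monodromy around the puncture at the origin and that its growth at $0$ and $\infty$ is compatible with $L^1_{\mathrm{loc}}$, and this is where the hypothesis $\mu > 1$ (or the careful bookkeeping of Corollary \ref{boundedness of three functions}) is essential. The cutoff limits in (i)–(ii) are routine once the growth rates are in hand, but they must be carried out simultaneously rather than sequentially if one wants to avoid an intermediate object that is only a weak solution away from both $\{0\}$ and $\{t=0\}$; I would take $\epsilon = \epsilon(a) \to 0$ along a joint sequence.
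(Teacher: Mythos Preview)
Your approach diverges from the paper's in one essential way, and that divergence creates the gap you yourself flag as ``the main obstacle.''

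The paper never constructs a pressure. Instead it exploits the fact that the test field $g$ is divergence-free: write $g=\nabla_x^\perp f$ for a scalar $f\in C^\infty_c(\bbR^2\times[0,\infty))$, and apply the spatial cutoff to $f$ rather than to $g$. Then the truncated test field $\nabla_x^\perp\big((1-\zeta^\delta)f\big)$ is still divergence-free, so one can invoke Proposition~\ref{weak solution for u intermediate 2} directly, and no pressure term ever appears. This completely sidesteps your step~(iii) and the monodromy question on $\rmz$, which you do not actually resolve (``$u\otimes u$ is single-valued'' does not imply the circulation $\oint_{\abs{x}=r}\big(u_t+\nabla_x\cdot(u\otimes u)\big)\cdot d\ell$ vanishes).

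The paper combines this with a second trick you are missing: after choosing $f$, replace it by $f(x,t)-f(0,t)$, so that $f(0,t)\equiv 0$ and hence $\abs{f(x,t)}\le M\abs{x}$ near the origin. This extra factor of $\abs{x}$ means $\nabla\big(\zeta^\delta f\big)$ stays bounded uniformly in $\delta$ (rather than blowing up like $\delta^{-1}$) and $\nabla^2\big(\zeta^\delta f\big)\lesssim\delta^{-1}$ (rather than $\delta^{-2}$). With $u\in L^{2p}_{\mathrm{loc}}$ for the explicit $p>2$ given in the proof, the $L^{p'}$ norms of these cutoff errors vanish like $\delta^{2/p'}$ and $\delta^{2/p'-1}$. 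This is what makes the argument go through for \emph{all} $\mu>\tfrac{2}{3}$; your conclusion that ``we genuinely use $\mu>1$'' is a symptom of missing this normalization (and incidentally your exponent $\epsilon^{2-2/\mu}$ should be $\epsilon^{3-2/\mu}$, which already converges for $\mu>\tfrac{2}{3}$). The limits $s\searrow 0$ and $\delta\searrow 0$ are taken sequentially, not jointly.
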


\begin{proof}
	As we already proved Proposition \ref{weak solution for u intermediate 2}, what we have to consider is integration near the origin and time zero. Consider a smooth, radial cut-off function $\zeta:\bbR^2 \rightarrow [0,1]$ such that
	\begin{equation*}
		\zeta(r) \equiv 1 \quad \text{on} \quad B(0,1), \qquad \zeta(r) \equiv 0 \quad \text{on} \quad \bbR^2 \backslash B(0,2)
	\end{equation*}
	and define $\zeta^\delta$ by $\zeta \left( \frac{\cdot}{\delta} \right)$. Then,
	\begin{equation} \label{eq66}
		\supp (\zeta^\delta ) \subset \br{B(0, 2\delta)}, \qquad \supp (\nabla_x \zeta^\delta ) \subset \br{B(0, 2\delta)} \backslash B(0,\delta), \qquad \supp ( 1- \zeta^\delta) \subset \bbR^2 \backslash B(0,\delta)
	\end{equation}
	and there exists $M_1>0$ such that
	\begin{equation} \label{eq67}
		\nrm{\nabla_x \zeta^\delta}_{L^\infty (\bbR^2)} < \frac{M_1}{\delta}, \qquad \nrm{\nabla_x^2 \zeta^\delta}_{L^\infty(\bbR^2)} < \frac{M_1}{\delta^2}.
	\end{equation}
	Now, we analyze a test function. Since $g$ is a divergence free test function, we can take a scalar function $f \in C^\infty_c \left( \bbR^2 \times [0,\infty) \right)$ such that $\nabla_x^\perp f = g$. Replacing $f(x,t)$ with $f(x,t)-f(0,t)$, we can assume that $f(0,t)=0$ for all $t \in [0,\infty)$.\footnote{After replacing, $f$ has not compact support anymore, but it does not matter since we will only treat $f$ after cut-off or differentiation.} As $f_t(0,t)=0$ for all $t \in [0, \infty)$ also, there exists $M_2>0$ and $R>0$ such that
	\begin{equation} \label{eq68}
		\abs{f(x,t)}, \abs{f_t(x,t)} \leq M_2 \abs{x}
	\end{equation}
	for all $0 \leq \abs{x} \leq R$ and $t\geq 0$, and
	\begin{equation} \label{eq69}
		\nrm{\nabla f (\cdot,t)}_{L^\infty (\bbR^2)}, \nrm{\nabla^2 f(\cdot, t)}_{L^\infty (\bbR^2)}, \nrm{\nabla f_t (\cdot, t)}_{L^\infty (\bbR^2)}, \nrm{\nabla^\perp f_t(\cdot,t)}_{L^\infty (\bbR^2)} \leq M_2
	\end{equation}
	for all $t\geq 0$. Let $M= \maxi{M_1, M_2}$. Then, for all $0< \delta \leq \mini{\frac{R}{2}, \frac{1}{M}}$ and $t \geq 0$,
	\begin{equation} \label{eq70}
		\begin{aligned}
			&\nrm{\nabla(\zeta^\delta f(\cdot, t))}_{L^\infty(\bbR^2 )} \\\underset{\eqref{eq66}}&{=} \nrm{\nabla(\zeta^\delta f(\cdot, t))}_{L^\infty(B(0,2\delta))} \\
			& \leq \nrm{\nabla \zeta^\delta}_{L^\infty(B(0,2\delta) )} \nrm{f(\cdot, t)}_{L^\infty(B(0,2\delta) )} + \nrm{\nabla f(\cdot, t)}_{L^\infty(B(0,2\delta) )} \nrm{\zeta^\delta}_{L^\infty(B(0,2\delta) )} \\
			\underset{\eqref{eq67}, \eqref{eq68}, \eqref{eq69}} &{\leq} \frac{M}{\delta} \cdot 2M\delta + M \cdot 1 \\
			&\leq 2M^2+1,
		\end{aligned}
	\end{equation}
	and $\nrm{\nabla(\zeta^\delta f_t(\cdot, t))}_{L^\infty(\bbR^2)}$ has same bound. Similarly, for all $t\geq 0$,
	\begin{equation} \label{eq71}
		\begin{aligned}
			& \nrm{\nabla^2 (\zeta^\delta f(\cdot, t))}_{L^\infty(\bbR^2)} \\
			\underset{\eqref{eq66}}&{=} \nrm{\nabla^2 (\zeta^\delta f(\cdot, t))}_{L^\infty(B(0,2\delta))} \\
			&\leq \nrm{\nabla^2 \zeta^\delta}_{L^\infty(B(0,2\delta) )} \nrm{f(\cdot, t)}_{L^\infty(B(0,2\delta) )}+2 \nrm{\nabla \zeta^\delta}_{L^\infty(B(0,2\delta) )} \nrm{\nabla f(\cdot, t)}_{L^\infty(B(0,2\delta) )} \\
			& \qquad + \nrm{\zeta^\delta}_{L^\infty(B(0,2\delta) )} \nrm{\nabla^2 f(\cdot, t)}_{L^\infty(B(0,2\delta) )} \\
			\underset{\eqref{eq67}, \eqref{eq68}, \eqref{eq69}  }&{\leq} \frac{M}{\delta^2} \cdot 2M\delta + \frac{2M}{\delta} \cdot M + 1 \cdot M \\
			&\leq \frac{4M^2+1}{\delta}.
		\end{aligned}
	\end{equation}
	
	Now, it is time to use $\mu> \frac{2}{3}$ condition crucially. Let
	\begin{equation*}
		p \coloneqq \begin{cases}
			1+ \frac{1}{2\left( \frac{1}{\mu}-1 \right)}=\frac{2-\mu}{2(1-\mu)} & (\frac{2}{3}<\mu <1) \\
			3 & (\mu \geq 1)
		\end{cases}.
	\end{equation*}
	Then, we have $p>2$ and $u(x,t) \in L^{2p}_{\textnormal{loc}}(\bbR^2)$ with its local norm uniformly controlled independent of t, because for $\frac{2}{3} <\mu <1$, 
	\begin{equation*}
		\abs{u(x,t)}^{2p} \underset{\text{Cor } \ref{boundedness of three functions}}{\leq} A^{2+\frac{1}{\frac{1}{\mu}-1}} \abs{x}^{2\left(1-\frac{1}{\mu} \right)-1}
	\end{equation*}
	with $2 \left( 1-\frac{1}{\mu} \right) -1 >-2$. For $\mu \geq 1$, $u(x,t)$ has no singularity at origin, so local integrability is trivial. When we denote conjugate exponent of $p$ by $p'$, then $p'<2$ and
	\begin{equation} \label{eq72}
	\begin{aligned}
		\nrm{\nabla( \zeta^\delta f(\cdot, t))}_{L^{p'}(\bbR^2)} \underset{\eqref{eq66}}&{=} \nrm{\nabla( \zeta^\delta f(\cdot, t))}_{L^{p'}(B(0,2\delta))} \\
		& \leq  \nrm{\nabla (\zeta^\delta f(\cdot,t)}_{L^\infty(B(0,2\delta))} \cdot \left( (2\delta)^2 \pi \right)^{\frac{1}{p'}} \\
		\underset{\eqref{eq70}}&{\leq} (2M^2+1)(4 \pi )^{\frac{1}{p'}} \cdot \delta^{\frac{2}{p'}}
	\end{aligned}
	\end{equation}
	for all $0< \delta < \mini{\frac{R}{2}, \frac{1}{M}}, t \geq 0$. We could get same bound for $\nrm{\nabla ( \zeta^\delta f_t (\cdot, t ))}_{L^{p'} (\bbR^2)}$. Also,
	\begin{equation} \label{eq73}
		\begin{aligned}
			\nrm{\nabla^2 ( \zeta^\delta f( \cdot, t))}_{L^{p'}(\bbR^2)} \underset{\eqref{eq66}}&{=} \nrm{\nabla^2 ( \zeta^\delta f (\cdot, t))}_{L^{p'}(B(0, 2\delta ))} \\
			&\leq \nrm{\nabla^2 (\zeta^\delta f (\cdot,t ))}_{L^\infty (B(0,2\delta))} \cdot \left( (2\delta)^2 \pi \right)^{\frac{1}{p'}}  \\
			\underset{ \eqref{eq71} }&{\leq} (4M^2+1)(4 \pi)^{\frac{1}{p'}} \cdot \delta^{\frac{2}{p'}-1}
		\end{aligned}
	\end{equation}
	for all $0 < \delta < \mini{\frac{R}{2}, \frac{1}{M}}, t \geq 0$.

	Now, all the preparations for proving Proposition \ref{weak solution of EE in velocity form upgraded} have been done. By Proposition \ref{weak solution for u intermediate 2}, using $(1-\zeta^\delta) f \chi_{\{ t\geq s \}}$ as a test function, we have
	\begin{equation} \label{eq106}
		\int_{\bbR^2} u(x,s) \cdot \nabla^\perp \left( (1-\zeta^\delta) f (x,s) \right) \, dx + \int_{\bbR^2 \times [s, \infty)} u \cdot \nabla^\perp \left( (1-\zeta^\delta) f_t \right) + (u \otimes u): \nabla \nabla^\perp \left( (1-\zeta^\delta) f \right) \, dxdt = 0
	\end{equation}
	for all $0< \delta < \mini{\frac{R}{2}, \frac{1}{M}}, s>0$. Since $(1-\zeta^\delta) f(x,t) \in C^\infty_c \left( \bbR^2 \backslash B(0,2\delta) \times [0, \infty) \right)$, all its derivative converges uniformly as $t \rightarrow 0$. Also, since we puncture the disk near origin through $1-\zeta^\delta$, Proposition \ref{initial data for vorticity prop} implies
	\begin{equation} \label{eq74}
		\begin{aligned}
			& \int_{\bbR^2} u(x,0) \cdot \nabla^\perp \left( (1-\zeta^\delta) f (x,0) \right) \, dx \\
			&+ \int_{\bbR^2 \times [0, \infty)} u \cdot \nabla^\perp \left( (1-\zeta^\delta) f_t \right) + (u \otimes u): \nabla \nabla^\perp \left( (1-\zeta^\delta) f \right) \, dxdt \\
			\underset{\text{Prop } \ref{initial data for vorticity prop} }&{=} \lim_{s \searrow 0} \int_{\bbR^2} u(x,s) \cdot \nabla^\perp \left( (1-\zeta^\delta) f (x,s) \right) \, dx \\
			&+ \lim_{s \searrow 0} \int_{\bbR^2 \times [s, \infty)} u \cdot \nabla^\perp \left( (1-\zeta^\delta) f_t \right) + (u \otimes u): \nabla \nabla^\perp \left( (1-\zeta^\delta) f \right) \, dxdt \\
			=0
		\end{aligned}
	\end{equation}
	for all $0 < \delta < \mini{\frac{R}{2}, \frac{1}{M}}$. Now, we send $\delta$ to zero. By \eqref{eq72} and \eqref{eq73}, we have
	\begin{equation} \label{eq75}
		\nabla^\perp \left( (1-\zeta^\delta)f(\cdot, t) \right) \xrightarrow[\delta \searrow 0]{}\nabla^\perp f(\cdot,t ), \qquad \nabla \nabla^\perp \left( (1-\zeta^\delta ) f(\cdot,t) \right) \xrightarrow[\delta \searrow 0]{} \nabla \nabla^\perp f(\cdot, t)
	\end{equation}
	in $L^{p'}(\bbR^2)$ for uniform rate with respect to $t$. As $u \in L^{2p}_{\textnormal{loc}} (\bbR^2)$ (hence $u \in L^p_{\textnormal{loc}}(\bbR^2)$ also) with its norm on compact set uniformly controlled independent of $t$ as in the proof of Proposition \ref{initial data for velocity prop}, H\"older inequality yields
	\begin{align*}
		& \int_{\bbR^2} u(x,0) \cdot \nabla^\perp  f (x,0)  \, dx \\
			&+ \int_{\bbR^2 \times [0, \infty)} u \cdot \nabla^\perp  f_t + (u \otimes u): \nabla \nabla^\perp f  \, dxdt \\
		\underset{\substack{\eqref{eq75},\, \text{Prop } \ref{initial data for vorticity prop}, \\ \text{H\"older}}}&{=} \lim_{\delta \searrow 0} \int_{\bbR^2} u(x,0) \cdot \nabla^\perp \left( (1-\zeta^\delta) f (x,0) \right) \, dx \\
			&+ \lim_{\delta \searrow 0} \int_{\bbR^2 \times [0, \infty)} u \cdot \nabla^\perp \left( (1-\zeta^\delta) f_t \right) + (u \otimes u): \nabla \nabla^\perp \left( (1-\zeta^\delta) f \right) \, dxdt \\
			\underset{\eqref{eq106} }&{=}0.
	\end{align*}
	
	Since $\nabla^\perp f=g$, this completes the proof.
\end{proof}

In a similar way, it can be shown that $w(x,t)$ is a weak solution of Euler equation of vorticity form \eqref{Euler eq-vor} on $\bbR^2 \times [0, \infty)$. A difference is that we have to require stronger condition on $\mu$ to utilize Proposition \ref{w is L^p} and \ref{initial data for vorticity prop} for eliminating singularity at origin.

\begin{proposition} \label{weak solution of EE in vorticity form upgraded}
	If $\mu>1$,
	\begin{equation*}
		w_t + \nabla_x \cdot (wu) =0
	\end{equation*}
	on $\bbR^2 \times [0, \infty)$ in the sense of distribution, i.e., for any test function $f \in C^\infty_c \left( \bbR^2 \times [0, \infty) \right)$,
	\begin{equation*}
		\int_{\bbR^2} w(x,0) f(x,0) \, dx + \int_{\bbR^2 \times [0, \infty)}  wf_t + w(u \cdot \nabla f) \, dxdt=0.
	\end{equation*}
\end{proposition}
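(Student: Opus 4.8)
The plan is to repeat, in vorticity form, the cut-off argument used for Proposition~\ref{weak solution of EE in velocity form upgraded}. By Proposition~\ref{weak solution of EE in vorticity form} the pair $(w,u)$ already satisfies the weak vorticity equation on $\rmz\times[a,\infty)$ for every $a>0$, so the only issue is that no mass be lost at the spatial origin or at $t=0$. Fix $f\in C^\infty_c(\bbR^2\times[0,\infty))$ with $\supp f\subset\br{B(0,R)}\times[0,T]$, and let $\zeta^\delta$ be the radial cut-off from the proof of Proposition~\ref{weak solution of EE in velocity form upgraded}, so that $1-\zeta^\delta\equiv 0$ on $B(0,\delta)$, $\supp\nabla_x\zeta^\delta\subset\br{B(0,2\delta)}\setminus B(0,\delta)$ and $\nrm{\nabla_x\zeta^\delta}_{L^\infty(\bbR^2)}\lesssim\delta^{-1}$. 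For fixed $\delta>0$ and $s>0$, the restriction of $(1-\zeta^\delta)f$ to times $\ge s$ is an admissible test function for Proposition~\ref{weak solution of EE in vorticity form} on $\rmz\times[s,\infty)$ (smooth, supported away from the origin, compactly supported), which gives
\begin{equation*}
	\int_{\rmz}w(x,s)(1-\zeta^\delta(x))f(x,s)\,dx+\int_s^\infty\!\!\int_{\rmz}w\,(1-\zeta^\delta)f_t+wu\cdot\nabla_x\!\bigl[(1-\zeta^\delta)f\bigr]\,dxdt=0.
\end{equation*}

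First I would send $s\searrow0$ with $\delta$ fixed. Since $1-\zeta^\delta$ vanishes near the origin, the boundary term converges to $\int_{\bbR^2}w(x,0)(1-\zeta^\delta)f(x,0)\,dx$ by Proposition~\ref{initial data for vorticity prop} (convergence $w(\cdot,s)\to w(\cdot,0)$ in $L^p_{\textnormal{loc}}(\bbR^2)$) together with the uniform convergence $f(\cdot,s)\to f(\cdot,0)$; and the slab integral $\int_s^\infty$ may be replaced by $\int_0^\infty$ because, by Corollary~\ref{boundedness of three functions}, $w(\cdot,t)$ and, using $\abs{u(x,t)}\le A\abs{x}^{1-1/\mu}$ (locally bounded since $\mu>1$), $wu(\cdot,t)$ have $L^1_{\textnormal{loc}}$ norms bounded uniformly in $t$, so the full bulk integrand lies in $L^1(\bbR^2\times[0,T])$. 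This produces, for each fixed $\delta>0$,
\begin{equation*}
	\int_{\bbR^2}w(x,0)(1-\zeta^\delta)f(x,0)\,dx+\int_0^\infty\!\!\int_{\bbR^2}w\,(1-\zeta^\delta)f_t+wu\cdot\nabla_x\!\bigl[(1-\zeta^\delta)f\bigr]\,dxdt=0.
\end{equation*}

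Then I would let $\delta\searrow0$. Expanding $\nabla_x[(1-\zeta^\delta)f]=(1-\zeta^\delta)\nabla_x f-f\,\nabla_x\zeta^\delta$, the three terms not containing $\nabla_x\zeta^\delta$ converge to $\int w(\cdot,0)f(\cdot,0)$, $\int\!\!\int wf_t$ and $\int\!\!\int wu\cdot\nabla_x f$ by dominated convergence, using that $w(\cdot,0)=\abs{x}^{-1/\mu}(\,\cdots)\,\Omega(\theta)\in L^1_{\textnormal{loc}}(\bbR^2)$ (valid since $\mu>1>\tfrac12$), that $w(\cdot,t)\in L^p_{\textnormal{loc}}$ uniformly in $t$, and that $wu\in L^1_{\textnormal{loc}}$. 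The only genuinely new term is $\int_0^\infty\!\!\int_{\bbR^2}wu\cdot f\,\nabla_x\zeta^\delta\,dxdt$. On the annulus $\delta\le\abs{x}\le2\delta$ we have $\abs{\nabla_x\zeta^\delta}\lesssim\delta^{-1}$, $\abs{u(x,t)}\le A(2\delta)^{1-1/\mu}$ (Corollary~\ref{boundedness of three functions}), $\abs{f}\le\nrm{f}_{L^\infty}$, and, from the explicit local $L^1$ bound in Proposition~\ref{w is L^p}, $\nrm{w(\cdot,t)}_{L^1(\br{B(0,2\delta)})}\lesssim\delta^{\,2-1/\mu}$ uniformly in $t$; hence, integrating over the compact time support of $f$,
\begin{equation*}
	\Bigl\lvert\int_0^\infty\!\!\int_{\bbR^2}wu\cdot f\,\nabla_x\zeta^\delta\,dxdt\Bigr\rvert\lesssim\nrm{f}_{L^\infty}\cdot\delta^{-1}\cdot\delta^{\,1-1/\mu}\cdot\delta^{\,2-1/\mu}=\nrm{f}_{L^\infty}\,\delta^{\,2-2/\mu}\xrightarrow[\delta\searrow0]{}0.
\end{equation*}
Passing to the limit in the previous display then yields the claimed weak vorticity identity on $\bbR^2\times[0,\infty)$.

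The main obstacle is exactly this last estimate: one must defeat the combined singularities of $w$ (which is $\sim\abs{x}^{-1/\mu}$, so $\nrm{w}_{L^1(B(0,2\delta))}\sim\delta^{2-1/\mu}$) and of $u$ ($\sim\abs{x}^{1-1/\mu}$) against the $\delta^{-1}$ blow-up of $\nabla_x\zeta^\delta$; the bookkeeping leaves the power $\delta^{\,2-2/\mu}$, which is positive precisely when $\mu>1$ — the same threshold that makes $u$ locally bounded so that $wu$ is locally integrable and that appears in the hypothesis. This is strictly worse than for Proposition~\ref{weak solution of EE in velocity form upgraded}, where replacing the scalar $f$ by $\nabla^\perp f$ lets one subtract $f(0,t)$ to gain a factor $\abs{x}$ and where the nonlinearity is the quadratically small $u\otimes u$ rather than $wu$; no such gain is available for the scalar test function in the vorticity weak formulation, which is why the upgrade to $\bbR^2\times[0,\infty)$ is only claimed for $\mu>1$.
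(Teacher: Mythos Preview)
Your proof is correct and follows the same overall architecture as the paper: apply Proposition~\ref{weak solution of EE in vorticity form} with the excised test function $(1-\zeta^\delta)f$ on $[s,\infty)$, send $s\searrow0$ using Proposition~\ref{initial data for vorticity prop} and the time-uniform bounds of Corollary~\ref{boundedness of three functions}, then send $\delta\searrow0$.

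The one genuine difference is in how the $\delta\searrow0$ limit is controlled. The paper packages everything through H\"older duality: it shows $wu\in L^r_{\textnormal{loc}}(\bbR^2)$ for some $r>2$ (taking $r=1+\mu$ when $\mu>1$), and then uses $\nrm{\nabla(\zeta^\delta f)}_{L^{r'}}\lesssim\delta^{2/r'-1}\to0$ uniformly in $t$. You instead isolate the commutator term $\int wu\cdot f\,\nabla_x\zeta^\delta$ and estimate it directly via the pointwise bound $\abs{u}\le A\abs{x}^{1-1/\mu}\lesssim\delta^{1-1/\mu}$ on the annulus (valid since $\mu>1$ makes $u$ locally bounded) together with the explicit small-ball $L^1$ bound $\nrm{w(\cdot,t)}_{L^1(\br{B(0,2\delta)})}\lesssim\delta^{2-1/\mu}$ from Proposition~\ref{w is L^p}, obtaining $\delta^{2-2/\mu}\to0$. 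Your route is slightly more elementary and makes the role of the threshold $\mu>1$ very transparent in a single exponent; the paper's H\"older argument is more uniform across the three terms and mirrors the velocity-form proof more closely. Both arrive at exactly the same restriction $\mu>1$.
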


\begin{proof}
	Take the cut-off function $\zeta^\delta$ same as in the proof of Proposition \ref{weak solution of EE in velocity form upgraded} and let $f \in C^\infty_c \left( \bbR^2 \times [0, \infty) \right)$ be given. As we cannot assume $f(0,t) \equiv 0$, we cannot expect bound \eqref{eq68}. Instead, we could just guarantee that there exists $M_2>0$ such that
	\begin{equation} \label{eq76}
		\nrm{f(\cdot,t)}_{L^\infty(\bbR^2)}, \nrm{\nabla f(\cdot, t)}_{L^\infty(\bbR^2)}, \nrm{f_t (\cdot,t)}_{L^\infty(\bbR^2)}, \nrm{\nabla f_t (\cdot, t)}_{L^\infty(\bbR^2)} \leq M_2
	\end{equation}
	for all $t \geq 0$. Let $M = \maxi{M_1, M_2}$. Then, for all $0 < \delta < \frac{1}{M}$, $t \geq 0$
	\begin{equation} \label{eq77}
		\begin{aligned}
			& \nrm{\nabla ( \zeta^\delta f )}_{L^\infty (\bbR^2)} \\
			\underset{\eqref{eq66} }&{=} \nrm{\nabla \zeta^\delta}_{L^\infty(B(0,2\delta))} \nrm{f}_{L^\infty(B(0,2\delta))} + \nrm{\zeta^\delta}_{L^\infty(B(0,2\delta))} \nrm{\nabla f}_{L^\infty(B(0,2\delta))} \\
			\underset{\eqref{eq67}, \eqref{eq76} }&{\leq} \frac{M^2}{\delta} + M \\
			&\leq \frac{M^2+1}{\delta}.			
		\end{aligned}
	\end{equation}
	
	Then, for all $r \geq 1$, we have
	\begin{equation} \label{eq78}
		\begin{aligned}
			\nrm{\zeta^\delta f (\cdot, t)}_{L^{r'}(\bbR^2)} \underset{\eqref{eq66} }&{=} \nrm{\zeta^\delta f (\cdot, t)}_{L^{r'} (B(0,2\delta))} \\
			&\leq \nrm{\zeta^\delta f(\cdot, t)}_{L^\infty(B(0,2 \delta))} \left( \pi (2 \delta)^2 \right)^{\frac{1}{r'}} \\
			&\leq M (4 \pi)^{\frac{2}{r'}} \delta^{\frac{2}{r'}}
		\end{aligned}
	\end{equation}
	with same bound for $f_t$. Also,
	\begin{equation} \label{eq79}
		\begin{aligned}
			\nrm{\nabla(\zeta^\delta f (\cdot, t))}_{L^{r'}(\bbR^2)} \underset{\eqref{eq66} }&{=} \nrm{\nabla(\zeta^\delta f (\cdot, t)}_{L^{r'} (B(0,2\delta))} \\
			&\leq \nrm{\nabla(\zeta^\delta f(\cdot, t))}_{L^\infty(B(0,2 \delta))} \left( \pi (2 \delta)^2 \right)^{\frac{1}{r'}} \\
			\underset{\eqref{eq77} }&{\leq} M (4 \pi)^{\frac{2}{r'}} \delta^{\frac{2}{r'}-1}.
		\end{aligned}
	\end{equation}
Hence, if $r>2$, we have
\begin{equation} \label{eq94}
	\lim_{\delta \searrow 0} \nrm{\zeta^\delta f(\cdot,t)}_{L^{r'}(\bbR^2)}=0, \qquad \lim_{\delta \searrow 0} \nrm{\zeta^\delta f_t(\cdot,t)}_{L^{r'}(\bbR^2)}=0, \qquad \lim_{\delta \searrow 0} \nrm{\nabla(\zeta^\delta f(\cdot,t))}_{L^{r'}(\bbR^2)}=0.
\end{equation}
Now, we investigate the integrability of $w$ and $wu$. As we saw in the Proposition \ref{boundedness of three functions}, we have
\begin{equation} \label{eq95}
		w(x,t) \in L^p_{\textnormal{loc}} (\bbR^2), \qquad
		u(x,t) \in L^q_{\textnormal{loc}}(\bbR^2 ; \bbR^2)
\end{equation}
for all

\begin{equation*}
	1 \leq p<2\mu \qquad \text{and} \qquad \begin{cases}
	q< \frac{2\mu}{1-\mu} & (\frac{2}{3}<\mu<1) \\
	q=\infty & (\mu \geq 1) \\
\end{cases},
\end{equation*}
 where its norm on compact set is controlled independent of $t$. Hence,
 \begin{equation} \label{eq96}
 	wu \in L^r_{\textnormal{loc}}(\bbR^2 ; \bbR^2)
 \end{equation}
for
\begin{equation*}
	r < \begin{cases}
		 \left( \frac{1}{2\mu}+\frac{1-\mu}{2\mu} \right)^{-1} =\frac{2\mu}{2-\mu} & (\frac{2}{3} < \mu <1) \\
		2\mu & (\mu \geq 1) 
	\end{cases}
\end{equation*}
with its local $L^r$-norm uniformly controlled over time. If $\mu \leq 1$, we have $r<\frac{2\mu}{2-\mu}<2$, by which we cannot guarantee the convergence in \eqref{eq94}. This is why we require $\mu >1$ in the assumption of Proposition \ref{weak solution of EE in vorticity form upgraded}. Then, if we set $r=1+\mu$, \eqref{eq94} and \eqref{eq96} are satisfied for all $\mu>1$.

We are ready to show that $w(x,t)$ is a weak solution of \eqref{EEz}. By Proposition \ref{weak solution of EE in vorticity form}, taking a test function $(1-\zeta^\delta) f \chi_{\{t \geq s \}}$ gives
\begin{equation} \label{eq82}
	\int_{\bbR^2} w(\cdot, s) \cdot \left( (1-\zeta^\delta) f(\cdot, s) \right)  + \int_{\bbR^2 \times [s, \infty)} w \left( (1-\zeta^\delta) f_t+ u \cdot \nabla \left((1-\zeta^\delta)f) \right) \right) =0
\end{equation}
for all $s>0$ and $0<\delta<\frac{1}{M}$. By Proposition \ref{initial data for vorticity prop}, $w(\cdot, s) \rightarrow w(\cdot, 0)$ in $L^p_{\textnormal{loc}} (\bbR^2)$. Since $(1-\zeta^\delta)f(\cdot, s)$ converges uniformly as $s \searrow 0$ as a test function, we have
\begin{equation*}
	\lim_{s \searrow 0} \int_{\bbR^2} w(\cdot, s) \left((1-\zeta^\delta)f(\cdot, s)\right) = \int_{\bbR^2} w(\cdot, 0) \left((1-\zeta^\delta) f(\cdot, 0)\right).
\end{equation*}
Note that by the assumption of $\mu>1$, Corollary \ref{boundedness of three functions} implies $u \in L^\infty_{\textnormal{loc}}(\bbR^2)$ and $w \in L^p_{\textnormal{loc}}(\bbR^2)$ with their local norm controlled independent of $t$. Therefore, we have

\begin{equation} \label{eq83}
	\lim_{s \searrow 0} \int_{\bbR^2 \times [s,\infty)} w\left( (1-\zeta^\delta ) f_t + u \cdot \nabla ((1-\zeta^\delta)f) \right)=\int_{\bbR^2 \times [0,\infty)} w\left( (1-\zeta^\delta ) f_t + u \cdot \nabla ((1-\zeta^\delta)f) \right).
\end{equation}

Sending $\delta$ to zero to include origin, we have

\begin{equation} \label{eq84}
	\int_{\bbR^2} \underbrace{w(\cdot, 0)}_{\underset{\text{Prop } \eqref{w is L^p} }{\in} L^r_{\textnormal{loc}}(\bbR^2)} \quad \underbrace{(1-\zeta^\delta)f(\cdot,0)}_{\xrightarrow[\delta \searrow 0 ]{\eqref{eq94} }f(\cdot, 0) \text{ in } L^{r'}_{\textnormal{loc}}(\bbR^2)} \xrightarrow[\delta \searrow 0]{\text{H\"older}} \int_{\bbR^2} w(\cdot, 0) f(\cdot, 0)
\end{equation}

\begin{equation} \label{eq85}
	\begin{aligned}
	& \int_{\bbR^2 \times [0, \infty)} \underbrace{w}_{\in L^r_{\textnormal{loc}}(\bbR^2)} \underbrace{(1-\zeta^\delta) f_t}_{\substack{ \xrightarrow[\delta \searrow 0]{\eqref{eq94} } f_t \text{ in } L^{r'}_{\textnormal{loc}}(\bbR^2) \\ \text{for uniform rate over time}}} + \underbrace{wu}_{\underset{\eqref{eq96} }{\in} L^r_{\textnormal{loc}}(\bbR^2)} \cdot \underbrace{\nabla((1-\zeta^\delta)f)}_{\substack{ \xrightarrow[\delta \searrow 0]{\eqref{eq94} } \nabla f \text{ in } L^{r'}_{\textnormal{loc}}(\bbR^2) \\ \text{for uniform rate over time}}} \\ 
	& \xrightarrow[\delta \searrow 0]{\text{H\"older}} \int_{\bbR^2 \times [0, \infty)} w f_t + wu \cdot \nabla f.
	\end{aligned}
\end{equation}
Therefore, sending $s$ to 0 and $\delta$ to 0 successively in \eqref{eq82} yields

\begin{equation*}
		\int_{\bbR^2} w(x,0) f(x,0) \, dx + \int_{\bbR^2 \times [0, \infty)}  wf_t + w(u \cdot \nabla f) \, dxdt=0.
\end{equation*}
\end{proof}

To prove $(w, u, \psi)$ is an weak solution for \eqref{Euler eq} and \eqref{Euler eq-vor}, it only remains to show that:

\begin{proposition} \label{second equality of EE-vor}
	\begin{equation} \label{eq97}
		\nabla_x \cdot u =0, \qquad \nabla_x^\perp \psi = u, \qquad \Delta_x \psi = w
	\end{equation}
	in distribution sense.
\end{proposition}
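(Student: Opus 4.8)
The plan is to verify the three identities in \eqref{eq97} on all of $\bbR^2$ (including the origin) by combining the already-established facts on $\rmz$ with the uniform growth bounds from Corollary \ref{boundedness of three functions}, which control the behavior near $x=0$. Recall from \eqref{regularity of dependent variables} that on $\rmz$ we already have $u=\nabla_x^\perp \psi$ pointwise (by definition), $\nabla_x \cdot u = 0$ (since $u=\nabla_x^\perp\psi$ with $\psi\in C^1$), and $\Delta_x\psi=w$ in the sense of distributions on $\rmz$ (this is \eqref{vorticity stream relation}). So for each identity, the only issue is whether the distributional equation picks up a singular contribution supported at $\{0\}$, i.e., a finite linear combination of derivatives of the Dirac mass $\delta_0$.

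First I would handle $\nabla_x\cdot u = 0$ and $\nabla_x^\perp\psi = u$. For a test function $f\in C^\infty_c(\bbR^2)$, write $\int_{\bbR^2} u\cdot\nabla_x f = \lim_{\delta\searrow 0}\int_{\bbR^2}(1-\zeta^\delta)\,u\cdot\nabla_x f$, where $\zeta^\delta$ is the radial cutoff from the proof of Proposition \ref{weak solution of EE in velocity form upgraded}. On the support of $1-\zeta^\delta$ we integrate by parts freely using $\nabla_x\cdot u=0$ on $\rmz$, picking up only the boundary/cutoff term $\int_{\bbR^2} u\cdot(\nabla_x\zeta^\delta) f$. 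Since $\supp(\nabla_x\zeta^\delta)\subset \br{B(0,2\delta)}\setminus B(0,\delta)$, $\abs{\nabla_x\zeta^\delta}\lesssim \delta^{-1}$, and $\abs{u(x,t)}\le A\abs{x}^{1-1/\mu}$ by Corollary \ref{boundedness of three functions}, this term is bounded by $C\delta^{-1}\cdot\delta^{1-1/\mu}\cdot\delta^2 = C\delta^{2-1/\mu}\to 0$ because $\mu>\tfrac{2}{3}$ gives $2-\tfrac1\mu>\tfrac12>0$. Hence $\int u\cdot\nabla_x f=0$. The identity $\nabla_x^\perp\psi=u$ is proved identically: the discrepancy $\int_{\bbR^2}(\psi\,\nabla_x^\perp\cdot g + u\cdot g)$ for $g\in C^\infty_c(\bbR^2;\bbR^2)$ vanishes on $\rmz$, and the cutoff error is controlled by $\abs{\psi(x,t)}\le A\abs{x}^{2-1/\mu}$ together with $\abs{\nabla_x\zeta^\delta}\lesssim\delta^{-1}$, giving an error $\lesssim \delta^{-1}\cdot\delta^{2-1/\mu}\cdot\delta^2=\delta^{3-1/\mu}\to 0$ since $\mu>\tfrac13$.

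For $\Delta_x\psi = w$ I would run the same truncation argument but now one needs to integrate by parts twice, so two cutoff error terms appear: one involving $\nabla_x\zeta^\delta\cdot\nabla_x\psi$ and one involving $(\Delta_x\zeta^\delta)\psi$. For the first, $\nabla_x\psi=u$ (already known on $\rmz$), so this term is $\lesssim \delta^{-1}\cdot\abs{x}^{1-1/\mu}\cdot\delta^2 = \delta^{2-1/\mu}\to0$. For the second, $\abs{\Delta_x\zeta^\delta}\lesssim\delta^{-2}$ and $\abs{\psi}\lesssim\abs{x}^{2-1/\mu}$, so the term is $\lesssim\delta^{-2}\cdot\delta^{2-1/\mu}\cdot\delta^2 = \delta^{2-1/\mu}\to0$, again using $\mu>\tfrac23$. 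Therefore $-\int_{\bbR^2}\nabla_x\psi\cdot\nabla_x f = \int_{\bbR^2} wf$ for all $f\in C^\infty_c(\bbR^2)$.

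The one subtlety — and the step I expect to require the most care — is making sure the local integrability needed for these limit arguments is genuinely uniform and that $w\in L^1_{\mathrm{loc}}(\bbR^2)$ near the origin so the right-hand side $\int wf$ makes sense: this is exactly Proposition \ref{w is L^p} (valid since $1\le p<2\mu$ and in particular $p=1$ is allowed for all $\mu>\tfrac23$), which gives $w(\cdot,t)\in L^1_{\mathrm{loc}}(\bbR^2)$, and the bounds on $u,\psi$ in Corollary \ref{boundedness of three functions}. One should also note these identities are stated for the profile at a fixed time (equivalently in $z$-coordinates), and the scaling \eqref{tilde func} transports them across all $t>0$ and, by the convergence results of Section \ref{subsec: Convergence to initial data and time independent boundedness}, to $t=0$ as well; but since \eqref{eq97} is time-independent in form it suffices to establish it for the self-similar profile $(\tld w,\tld u,\tld\psi)$ on $\bbR^2$, which is what the argument above does. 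With \eqref{eq97} in hand, together with Propositions \ref{weak solution of EE in velocity form upgraded} and \ref{weak solution of EE in vorticity form upgraded}, all the conditions in Definitions \ref{weak solution for u} and \ref{weak solution for w} are met.
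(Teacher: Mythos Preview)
Your proposal is correct and follows essentially the same approach as the paper: both start from the identities already established on $\rmz$, use the radial cutoff $\zeta^\delta$ from Proposition \ref{weak solution of EE in velocity form upgraded}, and show the cutoff error vanishes using the growth bounds of Corollary \ref{boundedness of three functions} together with $\mu>\tfrac{2}{3}$. The paper compresses the argument by just asserting that it suffices to have $\psi,\nabla_x\psi\in L^a_{\textnormal{loc}}$ for some $a>2$ and $u,w\in L^b_{\textnormal{loc}}$ for some $b>1$ (then implicitly applying H\"older as in the earlier proofs), whereas you spell out the pointwise estimates $\delta^{2-1/\mu}\to 0$ explicitly; but the content is the same.
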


\begin{proof}
	We already show in Proposition \ref{weak solution of EE in vorticity form} that $\nabla_x^\perp \psi = u, \, \Delta_x \psi = w$ in the sense of distribution on $\rmz$. Also, since $u$ is defined by $\nabla_x^\perp \psi$, it is trivial that $u$ is weakly divergence free on $\rmz$. To include origin, by the same way in the proof of Proposition \ref{weak solution of EE in velocity form upgraded} and \ref{weak solution of EE in vorticity form upgraded}, it is suffices to show
	\begin{equation*}
		\psi, \nabla_x \psi \in L^a_{\textnormal{loc}}(\bbR^2), \qquad u, w \in L^b_{\textnormal{loc}}(\bbR^2)
	\end{equation*}
	for $a>2, b>1$, which is clear by Corollary \ref{boundedness of three functions} with $\mu>\frac{2}{3}$.
\end{proof}

\bigskip

\subsection{Initial data in original coordinate and main theorem}
\label{subsec: Initial data in original coordinate and main theorem}

In summary, we construct self-similar weak solution of Euler equation through following process

\begin{equation} \label{transform process}
	\Omega(\phi) \underset{\eqref{main thm in special coordinate}}{\Rightarrow} \br{\psi}(\beta, \phi)=G(\Omega(\phi)) \underset{\eqref{w formula}}{\Rightarrow} w(\beta, \phi)=\left( \psi_\varphi \right)^{-\frac{1}{2\mu}} \Omega(\phi) \underset{\eqref{pull back notation}}{\Rightarrow} \begin{cases}
		\tld{w}(z) \\
		\tld{u}(z) \\
		\tld{\psi}(z)
	\end{cases} \underset{\substack{\text{Def } \ref{self-similarity} \text{ for} \\ \lambda = t^\mu, a=\mu^{-1}}}{\Rightarrow} \begin{cases}
		w(x,t) \\
		u(x,t) \\
		\psi(x,t)
	\end{cases},
\end{equation}
and we shows that $w,u,\psi$ indeed a weak solution for \eqref{Euler eq}, \eqref{Euler eq-vor} in Proposition \ref{weak solution of EE in velocity form upgraded}, \ref{weak solution of EE in vorticity form upgraded}, \ref{second equality of EE-vor}.
As we used implicit function theorem, we cannot choose $\Omega(\phi)$ arbitrary in $\Yz$, but sufficiently close to trivial solution, i.e. in $\ball{\Yz}{\Omega_0}{\varepsilon_N^*}$ where $N$ should be chosen depending on $\mu$ as in \eqref{range of N}. However, we want to impose restriction on initial data on physical variable $w_0(x)\coloneqq w(x,0)$ directly rather than on $\Omega(\phi)$, because limitation on $\Omega(\phi)$ become ambiguous after transformation process \eqref{transform process}. For this aim, we prove local surjectivity of initial data map which corresponds $\Omega(\phi)$ to \eqref{initial data for vorticity prop}.

\begin{proposition} \label{local surjectivity}
	The map which corresponds to angular variety of initial data \eqref{initial data for vorticity} in physical coordinate from $\Omega \in \ball{\Yz}{\Omega_0}{\varepsilon^*_N}$
	\begin{equation} \label{initial data map}
		\Omega(\theta) \mapsto \left( \frac{\dbbeta \br{\psi}^{(\Omega)}(0, \theta)}{-\mu \dbvarphi \br{\psi}^{(\Omega)} (0, \theta)}\right)^{\frac{1}{2\mu}} \Omega(\theta) \eqqcolon h^{(\Omega)} (\theta)
	\end{equation}
	is a local $C^1$ diffeomorphism to $\Yz$ at $\Omega=\Omega_0$. In particular, this implies that, when we denote $h^{(\Omega_0)}(\theta)\equiv \mu^{-\frac{1}{2\mu}} \left( 2-\frac{1}{\mu}\right)$ by $\overset{\circ}{w_0}$, there exists $\varepsilon^{**}_N>0$ and neighborhood $U$ of $\Omega_0$ such that if $\nrm{h(\theta)-\overset{\circ}{w_0}}_{\Yz}< \varepsilon^{**}_N$, there exists unique $\Omega(\theta) \in U \subset \ball{\Yz}{\Omega_0}{\varepsilon^*_N}$ so that
	\begin{equation*}
		\left( \frac{\dbbeta \br{\psi}^{(\Omega)}(0,\theta)}{-\mu \dbvarphi \br{\psi}^{(\Omega)} (0,\theta)}\right)^{\frac{1}{2\mu}} \Omega(\theta) = h (\theta).
	\end{equation*}
	Also, if $h(\theta) \in L^p(\bbT)$, corresponding $\Omega(\theta) $ also belongs to $L^p (\bbT)$.
\end{proposition}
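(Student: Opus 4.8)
The plan is to realize the map $\Omega\mapsto h^{(\Omega)}$, which I denote $H$, as a composition of $C^{1}$ maps, to compute $DH|_{\Omega_{0}}$ and observe that it is a nonzero scalar multiple of the identity, and then to invoke the inverse function theorem in Banach spaces. The preliminary observation is that the natural target algebra is $\calA^{0.5}(\cn)$, not $\Yz$: by Proposition~\ref{A^s algebra} with $\ui X=\uj Y=\uij Z=\cn$ (a product of constants is a constant) this is a unital Banach algebra, and $\calA^{0.5}(\cn)\unibed\Yz$ as well as $\calA^{0.5}(\cn)\hookrightarrow\calA^{0}(\cn)\hookrightarrow C^{0}(\bbT)\subset L^{\infty}(\bbT)$. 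Since every $f\in\cbd$ satisfies $\abs{f(\beta)}\le\nrm{\beta^{-\delta}f}_{C_{b}}\beta^{\delta}\to0$ as $\beta\to0$, while $\xi_{0}(0)=1$ and $\xi_{\infty}(0)=0$, the trace $\n f\mapsto\n f(0)$ obeys $\abs{\n f(0)}\le\nrm{\n f}_{\Wnz}$ and $\abs{\n f(0)}\le\nrm{\n f}_{\Wnp}$; because $\Wz,\Wp\hookrightarrow C_{b}(\RT)$ this gives well-defined bounded linear maps $\mathrm{ev}_{0}\colon\Wz\to\calA^{0.5}(\cn)$ and $\mathrm{ev}_{0}\colon\Wp\to\calA^{0.5}(\cn)$ of norm at most $1$, which agree with genuine evaluation at $\beta=0$.

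Next I would establish the $C^{1}$ structure. Put $p(\Omega)\coloneqq\mathrm{ev}_{0}(\dbbeta G(\Omega))$ and $q(\Omega)\coloneqq\mathrm{ev}_{0}(\dbvarphi G(\Omega))$, where $G$ is the $C^{1}$ map of Theorem~\ref{main thm in special coordinate}; by Proposition~\ref{continuity of diff bar operator} and the previous step these are $C^{1}$ maps $\ball{\Yz}{\Omega_{0}}{\varepsilon_{N}^{*}}\to\calA^{0.5}(\cn)$ with $p(\Omega_{0})\equiv-1$ and $q(\Omega_{0})\equiv1$ (recall $\dbbeta\trisol\equiv-1$, $\dbvarphi\trisol\equiv1$). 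Since $\calA^{0.5}(\cn)$ is a unital Banach algebra and $p(\Omega_{0})/(-\mu\,q(\Omega_{0}))=1/\mu>0$, the map $A(\Omega)\coloneqq\bigl(p(\Omega)/(-\mu\,q(\Omega))\bigr)^{1/2\mu}$ is a well-defined $C^{1}$ map into $\calA^{0.5}(\cn)$ on a perhaps smaller ball about $\Omega_{0}$ (holomorphic functional calculus, exactly as in Proposition~\ref{continuity of diff varphi bar inv}), with $A(\Omega_{0})=\mu^{-1/2\mu}$. Finally $H(\Omega)=A(\Omega)\,\Omega$, and the multiplication $\calA^{0.5}(\cn)\times\Yz\to\Yz$ is bounded bilinear by Proposition~\ref{A^s algebra}(2) (again with constant component spaces, $s=0.5$), hence $C^{\infty}$; so $H$ is $C^{1}$ from a neighborhood of $\Omega_{0}$ in $\Yz$ into $\Yz$, with $H(\Omega_{0})=\mu^{-1/2\mu}(2-\tfrac1\mu)=\overset{\circ}{w_{0}}$.

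The decisive step is the derivative at $\Omega_{0}$. By the product rule $DH|_{\Omega_{0}}[\dot\Omega]=A(\Omega_{0})\dot\Omega+\Omega_{0}\,DA|_{\Omega_{0}}[\dot\Omega]=\mu^{-1/2\mu}\dot\Omega+(2-\tfrac1\mu)\,DA|_{\Omega_{0}}[\dot\Omega]$, the factors $A(\Omega_{0})$ and $\Omega_{0}$ being constants. I claim $DA|_{\Omega_{0}}=0$. Indeed the chain rule gives $DA|_{\Omega_{0}}[\dot\Omega]=-\tfrac{\mu^{-1/2\mu}}{2\mu}(\dot p+\dot q)$ with $\dot p+\dot q=\mathrm{ev}_{0}\bigl((\dbbeta+\dbvarphi)\,DG|_{\Omega_{0}}[\dot\Omega]\bigr)$, and in each Fourier mode $\n{(\dbbeta)}+\n{(\dbvarphi)}=(Q+(1-2\mu)\,id)+(-\n{P}+(2\mu-1)\,id)=Q-\n{P}=in\beta$ by \eqref{eq19}, \eqref{eq20} and \eqref{Pn}, i.e.\ $\dbbeta+\dbvarphi=\beta\rd_{\phi}$; since $\rd_{\phi}f\in\Wz\hookrightarrow C_{b}(\RT)$ for $f\in\Xz$, the function $\beta\,\rd_{\phi}f$ is continuous on $\RT$ and vanishes on $\set{\beta=0}$, whence $\mathrm{ev}_{0}(\beta\rd_{\phi}f)=0$; applying this with $f=DG|_{\Omega_{0}}[\dot\Omega]\in\Xz$ gives $\dot p+\dot q=0$. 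Hence $DH|_{\Omega_{0}}=\mu^{-1/2\mu}\,\mathrm{id}_{\Yz}$, an isomorphism, and the inverse function theorem yields open sets $U\ni\Omega_{0}$, $V\ni\overset{\circ}{w_{0}}$ with $H\colon U\to V$ a $C^{1}$ diffeomorphism; shrinking $U\subset\ball{\Yz}{\Omega_{0}}{\varepsilon_{N}^{*}}$ and choosing $\varepsilon_{N}^{**}>0$ with $\ball{\Yz}{\overset{\circ}{w_{0}}}{\varepsilon_{N}^{**}}\subset V$ gives the asserted existence and uniqueness. For the $L^{p}$ statement, $A(\Omega)$ lies near the nonzero constant $\mu^{-1/2\mu}$ in the unital Banach algebra $\calA^{0.5}(\cn)$, hence is invertible there, so $A(\Omega)\,\Omega=H(\Omega)=h$ forces $\Omega=A(\Omega)^{-1}h$ in $\Yz$ with $A(\Omega)^{-1}\in\calA^{0.5}(\cn)\subset L^{\infty}(\bbT)$; comparing Fourier coefficients shows this $\calA$-product equals the pointwise product, so $\Omega\in L^{\infty}(\bbT)\cdot L^{p}(\bbT)\subset L^{p}(\bbT)$.

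The main obstacle is organizing the computation so that $DA|_{\Omega_{0}}$ vanishes; this rests on the algebraic identity $\dbbeta+\dbvarphi=\beta\rd_{\phi}$ together with the boundedness of $\rd_{\phi}f$ on $\RT$, and on having chosen the codomain $\calA^{0.5}(\cn)$ rather than $\Yz$ so that the quotient, the $1/2\mu$-th power, and the inversion in the last step all take place inside a Banach algebra. Everything else is a routine assembly of the continuity results already proved.
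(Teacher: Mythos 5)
Your proposal is correct and follows essentially the same route as the paper's proof: well-definedness and $C^1$-regularity of the map via the restriction operator (your $\mathrm{ev}_0$, the paper's $\text{Res}$), the algebra structure of the $\calA^{0.5}$-space, and the continuity results from Section~\ref{subsec: Continuity results for differential operator in function space}; the derivative computation at $\Omega_0$ hinging on the algebraic identity $\dbbeta+\dbvarphi=\beta\rd_\phi$ vanishing upon restriction to $\beta=0$, giving $DH|_{\Omega_0}=\mu^{-1/2\mu}\,\mathrm{id}$; the inverse function theorem; and the $L^p$ statement from two-sided boundedness (equivalently, invertibility in the unital algebra $\calA^{0.5}(\cn)\subset L^\infty(\bbT)$) of the prefactor. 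Your organization---packaging the prefactor as a single $C^1$ map $A\colon\ball{\Yz}{\Omega_0}{\varepsilon_N^*}\to\calA^{0.5}(\cn)$ and applying the product rule to $H=A\cdot\Omega$---is a tidier bookkeeping than the paper's decomposition into $I_1,I_2,I_3,J_1,J_2$, but the underlying steps and the key cancellation are identical.
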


\begin{proof}
	First, we check that the map is well-defined, i.e.,
	\begin{equation*}
		\left( \frac{\dbbeta \br{\psi}^{(\Omega)}(\beta, \phi)}{-\mu \dbvarphi \br{\psi}^{(\Omega)} (\beta, \phi)}\right)^{\frac{1}{2\mu}} \Omega(\phi) \Bigg\vert_{(\beta, \phi)=(0, \theta)}  \in \Yz = \calA^{-0.5} \left( \cn \right).
	\end{equation*}
	Using same method as in Proposition \ref{continuity of diff bar operator}, \ref{continuity of diff beta bar inv}, and the fact that $\calA^{0.5}\left(\Wnp\right)$ is an algebra, we have
	\begin{equation} \label{eq86}
		\left( \frac{\dbbeta \br{\psi}^{(\Omega)}(\beta, \phi)}{-\mu \dbvarphi \br{\psi}^{(\Omega)} (\beta, \phi)}\right)^{\frac{1}{2\mu}}  \in \calA^{0.5}\left( \Wnp \right).
	\end{equation}
	As a restriction map
	\begin{equation*}
		\textrm{Res}^{(n)}: \Wnp \rightarrow \cn
	\end{equation*}
	defined by $\text{Res}^{(n)}\left(f^{(n)} (\beta) \right) = f^{(n)}(0)$ is uniformly bounded ($\because \abs{f(0)} \leq \nrm{f(\beta)}_{C_b} \leq \nrm{f(\beta)}_{\Wnp})$, the induced restriction
	\begin{equation*}
		\text{Res}: \calA^{0.5}\left( \Wnp \right) \rightarrow \calA^{0.5} \left( \cn \right)
	\end{equation*}
	defined by
	\begin{equation} \label{restriction map}
		\text{Res} \left( f(\beta, \phi) \right) =f(0, \phi)
	\end{equation}
	is continuous. Then, by Proposition \ref{A^s algebra}, we have
	\begin{equation*}
		\left( \frac{\dbbeta \br{\psi}^{(\Omega)}(\beta, \phi)}{-\mu \dbvarphi \br{\psi}^{(\Omega)} (\beta, \phi)}\right)^{\frac{1}{2\mu}} \Omega(\phi) \Bigg\vert_{(\beta, \phi)=(0, \theta)}  
		=
		\underbrace{\left( \frac{\dbbeta \br{\psi}^{(\Omega)}(\beta, \phi)}{-\mu \dbvarphi \br{\psi}^{(\Omega)} (\beta, \phi)}\right)^{\frac{1}{2\mu}} \res}_{\in \calA^{0.5} \left( \cn \right) ~~(\because \text{ restriction)}} \underbrace{\Omega(\theta)}_{\in \calA^{-0.5} \left( \cn \right)} \in \calA^{-0.5} \left( \cn \right)	.
	\end{equation*}
	This shows the map \eqref{initial data map} is well defined map from $\Yz$ to $\Yz$.
	
	To show \eqref{initial data map} is a local diffeomorphism at $\Omega = \Omega_0$, we utilize inverse function theorem. First, we show \eqref{initial data map} is $C^1$ in $\ball{\Yz}{\Omega_0}{\varepsilon^*_N}$. By chain rule, we have
	\begin{equation} \label{eq87}
		\begin{aligned}
			&\frac{d}{d\Omega} \left( \left( \frac{\dbbeta \br{\psi}^{(\Omega)}(\beta, \phi)}{-\mu \dbvarphi \br{\psi}^{(\Omega)} (\beta, \phi)}\right)^{\frac{1}{2\mu}} \Omega(\phi) \Bigg\vert_{(\beta, \phi)=(0, \theta)} \right) \\
			&= \left( \frac{\dbbeta \br{\psi}^{(\Omega)}(\beta, \phi)}{-\mu \dbvarphi \br{\psi}^{(\Omega)} (\beta, \phi)}\right)^{\frac{1}{2\mu}}\Bigg\vert_{(\beta, \phi)=(0, \theta)} \cdot id \\
			& \qquad + \Omega(\theta) \cdot \left( \frac{d}{d \br{\psi}} \left( \frac{\dbbeta \br{\psi}^{(\Omega)}(\beta, \phi)}{-\mu \dbvarphi \br{\psi}^{(\Omega)} (\beta, \phi)}\right)^{\frac{1}{2\mu}} \res \circ \frac{\quad d\br{\psi}^{(\Omega)}}{d \Omega} \right) \\
			&= \underbrace{\left(\frac{\dbbeta \br{\psi}^{(\Omega)}(\beta, \phi)}{-\mu \dbvarphi \br{\psi}^{(\Omega)} (\beta, \phi)}\right)^{\frac{1}{2\mu}}\Bigg\vert_{(\beta, \phi)=(0, \theta)}}_{\eqqcolon I_1(\Omega)} \cdot id \\
			& \qquad -\frac{1}{2\mu^2}\underbrace{\Omega(\theta) \cdot \left( \left( \frac{\dbbeta \br{\psi}^{(\Omega)}(\beta, \phi)}{-\mu \dbvarphi \br{\psi}^{(\Omega)} (\beta, \phi)}\right)^{\frac{1}{2\mu}-1} \frac{1}{\dbvarphi \br{\psi}^{(\Omega)}} \right) \res}_{\eqqcolon I_2(\Omega)} \cdot \underbrace{\left( \dbbeta \circ \frac{\quad \rd \br{\psi}^{(\Omega)}}{\rd \Omega} \right) \res}_{\eqqcolon J_1} \\
			&\qquad \underbrace{+\frac{1}{2\mu^2} \Omega(\theta) \cdot \left( \left( \frac{\dbbeta \br{\psi}^{(\Omega)}(\beta, \phi)}{-\mu \dbvarphi \br{\psi}^{(\Omega)} (\beta, \phi)}\right)^{\frac{1}{2\mu}-1} \frac{\dbbeta \br{\psi}^{(\Omega)}}{\left(\dbvarphi \br{\psi}^{(\Omega)}\right)^2} \right) \res}_{\eqqcolon I_3(\Omega)} \cdot\underbrace{\left( \dbvarphi \circ \frac{\quad \rd \br{\psi}^{(\Omega)}}{\rd \Omega} \right) \res}_{\eqqcolon J_2}
		\end{aligned}
	\end{equation}
	Note that
	\begin{equation*}
		I_1: \ball{\Yz}{\Omega_0}{\varepsilon^*_N} \rightarrow \Yz
	\end{equation*}
	is the composition of three map
	\begin{equation*}
		\text{Res} \circ \left( \frac{\dbbeta}{-\mu \dbvarphi} \right)^{\frac{1}{2\mu}} \circ G,
	\end{equation*}
	where
	\begin{equation*}
		\text{Res}: \Wp \rightarrow \Yz
	\end{equation*}
	is the restriction map to $\beta=0$ as defined in \eqref{restriction map},
	\begin{equation*}
		\left( \frac{\dbbeta}{-\mu \dbvarphi} \right)^{\frac{1}{2\mu}}: \ball{\Xz}{\br{\psi}}{\delta} \rightarrow \Wp
	\end{equation*}
	is a map defined by
	\begin{equation*}
		\left( \frac{\dbbeta}{-\mu \dbvarphi} \right)^{\frac{1}{2\mu}}\left(\br{\psi}\right)= \left( \frac{\dbbeta \br{\psi}}{-\mu \dbvarphi \br{\psi}} \right)^{\frac{1}{2\mu}},
	\end{equation*}
	and
	\begin{equation*}
		G: \ball{\Yz}{\Omega_0}{\varepsilon^*_N} \rightarrow \ball{\Xz}{\br{\psi}_0}{\delta}
	\end{equation*}
	is a map in Theorem \ref{main thm in special coordinate}. Since these three maps are continuous, $I_1$ is continuous.
	
	Now we investigate $I_2$ term. Note that $I_2$ is a product of two map; $\Omega \rightarrow \Omega$, and
	\begin{equation} \label{eq88}
		\Omega \rightarrow \left( \left( \frac{\dbbeta \br{\psi}^{(\Omega)}(\beta, \phi)}{-\mu \dbvarphi \br{\psi}^{(\Omega)} (\beta, \phi)}\right)^{\frac{1}{2\mu}-1} \frac{\dbbeta \br{\psi}^{(\Omega)}}{\left(\dbvarphi \br{\psi}^{(\Omega)}\right)^2} \right) \res.
	\end{equation}
	Since \eqref{eq88} is continuous for the same reason as $I_1$, a map $I_2$ is continuous as a product of continuous map. $I_3$ is also continuous by same argument as $I_2$.

	Since the map $\Omega \rightarrow \br{\psi}^{(\Omega)}$ is $C^1$ on $\ball{\Yz}{\Omega_0}{\varepsilon_N^{*}}$ by Theorem \ref{main thm in special coordinate}, and
	\begin{align*}
		\dbbeta &: \Xz \rightarrow \Wp ,\\
		\dbvarphi &: \Xz \rightarrow \Wp ,\\
		\text{Res} &: \Wp \rightarrow \Yz
	\end{align*} 
	is continuous, $J_1, J_2$ is also continuous.
	Combining these result on $I_1, I_2, I_3, J_1, J_2$, we conclude the map \eqref{initial data map} is $C^1$.
	
	Next, we show
	\begin{equation} \label{eq89}
		\frac{d}{d\Omega} \left( \left( \frac{\dbbeta \br{\psi}^{(\Omega)}(\beta, \phi)}{-\mu \dbvarphi \br{\psi}^{(\Omega)} (\beta, \phi)}\right)^{\frac{1}{2\mu}} \Omega(\phi) \Bigg\vert_{(\beta, \phi)=(0, \theta)} \right) (\Omega_0)
	\end{equation}
	is an isomorphism. Let $\Omega = \Omega_0$ in \eqref{eq87} and using the fact that
	\begin{equation*}
		\dbbeta \trisol \equiv 1, \qquad \dbvarphi \trisol \equiv 1,
	\end{equation*}
	we have
	\begin{align*}
		& \frac{d}{d\Omega} \left( \left( \frac{\dbbeta \br{\psi}^{(\Omega)}(\beta, \phi)}{-\mu \dbvarphi \br{\psi}^{(\Omega)} (\beta, \phi)}\right)^{\frac{1}{2\mu}} \Omega(\phi) \Bigg\vert_{(\beta, \phi)=(0, \theta)} \right) (\Omega_0) \\
		& = \mu^{-\frac{1}{2\mu}} \cdot id - \left( 1-\frac{1}{2\mu} \right) \mu^{-\frac{1}{2\mu}-1} \left( \left( \dbbeta+\dbvarphi \right) \circ \frac{\quad \rd \br{\psi}^{(\Omega)}}{\rd \Omega} \right) \res \\
		\underset{\eqref{diff beta bar}, \eqref{diff varphi bar}}&{=} \mu^{-\frac{1}{2\mu}} \cdot id - \left( 1-\frac{1}{2\mu} \right) \mu^{-\frac{1}{2\mu}-1} \underbrace{\left( \left( \beta \rd_\phi \right) \circ \frac{\quad \rd \br{\psi}^{(\Omega)}}{\rd \Omega} \right) \res}_{=0 \text{ due to multiplication } \beta \text{ at } \beta=0} \\
		&= \mu^{-\frac{1}{2\mu}} \cdot id.
	\end{align*}
	Hence, Fr\'echet derivative \eqref{eq89} is an isomorphism. As \eqref{initial data map} is $C^1$ in $\ball{\Yz}{\Omega_0}{\varepsilon^*_N}$ and its Fr\'echet derivative at $\Omega_0$ is an isomorphism, it is a local diffemorphism at $\Omega=\Omega_0$ by inverse function theorem.
	
	The last statement is clear since
	\begin{equation*}
		\left( \frac{\dbbeta \br{\psi}^{(\Omega)}(0,\theta)}{-\mu \dbvarphi \br{\psi}^{(\Omega)} (0,\theta)}\right)^{\frac{1}{2\mu}}
	\end{equation*}
	is bounded by two positive number as can be seen in \eqref{boundedness of bar derivative}.
	\end{proof}

We gather all ingredients to prove main theorem.

\begin{theorem} \label{main thm}
	Let $\mu > \frac{2}{3}, \,1\leq p < 2\mu$ be given. For all
	\begin{equation} \label{range of N again}
		N > (2\mu -1)\left(397+\frac{1090}{\mu} + \frac{1264}{\mu^2}+\frac{999}{\mu^3}+\frac{42}{\mu^4} \right),
	\end{equation}	
	 there exists a constant $C=C(N)$ such that for all $g \in \calA^{-0.5}(\bbT) \cap L^p (\bbT) $ satisfying
	\begin{enumerate}
		\item Periodicity: $g(\theta)=g\left(\theta+ \frac{2 \pi}{N} \right)$,
		\item Small $\calA^{-0.5}$-seminorm relative to $\wht{g}(0)$: $\sum_{n \in \bbZ \backslash \set{0}} \brk{n}^{-0.5} \abs{\wht{g}(n)} < C \abs{\wht{g}(0)}$,
	\end{enumerate}
	
	there exist 
	\begin{equation} \label{eq107}
		w \in L^\infty([0,\infty); L^p_{\textnormal{loc}}(\bbR^2)), \quad u \in \begin{cases}
			L^\infty([0,\infty); L^{\frac{2-\mu}{1-\mu}}_{\textnormal{loc}}(\bbR^2; \bbR^2)) & \left( \frac{2}{3} < \mu <1 \right)\\
			L^\infty([0,\infty); L^\infty_{\textnormal{loc}}(\bbR^2; \bbR^2)) & (\mu \geq 1)\\
		\end{cases},\quad  \psi \in L^\infty([0,\infty); C^1(\bbR^2 \backslash \set{0}))
	\end{equation}
	which satisfy
	\begin{enumerate}
		\item [\circled{1}] $u$ is a weak solution of \eqref{Euler eq} with with initial velocity given by \eqref{initial data for velocity prop}.
		\item [\circled{2}] If $\mu>1$, $w$ is a weak solution of \eqref{Euler eq-vor} with initial vorticity $w_0(x)=\abs{x}^{-\frac{1}{\mu}}g(\theta)$.
		\item [\circled{3}] This solution is self-similar and shows spiral roll-up in the sense of \eqref{spiral roll-up}.
	\end{enumerate}
\end{theorem}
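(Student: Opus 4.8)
The plan is to assemble the results of Sections~\ref{sec: Main theorem in new coordinate}--\ref{sec: Main theorem in original coordinate}. Write $w_0^{\circ}:=\mu^{-1/(2\mu)}\bigl(2-\tfrac{1}{\mu}\bigr)$ for the constant in Proposition~\ref{local surjectivity}, and recall the homogeneity $L(\lambda\psi,\lambda^{1/(2\mu)}\Omega)=\lambda\,L(\psi,\Omega)$ of the reduced operator, which at the physical level is exactly the amplitude rescaling coming from the scale invariance of~\eqref{Euler eq}, realized by a spatial dilation $x\mapsto\sigma x$ (this dilation multiplies a homogeneous profile $\abs{z}^{-1/\mu}\Phi(\theta)$ by $\sigma^{-1/\mu}$, and preserves the self-similarity class of Definition~\ref{self-similarity}, the weak-solution property, and the spiral roll-up structure). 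Assume $\wht{g}(0)=:c\neq0$ (if $c=0$, condition~(2) forces $g\equiv0$ and the trivial solution works). I would first normalize by replacing $g$ with $g':=(w_0^{\circ}/c)\,g$, which has the same $N$-periodicity, mean $w_0^{\circ}$, and
\[
	\nrm{g'-w_0^{\circ}}_{\Yz}=\sum_{n\in N\bbZ\backslash\set{0}}\brk{n}^{-0.5}\abs{\wht{g'}(n)}=\frac{w_0^{\circ}}{\abs{c}}\sum_{n\in\bbZ\backslash\set{0}}\brk{n}^{-0.5}\abs{\wht{g}(n)}<w_0^{\circ}\,C ,
\]
so choosing $C=C(N)$ with $w_0^{\circ}C<\varepsilon^{**}_N$ (the constant of Proposition~\ref{local surjectivity}) makes the hypothesis of that proposition hold with $h=g'$; moreover $g'\in L^p(\bbT)$.

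Then I would run the construction on $g'$. Proposition~\ref{local surjectivity} produces a unique $\Omega\in\ball{\Yz}{\Omega_0}{\varepsilon^{*}_N}$, with $\Omega\in L^p(\bbT)$, such that $h^{(\Omega)}=g'$. Plugging $\Omega$ into Theorem~\ref{main thm in special coordinate} gives $\br{\psi}=G(\Omega)\in\ball{\Xz}{\trisol}{\delta}$ with $\br{L}(\br{\psi},\Omega)=0$, and the transformation chain~\eqref{transform process}---legitimate because the coordinate map $T$ is a $C^1$-diffeomorphism (Proposition~\ref{C^1 change of coordinate}) and the vorticity--stream relation is recovered in~\eqref{vorticity stream relation}---yields $(w,u,\psi)$ with the regularity recorded in~\eqref{regularity of dependent variables}. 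The uniform-in-time integrability claims~\eqref{eq107} are Corollary~\ref{boundedness of three functions} together with Proposition~\ref{w is L^p}: from $\abs{u(x,t)}\leq A\abs{x}^{1-1/\mu}$ one obtains $u\in L^{\infty}\bigl([0,\infty);L^{(2-\mu)/(1-\mu)}_{\textnormal{loc}}\bigr)$ for $\tfrac{2}{3}<\mu<1$ and $u\in L^{\infty}\bigl([0,\infty);L^{\infty}_{\textnormal{loc}}\bigr)$ for $\mu\geq1$, $w\in L^{\infty}\bigl([0,\infty);L^p_{\textnormal{loc}}\bigr)$ since $p<2\mu$, and $\psi\in L^{\infty}\bigl([0,\infty);C^1(\rmz)\bigr)$ from~\eqref{regularity of dependent variables}. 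Propositions~\ref{initial data for stream function prop},~\ref{initial data for velocity prop},~\ref{initial data for vorticity prop} identify the initial data, so in particular $w(\cdot,0)=\abs{x}^{-1/\mu}h^{(\Omega)}(\theta)=\abs{x}^{-1/\mu}g'(\theta)$. Finally I undo the normalization by the dilation $x\mapsto\sigma x$ with $\sigma^{-1/\mu}=c/w_0^{\circ}$: the resulting triple, still denoted $(w,u,\psi)$, is again a weak solution of~\eqref{Euler eq} with initial vorticity $\abs{x}^{-1/\mu}g(\theta)$. Assertion~\circled{1} is then Propositions~\ref{weak solution of EE in velocity form upgraded} and~\ref{second equality of EE-vor}; assertion~\circled{2} (under the extra hypothesis $\mu>1$) is Proposition~\ref{weak solution of EE in vorticity form upgraded}; and assertion~\circled{3} combines self-similarity (built into the ansatz via Definition~\ref{self-similarity} and stable under the dilation) with the spiral roll-up analysis of Section~\ref{subsec: The spiral roll-up phenomenon}, where the vanishing set $A=\set{\theta:g(\theta)=0}$ coincides with $\set{\phi_0:\Omega(\phi_0)=0}$ because $h^{(\Omega)}$ differs from $\Omega$ only by the strictly positive factor $\bigl(\tfrac{\dbbeta\br{\psi}(0,\theta)}{-\mu\,\dbvarphi\br{\psi}(0,\theta)}\bigr)^{1/(2\mu)}$.

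Since all the analytic content is already established, the remaining work is essentially organizational: pin down an admissible $C(N)$ and translate the stated $\calA^{-0.5}$-seminorm smallness into the $\Yz$-norm smallness required by Proposition~\ref{local surjectivity}. The step I would scrutinize most is the scaling reduction itself---verifying that the Euler dilation symmetry genuinely acts on the angular profile by multiplication and preserves each of the three claimed properties---since every other ingredient is a direct citation of the propositions above.
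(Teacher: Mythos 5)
Your overall strategy is the same as the paper's: normalize $g$ to $g_0 := (w_0^{\circ}/\wht g(0))\,g$ so Proposition~\ref{local surjectivity} applies, build $\Omega$, run the chain $\Omega \to \br\psi \to (w,u,\psi)$, cite Propositions~\ref{w is L^p}, \ref{initial data for stream function prop}--\ref{initial data for vorticity prop}, Corollary~\ref{boundedness of three functions}, and Propositions~\ref{weak solution of EE in velocity form upgraded}--\ref{second equality of EE-vor}, then undo the normalization by an exact Euler symmetry. The one genuine difference is the last step: you use a \emph{spatial} dilation $x\mapsto\sigma x$ with $\sigma^{-1/\mu}=\wht g(0)/w_0^{\circ}$, whereas the paper uses the \emph{time} scaling $w(x,t)\mapsto\lambda w(x,\lambda t)$ with $\lambda=\wht g(0)/w_0^{\circ}$. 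Both are one-parameter subgroups of the full Euler scaling \eqref{scaled func}, and both multiply a profile $|x|^{-1/\mu}h(\theta)$ by the same factor, preserve the self-similarity class of Definition~\ref{self-similarity}, and carry weak solutions to weak solutions (including through $t=0$, since they fix $t=0$ pointwise up to a positive time reparametrization), so your substitution is harmless; neither buys anything over the other.

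The one gap is the sign case $\wht g(0)<0$. You write $\sigma^{-1/\mu}=c/w_0^{\circ}$, but $w_0^{\circ}=\mu^{-1/(2\mu)}(2-1/\mu)>0$ for $\mu>2/3$, so $\sigma^{-1/\mu}$ has the sign of $c$ and admits no positive solution $\sigma$ when $c<0$. Neither a positive spatial dilation nor the paper's positive time scaling can flip the sign of the amplitude, and $(w,u,\psi)\mapsto(-w,-u,-\psi)$ is \emph{not} a symmetry of \eqref{Euler eq-vor} forward in time. The paper acknowledges this and handles $\wht g(0)<0$ by rerunning the entire construction with a negative constant $C$ in \eqref{choice of C}, so that the trivial profile $\Omega_0$ and hence $w_0^{\circ}$ become negative; that remark is doing real work and should appear in your proof. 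Alternatively one can use the reflection $x_1\mapsto -x_1$, which negates vorticity, but this also reflects the angular profile $g(\theta)\mapsto g(\pi-\theta)$ and so requires reflection-invariance of the hypotheses, which is not given --- so the paper's fix (rerunning with the negative trivial branch) is the clean one. Everything else in your outline --- well-posedness of Proposition~\ref{local surjectivity} under the $C(N)$-smallness, the zero-set identification $\{\theta: g(\theta)=0\}=\{\phi_0:\Omega(\phi_0)=0\}$ via the strictly positive factor in \eqref{initial data map}, the passage to the regularity class \eqref{eq107} --- matches the paper's argument.
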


\begin{proof}
	Suppose $g(\theta) = g \left( \theta+\frac{2 \pi}{N} \right)$ and define $g_0(\theta)$ by
	\begin{equation*}
		g_0(\theta) = \frac{\overset{\circ}{w_0}}{\wht{g}(0)} g(\theta),
	\end{equation*}
	where $\overset{\circ}{w_0}$ is as in Proposition \eqref{local surjectivity} so that $\abs{\wht{g_0}(0)} = \overset{\circ}{w_0}$. Then,
	
	\begin{align*}
		\nrm{g_0(\theta) - \overset{\circ}{w_0} }_{\calA^{-0.5} \left( \cn \right)} &= \underbrace{\abs{\wht{g_0}(0)-\overset{\circ}{w_0}}}_{=0}+\sum_{n \in N\bbZ \backslash \set{0}} \brk{n}^{-0.5} \abs{\wht{g_0}(n)} \\
		& = \frac{\overset{\circ}{w_0}}{\abs{\wht{g}(0)}} \sum_{n \in N\bbZ \backslash \set{0}} \brk{n}^{-0.5} \abs{\wht{g}(n)}.
	\end{align*}
	By Proposition \ref{local surjectivity}, if $\frac{\overset{\circ}{w_0}}{\abs{\wht{g}(0)}} \sum_{n \in N\bbZ \backslash \set{0}} \brk{n}^{-0.5} \abs{\wht{g}(n)}<\varepsilon^{**}_N$,\footnote{$\varepsilon^{**}_N$ is the constant in Proposition \ref{local surjectivity}.} there exists $\Omega \in \ball{\Yz}{\Omega_0}{\varepsilon^{*}_N}$\footnote{$\varepsilon^{*}_N$ is the constant in Theorem \ref{main thm in special coordinate}.} such that
	\begin{equation*}
		\left( \frac{\dbbeta \br{\psi}^{(\Omega)}(0,\theta)}{-\mu \dbvarphi \br{\psi}^{(\Omega)} (0,\theta)}\right)^{\frac{1}{2\mu}} \Omega(\theta) = g_0 (\theta)
	\end{equation*}
	for $\br{\psi}^{(\Omega)} = G(\Omega)$ (So the constant $C=C(N)$ in the Theorem \ref{main thm} is actually $\varepsilon^{**}_N / \overset{\circ}{w_0}$). Then, we can construct self-similar weak solution of Euler equation with initial vorticity $w(x,t)=g_0(\theta)\abs{x}^{-\frac{1}{\mu}}$ through $\Omega$ by the process \eqref{transform process}. We already saw that \eqref{eq107} follows from Proposition \ref{boundedness of three functions}, \circled{1} follows from Proposition \ref{weak solution of EE in velocity form upgraded}, \circled{2} follows from Proposition \ref{weak solution of EE in vorticity form upgraded}, and \circled{3} follows from Section \ref{subsec: The spiral roll-up phenomenon}.
	
	Although the angular variety of initial data was changed from $g(\theta)$ to $g_0(\theta)$, we easily solve this issue by time scaling symmetry of Euler equation. We use the fact that if $w(x,t)$ is a solution of Euler equation, so is $\lambda w (x, \lambda t)$ for all $\lambda>0$. If $\wht{g}(0)>0$, $\lambda=\wht{g}(0)  / \overset{\circ}{w_0}$ gives the desired solution with initial data $g(0)$. If $\wht{g}(0) < 0$, it suffices to choose $C$ in \eqref{choice of C} as negative constant and repeat same process all above.
\end{proof}

\bigskip

\end{document}